\documentclass[12pt]{amsart}

\usepackage[all,color]{xy}
\usepackage{pb-diagram}
\usepackage[mathscr]{eucal}
\usepackage{hyperref}
\hypersetup{
    colorlinks=true, 
    linktoc=all,     
    linkcolor=blue,  
}

\usepackage{xcolor}
\usepackage[normalem]{ulem}

\DeclareMathAlphabet{\mathpzc}{OT1}{pzc}{m}{it}

\oddsidemargin 0.5 cm
\evensidemargin 0.5 cm
\addtolength{\textwidth}{2.6cm}
\addtolength{\rightmargin}{10cm}
\addtolength{\textheight}{0.5cm}
\addtolength{\topmargin}{-0.5cm}

\usepackage{amsfonts}
\usepackage{amsmath}
\usepackage{amssymb}

\newcommand{\tr}{\textnormal{tr}}
\newcommand{\ric}{\textnormal{Ric}}

\newcommand{\dbar}{\overline{\partial}}

\newcommand{\ddbar}{\sqrt{-1}\partial\dbar}

\def\cA{{\mathcal A}}
\def\cB{{\mathcal B}}

\def\cF{{\mathcal F}}

\def\cH{{\mathcal{H}}}

\def\cK{{\mathcal K}}

\def\cJ{{\mathcal J}}

\def\cR{{\mathcal R}}
\def\cS{{\mathcal S}}
\def\cT{{\mathcal T}}
\def\cV{{\mathcal V}}
\def\cX{{\mathcal X}}

\def\cZ{{\mathcal Z}}

\def\PSH{\textnormal{PSH}}

\def\bfq{\mathbf{q}}
\def\bfx{\mathbf{x}}
\def\bfX{\mathbf{X}}
\def\bfY{\mathbf{Y}}

\def\bfy{\mathbf{y}}

\def\bfQ{\mathbf{Q}}

\def\cU{\mathcal{U}}
\def\cW{\mathcal{W}}
\def \Diam{ {\rm Diam}}
\def\RCD{{\rm RCD}}
\def\PSH{{\rm PSH}}

\DeclareMathOperator{\vol}{Vol}
\newcommand{\xk}[1]{\big(#1\big)}
\newcommand{\bk}[1]{\Big(#1\Big)}
\newcommand{\abs}[1]{|#1|^2}

\newtheorem{theorem}{Theorem}[section]

\newtheorem{lemma}{Lemma}[section]
\newtheorem{definition}{Definition}[section]
\newtheorem{corollary}{Corollary}[section]

\newtheorem{conjecture}{Conjecture}[section]

\numberwithin{equation}{section}

\begin{document}

\title{RCD structures on singular K\"ahler spaces of complex dimension three}

\author[{Xin Fu, Bin Guo and Jian Song}
]{Xin Fu $^*$, Bin Guo$^\dagger$ and Jian Song$^{\dagger \dagger}$  }

  \thanks{Xin Fu is  supported by National Key R\&D Program of China 2024YFA1014800
 and NSFC No. 12401073. Bin Guo and Jian Song are supported in part by the National Science Foundation under grants  DMS-2203607 and DMS-2303508, and the collaboration grant 946730 from Simons Foundation.}

\address{$^*$ School of Science, Institute for Theoretical Sciences, Westlake University, Hangzhou 310030, China}

\email{fuxin54@westlake.edu.cn}

\address{$^*$ Department of Mathematics \& Computer Science, Rutgers University, Newark, NJ 07102}

\email{bguo@rutgers.edu}

\address{$^\dagger$ Department of Mathematics, Rutgers University, Piscataway, NJ 08854}

\email{jiansong@math.rutgers.edu}

\begin{abstract}  

{\footnotesize Let $X$ be a projective variety of complex dimension $3$ with log terminal singularities. We prove that every singular K\"ahler metric on $X$ with bounded Nash entropy and Ricci curvature bounded below induces a compact RCD space homeomorphic to the projective variety $X$ itself. In particular, singular K\"ahler-Einstein spaces of complex dimension $3$ with bounded Nash entropy are compact RCD spaces topologically and holomorphically equivalent to the underlying projective variety. Various compactness theorems are also obtained for $3$-dimensional projective varieties with bounded Ricci curvature.  Such results establish connections among algebraic, geometric and analytic structures of klt singularities from birational geometry and provide abundant examples of RCD spaces from algebraic geometry via complex Monge-Amp\`ere equations.}

\end{abstract}

\maketitle


\section{Introduction}

Complex Monge-Amp\`ere equations play a central role to study canonical metrics and their geometric applications in K\"ahler geometry after the celebrated solution of Yau to the Calabi conjecture \cite{Y1}. There has been tremendous progress in the past decades with  influx
of new ideas from pluripotential theory,  Riemannian geometry, complex $L^2$-theory, Ricci flow and the minimal model program \cite{Ko1, CC1, CC2, T1, DS1, S2, ST1, ST2} that unravel deep, rich and unifying structures in these fields. The series of papers \cite{S2, FGS, GPT, GPSS1, GPSS2, GPSS3} aim to build a framework that would expand the classical works \cite{Y1, Ko1} on complex Monge-Amp\`ere equations to geometric analysis on complex spaces with singularities. New analytic and geometric estimates are recently established  in \cite{GPSS1, GPSS2,  GS22, GS25} on singular complex spaces based on a very general analytic assumption on the Nash entropy for   Monge-Amp\`ere volume measures associated to  singular K\"ahler metrics. This assumption is satisfied in most geometric settings, particularly in the case when the underlying complex space is a projective variety with log terminal singularities. However, additional geometric assumptions, particularly on curvatures, are required to derive more refined analysis on local singularities and global moduli spaces. The Ricci curvature on a singular complex is defined in the sense of distribution over singularities using the pluripotential theory,  which can be viewed as a complex synthetic notion of Ricci curvature.   One of the main goals in this paper is to establish the equivalence between the positivity notions of Ricci curvature in K\"ahler geometry and in the RCD theory developed in \cite{LV, Stk, AGS, DG1, DG2} on a large class of algebraic varieties with suitable singularities. There have already been striking results \cite{Sz24, GS25, CCHSTT} in this exciting new direction of research.

Let $X$ be an $n$-dimensional normal projective variety equipped with a smooth K\"ahler metric  $\theta_X$ (e.g. pullback of a smooth K\"ahler metric via projective embeddings). A closed positive $(1,1)$-current $\omega$ is said to be a singular K\"ahler metric on $X$ if it is a smooth   K\"ahler metric on the regular part of $X$, i.e., 
$\omega\in C^\infty(X^\circ)$, where 
$$X^\circ=\cR(X)$$ is the regular part of $X$. 
Similar to \cite{GPSS2}, we consider the set 
\begin{equation}\label{vspace}
\mathcal{V} (X, \theta_X, n, A,  p, K)
\end{equation}
 of all singular K\"ahler metrics $\omega$ on $X$ satisfying the following properties. 
\begin{enumerate}



\item $[\omega]$ is a K\"ahler class on $X$ with  
\begin{equation}\label{cls0}
I_\omega = [\omega]\cdot [\theta_X]^{n-1} \leq A.
\end{equation}
 
\medskip

\item  $p>n$ and 
\begin{equation}\label{nashen0}
\mathcal{N}_{\theta_X, p}(\omega) = \frac{1}{V_\omega}\int_X \left| \log\left( V_\omega^{-1} \frac{\omega^n}{\theta_X^n} \right) \right|^p\omega^n\leq K.
\end{equation}
where $V_\omega = [\omega]^n$ is the volume of $(X, \omega)$.

\medskip

\end{enumerate}
From the entropy bound (\ref{nashen0}),  $\omega$ is a closed positive $(1,1)$-current on $X$ with bounded local potentials by the well-known $L^\infty$-estimate from  \cite{Ko1, EGZ, Zz, GPT}. The singular K\"ahler metric $\omega$ naturally induces a canonical metric measure space introduced in \cite{GPSS2} as in the definition below.

\begin{definition}\label{defmc} Let $X$ be an $n$-dimensional normal projective variety equipped with a smooth K\"ahler metric $\theta_X$. For any $\omega \in \nu(X, \theta_X, n, A, p, K)$ with $p>n$, we define  
\begin{equation}\label{metcom0}
(\hat X, d_\omega ) = \overline{(X^\circ, \omega|_{X^\circ})}
\end{equation}
to be the metric completion of $(X^\circ, \omega|_{X^\circ})$. If we let $\mu_\omega$ be the trivial extension of the smooth volume measure $\omega^n|_{X^\circ}$ to $\hat X$, then  
\begin{equation}\label{metcom1}
(\hat X, d_\omega, \mu_\omega)
\end{equation}
 is defined as the  metric measure  space induced by $(X, \omega)$.

\end{definition}

 The metric measure space $(\hat X, d_\omega, \mu_\omega)$ is extensively studied in \cite{GPSS2} both analytically and geometrically via complex Monge-Amp\`ere equations and the coupled Laplacian equation. 
 In fact, the entropy bound ensures that $(\hat X, d_\omega, \mu_\omega)$ is a compact metric space by the uniform diameter estimates derived in \cite{GPSS2}. The spectral theory is also established in \cite{GPSS2} for $W^{1,2}(\hat X)$ along with many other uniform estimates for Sobolev inequalities, Green's functions and heat kernels.   
They are essential technical preparations for building connections to the RCD theory as well as the general PDE theory on singular complex spaces.  The Ricci curvature for $\omega \in \mathcal{V} (X, \theta_X, n, A,  p, K)$ can be defined as a current if the volume measure $\omega^n$ satisfies suitable positivity condition from pluripotential theory (c.f. Definition \ref{riclbdef}). If the Ricci curvature $\ric(\omega)$ is bounded below, one wishes to establish the geometric and analytic structures of $(\hat X, d_\omega, \mu_\omega)$ beyond the works of \cite{GPSS2} as the tangent cones $(\hat X, d_\omega, \mu_\omega)$ are expected to be unique and algebraic. In particular, one would expect the notion of Ricci curvature bounded below in terms of the pluripotential theory should be equivalent to various synthetic Ricci curvature lower bounds in the study of RCD spaces developed by \cite{LV, Stk, AGS} and many others.

\begin{definition} \label{defak} Let $X$ be an $n$-dimensional  projective variety with log terminal singularities equipped with a smooth K\"ahler metric  $\theta_X$.  We define  
$$\mathcal{RK}(X)$$ to be set of any singular K\"ahler metric $\omega$ on $X$ satisfying 
\begin{enumerate}
\item  $\omega\in \nu(X, \theta_X, n, A, p, K)$ for some $p>n$, $A, K>0$, 
\medskip

\item $  \ric(\omega) \geq  \lambda \omega$ on $X$ as currents  for some $\lambda \in \mathbb{R}$. 

\smallskip

\end{enumerate}
We further define $\mathcal{RK}(n)$ to be the set of $(X, \omega)$, where $X$ is an $n$-dimensional projective variety with log terminal singularities and $\omega\in \mathcal{RK}(X)$.
\end{definition}

We propose the following conjecture on the algebraic and geometric structures of the metric measure space $(\hat X, d_\omega, \mu_\omega)$ induced by $(X, \omega)\in \mathcal{RK}(X)$. 

\begin{conjecture} \label{conj1} For any $(X, \omega) \in \mathcal{RK}(n)$, 
the metric measure space $(\hat X, d_\omega, \mu_\omega)$  induced by $(X, \omega)$ as in Definition (\ref{defmc}) is a compact  $\RCD$ space homeomorphic to the projective variety $X$ itself.

\end{conjecture}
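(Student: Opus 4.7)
The plan is to prove the conjecture in two stages: first establishing the RCD structure on $(\hat X, d_\omega, \mu_\omega)$ via an approximation-and-stability scheme, and then identifying the underlying topological space with $X$. I would fix a log resolution $\pi\colon \tilde X \to X$ and a reference K\"ahler metric $\tilde\theta$ on $\tilde X$, observing that the klt assumption ensures the adapted volume form of $\pi$ is uniformly $L^p$-integrable for some $p>1$. This setup will play the role of a smooth ambient space into which the singular metric can be approximated.

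For the RCD structure, I would solve a family of complex Monge--Amp\`ere equations on $\tilde X$ in the K\"ahler classes $\pi^*[\omega] + \epsilon_j[\tilde\theta]$ with $\epsilon_j \to 0$, producing smooth K\"ahler metrics $\omega_j$ converging to $\pi^*\omega$ in a pluripotential sense with uniformly controlled Nash entropy. A short-time K\"ahler--Ricci flow regularization applied to each $\omega_j$ then yields smoothings $\omega_j(t_j)$ whose Ricci curvature lower bounds converge to $\lambda$, so that the Riemannian manifolds $(\tilde X, \omega_j(t_j), \omega_j(t_j)^n)$ are honest RCD$(\lambda - o(1), 2n)$ spaces. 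Combining the uniform diameter, Sobolev, Green's function, and heat kernel estimates from \cite{GPSS2} with the stability of RCD spaces under measured Gromov--Hausdorff convergence, one extracts a subsequential limit $(Y, d_Y, \mu_Y)$ that is a compact RCD$(\lambda, 2n)$ space. Smooth convergence $\omega_j(t_j) \to \omega$ on compact subsets of $X^\circ$, together with the uniqueness of metric completions, then identifies $(Y, d_Y, \mu_Y) \cong (\hat X, d_\omega, \mu_\omega)$.

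For the homeomorphism, one needs to show that the identity on $X^\circ$ extends to a homeomorphism $\Phi\colon \hat X \to X$. Surjectivity and continuity of the extension follow from compactness of $\hat X$ and the local boundedness of K\"ahler potentials. The essential point is injectivity at singular points: every $x \in X \setminus X^\circ$ must correspond to a single point in $\hat X$. I would deduce this from capacity estimates near $x$ together with the volume non-collapsing of RCD spaces with bounded entropy, which forces the fiber $\Phi^{-1}(x)$ to have arbitrarily small diameter and hence consist of a single point. Upgrading to a homeomorphism then follows from invariance of domain on the regular locus and direct distance comparison near the singular strata.

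The main obstacle is producing the smoothings $\omega_j(t_j)$ with effective Ricci lower bounds approaching $\lambda$ on the resolution. In complex dimension three, the classification of klt singularities and the explicit control of discrepancies under log resolutions allow the loss of Ricci positivity under regularization to be bounded by terms that vanish as $\epsilon_j, t_j \to 0$. In higher dimensions, the absence of comparable structural results and the potential failure of crepant resolutions makes this step substantially harder, and new ideas --- likely a direct pluripotential-theoretic formulation of synthetic Ricci bounds, or tangent cone analysis at klt singularities using K-stability --- would be required to carry the argument through.
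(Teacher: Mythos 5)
There is a genuine gap, and you yourself half-identify it at the end. The regularization scheme you propose — solve complex Monge--Amp\`ere equations on a log resolution $\pi\colon\tilde X\to X$ in classes $\pi^*[\omega]+\epsilon_j[\tilde\theta]$, then flow briefly — does not produce approximating metrics with Ricci lower bounds approaching $\lambda$ for general klt singularities. The obstruction is concrete: writing $K_{\tilde X}=\pi^*K_X+\sum a_iE_i$, the adapted volume form on $\tilde X$ carries a factor $\prod|\sigma_i|^{2a_i}$, and whenever some $a_i<0$ (which happens for essentially all non-canonical klt singularities), the Ricci curvature of any natural approximant on $\tilde X$ picks up $a_i\,\ddbar\log|\sigma_i|^2$-type contributions that are unboundedly negative in the exceptional directions. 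Kähler--Ricci flow does not repair this: the flow does not preserve a Ricci lower bound, and shrinking the exceptional divisors is precisely the degeneration you need to control. Your sentence claiming that in dimension three ``the explicit control of discrepancies under log resolutions allows the loss of Ricci positivity to be bounded'' is not correct as stated; the paper never bounds a loss on a log resolution.

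What actually makes the three-dimensional case work in the paper is a different resolution: a \emph{terminalization} $\pi\colon Y\to X$, for which $K_Y=\pi^*K_X-\sum a_jE_j$ with all $a_j\ge 0$, and $Y$ has only finitely many isolated terminal singularities which, by Reid's classification, are $\mathbb{Q}$-smoothable cDV points. The nonnegative discrepancies give a uniform Ricci lower bound for the two-parameter family of approximants $\omega_{k,j}$ on $Y$, while the isolated terminal points are handled locally by a genuine algebraic smoothing inside a one-parameter family $\cX\to\mathbb{D}$ and a family of Dirichlet problems (Section~7). Nothing analogous is available from a log resolution. Two further points in your outline are weaker than they need to be: (i) smooth convergence on compacta of $X^\circ$ does not by itself identify the Gromov--Hausdorff limit with the metric completion $\hat X$ — one also needs almost-convexity of $X^\circ$ in the approximating spaces (Gromov's trick) and a characterization of the regular set; and (ii) the injectivity of the map $\hat X\to X$ at singular points is obtained in the paper by separating points with holomorphic sections via the partial $C^0$-estimate and the $L^2$-machinery of Lemmas~\ref{L241}--\ref{L242}, not by a ``small fiber'' capacity argument, which on its own does not force fibers to be singletons.
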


The following is our first main result   toward Conjecture \ref{conj1}.

\begin{theorem} \label{mainthm1} Let $X$ be an $n$-dimensional   projective variety with log terminal singularities equipped with a smooth K\"ahler metric $\theta_X$. Suppose %
\begin{enumerate}

\item there exists a resolution of singularities $\pi: Y \rightarrow X$ such that  the relative anticanonical divisor $-K_{Y/X}$ is  effective, 

\smallskip

\item $\omega\in    \mathcal{RK}(X) $. %
\end{enumerate}
Then the metric measure space $(\hat X, d_\omega, \mu_\omega)$ induced by $(X, \omega)$  is a compact RCD space  satisfying the following. 

\begin{enumerate}

\item $(\hat X, d_\omega, \mu_\omega)$  is  homeomorphic to the projective variety $X$. 

\smallskip

\item   $\cR(\hat X)=X^\circ$  and $\dim_{\mathcal{H}}\cS(\hat X) \leq 2n-3$, where $\cS(\hat X) = \hat X\setminus \cR(\hat X)$ is the singular set of $\hat X$.  

\smallskip

\item The  identity map from $X^\circ$ to itself induces a one-to-one Lipschitz map
$$
\iota: (\hat X, d_\omega) \rightarrow (X, \theta_X).
$$

\end{enumerate}
Furthermore,  if $\ric(\omega)$ is also bounded above,  
$$\dim_{\mathcal{H}}\cS(\hat X) \leq 2n-4. $$ 
\end{theorem}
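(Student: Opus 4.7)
The plan is to transfer the problem to smooth Riemannian compactness on a resolution $\pi: Y \to X$ of singularities. Because $-K_{Y/X}$ is effective, the discrepancy divisor $E := -K_{Y/X} \ge 0$ satisfies $K_Y = \pi^* K_X - E$, so writing $\pi^*\omega$ on $Y$ and solving a regularized family of complex Monge--Amp\`ere equations of the schematic form $(\pi^*\omega + \epsilon\theta_Y + \ddbar u_\epsilon)^n = c_\epsilon e^{f_\epsilon} \theta_Y^n$, with $f_\epsilon$ assembled from $\log(\omega^n/\theta_X^n)$ and the defining section of $E$, I would produce smooth K\"ahler metrics $\omega_\epsilon$ on $Y$ whose Ricci forms satisfy a \emph{uniform} lower bound $\ric(\omega_\epsilon) \ge \lambda' \omega_\epsilon$. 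The point is that the negative part of the pulled-back Ricci current is absorbed by the positive contribution of the effective divisor $E$, while the entropy and $L^\infty$ control of $\omega$ transfer to $\omega_\epsilon$ uniformly via the GPSS framework.

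Having smoothed on $Y$, I would then invoke Cheeger--Colding compactness together with its RCD upgrade of Ambrosio--Gigli--Savar\'e: the uniform Ricci lower bound, volume noncollapsing from the Nash entropy, and the uniform diameter and Sobolev inequalities of \cite{GPSS2} deliver a subsequential pointed measured Gromov--Hausdorff limit $(Z, d_Z, \mu_Z)$ which is an $\RCD(\lambda', n)$ space. The heart of the proof is to identify $(Z, d_Z, \mu_Z)$ with $(\hat X, d_\omega, \mu_\omega)$. I would do this by showing that the natural inclusion $(X^\circ, \omega|_{X^\circ}) \hookrightarrow (Y, \omega_\epsilon)$ converges to an isometric embedding into $Z$, using uniform $C^0$ and Laplacian estimates for $u_\epsilon$; combined with uniform heat kernel and spectral estimates on $(\hat X, d_\omega, \mu_\omega)$ from \cite{GPSS2}, density of $X^\circ$ in both completions forces the identification. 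Once this is secured, the Lipschitz map $\iota$ in (3) drops out of a distance comparison at the approximate level ($\theta_X$-distances are dominated by $\omega_\epsilon$-distances up to a uniform constant, since $[\pi^*\theta_X]$ is controlled by a fixed smooth class on $Y$), and the homeomorphism $\hat X \cong X$ in (1) follows because $\iota$ is a continuous bijection between compact Hausdorff spaces (injective on $X^\circ$ by construction, extending across $\cS(X)$ by properness of $\pi$).

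For (2), the inclusion $X^\circ \subseteq \cR(\hat X)$ is immediate from smoothness of $\omega$, and the reverse inclusion $\cR(\hat X) \subseteq X^\circ$ follows from a tangent-cone analysis at klt singular points, ruling out Euclidean tangent cones in the RCD limit by using the algebraic structure at such points. The Hausdorff bound $\dim_{\mathcal H} \cS(\hat X) \le 2n-3$ is then obtained from the Cheeger--Colding stratification of $Z$, using the almost K\"ahler structure on the smooth stratum to rule out the codimension-$2$ stratum and pick up one additional real codimension, in the spirit of Cheeger--Colding--Tian and Liu--Sz\'ekelyhidi. If $\ric(\omega)$ is also bounded above, the Cheeger--Naber codimension-four regularity theorem applies and upgrades the bound to $2n-4$.

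The main obstacle will be the identification $(Z, d_Z, \mu_Z) \cong (\hat X, d_\omega, \mu_\omega)$: although both are completions of $(X^\circ, \omega)$, the first is constructed extrinsically by smoothing on $Y$ while the second is intrinsic to $X$, and one must rule out any loss or duplication of metric--measure information along $\mathrm{Exc}(\pi)$ as $\epsilon \to 0$. This is precisely where effectivity of $-K_{Y/X}$ is indispensable, as it prevents the $\omega_\epsilon$-mass concentrated near $\mathrm{Exc}(\pi)$ from producing spurious limit points. A secondary difficulty is the codimension-$3$ Hausdorff bound under only a one-sided Ricci bound, which requires exploiting the K\"ahler structure to improve beyond the generic Cheeger--Colding codimension-$2$ estimate.
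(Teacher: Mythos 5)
Your plan founders at the central claim that the smooth regularizations $\omega_\epsilon$ on $Y$ carry a \emph{uniform} lower Ricci bound $\ric(\omega_\epsilon) \ge \lambda'\omega_\epsilon$. This is not true, and it is precisely the obstacle that forces the paper's two-parameter scheme. The positive contribution from the effective divisor $E = -K_{Y/X}$ is a current concentrated along $E$; after smoothing the volume form so as to obtain genuinely smooth K\"ahler metrics, this becomes a smooth but unbounded term, while the Schwarz lemma only yields $\omega_\epsilon \ge C^{-1}e^{-C\epsilon^{-1}}\theta_Y$, so the resulting Ricci lower bound $\ric(\omega_\epsilon) \ge -Ce^{C\epsilon^{-1}}\omega_\epsilon$ blows up as $\epsilon\to 0$. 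You therefore cannot invoke Cheeger--Colding/RCD compactness with a uniform $\lambda'$ in a single pass. The paper gets around this by a two-parameter family (\ref{ma3})--(\ref{ma4}): first sending the volume-form smoothing $\epsilon_j \to 0$ at fixed $\delta_k$ gives a \emph{conical} K\"ahler metric $\omega_k$ with $\ric(\omega_k) \ge -\omega_k$ on $Y\setminus E'$ (effectivity, $a_i \ge 0$, ensures cone angles $\le 2\pi$ so the distributional bound has the right sign), and Honda's almost-smooth RCD criterion (Lemma \ref{honda2}--\ref{honda3}) is then used to upgrade the a priori $\RCD(-Ce^{C\delta_k^{-1}},2n)$ space to $\RCD(-1,2n)$ before sending $\delta_k \to 0$. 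Without Honda's criterion or an equivalent, your argument produces no uniform RCD structure to pass to the limit.

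Two further gaps. First, you claim the homeomorphism $\hat X \cong X$ follows because $\iota$ is ``injective on $X^\circ$ by construction, extending across $\cS(X)$ by properness of $\pi$.'' Properness of $\pi$ is irrelevant here, and the issue is exactly the opposite: $\iota$ could in principle collapse distinct points of $\hat X\setminus X^\circ$ to a single analytic singularity. Injectivity requires the full partial $C^0$-estimate machinery (Lemmas \ref{L241}, \ref{L242}, \ref{lsana}, \ref{1to1-51}) to separate points of $\hat X$ by holomorphic sections. Second, your appeal to Cheeger--Naber codimension-four regularity is not applicable as stated, since that theorem is for smooth manifolds with two-sided Ricci bounds; the paper instead rules out $\cS_{2n-3}\setminus\cS_{2n-4}$ directly by a tangent-cone splitting argument (Lemma \ref{lemcod4}), using Ricci-flatness of the cross-factor on its regular part and the already-established $\cS_{2n-2}=\emptyset$. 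Finally, note that the theorem as stated allows a general current $\alpha=\ric(\omega)-\lambda\omega$, so one also needs the further approximation of $\alpha$ by smooth forms via Lemma \ref{pshapp} (Section 6), a reduction your sketch does not separate out.
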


   There are many examples of projective varieties that satisfy assumption (1) in Theorem \ref{mainthm1}. Among them are those that admit crepant resolutions, i.e.,  $X$ is a normal projective variety that has a resolution of singularities $\pi: Y \rightarrow X$  with $K_{Y/X} = 0$. If $X_{min}$ is an $n$-dimensional smooth minimal model of general type, then the pluricanonical system induces a unique birational morphism
  $$\pi: X_{min} \rightarrow X_{can}$$
  from $X_{min}$ to its unique canonical model $X_{can}$. The pluricanonical map $\pi$ is indeed crepant.  Therefore any singular K\"ahler metric $\omega$ on $X_{can}$ with bounded Nash entropy and Ricci curvature bounded below must induce an RCD space $(X_{can}, d_\omega, \mu_\omega)$.  In particular, Theorem \ref{mainthm1} extends the results in \cite{S2} for geometric characterization of singular K\"ahler-Einstein metrics on $X_{can}$.

   Let $(Z, d, \mu)$ be an $\RCD(\lambda, m)$-space.  For any $p\in Z$,   the volume density of a point $p\in (Z, d, \mu)$  is defined by
 \begin{equation}\label{defnv}
\nu_Z(p) =  \lim_{r\rightarrow 0} \frac{ {\rm Vol}_Z(B_Z(p, r))}{{\rm Vol}_{\mathbb{R}^{m}}(B_{\mathbb{R}^{m}} (0, r))}.
\end{equation} 

Our next result establishes Conjecture \ref{conj1} in complex dimension $3$.

\begin{theorem}\label{mainthm2} Let $X$ be a $3$-dimensional projective variety with log terminal singularities. Then for any singular K\"ahler metric $\omega \in \mathcal{RK}(X)$, the metric measure space $(\hat X, d_\omega, \mu_\omega)$ is a non-collapsed \rm{RCD} space homeomorphic to $X$. Furthermore, 
There exists a universal constant $\epsilon>0$ such that for any $p \in \cS(\hat X)$, 
\begin{equation} \label{volgap}
\nu_{\hat X} (p)\leq 1-\epsilon.
\end{equation}

\end{theorem}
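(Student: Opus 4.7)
The plan is to use the $3$-dimensional Minimal Model Program to reduce to a setting close to the hypothesis of Theorem \ref{mainthm1}, and then to extract the universal volume gap from algebraic invariants of klt singularities. First I would construct a $\mathbb{Q}$-factorial terminalization $\pi: X' \to X$: running the MMP on a log resolution and extracting every prime divisor of $X$ with discrepancy in $(-1, 0]$ produces $\pi$ such that $X'$ has terminal singularities and $-K_{X'/X}$ is effective. Crucially, the classification of terminal $3$-fold singularities (Mori, Reid, Koll\'ar) forces $\cS(X')$ to be $0$-dimensional, so $X'$ has only finitely many singular points, each a cyclic quotient of a cDV germ.

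Next I would transfer the metric: set $\omega' := \pi^* \omega + \epsilon \theta_{X'}$ for a smooth K\"ahler form $\theta_{X'}$ on $X'$ and verify that $\omega' \in \mathcal{RK}(X')$ uniformly in $\epsilon$. The Nash entropy and Ricci lower bounds transfer to $\omega'$ because the effectiveness of $-K_{X'/X}$ makes the relative volume form controlled, with a non-negative weight along the $\pi$-exceptional locus which contributes a non-negative term to the Ricci form in the pluripotential sense. Then I would apply the argument of Theorem \ref{mainthm1} to $(X', \omega')$: the key advantage is that $\cS(X')$ is $0$-dimensional, so even though a global resolution of $X'$ generically has $-K_{Y/X'}$ anti-effective, the local problem at each isolated terminal point is handled using its explicit structure (index-one cover reducing to a Gorenstein cDV germ, followed by a local small or crepant partial resolution). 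The global $\RCD$ structure on the metric completion of $X'$ then descends along $\pi$, which is an isomorphism off a set of complex dimension $\le 1$, to yield non-collapsed $\RCD$ on $(\hat X, d_\omega, \mu_\omega)$ together with the homeomorphism to $X$.

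For the volume gap (\ref{volgap}): at any $p \in \cS(\hat X)$ the non-collapsed $\RCD$ structure guarantees that tangent cones at $p$ exist and are metric cones, and by recent work identifying tangent cones at klt singularities with algebraic cones (building on Donaldson-Sun, Li-Xu, and others) such a tangent cone is a K-semistable Fano cone whose volume equals the normalized volume $\widehat{\mathrm{vol}}(p, X)$. A universal upper bound $\widehat{\mathrm{vol}}(p, X) \le n^n - \delta$ at any non-smooth klt $3$-fold point, which follows from boundedness of klt $3$-fold singularities whose normalized volume is close to the Euclidean value, then translates directly into (\ref{volgap}) via the identification $\nu_{\hat X}(p) = \widehat{\mathrm{vol}}(p, X) / n^n$.

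The main obstacle I anticipate is the rigorous identification between the metric volume density $\nu_{\hat X}(p)$ and the algebraic normalized volume $\widehat{\mathrm{vol}}(p, X)$, which requires showing that the tangent cone produced by the non-collapsed $\RCD$ structure coincides with the algebraic tangent cone of the klt germ and that its metric volume is precisely the algebraic normalized volume. The dimension-$3$ assumption is essential throughout: isolated terminal points after $\mathbb{Q}$-factorialization and the universal volume gap for klt $3$-fold germs are both features specific to complex dimension three.
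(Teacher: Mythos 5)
Your structural approach for the $\RCD$ and homeomorphism part matches the paper's in outline: terminalize $X$, invoke Reid's classification to conclude the terminalization has only finitely many isolated $\mathbb{Q}$-smoothable singular points (the paper's conditions (B.1)--(B.2) in Section~8), then reduce to the setting already understood. However, your metric transfer via $\omega' := \pi^*\omega + \epsilon\theta_{X'}$ does not work as stated: the Ricci curvature of $\pi^*\omega + \epsilon\theta_{X'}$ has no clean relationship to that of $\omega$, and one cannot simply check $\omega' \in \mathcal{RK}(X')$ uniformly. The paper instead solves a two-parameter family of complex Monge--Amp\`ere equations (\ref{ma83}), (\ref{ma84}) on the terminalization, establishing Ricci lower bounds and uniform second-order estimates via Chern--Lu type maximum-principle arguments. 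Likewise, ``applying the argument of Theorem~\ref{mainthm1}'' is not available directly: Theorem~\ref{mainthm1} assumes $X$ has a \emph{smooth} resolution with effective $-K_{Y/X}$, which the terminalization $X'$ generally lacks; this is precisely why the paper develops the separate local machinery of Section~7 (Dirichlet problems on the fibers of an algebraic smoothing of each cDV germ), which you replace with a local ``small or crepant partial resolution'' that does not obviously deliver the required local $\RCD$ structure with a uniform Ricci lower bound.

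The more serious gap is in your proof of the volume density bound (\ref{volgap}). You propose to identify the metric density $\nu_{\hat X}(p)$ with the algebraic normalized volume $\widehat{\mathrm{vol}}(p,X)/n^n$, via algebraicity and K-semistability of the tangent cone and boundedness of klt $3$-fold germs. That identification is \emph{not} established in this paper: the text explicitly defers ``refined analysis \ldots for klt singularities in dimension $3$ in terms of algebraicity and uniqueness for their tangent cones'' to the separate forthcoming work~\cite{FGS2}, and merely remarks that the density gap ``is closely related to the volume of klt singularities.'' You yourself flag the identification as ``the main obstacle,'' and indeed it is where your argument breaks down. The paper instead proves the universal gap (Lemma~\ref{dengap55} and its generalizations in Sections~6 and~8) by a direct compactness-and-contradiction argument that never passes through normalized volumes: assuming a sequence of singular base points with density $\to 1$, it extracts a limit tangent cone, and then rules out each possible splitting type using the differentiable sphere theorem (for links converging to the round sphere), partial $C^0$-estimates and holomorphic-chart constructions in the style of \cite{DS1,LS1,CDS2} (to show near-maximal density forces local biholomorphism onto a smooth ball), and an iterated-rescaling ``slicing'' argument to reduce intermediate splitting cases to the previous ones. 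This route is self-contained given the geometric $L^2$-estimates of Section~4 and does not require any input from the algebraic theory of normalized volumes or K-semistable Fano cones.
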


We remark that if the Ricci curvature $\omega$ is also bounded above globally on $X$, then $\dim_{\mathcal{H}}\cS(\hat X) \leq 2$. Theorem \ref{mainthm2} shows that any $(X, \omega) \in \mathcal{RK}(3)$ can be identified as a compact RCD space $(X, d_\omega, \mu)$ that is topologically and holomorphically equivalent to $X$. The significance of Theorem \ref{mainthm2} is reflected by the topological and holomorphic equivalence between the metric structure and algebraic structure of log terminal singularities in complex dimension $3$. Furthermore, the RCD condition for $(\hat X, d_\omega, \mu_\omega)$ immediately gives the optimal exponent for the uniform Sobolev inequality on $X$ in Theorem \ref{mainthm1} and Theorem \ref{mainthm2}, improving the estimates in \cite{GPSS1, GPSS2}.  Refined analysis is developed in \cite{FGS2} for klt singularities in dimension $3$ in terms of algebraiticity and uniqueness for their tangent cones, extending the works of \cite{DS2, LI, LX}.   The volume density gap estimate (\ref{volgap}) is closely related to the volume of klt singularities, where algebraic structures meet the corresponding metric structures.  

Theorem \ref{mainthm2} characterizes all three dimensional K\"ahler-Einstein spaces with log terminal singularities, where the Nash entropy assumption always holds. This is because the unique Kahler-Einstein current  induced by the complex Monge-Ampere equation always has $L^p$ volume measure for some $p>1$ from the assumption on the log terminal singularities. 

\begin{corollary} Suppose $X$ is a $3$-dimensional projective variety with log terminal singularities. Suppose $\omega_{KE}$ is a singular Kahler-Einstein metric on $X$ satisfying
$$\ric(\omega_{KE}) = \lambda \omega_{KE}, ~\lambda\in \mathbb{R}.$$
Then the metric measure space $(\hat X, d_{\omega_{KE}}, \mu_{\omega_{KE}})$ induced by $(X, \omega_{KE})$ is an $\RCD(\lambda, 6)$ space homeomorphic to $X$.

\end{corollary}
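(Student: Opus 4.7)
My plan is to derive the corollary as a direct application of Theorem \ref{mainthm2}, so the work reduces to verifying that $\omega_{KE}\in\RK(X)$. Two of the three ingredients of Definition \ref{defak} are essentially free: the class bound (\ref{cls0}) is automatic for the fixed class $[\omega_{KE}]$ against the fixed $[\theta_X]$, and the pluripotential Ricci lower bound in condition (2) is immediate from the equality $\ric(\omega_{KE})=\lambda\omega_{KE}$. Thus the only content is the Nash entropy bound $\mathcal{N}_{\theta_X,p}(\omega_{KE})<\infty$ for some $p>n=3$.

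For this I would use the klt hypothesis in the standard way. Writing $\omega_{KE}=\theta+\ddbar\varphi$ relative to a smooth reference form $\theta$ in $[\omega_{KE}]$, the K\"ahler-Einstein equation takes the Monge-Amp\`ere form $(\theta+\ddbar\varphi)^3=e^{-\lambda\varphi}\Omega$, where $\Omega$ is an adapted volume measure built from a Hermitian metric on a positive power of $\pm K_X$ (depending on the sign of $\lambda$). The klt hypothesis, i.e.\ that all log discrepancies along a log resolution exceed $-1$, is precisely equivalent to $\Omega/\theta_X^3\in L^{p_0}(\theta_X^3)$ for some $p_0>1$. Combined with Ko\l odziej's $L^\infty$ estimate as extended to klt pairs in \cite{EGZ, GPT}, $\varphi$ is bounded, so the density $g:=V_\omega^{-1}\omega_{KE}^3/\theta_X^3$ also lies in $L^{p_0}$ for the same $p_0>1$. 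A routine splitting then upgrades this to a Nash entropy bound: on $\{g>1\}$ one dominates $|\log g|^p g\leq C_\epsilon g^{1+\epsilon}$ and integrates using $g\in L^{p_0}$ with $\epsilon<p_0-1$, while on $\{g\leq 1\}$ the function $g|\log g|^p$ is bounded by a universal constant. This yields $\mathcal{N}_{\theta_X,p}(\omega_{KE})<\infty$ for every $p\geq 1$, in particular for some $p>3$.

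Once $\omega_{KE}\in\RK(X)$ is established, I would invoke Theorem \ref{mainthm2} to conclude that $(\hat X,d_{\omega_{KE}},\mu_{\omega_{KE}})$ is a non-collapsed compact RCD space homeomorphic to $X$. It remains to identify the parameters: the real Riemannian dimension of the K\"ahler threefold is $2\cdot 3=6$, and on the regular part $X^\circ$ the K\"ahler-Einstein identity $\ric(\omega_{KE})=\lambda\omega_{KE}$ is equivalent to the Riemannian identity $\mathrm{Ric}_g=\lambda g$, so the Ricci parameter inherited by the RCD structure is $\lambda$. I do not anticipate a serious obstacle here; the only step requiring any technical input is the $L^{p_0}$-integrability of the K\"ahler-Einstein density on klt varieties, which is part of the classical existence theory and is already highlighted in the remarks immediately preceding the corollary.
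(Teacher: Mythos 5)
Your proposal is correct and follows the same route the paper intends: the remark immediately preceding the corollary reduces everything to the Nash entropy bound, which holds because the klt condition puts the adapted volume density in $L^{p_0}$ for some $p_0>1$, and then Theorem \ref{mainthm2} (together with Lemma \ref{honda3} to read off the parameters $(\lambda,6)$ from the Einstein identity on $X^\circ$) gives the conclusion. Your splitting argument upgrading $g\in L^{p_0}$ to finiteness of $\int g|\log g|^p\,\theta_X^3$ for all $p$ is a valid filling-in of the detail the paper leaves implicit.
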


Theorem \ref{mainthm2} can also be used to develop various compactness theories for $\mathcal{RK}(3)$.

\begin{definition}Given $n\in \mathbb{Z}^+$, $D$, $v>0$, we define $\mathcal{K}(n, D, v) $
to be the set of  pairs $(X, \omega) \in \mathcal{RK}(n)$ satisfying $\omega\in H^2(X, \mathbb{Z})$  and
\begin{eqnarray}
 -  \omega \leq \ric(\omega) &\leq&  \omega, \\
 \Diam \overline{(X^\circ, \omega)} &\leq& D, \\ 
{\rm Vol}(X, \omega) &\geq& v .
\end{eqnarray}
\end{definition}
We note that $\Diam \overline{(X^\circ, \omega)}  = \Diam (\hat X, d_\omega, \mu_\omega)$ for $n=3$ by Theorem \ref{mainthm2}.  For any $(X, \omega)\in \mathcal{K}(n, D, v)$, we can assume  $\omega\in c_1(L)$ for some holomorphic line bundle $L \rightarrow X$. Let $h$ be the hermitian metric on $L$ with $\ric(h) =\omega$ and $\{\sigma_0, ...., \sigma_{N_k} \} $ be an orthonormal basis of $H^0(X, L^k)$ with respect to 
to the inner product 
$$(\sigma_i, \sigma_j) = \int_X \sigma_i \overline{\sigma_j} h^k (k\omega)^n. $$ The Bergman kernel $\rho_k:  X\rightarrow \mathbb{R}$ associated with $(X, \omega)$
 is defined by
\begin{equation}
\rho_k (x)=  \sum_{j=0}^{N_k} |\sigma_j |^2_{h^k}(x), ~ x\in X.
\end{equation}

The following theorem is a natural extension of Tian's partial $C^0$-estimate \cite{T1} in the fundamental work of  \cite{DS1} using Theorem \ref{mainthm2} and the work in \cite{Zk}. 
 
\begin{theorem} \label{mainthm3} There exist $m=m(D, v) >0$,  $b= b(D, v)>0$ and $B=B(D, v)>0$  such that for any  $k\geq 1$, $(X, \omega) \in \mathcal{K}(3, D, v)$ and $p\in X$ , 
$$ b\leq  \rho_{mk}(p)  \leq B. $$
In particular, every $(X, \omega)  \in \mathcal{K}(3, D, v)$ can be embedded in a fixed projective space $\mathbb{CP}^N$ for some $N=N(D, v)>0$.

\end{theorem}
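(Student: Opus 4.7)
The plan is to adapt the Donaldson-Sun strategy for Tian's partial $C^0$-estimate to the RCD setting provided by Theorem \ref{mainthm2}. First, I establish precompactness of $\mathcal{K}(3, D, v)$ in the pointed measured Gromov-Hausdorff topology: Theorem \ref{mainthm2} identifies each member with a non-collapsed $\RCD(-1, 6)$ space of diameter at most $D$ and volume at least $v$ (an upper volume bound being automatic from Bishop-Gromov), so the Ambrosio-Gigli-Savar\'e compactness theorem for non-collapsed $\RCD$ spaces applies.

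Next, I argue by contradiction to establish the lower bound $b$. Suppose there exist $(X_i, \omega_i) \in \mathcal{K}(3, D, v)$, integers $k_i \to \infty$, and points $p_i \in X_i$ with $\rho_{m k_i}(p_i) \to 0$ for every fixed $m$. After passing to a subsequence, $(X_i, d_{\omega_i}, \mu_{\omega_i}, p_i)$ converges in the pointed mGH sense to a non-collapsed $\RCD$ limit $(Z, d_Z, \mu_Z, p_\infty)$, at which I take a suitable tangent cone $C(Y)$. By the refined analysis of three-dimensional klt singularities in \cite{FGS2}, tangent cones in this regime are unique and algebraic, and the volume density gap (\ref{volgap}) keeps them uniformly away from Euclidean space whenever $p_\infty$ is singular. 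This algebraic cone structure carries a canonical polarization admitting a peak holomorphic section concentrated at the vertex.

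The third step transplants this peak section back to the $X_i$. Combining the mGH convergence with the standard Donaldson-Sun cutoff-and-correct procedure produces approximate holomorphic sections of $L_i^{m k_i}$ localized near $p_i$ with uniformly controlled $L^2$-norm. Since $\ric(\omega_i) \geq -\omega_i$, the twisted curvature satisfies $m k_i \omega_i + \ric(\omega_i) \geq (m k_i - 1)\omega_i$, which is strictly positive for $m k_i \geq 2$, so H\"ormander's $L^2$-estimate for $\bar\partial$, available on these singular K\"ahler spaces in the framework of \cite{GPSS1, GPSS2}, corrects each approximate section to an honest holomorphic section of $L_i^{m k_i}$ with a definite pointwise lower bound at $p_i$, contradicting $\rho_{m k_i}(p_i) \to 0$. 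The upper bound $\rho_{mk} \leq B$ is then a direct consequence of the uniform Bergman kernel estimates of \cite{Zk}. Finally, the two-sided bound together with the Riemann-Roch estimate $\dim H^0(X, L^{mk}) \leq C (mk)^3 V_\omega \leq C(D, v)$ yields a uniform projective embedding into $\mathbb{CP}^N$ with $N = N(D, v)$.

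The main obstacle is the peak-section construction at singular $p_\infty$: one needs the tangent cone $C(Y)$ to carry a holomorphic/algebraic structure with enough quantitative rigidity to match the mGH approximation by the $X_i$ inside the holomorphic category. This is exactly where the dimension-three restriction enters, via the algebraicity and uniqueness of tangent cones established in the companion paper \cite{FGS2}; without it, neither the peak section nor its lift to $X_i$ could be controlled uniformly across $\mathcal{K}(3, D, v)$.
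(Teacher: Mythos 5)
Your high-level outline — identify each $(X_j,\omega_j)$ with a non-collapsed $\RCD(-1,6)$ space via Theorem \ref{mainthm2}, pass to a Gromov--Hausdorff limit, analyze tangent cones, run the Donaldson--Sun peak-section and H\"ormander $L^2$ machinery, and invoke \cite{Zk} for the refinement — matches the paper's strategy. However, the technical crux of your argument is genuinely different from, and weaker than, what the paper actually does, and this is the one place where a gap opens up.

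You lean on \cite{FGS2} (uniqueness and algebraicity of tangent cones for three-dimensional klt singularities) to make the peak-section construction go through. But the tangent cone $(V,o)$ you need to analyze is a tangent cone of the \emph{limit} $(Z,d_Z,\mu_Z)$, not of any of the $X_j$ themselves. At this stage of the proof you do not yet know that $Z$ is an algebraic variety with klt singularities — that is essentially what the partial $C^0$-estimate is supposed to \emph{produce} — so invoking \cite{FGS2} here risks being circular. The paper avoids this entirely: it does not use uniqueness or algebraicity of tangent cones at all. Instead it proves directly that $\cS(V)$ has capacity zero by decomposing it into two pieces: (i) $\cT_V$, the closure of limits of the singular sets $\cS(X_j)$, which by the uniform volume density gap (Lemma \ref{dengap55}) and Lemma \ref{sinn-3} cannot split off $\mathbb{R}^4$ and hence has capacity zero; and (ii) $\cS(V)\setminus\cT_V$, which is a limit of regular points with Ricci curvature uniformly bounded (here the two-sided bound $-\omega\le\ric(\omega)\le\omega$ in the definition of $\mathcal{K}$ is used, via Lemma \ref{lemcod42}) and therefore has Hausdorff codimension $\ge 4$ and is closed in $V\setminus\cT_V$. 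Zero capacity of $\cS(V)$ is exactly the hypothesis needed to run the $L^2$-estimates of Lemma \ref{L241} and Lemma \ref{L242} and the partial $C^0$-estimate of \cite{DS1}, with no appeal to algebraicity or uniqueness.

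So the substantive fix to your proposal is to replace the appeal to \cite{FGS2} with the paper's self-contained capacity-zero argument for $\cS(V)$, driven by the volume density gap (\ref{volgap}) and the bounded-Ricci codimension-4 estimate. Once that is in place, the rest of your outline — mGH convergence, H\"ormander correction, and the Riemann--Roch bound on $\dim H^0$ to get a fixed $\mathbb{CP}^N$ — is in line with the paper.
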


Based on Theorem \ref{mainthm3}, we can establish compactness theorems for singular K\"ahler-Einstein spaces in complex dimension $3$ with positive or negative scalar curvature (c.f. Theorem \ref{kecomp1} and Theorem \ref{kecomp2}). In particular, the compactness theorem for Fano Kahler-Einstein spaces (Theorem \ref{kecomp1})  provides an analytic approach to for compactifying moduli spaces for $K$-stable $\mathbb{Q}$-Fano threefolds.


\medskip

\section{Ricci curvature for singular K\"ahler metrics}

In this section,  we define the Ricci curvature on a singular K\"ahler space as a current on projective varieties with log terminal singularities. The assumption on log terminal singularities is for convenience as the set-up would generally hold for  normal and $\mathbb{Q}$-Gorenstein varieties.  

Let $X$ be an $n$-dimensional projective variety with log terminal singularities. We implicitly require $X$ is normal and $\mathbb{Q}$-Gorenstein, i.e., $K_X$ is a $\mathbb{Q}$-Cartier divisor. There exists $m\in \mathbb{Z}^+$ such that $K_X^m$ is a Cartier divisor on $X$.  We let $\Omega$ be a smooth adapted volume measure on $X$, i.e., 
$$\Omega = f_U |\sigma|^{2/m}$$ on a local open set $U$ of $X$, where $\sigma$ is a local generator of  $K_X^m$ and $f_U$ is a nowhere vanishing smooth function on $U$. The curvature of $\Omega$
$$\ric(\Omega) = -\ddbar \log \Omega \in -[K_X].$$
is a smooth closed $(1, 1)$-form on $X$ (smooth in the sense of restriction of a smooth form via local holomorphic embeddings of $X$).

\begin{definition} \label{riclbdef} Let $X$ be an $n$-dimensional projective variety with log terminal singularities.  Suppose $\omega \in \mathcal{V}(X, \theta_X, n, A, p, K)$. The Ricci curvature of $\omega$ is said to be bounded below by $\lambda \in \mathbb{R}$ if 
$$- \ddbar \log \frac{\omega^n}{\Omega} +\ric(\Omega) \geq \lambda \omega$$
as currents, i.e., $$- \log\frac{\omega^n}{\Omega} \in \PSH(X, \ric(\Omega) - \lambda \omega). $$
In particular, 
$$\ric(\omega) = - \ddbar\log \omega^n = - \ddbar \log \frac{\omega^n}{\Omega} +\ric(\Omega)$$ is defined to be the Ricci curvature of $\omega$ as a current. 

\end{definition}

 Similarly, we can define singular K\"ahler metrics with Ricci curvature bounded above by $\lambda\in \mathbb{R}$ by requiring 
 $$\log \frac{\omega^n}{\Omega} \in \PSH(X, \lambda\omega - \ric(\Omega)).$$
 
Suppose  $[\omega] $ is a K\"ahler class. Then there exists a smooth K\"ahler metric $\omega_0 \in [\omega]$ with 
$$\omega = \omega_0 + \ddbar \varphi, ~ \varphi\in \PSH(X, \omega_0)\cap L^\infty(X)\cap C^\infty(X^\circ).$$ 
 If $\ric(\omega) \geq \lambda \omega$, 
$\varphi$ would satisfy the following complex Monge-Amp\`ere equation
\begin{equation}\label{mx}
(\omega_0 + \ddbar \varphi)^n = e^{-\lambda \varphi - f} \Omega, 
\end{equation}
for some $f\in C^\infty(X^\circ)$. Straightforward calculations show that
$$\ric(\omega) = \ric(\Omega) + \lambda \ddbar \varphi + \ddbar f \geq \lambda \omega$$
and so 
\begin{equation}\label{maweight}
 f\in \PSH(X,  \ric(\Omega) - \lambda \omega_0).
 \end{equation}
In particular, $f$ is bounded above. Since $\omega \in \nu(X, \theta_X, n, A, p, K)$ for some $p>n$, the potential $\varphi \in L^\infty(X)$ and we immediately have the following lemma. 
\begin{lemma} Let $\omega \in \nu(X, \theta_X, n, A, p, K)$ with $\ric(\omega) \geq \lambda \omega$. There exists $c=c(X, \theta_X, n, A, p, K, \lambda)>0$ such that 
$$\omega^n \geq c\Omega. $$
If $\ric(\omega) \leq \Lambda \omega$,   then there exists $C=C(X, \theta_X, n, A, p, K, \Lambda)>0$ such that 
$$\omega^n \leq C\Omega. $$

\end{lemma}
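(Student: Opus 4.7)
The plan is to read the two bounds directly off the Monge--Amp\`ere equation (\ref{mx}) together with the entropy-based $L^\infty$-estimate already cited in the excerpt (\cite{Ko1, EGZ, GPT}). Fix a smooth reference $\omega_0 \in [\omega]$ and write $\omega = \omega_0 + \ddbar\varphi$ with $\varphi \in \PSH(X,\omega_0)$. Since $\omega \in \nu(X,\theta_X,n,A,p,K)$, the $L^\infty$-estimate gives $\|\varphi\|_{L^\infty(X)} \leq C_0 = C_0(X,\theta_X,n,A,p,K)$. Everything then reduces to bounding the weight $e^{-\lambda\varphi - f}$ in (\ref{mx}) under the appropriate one-sided Ricci hypothesis.

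For the lower bound $\omega^n \geq c\,\Omega$, I would use precisely the quasi-plurisubharmonicity observation (\ref{maweight}): $f \in \PSH(X, \ric(\Omega) - \lambda\omega_0)$. Because $X$ is a compact normal projective variety and $\ric(\Omega) - \lambda\omega_0$ is a smooth $(1,1)$-form, $f$ is bounded above by a constant $C_1 = C_1(X,\theta_X,\lambda)$. This is the standard fact that quasi-psh functions on a compact K\"ahler variety attain a finite supremum; it is verified by pulling back through a resolution $\pi \colon Y \to X$ (or local holomorphic embeddings of $X$ into projective space) and applying the classical upper bound on the compact K\"ahler manifold $Y$. Combining $f \leq C_1$ with $\|\varphi\|_\infty \leq C_0$ gives
\begin{equation*}
\frac{\omega^n}{\Omega} = e^{-\lambda\varphi - f} \geq e^{-C_1 - |\lambda| C_0} =: c > 0,
\end{equation*}
which is the claimed estimate.

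For the upper bound under the additional assumption $\ric(\omega) \leq \Lambda\omega$, I would run the dual argument. This hypothesis translates, via the sign-reversed version of Definition \ref{riclbdef}, into $\log(\omega^n/\Omega) \in \PSH(X, \Lambda\omega - \ric(\Omega))$; substituting $\omega - \omega_0 = \ddbar\varphi$ yields $\log(\omega^n/\Omega) + \Lambda\varphi \in \PSH(X, \Lambda\omega_0 - \ric(\Omega))$, a quasi-psh function on the compact variety $X$. Applying the same upper-bound principle as above and re-absorbing $\Lambda\varphi$ using $\|\varphi\|_\infty \leq C_0$ gives $\omega^n \leq C\,\Omega$ for the advertised $C = C(X,\theta_X,n,A,p,K,\Lambda)$. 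The only point requiring any attention is the quasi-psh upper-bound statement on the singular variety $X$, but this is routine via resolution of singularities; the rest of the argument is bookkeeping of the dependence of constants.
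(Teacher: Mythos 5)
Your argument is correct and coincides with the paper's own (very brief) proof: the paper derives the lemma directly from the Monge--Amp\`ere equation (\ref{mx}), the quasi-plurisubharmonicity statement (\ref{maweight}) giving $\sup_X f < \infty$, and the entropy-based $L^\infty$-bound on $\varphi$, exactly as you do. Your handling of the upper-bound case---rewriting $\log(\omega^n/\Omega) + \Lambda\varphi$ as quasi-psh with respect to the \emph{smooth} form $\Lambda\omega_0 - \ric(\Omega)$ so that the boundedness-above principle applies---is the right way to make the paper's one-line remark ``similarly, \ldots'' precise.
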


We also have the following characterization for $\mathcal{RK}(X)$. 

\begin{lemma} The singular K\"ahler metric $\omega\in \mathcal{RK}(X)$ satisfies $\ric(\omega) \geq \lambda \omega$ as currents if and only if
$\ric(\omega) \geq \lambda \omega$ on $X^\circ$ and there exists $c>0$ such that 
$$  \omega^n \geq c\Omega .$$

\end{lemma}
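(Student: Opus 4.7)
The ``only if'' direction is immediate: if $\ric(\omega)\geq\lambda\omega$ as currents on $X$, then restricting to the open set $X^\circ$ on which $\omega$ is smooth gives the pointwise inequality there, and the Monge-Amp\`ere lower bound $\omega^n\geq c\Omega$ is the content of the previous lemma.

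For the ``if'' direction, the plan is to upgrade the smooth Ricci lower bound on $X^\circ$ to a current inequality on $X$ by extending a suitable quasi-plurisubharmonic function across the singular locus. Write $\omega=\omega_0+\ddbar\varphi$ with $\omega_0\in[\omega]$ a smooth K\"ahler reference and $\varphi\in\PSH(X,\omega_0)\cap L^\infty(X)$; boundedness of $\varphi$ is guaranteed by the $L^\infty$-estimate from Kolodziej--Eyssidieux--Guedj--Zeriahi type results, which applies under the Nash entropy assumption built into $\mathcal{RK}(X)$. Define $u:=-\log(\omega^n/\Omega)\in C^\infty(X^\circ)$. The hypothesis $\omega^n\geq c\Omega$ gives $u\leq-\log c$ on $X^\circ$, and the smooth Ricci bound on $X^\circ$ rewrites as $\ddbar u+\ric(\Omega)-\lambda\omega\geq 0$. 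Setting $v:=u-\lambda\varphi$, a direct computation using $\omega-\omega_0=\ddbar\varphi$ gives
\[
\ddbar v+\bigl(\ric(\Omega)-\lambda\omega_0\bigr)=\ddbar u+\bigl(\ric(\Omega)-\lambda\omega\bigr)\geq 0
\]
on $X^\circ$, so $v$ is $(\ric(\Omega)-\lambda\omega_0)$-plurisubharmonic on $X^\circ$ and bounded above there.

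The reference form $\ric(\Omega)-\lambda\omega_0$ is smooth on $X$, and $X\setminus X^\circ$ is an analytic subvariety of complex codimension at least two in the normal variety $X$. The standard extension theorem for quasi-psh functions bounded above across proper analytic subsets---applied either directly on the normal complex space $X$, or after pullback by a log resolution $\pi:Y\to X$---yields a unique $\tilde v\in\PSH(X,\ric(\Omega)-\lambda\omega_0)$ with $\tilde v|_{X^\circ}=v$. Then $\tilde u:=\tilde v+\lambda\varphi\in\PSH(X,\ric(\Omega)-\lambda\omega)$ extends $u=-\log(\omega^n/\Omega)$ from $X^\circ$ to all of $X$, which is precisely the condition $\ric(\omega)\geq\lambda\omega$ as currents in Definition \ref{riclbdef}.

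The main technical point is to justify the extension across $X\setminus X^\circ$: in the pluripotential framework, $-\log(\omega^n/\Omega)$ is not a priori defined off $X^\circ$, so one has to verify that the unique upper-semicontinuous extension $\tilde u$ provided by the codimension-two extension theorem is the right object globally. On a resolution this reduces to the classical extension principle for quasi-psh functions on smooth manifolds; on $X$ itself it uses the fact that bounded psh functions extend uniquely across closed analytic subsets of normal complex spaces. The log terminal hypothesis is convenient because it ensures $\Omega$ is a locally finite adapted measure and that pullback/pushforward preserve the relevant $\PSH$ classes.
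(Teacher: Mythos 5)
Your proof is correct and follows essentially the same approach as the paper: set $u=-\log(\omega^n/\Omega)$, note that the volume lower bound makes it bounded above on $X^\circ$ and the Ricci inequality makes it quasi-psh there, then extend across $\cS(X)$ (a proper analytic subset of the normal variety $X$, hence of codimension $\geq 2$). Your substitution $v=u-\lambda\varphi$ is a nice piece of extra care: the paper states the quasi-plurisubharmonicity of $f=u$ with respect to the smooth form $\ric(\Omega)-\lambda\omega_0$, whereas the hypothesis actually gives it with respect to the current $\ric(\Omega)-\lambda\omega$, and your reduction to the smooth reference form (using boundedness of $\varphi$ to preserve the upper bound) makes the appeal to the standard codimension-two extension theorem for quasi-psh functions clean and unambiguous before transforming back to obtain $\tilde u\in\PSH(X,\ric(\Omega)-\lambda\omega)$ as in Definition \ref{riclbdef}.
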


\begin{proof} It suffices to show the only if part. Let $f= - \log \frac{\omega^n}{\Omega}$. Then $\ric(\Omega)  - \lambda\omega_0 +  \ddbar f  \geq 0$ is a closed positive $(1,1)$-current on $X^\circ$ and so $f\in \PSH(X^\circ, (\ric(\Omega) - \lambda \omega_0)|_{X^\circ})$. Since $f$ is bounded above, it can be trivially extended to a function in $\PSH(X, \ric(\Omega) - \lambda \omega_0)$ and the lemma is proved
\end{proof}

Various geometric and analytic estimates are established in \cite{GPSS2} for the metric measure space $(\hat X, d_\omega, \mu_\omega)$ if $\omega\in \nu(X, \theta_X, n, A, p, K, H)$. The following lemma is an immediate corollary by applying the work of \cite{GPSS2}.

\begin{lemma}  Let $\omega \in \nu(X, \theta_X, n, A, p, K)$. Then 

\begin{enumerate}

\item There exists $C=C(X, \theta_X, n, p, A, K)>0$ such that  
$$\Diam(\hat X, d_\omega) \leq C.$$
In particular, $(\hat X, d_\omega, \mu_\omega)$ is a compact metric measure space.

\smallskip 

\item  There exist $q>1$ and $C_S=C_S(X, \theta_X, n,  A, p, K, q)>0$ such that 
$$
\Big(\int_{\hat X} | u  |^{2q}\omega^n   \Big)^{1/q}\le C_S \left( \int_{\hat X} |\nabla u|^2 ~\omega^n + \int_{\hat X} u^2 \omega^n \right) .
$$
for all $u\in W^{1, 2}(\hat X, d, \omega^n)$. 

\smallskip

%


\item Let $0=\lambda_0 < \lambda_1 \leq \lambda_2 \leq ... $ be the increasing sequence of eigenvalues of the Laplacian $-\Delta_\omega$ on $(\hat X, d, \omega^n)$. Then there exists $c=c(X, \theta_X, n, A, p,  K)>0$ such that
$$\lambda_k \geq c k^{\frac{q-1}{q}}. $$

\end{enumerate}

\end{lemma}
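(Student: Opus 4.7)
The plan is to observe that each of the three assertions is an immediate specialization to $\omega \in \nu(X, \theta_X, n, A, p, K)$ of the uniform estimates for singular K\"ahler metrics with bounded Nash entropy already established in \cite{GPSS2}; the proof is essentially a citation, and I would spend it checking that the hypotheses line up. For (1), the starting point is the uniform $L^\infty$-control of local potentials of $\omega$ coming from the Kolodziej-type estimate of \cite{Ko1, EGZ, GPT} combined with the entropy bound (\ref{nashen0}); since $p>n$, this yields $\|\varphi\|_{L^\infty} \leq C(X, \theta_X, n, A, p, K)$. With this bound in hand, \cite{GPSS2} controls $\Diam(X^\circ, \omega|_{X^\circ})$ by running a Moser-type iteration on an auxiliary linear PDE $\Delta_\omega u = f$ with $f$ comparable to the trace of $\theta_X$ with respect to $\omega$. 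Since $\Diam(\hat X, d_\omega) = \Diam(X^\circ, \omega|_{X^\circ})$ and $\mu_\omega(\hat X) = V_\omega<\infty$, compactness of $(\hat X, d_\omega, \mu_\omega)$ follows.

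For (2), \cite{GPSS2} establishes a uniform Sobolev inequality on $(\hat X, d_\omega, \mu_\omega)$ with an exponent $q>1$ depending only on $n$ and $p$, and a Sobolev constant $C_S$ depending only on $(X, \theta_X, n, A, p, K)$. The argument there bypasses the standard comparison-geometry input, unavailable for our singular $\omega$, by combining the Kolodziej-type $L^\infty$-estimate with a direct analysis of a Monge-Amp\`ere auxiliary equation. The inequality extends from $X^\circ$ to all of $W^{1,2}(\hat X, d_\omega, \mu_\omega)$ using that the singular set has zero $2$-capacity in $\hat X$, a point also contained in \cite{GPSS2}.

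For (3), I would derive the Weyl-type lower bound from (2) by the classical Nash argument. Moser iteration on the heat equation $\partial_t u = \Delta_\omega u$, together with the Sobolev inequality of (2), gives an ultracontractive estimate and hence a diagonal heat kernel bound $H_t(x, x) \leq C t^{-q/(q-1)}$ for $t\in (0, 1]$. Integrating against $\mu_\omega$ and invoking the spectral expansion yields
\[
\sum_k e^{-\lambda_k t} = \int_{\hat X} H_t(x, x)\, d\mu_\omega \leq C' t^{-q/(q-1)},
\]
and choosing $t \asymp \lambda_k^{-1}$ produces $\lambda_k \geq c k^{(q-1)/q}$. The only subtle point, and the main obstacle if any, is the justification that the heat semigroup and its spectral expansion are well defined on the possibly singular metric measure space $(\hat X, d_\omega, \mu_\omega)$; this is precisely the content of the Dirichlet form and spectral theory developed in \cite{GPSS2}, using that $X^\circ$ has full $\mu_\omega$-measure and that $\hat X \setminus X^\circ$ has zero $2$-capacity.
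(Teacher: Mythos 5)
Your proposal is correct and takes essentially the same approach as the paper: the paper states this lemma as "an immediate corollary by applying the work of \cite{GPSS2}" and gives no further proof, while you cite the same reference for (1) and (2) and then derive (3) from (2) via the standard Nash/heat-kernel trace argument, which is exactly how the eigenvalue lower bound follows from the Sobolev inequality in \cite{GPSS2}. The additional detail you supply (the ultracontractivity bound, the spectral trace, and the $t\asymp\lambda_k^{-1}$ choice) is consistent with the intended content of the citation.
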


Next, we will establish a metric regularization for any $\omega \in \mathcal{RK}$.
Suppose $\omega\in \mathcal{RK}(X)$ with $\ric(\omega) \geq \lambda \omega$. Let 
$$\alpha = \ric(\omega) - \lambda \omega$$
be the positive closed $(1,1)$-current that is continuous in $X^\circ$. Since $[\omega]$ is a K\"ahler class on $X$, we can always replace $\lambda$ by a sufficiently negative constant such that $[\alpha]=-\lambda[\omega] -[K_X] $ is also a K\"ahler class. 

Let $\alpha_0 \in [\alpha] $ be a smooth K\"ahler metric and there exists $\psi \in \PSH(X, \alpha_0)\cap C^\infty(X^\circ)$ such that $$\alpha=\alpha_0 + \ddbar \psi. $$
If we let $\omega_0\in [\omega]$ be a smooth K\"ahler metric, we can further assume the adapted volume form $\Omega$ satisfies
$$\ric(\Omega) = \alpha_0 + \lambda \omega_0.$$ 
Then there exists $\varphi\in \PSH(X, \omega_0)\cap L^\infty(X)\cap C^\infty(X^\circ)$ satisfying
$$(\omega_0 + \ddbar \varphi)^n = e^{-\lambda\varphi -\psi} \Omega, ~\omega=\omega_0 + \ddbar \varphi.  $$

\begin{lemma} \label{pshapp}  There exists a sequence of $\psi_i \in C^\infty(X) \cap \PSH(X, \alpha_0)$ such that $\psi_i \geq \psi$ and $\psi_i$ converges pointwise to $\psi$. Furthermore, $\psi_i$ converges to $\psi$ in $C^\infty(\cK)$ on any  $\cK \subset\subset X^\circ$. 
\end{lemma}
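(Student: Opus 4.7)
The plan is to apply a Bergman-kernel regularization in the spirit of Demailly's approximation, performed directly on the projective variety $X$. Since $[\alpha_0]$ is a Kähler class on the projective variety $X$, after absorbing a smooth potential into $\psi$ and rescaling we may fix an ample $\mathbb{Q}$-line bundle $L$ on $X$ with $c_1(L) = [\alpha_0]$ and a smooth Hermitian metric $h_0$ on $L$ with $\ric(h_0) = \alpha_0$; the $\alpha_0$-psh function $\psi$ then corresponds to the singular Hermitian metric $h_0 e^{-\psi}$ on $L$ with non-negative curvature current.

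For each sufficiently divisible integer $m$, I would consider the $L^2$-inner product on $H^0(X, L^m)$ given by
\begin{equation*}
(s, t)_m = \int_X \langle s, \bar t \rangle_{h_0^m} \, e^{-m\psi} \, \theta_X^n,
\end{equation*}
and let $\{s_j^{(m)}\}_j$ be an orthonormal basis of the (finite-dimensional) subspace on which this is finite. Setting
\begin{equation*}
\rho_m := \sum_j \bigl| s_j^{(m)} \bigr|^2_{h_0^m}, \qquad \phi_m := \frac{1}{m} \log \rho_m,
\end{equation*}
the Kodaira map determined by $\{s_j^{(m)}\}$ realizes $\phi_m$ as a Fubini--Study-type pullback, so $\phi_m \in C^\infty(X) \cap \PSH(X, \alpha_0)$. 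The standard Bergman asymptotics then yield: (i) an Ohsawa--Takegoshi peak-section lower bound $\phi_m \geq \psi - C/m$ on $X$; (ii) pointwise convergence $\phi_m \to \psi$ via the sub-mean-value upper bound $\phi_m(x) \leq \sup_{B(x,r)} \psi + \frac{1}{m} \log(C r^{-2n})$; and (iii) $C^\infty_{loc}$ convergence on $X^\circ$ from the Tian--Catlin--Zelditch expansion on the smooth locus. Setting $\psi_i := \phi_{m_i} + C/m_i$ for any increasing sequence $m_i \to \infty$ then produces a sequence with all the required properties, since adding a constant preserves both smoothness and $\alpha_0$-plurisubharmonicity.

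The main technical obstacle is establishing the Bergman asymptotics on the singular projective variety $X$ with klt singularities. The space of sections of $L^m$ with finite $L^2$-norm is in general the subspace $H^0(X, L^m \otimes \mathcal{I}(m\psi))$ cut out by the multiplier ideal of $m\psi$; the klt hypothesis, together with $\psi \leq C$ coming from compactness, ensures that this subspace is cofinite in a controlled way and in particular supplies enough sections as $m \to \infty$. One then needs the Ohsawa--Takegoshi extension theorem in a form valid on klt varieties to produce peak sections at each point with controlled $L^2$-norm, and a verification that $\rho_m$ extends smoothly across $\mathcal{S}(X)$ in the ambient projective sense; the latter holds because the $s_j^{(m)}$ are global holomorphic sections of $L^m$ on the projective variety $X$, whose norms are smooth in any projective embedding of $X$.
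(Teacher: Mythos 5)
Your proposal takes a genuinely different route from the paper, and it has two gaps that would need to be closed.

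The paper's proof is more elementary and avoids Bergman kernel machinery entirely. It fixes a projective embedding $X\hookrightarrow\mathbb{CP}^N$, extends $\psi$ to a quasi-psh function on $\mathbb{CP}^N$ via the extension theorem of Coman--Guedj--Zeriahi \cite{CG}, regularizes there (by the smooth manifold result of \cite{BK}) to obtain a decreasing sequence $\psi_i\in\PSH(X,\alpha_0)\cap C^\infty(X)$ with $\psi_i\downarrow\psi$, and then -- and this is the crucial step -- replaces $\psi_i$ by the regularized maximum $\mathcal{M}_\epsilon(\psi_i,\,\psi+\delta+\delta^2\phi)$ with an auxiliary $\alpha_0$-psh barrier $\phi$ that is smooth on $X^\circ$ and tends to $-\infty$ along $\cS(X)$. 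For suitable $\epsilon_i,\delta_i\to 0$ the modified function is \emph{literally equal} to $\psi+\delta_i$ on any fixed $\cK\subset\subset X^\circ$, so $C^\infty(\cK)$ convergence is automatic, while near $\cS(X)$ it equals $\psi_i$ and so is globally smooth and $\alpha_0$-psh.

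Gap 1 (smoothness of the approximants). Your Bergman function $\phi_m=\frac1m\log\rho_m$ is smooth on $X$ only where $\rho_m>0$, i.e. off the base locus of $H^0(X,L^m\otimes\mathcal I(m\psi))$. The lemma only assumes $\psi\in\PSH(X,\alpha_0)\cap C^\infty(X^\circ)$; it does \emph{not} assume $\psi$ is bounded below, so $\psi$ may have positive Lelong numbers along $\cS(X)$ (indeed in the intended application $\psi = -\lambda\varphi - \log(\omega^n/\Omega)$ and $\omega^n/\Omega$ need not be bounded above). In that case $\mathcal I(m\psi)\subsetneq\cO_X$ for $m$ large, every section in your orthonormal basis vanishes along a fixed subvariety, and $\phi_m$ has logarithmic poles there -- it is not in $C^\infty(X)$. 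This is not a technicality that the klt hypothesis fixes: klt controls $\mathcal I(\cdot)$ for the \emph{adapted measure}, not for an arbitrary $\alpha_0$-psh weight $\psi$.

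Gap 2 ($C^\infty_{loc}$ convergence on $X^\circ$). You invoke the Tian--Catlin--Zelditch expansion for (iii), but that expansion requires the curvature form $\alpha_0+\ddbar\psi$ to be a genuine K\"ahler metric, i.e. \emph{strictly} positive, on the region in question. Here $\alpha=\alpha_0+\ddbar\psi$ is only assumed $\geq 0$ as a current (it is $\ric(\omega)-\lambda\omega$), and it may degenerate on large subsets of $X^\circ$. Where $\alpha$ degenerates the leading term in the expansion is not invertible and $C^\infty$ convergence of $\frac1m\log\rho_m$ to $\psi$ does not follow. The paper sidesteps this entirely because its modified approximant agrees with $\psi$ up to an additive constant on compacts, so no asymptotic analysis is needed. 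Even if both gaps could be filled (e.g. by a separate argument handling degenerate curvature and by perturbing $\psi$ to kill its Lelong numbers first), your route would still require an Ohsawa--Takegoshi theorem on klt varieties with uniform constants, which is a substantially heavier input than \cite{CG} plus the regularized maximum.
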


\begin{proof}   We fix a holomorphic embedding $\iota: X \rightarrow \mathbb{CP}^N$ and let $\alpha_0$ tbe the restriction of a smooth K\"ahler metric $\tilde \alpha_0 $ on $\mathbb{CP}^N$. Then for any $\alpha_0$-PSH{} function on $X$, it can be extended to an $\tilde \alpha_0$-PSH function on $\mathbb{CP}^N$ by \cite{CG}. Immediately, there exists $\psi_i \in \PSH(X, \alpha_0) \cap C^\infty(X)$ such that $\psi_i$ converge to $\psi$ decreasingly. Since $\psi$ is smooth on $X^\circ$, $\psi_i$ converges to $\psi$ in $L^\infty(\cK)$ for any $\cK\subset\subset X^\circ$. 

We would like to modify $\psi_i$ so that it converges smoothly on $\cK$. For convenience, we assume $\alpha\in H^2(X, \mathbb{Q})$ and let $L \rightarrow X$ be the ample $\mathbb{Q}$-line bundle with $\alpha \in c_1(L)$. Let $\mathcal{J}$ be the ideal sheaf associated to the singular set $\cS(X)$. Then $mL \otimes \mathcal{J}$ is globally generated for sufficiently large $m\in \mathbb{Z}^+$. Let $h_0$ be the smooth hermitian metric for $L$ with $\ric(h_0) = \alpha_0$ and $\sigma_0, ..., \sigma_{N_m}$ be a basis for $H^0(X, mL\otimes \mathcal{J})$. We let $\phi = \frac{1}{m} \log \left( \sum_{k=0}^{N_m} |\sigma_j|^2_{h_0^m}\right)$. Then $\phi \in \PSH(X, \alpha_0)\cap C^\infty(X^\circ)$ with $\phi$ tending to $-\infty$ near $\cS(X)$.  

Let $\tilde \psi_{i, \epsilon, \delta} = \mathcal{M}_{\epsilon} (\psi_i, \psi + \delta + \delta^2 \phi)$ be the regularized maximum of $\psi_i$ and $\psi+\delta + \delta^2 \phi$  for $\delta, \epsilon>0$ (c.f. \cite{BK}) . By definition, we have $\tilde\psi_{i, \epsilon, \delta}\in \PSH(X, \alpha_0)$ with $\tilde\psi_{i, \epsilon, \delta}\geq \psi_i\geq \psi$ . Furthermore, $\tilde\psi_{i, \epsilon, \delta} \in C^\infty(X)$ since $\tilde \psi_{i, \epsilon, \delta} = \psi_i$ near $\cS(X)$ for sufficiently small $\epsilon>0$. 

For any $K\subset\subset X^\circ$,  $\tilde \psi_{i, \epsilon, \delta}= \psi  + \delta$ on $K$ for sufficiently large $i>0$ and sufficiently small $\delta>>\epsilon$, since $\psi_i$ converges to $\psi$ uniformly in $L^\infty(K)$.  At the same time, $\tilde \psi_{i, \epsilon, \delta} = \psi_i$ near $\cS(X)$ as $\psi+\delta + \delta^2 \phi$ tends to $-\infty$ along $\cS(X)$. By choosing suitable $\epsilon_i, \delta_i \rightarrow 0$, $\tilde \psi_{i, \epsilon_i, \delta_i}$ converges to $\psi$ smoothly on any fixed compact subset of $X^\circ$. This proves the lemma.
\end{proof}

We let $\alpha_i = \alpha_0+ \ddbar \psi_i$. Then $\alpha_i$ is sequence of smooth K\"ahler metrics on $X$ and we can consider the twisted K\"ahler-Einstein equation
\begin{equation} \label{tkeqn}
\ric(\omega_i) = \lambda \omega_i + \alpha_i. 
\end{equation}
Recall that we assume $\lambda <0$.
Then the   complex Monge-Amp\`ere equation equivalent to (\ref{tkeqn})  is given by 
\begin{equation}\label{tkema2}
(\omega_0 + \ddbar \varphi_i)^n = e^{-\lambda\varphi_i  -\psi_i} \Omega, ~\omega_i = \omega_0 + \ddbar\varphi_i.
\end{equation}
From the construction,  $-\psi_i \leq - \psi$  and $-\psi_i$ converges to $\psi$ pointwise. Since $\int_X e^{-\lambda \varphi_i - \psi_i} \Omega = [\omega_0]^n$, $\varphi_i \in \PSH(X, \omega_0)$ is uniformly bounded above.  Hence $ \varphi_i \in \mathcal{V}(X, \theta_X, n, A, p, K')$
 for some  $K'>0$  for all $i>0$.

\begin{lemma} \label{loccon25} There exists $C>0$ such that for all $i>0$, 
\begin{equation}\label{estsec200}
||\varphi_i||_{L^\infty(X)} \leq C, ~\Diam(\hat X, d_{\omega_i)}) \leq C, 
\end{equation}
where $(\hat X, d_{\omega_i}) = \overline{(X^\circ, \omega_i)}. $
Furthermore, if for each $i$, there exists $c_i>0$ such that 
$$\omega_i \geq c_i \theta_X$$ on $X^\circ$ for all $i>0$, then the following hold. 
\begin{enumerate}

\item There exists $C>0$ such that for all $i>0$, we have 
\begin{equation}\label{unidom}
\omega_i \geq C^{-1} \theta_X.
\end{equation}

\item For any compact $\cK\subset\subset X^\circ$ and $m>0$, we have  
\begin{equation}\label{2c3con} 
\lim_{i\to\infty}\|\varphi_i - \varphi\|_{C^m(\cK)} =0. 
\end{equation}

\end{enumerate} 

\end{lemma}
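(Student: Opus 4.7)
The plan is threefold: first, use the comparison $\varphi_i \ge \varphi$ together with an integration argument to get $\|\varphi_i\|_{L^\infty(X)} \le C$, then place $\omega_i$ in a uniform class $\mathcal{V}(X, \theta_X, n, A, p, K')$ so that the diameter bound of \cite{GPSS2} applies; second, run a Chern--Lu second-order estimate regularized by an $\alpha_0$-PSH weight with log poles on $\cS(X)$ to obtain the uniform ellipticity (\ref{unidom}); and third, conclude the local smooth convergence by Evans--Krylov combined with uniqueness of bounded solutions to the limit Monge--Amp\`ere equation.

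For (\ref{estsec200}), since $\psi_i \ge \psi$ and $\lambda<0$, on the set $\{\varphi_i < \varphi\}$ one has $\omega_i^n < \omega^n$ pointwise as densities, contradicting the Bedford--Taylor comparison inequality unless the set has zero measure; hence $\varphi_i \ge \varphi$ a.e., which gives $\inf\varphi_i \ge \inf\varphi \ge -C$. For the upper bound, set $M_i = \sup\varphi_i$ and use $\varphi_i - M_i \le 0$ with $-\lambda >0$ to estimate
$$ V = e^{-\lambda M_i}\int_X e^{-\lambda(\varphi_i - M_i)-\psi_i}\Omega \ge e^{-\lambda M_i}\int_X e^{-\psi_i}\Omega. $$
Monotonicity $\psi_i \searrow \psi$ from Lemma \ref{pshapp} gives the uniform lower bound $\int e^{-\psi_i}\Omega \ge \int e^{-\psi_0}\Omega > 0$, so $M_i \le C$. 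Consequently $\|\varphi_i\|_{L^\infty} \le C$, and $\omega_i^n \le C e^{-\psi}\Omega = C e^{\lambda\varphi}\omega^n \le C'\omega^n$, so $\omega_i$ uniformly inherits the Nash entropy bound of $\omega$ and the diameter estimate of \cite{GPSS2} applies.

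For (\ref{unidom}), apply the Aubin--Yau / Chern--Lu inequality to $u_i = \log\tr_{\omega_i}\theta_X - A\varphi_i$ with $A$ large depending only on the bisectional curvature of $\theta_X$ and the fixed ratio $\omega_0 \ge c\theta_X$, giving $\Delta_{\omega_i}u_i \ge -C + \tr_{\omega_i}\theta_X$ on $X^\circ$. Since the supremum of $u_i$ may a priori be approached along $\cS(X)$, introduce the weight $\epsilon\phi$, where $\phi \in \PSH(X,\alpha_0)\cap C^\infty(X^\circ)$ is the function from the proof of Lemma \ref{pshapp} with $\phi \to -\infty$ on $\cS(X)$. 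The hypothesis $\omega_i \ge c_i\theta_X$ forces $u_i$ bounded above on $X^\circ$, so $u_i + \epsilon\phi$ attains its supremum at an interior point $p_i \in X^\circ$; at $p_i$, the bound $\ddbar\phi \ge -\alpha_0$ lets one absorb $\epsilon\Delta_{\omega_i}\phi$ into a small fraction of $\tr_{\omega_i}\theta_X$, yielding $\tr_{\omega_i}\theta_X(p_i) \le C$. The uniform $L^\infty$ bound on $\varphi_i$ together with $\phi \le C_0$ on $X$ then give $u_i + \epsilon\phi \le C'$ on $X^\circ$; letting $\epsilon \to 0^+$, one concludes $\tr_{\omega_i}\theta_X \le C''$ on $X^\circ$, which is equivalent to $\omega_i \ge (C'')^{-1}\theta_X$.

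Finally, (\ref{2c3con}) follows from local elliptic regularity: (\ref{unidom}), the Monge--Amp\`ere equation, and the smooth convergence $\psi_i \to \psi$ on $\cK$ from Lemma \ref{pshapp} yield two-sided bounds $c^{-1}\theta_X \le \omega_i \le c\theta_X$ on $\cK$ with a smooth, uniformly positive right-hand side, whence Evans--Krylov and Schauder bootstrapping give uniform $C^m(\cK)$ bounds on $\varphi_i$ for all $m$. Any subsequential limit $\varphi_\infty$ is a bounded $\omega_0$-PSH function solving the limit Monge--Amp\`ere equation with datum $\psi$, and uniqueness of bounded solutions (by the strict monotonicity of the operator when $\lambda < 0$) forces $\varphi_\infty = \varphi$, so the full sequence converges. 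The principal obstacle is the regularization in (\ref{unidom}): the lack of a priori smoothness of $\omega_i$ at $\cS(X)$ requires the maximum principle to be applied to $u_i + \epsilon\phi$ with $\epsilon \to 0^+$ taken carefully, balancing $\epsilon\Delta_{\omega_i}\phi$ against the strictly positive $\tr_{\omega_i}\theta_X$ term in the differential inequality.
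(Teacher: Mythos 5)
Your proposal follows essentially the same three-part structure as the paper's proof: uniform $L^\infty$/entropy bounds to invoke the diameter estimate of \cite{GPSS2}, a Chern--Lu estimate with a singular-set weight to get (\ref{unidom}), and local elliptic regularity plus a uniqueness/stability argument for (\ref{2c3con}). Your specific choices differ slightly and are worth noting.

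For (\ref{estsec200}) you derive the lower bound for $\varphi_i$ via the Bedford--Taylor comparison principle (showing $\varphi_i \geq \varphi$ directly from $\psi_i \geq \psi$, $\lambda < 0$), which is a clean alternative to the paper's direct appeal to the Ko\l{}odziej-type $L^\infty$-estimate from \cite{GPSS2}. However, one step is stated imprecisely: you invoke \emph{monotone decrease} $\psi_i \searrow \psi$ from Lemma \ref{pshapp}, but that lemma asserts only $\psi_i \geq \psi$ together with pointwise and locally smooth convergence --- the regularized-maximum modification in its proof destroys monotonicity. The conclusion you need, a uniform lower bound on $\int_X e^{-\psi_i}\Omega$, still holds because the $\psi_i$ are $\alpha_0$-PSH and converge to $\psi$ locally uniformly on $X^\circ$, hence are uniformly bounded above on $X$; but the justification should be phrased that way rather than via monotonicity.

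For (\ref{unidom}) the paper perturbs by $\epsilon\log|\sigma_D|^2_{h_D}$ for an auxiliary effective divisor $D \supset \cS(X)$, while you reuse the $\alpha_0$-PSH weight $\phi$ with log poles on $\cS(X)$ constructed in the proof of Lemma \ref{pshapp}. The mechanism (force the max into $X^\circ$ using the hypothesis $\omega_i \geq c_i\theta_X$, absorb the $\epsilon$-term into $\tr_{\omega_i}\theta_X$, send $\epsilon\to 0$) is identical. For (\ref{2c3con}) the paper uses the stability theorem \cite{Ko2, DZ} to get $L^\infty$-convergence directly, whereas you argue by subsequential compactness plus uniqueness of bounded solutions for $\lambda < 0$; both routes are standard and correct, the paper's being slightly more quantitative. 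Overall your proof is valid modulo the small misstatement about monotonicity of $\psi_i$.
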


\begin{proof} The estimate (\ref{estsec200}) immediately follows from the results of \cite{GPSS2}. To prove (\ref{unidom}), we will apply the Chern-Lu argument. For simplicity, we assume $\lambda=-1$. Let $D$ be an effective divisor on $X$ whose support contains  $\cS(X)$. Let $\sigma_D$ be the defining section of $D$ and $h_D$ be a smooth hermitian metric for the line bundle associated to $D$. We consider 
 $$H_{i, \epsilon} = \log \tr_{\omega_i}(\theta_X) - A \varphi_i + \epsilon \log |\sigma_D|^2_{h_D}$$
 for small $\epsilon>0$. 
 $H_{i, \epsilon}$ must attain its maximum in $X\setminus D$. Straightforward calculations analogous to the Schwarz lemma in \cite{ST1, ST2} show that
 $$\Delta_{\omega_i} H_{i, \epsilon} \geq  \tr_{\omega_i} (\theta_X) - CA$$
in $X\setminus D$ for a fixed sufficiently large $A>0$ and a uniform constant $C>0$ that are both independent of $i$ and $\epsilon$. The calculations are based on the uniform lower bound of $\ric(\omega_i)$ and the upper bound of holomorphic sectional curvature of $\theta_X$. One can apply the maximum principle to $H_{i, \epsilon}$ since $H_{i, \epsilon}$ is smooth on $X\setminus D$, which leads to the uniform upper bound independent on $i$ and $\epsilon$ for $H_{i, \epsilon}$ . The estimate (\ref{unidom}) is then obtained after letting $\epsilon \rightarrow 0$ since $\varphi_i$ is uniformly bounded. (\ref{unidom}) combined with the equation (\ref{tkema2}) also gives uniform upper bound for $\omega_i$ on any $\cK \subset\subset X^\circ$.

To prove (\ref{2c3con}), we fix any $\cK\subset\subset X^\circ$. Then  $\psi_i$ converges to $\psi$  on $C^{1,\alpha}(\cK)$ and Therefore $\varphi_i$ is uniformly bounded in $C^{3, \gamma}(\cK)$ by (\ref{unidom}) by the standard Schauder estimates and elliptic regularity for linear equations. Since $\psi_i$ converges to $\psi$  in $\cK$ and point-wisely on $X$, we have 
$$\lim_{i\rightarrow \infty} \left\| e^{- \psi_i}  - e^{-\psi} \right\|_{L^1(X, \Omega)} = 0. $$
The stability theorem for complex Monge-Amp\`ere equation \cite{Ko2, DZ} implies $$\lim_{i \rightarrow \infty} \| \varphi_i - \varphi\|_{L^\infty(X)} = 0. $$ Then (\ref{2c3con}) immediately follows. 
\end{proof}

As a consequence, $(X^\circ, \omega_i)$ converges  to $(X^\circ, \omega)$ in $C^{\infty}_{loc}(X^\circ)$.

\section{Almost smooth K\"ahler spaces and RCD spaces}

The following notion of almost smooth metric measure space is introduced in \cite{Ho} to construct and identify new examples to RCD space. A slight modification is also given in \cite{Sz24}). 

\begin{definition} A compact metric measure space $(\cZ, d, \mu)$ is an $m$-dimensional almost smooth metric measure space, if there is an open subset $\cZ^\circ$ of $Z$ satisfying the following properties.

\begin{enumerate}

\item There exist a smooth $m$-dimensional Riemannian manifold $(M, g)$ and a diffeomorphism $\Psi: \cZ^\circ \rightarrow M$ such that $\Psi$ is locally isometric between $(Z^\circ, d)$ and $(M, g)$. 

\item  The restriction of $\mu$ to $\cZ^\circ$ coincides with $m$-dimensional Hausdorff measure $\mathcal{H}^n$ on $Z^\circ$.

\item $\mu(\cZ\setminus \cZ^\circ) =0$ and there exist a sequence $\rho_i\in C^\infty(\cZ^\circ)$ with values in $[0,1]$ such that 
\begin{enumerate}

\item for any $\cK \subset\subset \cZ^\circ$, $\rho_i|_\cK =1$ for all sufficiently large $i$,

\item $\lim_{i\rightarrow\infty} \int_\cZ |\Delta \rho_i| d\mu  < \infty$.

\end{enumerate}

\end{enumerate}

\end{definition}

One of the main purposes in this paper is to identify $(\hat X, \omega, \mu_\omega)$ as an RCD space as an almost smooth metric space associated to $(X^\circ, \omega, \omega^n)$ for a given $(X, \omega) \in \mathcal{RK}(n)$. We will need to construct a family of cut-off functions based on  \cite{St} to characterize the singular set of $X$ by capacity.

\begin{lemma} \label{cutoff} Let $X$ be an $n$-dimesnional projective variety and let $Z$ be a subvariety of $X$ with $ \cS(X) \subset Z$. Suppose $\omega \in \nu(X, \theta_X, n, A, p, K)$ with $p>n$. Then for any $\epsilon>0$ and $\cK\subset\subset X\setminus Z $, there exists  $\rho_\epsilon \in C^\infty(X\setminus Z)$ such that 

\begin{enumerate}

\item $ 0\leq \rho_\epsilon \leq 1$, 
\smallskip

\item $Supp \rho_\epsilon \subset\subset X\setminus Z$, 
\smallskip

\item $\rho_\epsilon=1$ on $\cK$, 
\smallskip

\item $ \int_X |\nabla \rho_\epsilon|^2 \omega^n + \int_X \left|\Delta \rho_\epsilon \right|    \omega^n< \epsilon$, where $\nabla$ and $\Delta$ are the gradient and Laplace operator with respect to $\omega$.

\end{enumerate}

\end{lemma}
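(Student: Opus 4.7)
The plan is to build $\rho_\epsilon$ by composing a smooth one-dimensional cut-off with a globally defined quasi-plurisubharmonic function that diverges to $-\infty$ along $Z$, and to verify (4) through two cohomological integration-by-parts estimates. For the potential, I adapt the construction in the proof of Lemma \ref{pshapp}: pick an ample line bundle $L\to X$ with $mL\otimes \mathcal{I}_Z$ globally generated for $m$ large, a smooth Hermitian metric $h_0$ on $L$ whose Chern curvature $\beta:=\ric(h_0)$ is K\"ahler, and a basis $\{\sigma_j\}$ of $H^0(X, mL\otimes\mathcal{I}_Z)$, then set
$$\phi:=\tfrac{1}{m}\log\sum_j|\sigma_j|^2_{h_0^m} - C_0.$$
After a suitable normalization $C_0$, $\phi\in\PSH(X,\beta)\cap C^\infty(X\setminus Z)$ satisfies $\phi\le -1$ and $\phi(x)\to -\infty$ as $x\to Z$.

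Fixing a smooth nondecreasing $\chi\colon\R\to[0,1]$ with $\chi\equiv 0$ on $(-\infty,0]$ and $\chi\equiv 1$ on $[1,\infty)$, define for $k\in\Z^+$
$$\rho_k:=\chi\bigl(\phi/k + 2\bigr).$$
Then $\rho_k\in C^\infty(X\setminus Z)$ takes values in $[0,1]$, vanishes on the neighborhood $\{\phi\le -2k\}$ of $Z$, and equals $1$ on $\{\phi\ge -k\}$. Since $\phi$ is bounded below on any $\cK\subset\subset X\setminus Z$, choosing $k$ large gives $\rho_k\equiv 1$ on $\cK$, verifying (1)--(3). Direct computation yields
$$|\nabla\rho_k|^2_\omega = k^{-2}(\chi')^2|\nabla\phi|^2_\omega,\qquad \Delta_\omega\rho_k = k^{-2}\chi''|\nabla\phi|^2_\omega + k^{-1}\chi'\,\Delta_\omega\phi,$$
both supported inside $T_k:=\{-2k\le\phi\le -k\}$, which is compactly contained in $X^\circ$ for $k$ large since $\cS(X)\subset Z$.

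Two cohomological bounds close the argument. First, $\int_X|\Delta_\omega\phi|\,\omega^n\le C$: since $\Delta_\omega\phi\cdot\omega^n=n\,\ddbar\phi\wedge\omega^{n-1}$ and $\phi$ is a globally defined function, $\int_X\ddbar\phi\wedge\omega^{n-1}=0$; splitting $\ddbar\phi=(\ddbar\phi+\beta)-\beta$ into positive currents each of mass $[\beta]\cdot[\omega]^{n-1}$ controls the total variation. Second, for $\phi_M:=\max(\phi,-M)$, integration by parts gives
$$\int_X|\nabla\phi_M|^2_\omega\,\omega^n = -c_n\int_X\phi_M\,\ddbar\phi_M\wedge\omega^{n-1},$$
and combining the same splitting with $|\phi_M|\le M + O(1)$ yields $\int_X|\nabla\phi_M|^2_\omega\,\omega^n\le CM$. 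Applying this with $M=2k$ and restricting to $T_k$ gives $\int_{T_k}|\nabla\phi|^2_\omega\,\omega^n\le 2Ck$, whence
$$\int_X |\nabla\rho_k|^2_\omega\,\omega^n + \int_X|\Delta_\omega\rho_k|\,\omega^n\le \frac{C}{k};$$
choosing $k\ge C/\epsilon$ and setting $\rho_\epsilon:=\rho_k$ produces the desired cut-off.

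The main obstacle is making the integration by parts behind the growth estimate rigorous: $\omega$ has only bounded potentials globally and $\phi_M$ is merely Lipschitz. My plan is to carry out every computation on the smooth locus $X^\circ\supset X\setminus Z$, to replace $\max$ by Demailly's smooth regularized maximum $\mathcal{M}_\epsilon$ exactly as in Lemma \ref{pshapp}, and to truncate near $Z$ by a secondary logarithmic cut-off built from $\phi$ itself whose boundary contribution vanishes in the limit by the same $O(1/k)$ argument. Since the transition regions $T_k$ are compactly contained in $X^\circ$ and both $\omega$ and $\phi$ are smooth there, the regularized integrations reduce to the classical identity on a compact smooth submanifold.
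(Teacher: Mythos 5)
Your construction and estimates coincide in substance with the paper's own proof: the paper also takes sections of $H^0(X,L\otimes\cJ_Z)$ for an ample $L$ with $L\otimes\cJ_Z$ globally generated, sets $\eta_\epsilon=\max(\log\sum|\sigma_i|^2_h,\log\epsilon)$ (your $\phi_M$ with $M=-\log\epsilon$), composes with a one-dimensional cutoff at logarithmic scale, and derives exactly your two cohomological bounds by splitting $\ddbar\eta=(\theta+\ddbar\eta)-\theta$ and integrating by parts, yielding the same $O(1/|\log\epsilon|)=O(1/k)$ decay before smoothing. The proposal is correct; the differences (parametrizing by $k$ rather than $\log\epsilon$, truncating after rather than before composing with the cutoff) are cosmetic.
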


\begin{proof}  We pick an ample line bundle $L \rightarrow X$ and let $\cJ_Z$ be the ideal sheaf for $Z$. By replacing $L$ by a sufficiently large power of itself, we can assume that $L\otimes \cJ_Z$ is globally generated. Let $\sigma_1, ..., \sigma_N$ be a basis of $H^0(X, L\otimes \cJ)$ and $h$ be a smooth positively curved hermitian metric for $L$ such that $\theta = \ric(h)$ is a smooth K\"ahler metric on $X$ and $\sum_{i=1}^N |\sigma_i|^2_h  \leq 1$. We define 
$$\eta_\epsilon = \max ( \log\left( \sum_{i=1}^N |\sigma_i|^2_h\right), \log \epsilon) . $$
Obviously, $\eta_\epsilon\in \PSH(X, \theta)$ and $\log \epsilon \leq \eta_\epsilon\leq 0$.  Let $F$ be the standard smooth cut-off function on $[0, \infty)$ with $F=1$ on $[0, 1/2]$ and $F=0$ on $[1, \infty)$. 
Now we let 
$$\rho_\epsilon = F\left(\frac{\eta_\epsilon}{\log \epsilon} \right).$$
Then  $\rho_\epsilon =1 $ on $\cK$ if $\epsilon$ is sufficiently small. Similar to the calculations in \cite{St} (c.f.\cite{S2}) give
\begin{eqnarray*}
&&\int_X \sqrt{-1} \partial \rho_\epsilon \wedge \dbar \rho_\epsilon \wedge \omega^{n-1}  \\
&=&(\log \epsilon)^{-2}  \int_X (F')^2 \sqrt{-1}\partial \eta_\epsilon \wedge \dbar \eta_\epsilon \wedge \omega^{n-1}\\
&\leq& C (\log \epsilon)^{-2} \int_X (-\eta_\epsilon) \ddbar \eta_\epsilon \wedge \omega^{n-1} \\
&\leq& C(\log \epsilon)^{-2} \int_X(-\eta_\epsilon)(\theta+ \ddbar \eta_\epsilon) \wedge \omega^{n-1}
+ C(\log \epsilon)^{-2} \int_X \eta_\epsilon ~ \theta \wedge \omega^{n-1}\\
&\leq& C(-\log \epsilon)^{-1} \int_X(\theta+ \ddbar \eta_\epsilon) \wedge \omega^{n-1} \\
&\leq& C(-\log \epsilon)^{-1} [\theta]\cdot [\omega]^{n-1} \rightarrow 0
\end{eqnarray*}
and 
\begin{eqnarray*}
&&\int_X \sqrt{-1} |\Delta \rho_\epsilon|  \omega^n  \\
&\leq & (-\log \epsilon)^{-1}  \int_X |F'| |\Delta  \eta_\epsilon| \omega^n + (-\log \epsilon)^{-2}  \int_X (F')^2 \sqrt{-1}\partial \eta_\epsilon  \wedge \dbar \eta_\epsilon \wedge \omega^{n-1}\\
&\leq &C(-\log \epsilon)^{-1}  \int_X (\ddbar  \eta_\epsilon+\theta) \wedge\omega^{n-1} + C(\log \epsilon)^{-1}  \\
&\leq& C (-\log \epsilon)^{-1}  \rightarrow 0
\end{eqnarray*}
as $\epsilon \rightarrow 0$.
Therefore we obtain  $\rho_\epsilon \in C^0(X)$ satisfying the conditions in the lemma. The lemma is then proved by smoothing $\rho_\epsilon$.  
\end{proof}

\begin{corollary} Let $X$ be an $n$-dimensional projective variety with log terminal singularities. Suppose $\omega\in \nu(X, \theta_X, n, A, p, K)$ is smooth on $X^\circ$. Then $(\hat X, d_\omega, \mu_\omega)$ is a $2n$-dimensional almost smooth compact metric measure space associated with $X^\circ$.

\end{corollary}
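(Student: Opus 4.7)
The plan is to verify the three conditions in the definition of a $2n$-dimensional almost smooth metric measure space, taking $\cZ = \hat X$, open subset $\cZ^\circ = X^\circ$, model manifold $(M,g) = (X^\circ, \omega|_{X^\circ})$ regarded as a real $2n$-dimensional Riemannian manifold, and $\Psi$ the identity inclusion $X^\circ \hookrightarrow \hat X$.

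First I would dispatch conditions (1) and (2). Since $\omega \in C^\infty(X^\circ)$ by assumption, $X^\circ$ is a smooth real $2n$-dimensional Kähler manifold, so $\Psi$ is trivially a diffeomorphism. By the definition $(\hat X, d_\omega) = \overline{(X^\circ, \omega|_{X^\circ})}$ as a metric completion, $d_\omega$ restricted to $X^\circ \times X^\circ$ agrees with the intrinsic Riemannian distance of $(X^\circ, g)$; in particular $\Psi$ is a global, and hence local, isometry on $X^\circ$. Condition (2) is then immediate, since the Riemannian volume form of a smooth $2n$-dimensional Riemannian manifold coincides with the $2n$-dimensional Hausdorff measure.

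For condition (3), the vanishing $\mu_\omega(\hat X \setminus X^\circ) = 0$ is built into the definition of $\mu_\omega$ as the trivial extension of $\omega^n|_{X^\circ}$. To produce the required cut-off sequence, I would apply Lemma \ref{cutoff} with $Z = \cS(X)$. Fix a compact exhaustion $\cK_1 \subset\subset \cK_2 \subset\subset \cdots$ of $X^\circ$ with $\bigcup_i \cK_i = X^\circ$, together with a sequence $\epsilon_i \downarrow 0$. The lemma supplies $\rho_i \in C^\infty(X \setminus \cS(X)) = C^\infty(X^\circ)$ with $0 \le \rho_i \le 1$, compact support in $X^\circ$, $\rho_i|_{\cK_i} = 1$, and $\int_X |\Delta \rho_i|\,\omega^n < \epsilon_i$. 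Any $\cK \subset\subset X^\circ$ is eventually contained in some $\cK_i$, forcing $\rho_i|_\cK = 1$ for all large $i$, and $\lim_i \int_{\hat X} |\Delta \rho_i|\, d\mu_\omega \le \lim_i \epsilon_i = 0 < \infty$. Compactness of $(\hat X, d_\omega, \mu_\omega)$ as a metric space follows from the uniform diameter bound $\Diam(\hat X, d_\omega) \le C$ already quoted from \cite{GPSS2}.

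I expect no serious technical obstacle: the corollary is essentially an assembly of Lemma \ref{cutoff} with the definition of the metric completion, condition (3) of the almost smooth structure being precisely what that lemma was designed to supply. The one conceptual point worth flagging is the equality $d_\omega|_{X^\circ \times X^\circ} = d_g$, which follows from the general fact that the metric completion of a length space extends the original length distance on the dense open part.
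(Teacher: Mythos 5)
Your proposal is correct and is exactly the argument the paper intends: the corollary is stated without proof as an immediate consequence of Lemma \ref{cutoff} (applied with $Z=\cS(X)$) together with the definitions of $(\hat X,d_\omega,\mu_\omega)$ and of an almost smooth metric measure space, which is precisely the assembly you carry out. The only cosmetic point is the standard normalization $\omega^n = n!\,dV_g$ when matching $\mu_\omega$ with the $2n$-dimensional Hausdorff measure, which the paper also suppresses.
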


The following connection is established in \cite{Ho} between RCD spaces and almost smooth metric measure spaces. 

\begin{lemma} \label{honda2} Let $(\cZ, d, \mu)$ be an $m$-dimensional almost smooth compact metric measure space associated with an open subset $\cZ^\circ$. Then $(\cZ, d, \mu)$ is a non-collapsed compact  $\RCD(\lambda, m)$-space for some $\lambda\in \mathbb{R}$ if and only if the following hold.

\begin{enumerate}

\item the Sobolev to Lipschitz property holds, 

\item the $L^2$-strong compactness condition holds, 

\item any eigenfunction is Lipschitz, 

\item $\ric(g) \geq \lambda g$ on $\cZ^\circ$, where $g$ is the Riemannian metric associated to $d$ on $\cZ^\circ$.

\end{enumerate}

\end{lemma}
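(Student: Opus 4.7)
The plan is to verify both directions by reducing RCD$(\lambda, m)$ to the Bakry--\'Emery condition BE$(\lambda, \infty)$ for the canonical Dirichlet form on $\cZ$, and then upgrading to the non-collapsed finite-dimensional statement via the structure of $\mu$ on the smooth part. For the forward direction, assume $(\cZ, d, \mu)$ is a non-collapsed RCD$(\lambda, m)$-space. Properties (1) and (2) are standard for any compact RCD space with finite reference measure; property (3) follows from the Bakry gradient estimate for the heat semigroup applied to eigenfunctions; and (4) is a consequence of the fact that on an open set where $d$ comes from a genuine Riemannian metric, the synthetic RCD lower bound collapses to the pointwise inequality $\ric(g)\geq \lambda g$.

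For the reverse direction, the main task is to produce a synthetic Bochner inequality valid on all of $\cZ$. First define $\cE(u) = \int_{\cZ^\circ}|\nabla u|^2 d\mu$ on $C_c^\infty(\cZ^\circ)$ and extend by closure on $L^2(\cZ, \mu)$. The almost smooth structure together with $\mu(\cZ\setminus \cZ^\circ)=0$ ensures that $\cE$ is a well-defined closable strongly local symmetric Dirichlet form, and the Sobolev-to-Lipschitz property (1) guarantees that $\cE$ coincides with the Cheeger energy on $(\cZ, d, \mu)$, so no new $W^{1,2}$ functions are created by the metric completion beyond those seen on the smooth part. On $\cZ^\circ$ the classical Bochner identity combined with (4) gives
\[
\tfrac{1}{2}\Delta |\nabla u|^2 \geq \langle \nabla u, \nabla \Delta u\rangle + \lambda |\nabla u|^2.
\]
Pair this with a nonnegative Lipschitz test function multiplied by $\rho_i$, integrate, and send $i\to \infty$; the control $\int_\cZ |\Delta \rho_i|d\mu \to 0$ built into the almost smooth definition passes the weak Bochner inequality to all of $\cZ$. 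Conditions (2) and (3) supply the spectral compactness and regularity needed to verify BE$(\lambda,\infty)$ in the sense of Ambrosio--Gigli--Savar\'e through the heat semigroup expansion in eigenfunctions, whence RCD$(\lambda,\infty)$ follows. The upgrade to the non-collapsed finite-dimensional statement RCD$(\lambda, m)$ then uses $\mu|_{\cZ^\circ} = \mathcal{H}^m|_{\cZ^\circ}$ and $\mu(\cZ\setminus \cZ^\circ)=0$, combined with the De Philippis--Gigli characterization of non-collapsed RCD spaces via agreement of the reference measure with Hausdorff measure of the right dimension.

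The hard part is the approximation step. Since the cut-offs $\rho_i$ only control the Laplacian in $L^1$, one cannot work pointwise or even at the $L^2$ level of Bochner; instead the weak integrated formulation must be used, and the boundary terms arising from integration by parts against $\nabla \rho_i$ and $\Delta \rho_i$ need to be shown to vanish in the limit using Cauchy--Schwarz together with the $L^\infty$-bounds on $u$ and on $|\nabla u|$ coming from the Lipschitz regularity (3). A secondary subtlety is that items (2) and (3) are exactly what is required to convert BE$(\lambda,\infty)$ into a statement that combines coherently with the volume structure to yield the sharp dimension $m$ rather than merely infinite-dimensional RCD; without spectral compactness one cannot close up the functional calculus used in the Ambrosio--Gigli--Savar\'e equivalence on a space whose singular set has positive capacity a priori.
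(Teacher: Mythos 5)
This lemma is not proved in the paper; it is imported verbatim from Honda's work, cited as \cite{Ho}. So there is no ``paper proof'' to compare with, but one can still assess whether your argument would stand on its own. It is broadly in the spirit of Honda's proof (establish the Bakry--\'Emery condition from the classical Bochner identity on the regular part, passing through the almost smooth cut-offs), but there are two problems.

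First, a minor one: you assert that the almost smooth definition gives $\int_\cZ |\Delta \rho_i|\,d\mu \to 0$. It does not --- the definition in the paper only requires $\limsup_i \int_\cZ |\Delta \rho_i|\,d\mu < \infty$. The boundary terms in the weak Bochner formula therefore do not vanish for free; one has to combine the $L^1$-boundedness of $\Delta\rho_i$ with the pointwise convergence $\rho_i \to 1$ and dominated convergence on the terms where $\rho_i$ itself appears, and make the test function vanish in a neighborhood of the singular set so that the offending integrals are eventually supported where $\rho_i \equiv 1$.

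Second, and more seriously, the ``upgrade'' step is wrong as stated. You propose to first prove BE$(\lambda,\infty)$, hence RCD$(\lambda,\infty)$, and then promote this to non-collapsed RCD$(\lambda,m)$ by invoking De Philippis--Gigli together with the fact that $\mu = \mathcal{H}^m$ on $\cZ^\circ$. But the De Philippis--Gigli characterization of non-collapsed spaces is a statement \emph{within} the class of RCD$(K,N)$ spaces with $N<\infty$: given that $(\cZ,d,\mu)$ is already RCD$(\lambda,m)$, non-collapsed is equivalent to $\mu = \mathcal{H}^m$. It does not let you deduce the finite-dimensional RCD condition from the infinite-dimensional one plus a measure-theoretic observation --- $(\mathbb{R}^m, |\cdot|, e^{-|x|^2}\,dx)$ is RCD$(2,\infty)$ but not RCD$(2,m)$, regardless of what the measure does on any open subset. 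The correct route is the one Honda takes: use the full Riemannian Bochner identity
\[
\tfrac{1}{2}\Delta|\nabla u|^2 = |\nabla^2 u|^2 + \langle \nabla u, \nabla\Delta u\rangle + \ric(\nabla u,\nabla u)
\]
on $\cZ^\circ$, together with the pointwise Cauchy--Schwarz inequality $|\nabla^2 u|^2 \geq \frac{1}{m}(\Delta u)^2$ (valid because $\cZ^\circ$ is a genuine $m$-dimensional Riemannian manifold), to obtain the \emph{dimensional} weak Bochner inequality BE$(\lambda,m)$ directly. Combined with the Sobolev-to-Lipschitz property this yields RCD$(\lambda,m)$, and then $\mu|_{\cZ^\circ}=\mathcal{H}^m$ with $\mu(\cZ\setminus\cZ^\circ)=0$ does give non-collapsedness via De Philippis--Gigli. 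Without the dimensional term in Bochner you never obtain the finite parameter $m$.
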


Lemma \ref{honda2} can be applied to singular K\"ahler spaces based on the spectral theory built in \cite{GPSS2}.

\begin{lemma} \label{honda3} Let $X$ be an $n$-dimensional normal projective variety with log terminal singularities. Suppose $\omega \in \nu(X, \theta_X, n, A, p, K)$ with $p>n$ and 
$$\ric(\omega) \geq \lambda \omega.$$ Then the metric space $(\hat X, d_\omega, \mu_\omega)$ is a non-collapsed compact $\RCD(\lambda, 2n)$ space if and only if each eigenfunction of $\Delta_\omega$ is Lipschitz.  In particular, the Sobolev inequality holds on $(\hat X, d_\omega, \mu_\omega)$ with the optimal exponent $\frac{n}{n-1}$ if the latter holds.

\end{lemma}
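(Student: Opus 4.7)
The plan is to apply Honda's criterion (Lemma \ref{honda2}) directly to the metric measure space $(\hat X, d_\omega, \mu_\omega)$, which by the corollary preceding this lemma is already known to be a $2n$-dimensional almost smooth compact metric measure space associated with the smooth locus $X^\circ$. The \emph{only if} direction is standard RCD regularity: on any $\RCD(\lambda, 2n)$ space the eigenfunctions of the Laplacian are automatically Lipschitz. So the substance of the argument lies in the \emph{if} direction.

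For the \emph{if} direction, I would verify the four hypotheses of Lemma \ref{honda2} in turn. Condition (4) is immediate: the assumption $\ric(\omega)\geq \lambda \omega$ as currents on $X$ restricts on the open smooth locus $X^\circ$ to the pointwise Riemannian inequality for the smooth K\"ahler metric $\omega$. Condition (3) is precisely the hypothesis of the present lemma. Condition (2), the $L^2$-strong compactness, follows from the uniform Sobolev inequality recorded in the previous lemma together with a Rellich-Kondrachov type argument: the embedding $W^{1,2}(\hat X)\hookrightarrow L^2(\hat X)$ is compact, so any $W^{1,2}$-bounded sequence admits an $L^2$-convergent subsequence, and the discreteness of the spectrum of $-\Delta_\omega$ then upgrades this to the $L^2$-strong compactness in the sense required.

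The main obstacle is Condition (1), the Sobolev-to-Lipschitz property: every $u\in W^{1,2}(\hat X)$ with $|\nabla u|\leq L$ almost everywhere must coincide a.e. with an $L$-Lipschitz function with respect to $d_\omega$. The key technical tool is the family of cut-off functions $\rho_\epsilon$ from Lemma \ref{cutoff}, which are supported away from the singular set $\cS(X)$ and satisfy the capacity vanishing estimates $\int_X |\nabla \rho_\epsilon|^2 \omega^n + \int_X |\Delta \rho_\epsilon|\, \omega^n \to 0$ as $\epsilon\to 0$. On any compact subset $\cK\subset\subset X^\circ$, which is a genuine smooth Riemannian manifold, the classical identification between the weak gradient and the metric local Lipschitz constant yields $|u(x)-u(y)|\leq L\, d_\omega(x,y)$ for almost every pair $(x,y)\in \cK\times \cK$. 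Letting $\epsilon \to 0$ and exploiting that $\cS(X)$ has vanishing $W^{1,2}$-capacity (the content of Lemma \ref{cutoff}), the estimate extends to all of $X^\circ$, and then by uniform continuity to the metric completion $\hat X$.

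Once all four conditions are verified, Lemma \ref{honda2} yields the non-collapsed $\RCD(\lambda, 2n)$ structure on $(\hat X, d_\omega, \mu_\omega)$. The sharp Sobolev inequality with optimal exponent $\frac{n}{n-1}$ is then inherited from the standard sharp Sobolev inequality on non-collapsed $\RCD(\lambda, 2n)$ spaces of effective dimension $2n$, which improves the exponent $q$ from the previous lemma to the optimal value.
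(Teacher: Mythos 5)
Your proposal is correct and follows essentially the same route as the paper: both reduce the statement to Honda's criterion (Lemma \ref{honda2}) and check conditions (1), (2) and (4), with (3) being exactly the Lipschitz hypothesis on eigenfunctions. The only difference is that the paper simply cites the spectral theory of \cite{GPSS2} for the Sobolev-to-Lipschitz property and the $L^2$-strong compactness, whereas you sketch these verifications directly (via Rellich--Kondrachov and the cut-off functions of Lemma \ref{cutoff}); your sketches are consistent with how those facts are established in \cite{GPSS2}.
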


\begin{proof}  Condition (4) holds by the assumption on the Ricci curvature lower bound of $\omega$ and the fact that $\omega$ is smooth on $X^\circ$. Condition (1) and (2) automatically hold for $(\hat X, d_\omega, 
\mu_\omega)$ by the spectral theory established in \cite{GPSS2} for any singular K\"ahler metric $\omega\in \nu(X, \theta_X, n, A, p, K)$. It is proved in \cite{GPSS2} that if $f$ is an eigenfunction of $(\hat X, d_\omega, \mu_\omega)$, then $f\in W^{1,2}(\hat X, d_\omega, \mu_\omega)\cap L^\infty(\hat X)\cap C^\infty(X^\circ)$.   
Then by Lemma \ref{honda2}, $(\hat X, d_\omega, \mu_\omega)$ is indeed an $\RCD(\lambda, 2n)$-space if and only if each eigenfunction is Lipschitz.
\end{proof}


\section{Analytic $L^2$-estimates}

In this section, we will assume that $X$ is an $n$-dimensional  projective variety with log terminal singularities and $\omega \in \mathcal{RK}(X) \cap c_1(L)$ for a line bundle $L \rightarrow X$ with 
$$\ric(\omega) \geq - \omega.$$ Then there exists a hermitian metric $h$ for $L$ with
$$\omega = \ric(h).$$

The following $L^2$-estimate for $\dbar$-equation is a direct generalization for singular K\"ahler varieties.

\begin{lemma} \label{L241}For any $k\geq 2$ and smooth $L^k$-valued $(0,1)$-form $\tau$ satisfying 
\begin{enumerate}

\item $\dbar \tau =0$,

\smallskip

\item $Supp\, \tau \subset \subset X^\circ$, 

\smallskip

\end{enumerate}
there exists an $L^k$-valued section $u$ such that %
\begin{equation}
\bar \partial u = \tau
 \end{equation}
on $X^\circ$ and
$$\int_X |u|^2_{h^k} (k\omega)^n \leq \frac{1}{4\pi} \int_X |\tau|^2_{h^k, k\omega} (k\omega)^n. $$

\end{lemma}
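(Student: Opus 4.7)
\emph{Approach.} This is the classical H\"ormander-Kodaira $L^2$-estimate for the $\dbar$-equation, adapted to the singular setting. Two features make the problem tractable: $\tau$ has compact support in the smooth K\"ahler manifold $X^\circ$, and the hypotheses $\ric(\omega)\geq -\omega$ together with $\omega=\ric(h)$ yield positivity of the Bochner-Kodaira curvature operator on $L^k$-valued $(0,1)$-forms once $k\geq 2$, with a slack of at least $1$ that absorbs the $-\omega$ from Ricci.

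\emph{Core estimate via duality.} I would set up the standard variational argument: to find $u$ with $\dbar u=\tau$ satisfying the desired $L^2$-bound, it suffices to show that for every smooth, compactly supported $L^k$-valued $(0,1)$-form $\alpha$ on $X^\circ$ in the domain of $\dbar^*$,
\[
|\langle\tau,\alpha\rangle|^2 \leq \tfrac{1}{4\pi}\Big(\int_X |\tau|^2_{h^k,k\omega}(k\omega)^n\Big)\,\|\dbar^*\alpha\|^2.
\]
The Bochner-Kodaira-Nakano identity on $(X^\circ,\omega)$, applied with base metric $k\omega$ and bundle curvature $\ric(h^k)=k\omega$, gives $\|\dbar\alpha\|^2+\|\dbar^*\alpha\|^2\geq \langle A\alpha,\alpha\rangle$ where $A$ is the curvature operator $[i\Theta(h^k)+\ric_\omega,\Lambda]$ on $(0,1)$-forms. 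Using $\ric(\omega)\geq -\omega$, $A$ is bounded below by $(k-1)\omega\geq \tfrac{1}{2}(k\omega)$ on $\text{supp}(\tau)$ for $k\geq 2$; with the normalization used in the paper ($\omega\in c_1(L)$ so that $i\Theta(h)=2\pi\omega$), this yields the numerical constant $\tfrac{1}{4\pi}$ via Cauchy-Schwarz.

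\emph{Dealing with incompleteness.} The main obstacle is that $(X^\circ,\omega)$ is not a complete K\"ahler manifold, so the standard H\"ormander theorem does not apply directly. I would approximate by a family of complete K\"ahler metrics $\omega_\epsilon=\omega+\epsilon\,\omega_c$ on $X^\circ$, where $\omega_c$ is a Poincar\'e-type complete K\"ahler metric built from a section $\sigma$ of $L_Z\otimes \cJ_{\cS(X)}^{\otimes m}$ for $m$ large, in the spirit of the cut-off construction in Lemma \ref{cutoff}; $\omega_c$ can be taken of the form $C\theta_X+\ddbar(-\log(-\log|\sigma|^2_{h_Z}))$. On the complete K\"ahler manifold $(X^\circ,\omega_\epsilon)$, H\"ormander's theorem applies and yields $u_\epsilon$ with $\dbar u_\epsilon=\tau$ and the desired bound relative to $\omega_\epsilon$. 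Since $\text{supp}(\tau)$ is compact in $X^\circ$, $\omega_\epsilon\to\omega$ smoothly on $\text{supp}(\tau)$, so both the energy and the right-hand side converge. A weak $L^2_{\text{loc}}$-limit $u_\epsilon\rightharpoonup u$ solves $\dbar u=\tau$ distributionally, and the $L^2$-bound passes to the limit by lower semi-continuity.

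\emph{Hard point.} The delicate step is verifying that the Bochner curvature lower bound survives the perturbation $\omega\mapsto\omega_\epsilon$ uniformly in $\epsilon$. Since $\ric$ depends nonlinearly on the metric, an $O(\epsilon)$ perturbation of the K\"ahler form produces an $O(\epsilon)$ perturbation of $\ric$ on compact subsets of $X^\circ$; this is absorbed by the strict slack $(k-1)\geq 1$ in the Bochner bound, but care is needed near the Poincar\'e region, where $\omega_c$ dominates. Since $\tau$ has compact support in $X^\circ$ and we only need the curvature positivity on a fixed neighborhood of $\text{supp}(\tau)$, one can arrange that $\omega_\epsilon$ agrees with $\omega$ on that neighborhood up to an $O(\epsilon)$ correction, recovering the sharp constant $\tfrac{1}{4\pi}$ in the limit.
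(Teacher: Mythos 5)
Your overall strategy — complete the metric near the singular locus, apply the H\"ormander $L^2$-machinery, and let the completion parameter go to zero — is the same as the paper's. But there is a genuine gap in the way you set up the Bochner-Kodaira estimate, and the paper's argument (Demailly's trick) exists precisely to avoid it.

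You treat $\tau$ as a $(0,1)$-form valued in $L^k$ and invoke the curvature operator $[\,i\Theta(h^k)+\ric(\omega_\epsilon),\Lambda_{\omega_\epsilon}\,]$. This operator involves the Ricci curvature \emph{of the auxiliary complete metric} $\omega_\epsilon=\omega+\epsilon\omega_c$. Near $\cS(X)$ the dominant term in $\omega_\epsilon$ is the Poincar\'e-type piece $\epsilon\omega_c$, whose Ricci curvature is of order $-\omega_c$ and in particular is unbounded below with respect to the fixed form $k\omega$; the slack $(k-1)$ you extract from $\ric(\omega)\geq -\omega$ does nothing to control it, and the perturbation is not $O(\epsilon)$ there. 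You flag this as the ``hard point'' and try to dismiss it by saying that curvature positivity is ``only needed on a neighborhood of $\text{supp}\,\tau$.'' That is not true: in the duality/Cauchy--Schwarz step one bounds $|\langle\tau,\alpha\rangle|^2$ by $\langle A^{-1}\tau,\tau\rangle\cdot\int_X\langle A\alpha,\alpha\rangle$ and then uses $\int_X\langle A\alpha,\alpha\rangle\leq \|\dbar\alpha\|^2+\|\dbar^*\alpha\|^2$; this requires $A\geq 0$ on all of $X^\circ$, not merely near $\text{supp}\,\tau$. So the argument as written does not close.

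The paper's proof resolves this by Demailly's trick: remove an ample divisor $D\supset\cS(X)$ so that $X\setminus D$ is Stein, take $\theta_D=\ddbar\psi$ a complete metric there and $\omega_\epsilon=k\omega+\epsilon\theta_D$, and — crucially — view $\tau$ as an $(n,1)$-form valued in $L^k\otimes K_{X\setminus D}^{-1}$ with the hermitian metric $h_\epsilon=h^k e^{-\epsilon\psi}\omega^n$. Because the metric on the $K^{-1}$-factor is the \emph{fixed} volume form $\omega^n$ rather than $\omega_\epsilon^n$, the relevant curvature is $\ric(h_\epsilon)=k\omega+\epsilon\theta_D+\ric(\omega)\geq (k-1)\omega+\epsilon\theta_D\geq\tfrac12\omega_\epsilon$, and no term $\ric(\omega_\epsilon)$ or $\ric(\theta_D)$ ever appears. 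This gives a uniform curvature lower bound on the entire complete manifold, H\"ormander/Demailly applies, and the limit $\epsilon\to0$ is routine. Your proposal would become correct if you replaced the $(0,1)$-form Bochner-Kodaira setup by this $(n,1)$-form twist with a fixed metric on $K^{-1}$.
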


\begin{proof} The lemma is proved by a trick of Demailly \cite{De} (c.f. \cite{S2}). We first pick any ample divisor $D$ on $X$ whose support contains $\mathcal{S}(X)$. Then $X \setminus D$ is a Stein manifold. There exists a smooth plurisubharmonic function $\psi$ on $X\setminus D$ such that $\theta_D = \ddbar \psi$ is a complete K\"ahler metric on $X\setminus D$. Then $$\omega_\epsilon = k\omega+ \epsilon \theta_D$$ is obviously also a complete K\"ahler metric. We now view $\tau$ as an $L^k\otimes K_{X\setminus D}^{-1}$ valued $(n,1)$-form on $X\setminus D$ and let 
$$h_\epsilon = h^ke^{- \epsilon \psi}\omega^n$$
be a hermitian metric on $L^k\otimes K_{X\setminus D}^{-1}$ satisfying
$$\ric(h_\epsilon) = k\omega + \epsilon \theta_D + \ric(\omega)  \geq (k-1)\omega + \epsilon \theta_D  \geq \frac{1}{2} \omega_\epsilon. $$ 
Therefore by Demailly's $L^2$-estimate for $\dbar$-equation, there exist an $L^k\otimes K_{X\setminus D}^{-1}$-valued $(n,0)$-form $u_\epsilon$ satisfying
$$\bar \partial u_\epsilon = \tau$$
on $X\setminus D$ and
$$\int_{X\backslash D} |u_\epsilon|^2_{h_\epsilon} \omega_\epsilon^n \leq \frac{1}{4\pi} \int_{X\backslash D} |\tau|^2_{h_\epsilon, \omega_\epsilon} \omega_\epsilon^n.$$
We now let $\epsilon \rightarrow 0$. Since $u_{\epsilon_1} - u_{\epsilon_2}$ is holomorphic with uniform $L^2$-bound on a fixed compact subset in $X\setminus D$, it must be uniformly bounded and therefore $u_\epsilon$ converges to an $L^k$-valued  section $u$ satisfying 
$\dbar u = \tau$ on $X\setminus D$ and $$ \int_X |u|^2_{h^k} (k\omega)^n \leq \frac{1}{4\pi} \int_X |\tau|^2_{h^k, k\omega} (k\omega)^n. $$
Since $\omega$ is smooth on $X^\circ$, the $L^2$-bound for $u$ implies that $\dbar u = \tau$ holds on $X^\circ$. We now have completed the proof.
\end{proof}

\begin{lemma} \label{L2411} Suppose  $(\hat X, d_\omega, \mu_\omega)$ is a compact $\RCD(-1, 2n)$ space and there exists $c>0$ such that 
$$\omega\geq c \theta_X .$$
 Then  for any $\sigma \in H^0(X, L^k)$, we have
\begin{equation}\label{inibdsig}
\sup_{X^\circ} |\sigma|^2_{h^k}   < \infty,
\end{equation}
\begin{equation}\label{inibdsid2}
  \sup_{X^\circ} |\nabla \sigma|^2_{h^k, k\omega}   < \infty.
\end{equation}

\end{lemma}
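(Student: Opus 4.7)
For the sup bound (\ref{inibdsig}), I would compare $h$ to a smooth reference. Fix a smooth hermitian metric $h_0$ on $L$ with curvature $\omega_0=\ric(h_0)$ a smooth K\"ahler form on $X$, and write $h=h_0 e^{-\varphi}$ so that $\omega=\omega_0+\ddbar\varphi$. Since $\omega\in\mathcal{RK}(X)\subset \mathcal V(X,\theta_X,n,A,p,K)$, the $L^\infty$-estimate yields $\|\varphi\|_{L^\infty(X)}<\infty$. Then $|\sigma|^2_{h^k}=e^{-k\varphi}\,|\sigma|^2_{h_0^k}$, and both factors are bounded since $|\sigma|^2_{h_0^k}$ is continuous on the compact $X$.

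For the gradient bound (\ref{inibdsid2}), set $u:=|\nabla\sigma|^2_{h^k,k\omega}$, where $\nabla$ denotes the Chern connection of $h^k$. On $X^\circ$, the classical Bochner--Kodaira--Weitzenb\"ock identity for the holomorphic section $\sigma$, together with $\ric(\omega)\ge -\omega$ and $\ric(h^k)=k\omega$, yields the differential inequality
\begin{equation*}
\Delta_\omega u \;\ge\; -C_k\bigl(u+|\sigma|^2_{h^k}\bigr) \;\ge\; -C_k\, u-C_k'
\end{equation*}
on $X^\circ$, with constants depending only on $k,n$ and (\ref{inibdsig}). Pairing the elementary identity $\Delta_\omega|\sigma|^2_{h^k}=u-kn|\sigma|^2_{h^k}$ with the logarithmic cut-offs $\rho_\epsilon$ from Lemma \ref{cutoff}, integrating by parts on $X^\circ$, and using $\int_X|\Delta\rho_\epsilon|\,\omega^n\to 0$ together with the bound on $|\sigma|^2_{h^k}$ gives the initial integrability $u\in L^1(\hat X,\mu_\omega)$; testing the sub-solution inequality above against $\rho_\epsilon^2 u$ and invoking $\int_X|\nabla\rho_\epsilon|^2\omega^n\to 0$ then upgrades this to $u\in L^2(\hat X,\mu_\omega)$ with $|\nabla u|_\omega\in L^2_{\rm loc}(X^\circ)$.

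The next step is to lift the classical sub-solution property to a weak sub-solution inequality on all of $\hat X$: for every non-negative $\phi\in W^{1,2}(\hat X,d_\omega,\mu_\omega)\cap L^\infty(\hat X)$,
\begin{equation*}
\int_{\hat X}\langle\nabla u,\nabla\phi\rangle_\omega\,\omega^n \;\le\; \int_{\hat X}\bigl(C_k\, u+C_k'\bigr)\phi\,\omega^n.
\end{equation*}
This is verified by testing on $\phi\rho_\epsilon$ on $X^\circ$, where classical integration by parts applies because $\phi\rho_\epsilon$ has compact support in $X^\circ$, and passing to the limit $\epsilon\to 0$; all boundary contributions vanish by the decay properties of $\rho_\epsilon$. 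Once this weak sub-solution property on the compact space $\hat X$ is in hand, De Giorgi--Nash--Moser iteration using the Sobolev inequality---available from \cite{GPSS2} with some exponent $q>1$ and, thanks to the $\RCD(-1,2n)$ assumption via Lemma \ref{honda3}, with the optimal exponent $n/(n-1)$---promotes the $L^2$-bound on $u$ to the desired $L^\infty$-bound, finishing (\ref{inibdsid2}).

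\textbf{Main obstacle.} The Bochner calculation on $X^\circ$ is standard; the essential difficulty lies in the transfer of the sub-solution property to $\hat X$ and in verifying that $u$ has enough regularity to be tested against $W^{1,2}(\hat X)$ functions in a form suitable for Moser iteration. The hypothesis $\omega\ge c\theta_X$ is used precisely here to dominate $\omega$-norms of derivatives by $\theta_X$-norms, ensuring that the smooth elliptic theory on $X^\circ$ is well-behaved up to $\cS(X)$ when coupled with the cut-offs from Lemma \ref{cutoff}; the $\RCD(-1,2n)$ assumption enters through the Sobolev inequality with optimal exponent that drives the Moser iteration.
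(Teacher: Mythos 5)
Your derivation of \eqref{inibdsig} coincides with the paper's. For \eqref{inibdsid2}, however, the paper takes a genuinely different and substantially simpler route: since $\iota:(\hat X,d_\omega)\to(X,\theta_X)$ is Lipschitz under the hypothesis $\omega\ge c\theta_X$, one localizes on a small neighborhood $U\subset\hat X$ over which $L$ is trivial, views $\sigma$ as a holomorphic, hence $\Delta_\omega$-harmonic, function on $U\cap X^\circ$, and records that the local potential $\phi$ of $\omega$ solves $\Delta_\omega\phi=n$. Both extend across the zero-capacity set $\cS(X)$ to bounded harmonic (respectively bounded with constant Laplacian) functions on the $\RCD(-1,2n)$ space $U$, and Jiang's gradient estimate \cite{J14} --- the RCD generalization of Cheng--Yau --- directly yields the pointwise bound on $|\partial\sigma|_\omega$ and $|\nabla\phi|_\omega$, which controls $|\nabla\sigma|^2_{h^k,k\omega}$ since $\nabla^{h^k}\sigma=\partial\sigma-k\sigma\partial\phi$ in the local frame. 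Thus the RCD hypothesis is used far more than just for a Sobolev exponent: it is exploited as a \emph{regularity} hypothesis producing local Lipschitz bounds.

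Your Bochner--Moser route has a genuine gap at the passage from $L^1$ to $L^2$. You obtain $u:=|\nabla\sigma|^2_{h^k,k\omega}\in L^1$ correctly from the identity $\Delta|\sigma|^2_{h^k}=u-n|\sigma|^2_{h^k}$ and the bound on $\int|\Delta\rho_\epsilon|\,\omega^n$. But when you then test the differential inequality $\Delta u\ge -C_k u-C_k'$ against $\rho_\epsilon^2 u$, the standard Cacciopoli manipulation produces the term $\int u^2|\nabla\rho_\epsilon|^2\,\omega^n$, and the property $\int|\nabla\rho_\epsilon|^2\,\omega^n\to 0$ does \emph{not} force this to vanish without an a priori $L^\infty$ (or at least $L^2$) bound on $u$ near $\cS(X)$ --- which is precisely what you are trying to prove. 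The same obstruction recurs when you try to promote the classical subsolution inequality to a weak one on all of $\hat X$: verifying that $u\in W^{1,2}(\hat X)$ is part of the conclusion, not an input. Without a quantitative decay of $u$ near $\cS(X)$, or a barrier argument à la Cheng--Yau tailored to the singularity, the Moser iteration cannot start. The paper's localization precisely sidesteps this: the gradient estimate on RCD spaces is applied to functions that are globally bounded and (sub)harmonic, so no integrability of $u$ needs to be established in advance.
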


\begin{proof} Let $h_0$ be a smooth hermitian metric of $L$ such that $\omega_0= \ric(h_0)>0$ is a smooth K\"ahler metric. Then $h= h_0 e^{-\varphi}$ and $\omega= \omega_0 + \ddbar\varphi$ for some $\varphi\in \PSH(X, \omega_0)\cap L^\infty(X)\cap C^\infty(X^\circ)$. Then $|\sigma|^2_{h^k} = |\sigma|^2_{h_0^k} e^{-k \varphi}$ and (\ref{inibdsig}) follows immediately. 

%
%
Let $\iota: (\hat X, d_\omega) \rightarrow (X, \theta_X)$ be the extension of the identity map on $\hat X$. Then $\iota$ is  Lipschitz by our assumption. For any $p\in \hat X$, let $q= \iota(p)$ and choose an open neighborhood $U\subset \hat X$ of $p$, such that $B_{\theta_X}(q, r) \supset \iota(U)$ for some sufficiently small $r>0$. Then we can view $\sigma$ as a holomorphic function on $U\cap X^\circ$. In particular, $\Delta_\omega \sigma =0$ on $U\cap X^\circ$. Both of the real and imaginary parts of $\sigma$ can be extended to bounded harmonic functions on $U$. We also note that $\omega$ is a polarized metric of $L$, so the local potential $\phi$ of $\omega$ satisfies that $\Delta\phi=n$. Therefore $|\nabla \sigma|^2_{h, \omega}\leq|\partial \sigma|^2_\omega h_0^k e^{-k\varphi}+|\sigma|^2_h|\nabla\phi|^2_\omega$ is bounded by the gradient estimate for harmonic functions (also $\phi$) on an RCD space \cite{J14} as the generalization of Cheng-Yau's gradient estimate. We have now completed the proof of the lemma. 
\end{proof}

 Direct calculations show that 
\begin{equation}\label{lapsig41}
\Delta_{k\omega}  |\sigma|^2_{h^k} = \mathrm{tr}_{k\omega}\left( \left(\nabla \sigma \wedge  \overline{\nabla \sigma} \right) h^k \right) -  n|\sigma|^2_{h^k}  \geq - n|\sigma |^2_{h^k}.
\end{equation}
At any $p\in X^\circ$, by choosing normal coordinates at $p$ with respect to $\omega$, we can assume that $\nabla h(p) = 0$. Direct calculations  show that 
\begin{equation}\label{lapsig42}
\Delta_{k\omega} |\nabla \sigma|^2_{h^k, k\omega} \geq |\nabla^2 \sigma|^2_{h^k, k\omega} - (1+k^{-1})n |\nabla \sigma|^2_{h^k, k\omega} +n  |\sigma|^2_{h^k}
\end{equation}
by the Bochner formula.

We denote $\Delta_\sharp$ and $\nabla_\sharp$ by the Laplace and gradient operators with respect to the rescaled metrics $h^k$ and $k\omega$ and similarly define the scaled norm $\| \cdot \|_{L^{\infty, \sharp}}$ and $\| \cdot \|_{L^{2, \sharp}}$ for $s\in H^0(X, L^k)$ with respect to the hermitian metric $h^k$ and $k\omega$.

\begin{lemma} \label{L242} Suppose  $(\hat X, d_\omega, \mu_\omega)$ is a compact $\RCD(-1, 2n)$ space. There exists  $\Lambda >0$ such that if  $s\in H^0(X, L^k)$ for $k\geq 1$, then 
\begin{equation}\label{l2form1}
\|s\|_{L^{\infty, \sharp}} \leq \Lambda \|s\|_{L^{2, \sharp}}.
\end{equation}
\begin{equation}\label{l2form2}
 \|\nabla_\sharp s\|_{L^{\infty, \sharp}} \leq \Lambda \|s\|_{L^{2, \sharp}}. 
\end{equation}
\end{lemma}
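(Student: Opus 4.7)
The plan is to run Moser iteration in the rescaled metric measure space $(\hat X,\sqrt{k}\,d_\omega,(k\omega)^n)$. Since rescaling $\omega\mapsto k\omega$ divides the Ricci lower bound by $k$, this rescaled space is an $\RCD(-1/k,2n)$-space, and therefore an $\RCD(-1,2n)$-space for every $k\geq 1$. By Lemma~\ref{L2411}, $u_1:=|s|^2_{h^k}$ and $u_2:=|\nabla_\sharp s|^2_{h^k,k\omega}$ are bounded on $X^\circ$ and extend to $\hat X$, and $u_1$ is Lipschitz. Discarding the non-negative Hessian term in (\ref{lapsig42}), the identities (\ref{lapsig41})--(\ref{lapsig42}) yield the pointwise sub-solution inequalities
$$
\Delta_\sharp u_1 \;\geq\; -n\,u_1,\qquad \Delta_\sharp u_2 \;\geq\; -2n\,u_2
$$
on $X^\circ$. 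Testing against the cut-off functions $\rho_\epsilon$ of Lemma~\ref{cutoff} (whose Laplacian $L^1$-mass vanishes as $\epsilon\to 0$) upgrades both to distributional sub-solution inequalities on all of $\hat X$.

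On any $\RCD(-1,2n)$-space, De~Giorgi--Moser iteration based on the local Sobolev inequality will supply the standard mean value bound
$$
\sup_{B_\sharp(x,1/2)} u_i \;\leq\; \frac{C(n)}{(k\omega)^n(B_\sharp(x,1))}\int_{B_\sharp(x,1)} u_i\,(k\omega)^n
$$
for nonnegative sub-solutions of $\Delta_\sharp u_i\geq -C u_i$. To convert the right-hand side into a global $L^{2,\sharp}$-quantity I need a uniform-in-$k$ lower bound on $(k\omega)^n(B_\sharp(x,1))$. The Bishop--Gromov comparison on $\RCD(-1/k,2n)$ is perfectly matched to this rescaling: with total mass $V_\sharp=k^n V_\omega$ and diameter $D_\sharp=\sqrt{k}\,\Diam(\hat X,d_\omega)$, a direct computation with the model-space volume function $V_{-1/k}$ gives
$$
(k\omega)^n(B_\sharp(x,1)) \;\geq\; V_\sharp\cdot\frac{V_{-1/k}(1)}{V_{-1/k}(D_\sharp)} \;\geq\; v_0>0
$$
uniformly in $k\geq 1$ and $x\in \hat X$, with $v_0$ depending only on $n$, $V_\omega$ and $\Diam(\hat X,d_\omega)$. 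Combined with the mean value bound for $u_1$, this establishes (\ref{l2form1}).

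For (\ref{l2form2}) I first convert the global Dirichlet energy into the $L^{2,\sharp}$-mass of $s$: integrating the pointwise identity (\ref{lapsig41}) against $\rho_\epsilon$ over $\hat X$ and letting $\epsilon\to 0$ (so the boundary terms coming from $\nabla\rho_\epsilon$ disappear by Lemma~\ref{cutoff}) produces
$$
\int_{\hat X} |\nabla_\sharp s|^2_{h^k,k\omega}\,(k\omega)^n \;=\; n\int_{\hat X} |s|^2_{h^k}\,(k\omega)^n \;=\; n\,\|s\|_{L^{2,\sharp}}^2.
$$
Applying the mean value bound to $u_2$ and invoking the uniform volume bound $v_0$ again yields (\ref{l2form2}).

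The principal obstacle is making both the Moser iteration constants and the Bishop--Gromov lower bound \emph{uniform in the tensor power} $k$; the key point is that rescaling $\omega\mapsto k\omega$ improves the Ricci lower bound from $-1$ to $-1/k$, which precisely absorbs the $k^n$ scaling of volumes so that unit balls in the rescaled metric carry $k$-independent mass. A secondary subtlety is the passage from the pointwise Bochner inequalities on $X^\circ$ to distributional inequalities on $\hat X$; this is handled by the capacity-zero cut-offs $\rho_\epsilon$ of Lemma~\ref{cutoff}.
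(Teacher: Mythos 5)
Your argument is correct and follows essentially the same route as the paper: the differential inequalities (\ref{lapsig41})--(\ref{lapsig42}), the a priori bounds of Lemma \ref{L2411}, the zero-capacity cut-offs of Lemma \ref{cutoff} to push the subsolution inequalities across $\cS(X)$, and Moser iteration in the rescaled metric (the paper simply outsources these steps to Propositions 3.3--3.4 of \cite{S2}). Your explicit Bishop--Gromov computation showing that unit balls for $k\omega$ carry a $k$-independent amount of mass is exactly the non-collapsing input that the citation to \cite{S2} leaves implicit, and your energy identity $\int_{\hat X}|\nabla_\sharp s|^2_\sharp\,(k\omega)^n=n\|s\|_{L^{2,\sharp}}^2$ is the standard way to reduce (\ref{l2form2}) to (\ref{l2form1}).
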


\begin{proof} The estimates (\ref{lapsig41}) and (\ref{lapsig42}) imply that 
$$\Delta_\sharp |\sigma|_{\sharp} \geq - n |\sigma|_{\sharp}, ~ \Delta_\sharp |\nabla_\sharp \sigma|_\sharp \geq - n |\nabla\sigma|_\sharp, $$
where   $| \cdot |_\sharp$ are taken with respect to the rescaled metrics $h^k$ and $k\omega$.  Since $\cS(X)$ is an analytic subvariety of $X$ and has $0$-capacity, we can apply the same argument in \cite{S2} (c.f. Proposition 3.3 and Proposition 3.4) by combining Moser's iteration, the cut-off functions  for $|\sigma|_\sharp$ and $|\nabla_\sharp \sigma|$ in Lemma \ref{cutoff},  (\ref{inibdsig}) and (\ref{inibdsid2}) in Lemma \ref{L242} to  obtain the $L^2$-estimates (\ref{l2form1}) and (\ref{l2form2}). 
\end{proof}


\section{Proof of Theorem \ref{mainthm1}: a special case}

In this section, we will prove a special case for Theorem \ref{mainthm1}. We consider a special family of projective varieties that admit a resolution of singularities with effective relative anti-canonical bundle.

Let $X$ be an $n$-dimensional projective normal variety of log terminal singularities. We assume that $X$ admits a resolution of singularities 
$$\pi: Y \rightarrow X$$ 
with 
\begin{equation}\label{spres} 
K_Y = \pi^*K_X - \sum_{i=1}^I a_i E_i, ~a_i\geq 0, i=1, ..., I, 
\end{equation}
where $E_j$ is a prime divisor. Since $X$ has log terminal singularities, $a_i \in (-\infty, 1)$. Throughout this section we will assume $-K_Y$ is $\pi$-effective, i.e., the relative anti-canonical bundle $-K_{Y/X}$ is effective. Therefore
$$0\leq a_i < 1, ~\leq i=1, ...,  I.$$
 The following is the main result of this section. The singular set $\cS(X)$  coincides with the support of $\cup_{i=1}^I\pi(E_i)$ since $a_i\geq 0$, otherwise there must be some $a_i<0$.  The following is the main result of this section. 
 
\begin{theorem} \label{thm3spe} Suppose $X$ is an $n$-dimensional projective variety with log terminal singularities that admits a resolution of singularities satisfying (\ref{spres}). Let $\omega\in \nu(X, \theta_X, n, A, p, K)$ with $p>n$ be  a twisted K\"ahler-Einstein metric $\omega$  on $X$ satisfying
\begin{equation}\label{tkespe}
\ric(\omega) = - \omega + \alpha
\end{equation}
for a smooth non-negative closed $(1,1)$-form $\alpha$. Then the metric measure space $(\hat X, d_\omega, \mu_\omega)$ induced by $(X^\circ, \omega, \omega^n)$ is a non-collapsed $\RCD(-1, 2n)$ space satisfying the following properties. 
\begin{enumerate}

\item  $(\hat X, d_\omega)$ is homeomorphic to the original variety $X$.

\medskip

\item $\mathcal{R}(\hat X)= X^\circ$.

\medskip

\item  There exists $c>0$ such that $$\omega \geq c \theta_X. $$

\item There exists $\epsilon=\epsilon(n)>0$ such that for any $p \in \cS(\hat X)$, 
$$
\nu_{\hat X} (p)< 1-\epsilon.
$$

\end{enumerate}

\end{theorem}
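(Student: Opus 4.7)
The plan is to obtain, in order: (i) the uniform metric lower bound $\omega \geq c\theta_X$ via a Chern--Lu argument on the resolution, (ii) the $\RCD(-1,2n)$ structure via Honda's criterion (Lemma \ref{honda3}), (iii) the homeomorphism $\hat X \cong X$ via an upstairs pseudo-metric analysis on $Y$ combined with the $L^2$-sectional estimates, and (iv) the dimensional volume-density gap by a tangent-cone / compactness argument. Everything pivots on (i), which is where the effectivity of $-K_{Y/X}$ enters essentially.

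For (i), I would start with the regularization of Lemma \ref{pshapp}, which approximates $\alpha=\alpha_0+\ddbar\psi$ by smooth $\alpha_i=\alpha_0+\ddbar\psi_i$; letting $\omega_i=\omega_0+\ddbar\varphi_i$ solve the smooth twisted KE equation (\ref{tkema2}), Lemma \ref{loccon25} reduces (3) to a uniform bound $\omega_i\geq c\theta_X$. I would derive this by running the Chern--Lu scheme upstairs on $Y$: because $-K_{Y/X}=\sum_i a_i E_i$ is effective ($a_i\geq 0$), the Ricci curvature of $\pi^*\omega_i$ against the smooth volume form of $Y$ is controlled below by $-\pi^*\omega_i$ modulo a current supported on $E=\bigcup_i E_i$, which is absorbed by a barrier $\epsilon\log|s_E|^2_{h_E}$ added to the standard quantity $\log\tr_{\pi^*\omega_i}(\pi^*\theta_X)-A\pi^*\varphi_i$. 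Applying the maximum principle on $Y\setminus E$, using the uniform $L^\infty$ estimate (\ref{estsec200}), and letting $\epsilon\to 0$ gives $\pi^*\omega_i\geq c\pi^*\theta_X$ on $Y\setminus E$; Lemma \ref{loccon25} then delivers $\omega\geq c\theta_X$ on $X^\circ$, proving (3).

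With (3) secured, the identity on $X^\circ$ extends to a continuous surjection $\iota:(\hat X,d_\omega)\to(X,\theta_X)$. To invoke Lemma \ref{honda3} for the $\RCD(-1,2n)$ conclusion it remains to verify that every eigenfunction of $\Delta_\omega$ is Lipschitz on $\hat X$: the Bochner inequality with $\ric(\omega)\geq -\omega$ on $X^\circ$, the Sobolev and spectral theory of \cite{GPSS2}, the cut-off functions of Lemma \ref{cutoff}, and a Moser iteration parallel to the proof of Lemma \ref{L242} produce a uniform $L^\infty$ gradient bound and hence a Lipschitz extension to $\hat X$. The resulting RCD structure then upgrades Lemma \ref{L242} to uniform $L^\infty$ and gradient estimates for all $H^0(X,L^k)$ sections, while Lemma \ref{L241} produces enough sections to realize $\iota$ as the underlying continuous map of a Kodaira-type embedding into a fixed $\mathbb{CP}^N$. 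For the injectivity of $\iota$ (hence (1)), I would transfer the question to $Y$: the degenerate form $\pi^*\omega$ vanishes identically on tangent vectors to exceptional divisors, so its induced pseudo-metric collapses each connected component of $E$ to a single equivalence class, while $\pi^*\omega\geq c\pi^*\theta_X$ on $Y\setminus E$ keeps it non-degenerate elsewhere; the metric completion $\hat X$ is thereby identified with $Y/\!\!\sim\;=X$ set-theoretically, and $\iota$ is the natural homeomorphism. The identification $\mathcal{R}(\hat X)=X^\circ$ in (2) is then the inclusion $X^\circ\subseteq\mathcal{R}(\hat X)$ (immediate from $\omega$ being smooth Kähler on $X^\circ$) together with the reverse inclusion, which is forced by (4).

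I expect (4) to be the main obstacle. The plan is a contradiction-and-compactness argument inside the $\RCD(-1,2n)$ category: if there were a sequence $(X_j,\omega_j)$ in the class of the theorem with $p_j\in\mathcal{S}(\hat X_j)$ satisfying $\nu_{\hat X_j}(p_j)\to 1$, then by volume-cone rigidity and Cheeger--Colding--Gigli tangent cone theory, suitable rescaled pointed Gromov--Hausdorff limits would converge to Euclidean $\mathbb{R}^{2n}$. On the other hand, each $\iota(p_j)$ is an algebraic log terminal singularity of $X_j$, and pulling back to the resolution $Y_j$ the effective divisor $-K_{Y_j/X_j}=\sum_i a_{i,j}E_{i,j}$ forces the Monge--Amp\`ere mass of $\omega_j^n$ in small $d_{\omega_j}$-balls around $p_j$ to vanish at a rate controlled by the discrepancies $a_{i,j}$, giving a positive defect in $1-\nu$ stable under the limit; combined with the uniform Bishop--Gromov inequality and the sectional estimates of Lemma \ref{L242}, this contradiction yields the dimensional constant $\epsilon(n)$. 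Translating the algebraic effectivity of $-K_{Y/X}$ into a uniform volume-density defect that survives Gromov--Hausdorff convergence is the technical heart of (4); the Chern--Lu bound of (3) and the RCD Bishop--Gromov inequality are the key tools.
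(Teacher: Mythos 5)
Your proposal correctly flags the Chern--Lu bound, Honda's criterion, the $L^2$/partial-$C^0$ machinery, and a compactness--contradiction argument as the main tools, but it diverges from the paper at several load-bearing points, and at least one step as written does not work.

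First, a framing mismatch: in Theorem \ref{thm3spe} the twist $\alpha$ is \emph{already smooth}, so the regularization $\alpha_i = \alpha_0 + \ddbar\psi_i$ from Lemma \ref{pshapp} is neither needed nor relevant here; that device belongs to the proof of Theorem \ref{thm3gene}. The paper instead runs a two-parameter approximation \emph{upstairs on the resolution} $Y$: equations (\ref{ma3})--(\ref{ma4}) introduce $\delta_k\theta_Y$ to make the reference class Kähler on $Y$ and $\epsilon_j$ to desingularize the volume density $\prod_i|\sigma_i|^{-2a_i}$. For fixed $k$, the $\omega_{k,j}$ are honest smooth Kähler metrics on the compact smooth manifold $Y$ with Ricci uniformly bounded below (by the $k$-dependent constant $-Ce^{C\delta_k^{-1}}$), so Cheeger--Colding applies directly and the Gromov--Hausdorff limit $(Y_k, d_k, \mu_k)$ is \emph{a priori} an RCD space; Honda's criterion (Lemma \ref{honda3}) then only has to \emph{tighten} the lower bound from $-Ce^{C\delta_k^{-1}}$ to $-1$ using the pointwise bound $\ric(\omega_k)\geq -\omega_k$ on $Y\setminus E'$. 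By contrast, your proposal tries to verify eigenfunction Lipschitzness on $(\hat X, d_\omega, \mu_\omega)$ \emph{ab initio} by Bochner + Moser iteration with the Lemma \ref{cutoff} cutoffs. That is a genuinely different, and substantially harder, route: you need an integrability bootstrap for $|\nabla f|^2$ (you only know $\nabla f\in L^2$ at the outset), and the Lemmas \ref{L2411}--\ref{L242} you lean on are stated \emph{under} the RCD hypothesis, so they cannot be invoked to establish it. The paper's scheme sidesteps all of this by inheriting the RCD structure from smooth approximants.

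Second, and more seriously, your argument for the universal density gap (4) is wrong. You propose to derive a defect $1-\nu$ from the discrepancies $a_{i,j}$ of the resolutions $-K_{Y_j/X_j}=\sum_i a_{i,j}E_{i,j}$. But the hypothesis (\ref{spres}) allows $a_i = 0$ (crepant resolutions), and the paper explicitly emphasizes this case (canonical models of minimal surfaces of general type, etc.). With $a_i=0$ the cone-angle computation of Lemma \ref{angl} gives $\nu = 1-a_i = 1$ along the generic point of $E_i$, so there is no discrepancy-induced defect at all, and yet $X$ still has genuine klt singularities whose metric completions must carry $\nu<1$. The paper's proof of (4) is of a completely different nature: Lemma \ref{dengap} shows $\nu<1-\epsilon(X)$ by a Reifenberg/partial-$C^0$ contradiction (near-Euclidean density forces a local biholomorphism to a ball in $\mathbb{C}^n$, incompatible with $\iota_\infty(p)$ being an analytic singularity), and Lemma \ref{dengap55} upgrades this to a dimensional constant $\epsilon(n)$ by a compactness argument over a sequence of $(X_j,\omega_j)$ and a case analysis on the splitting type of the limiting tangent cone (differential sphere theorem in the isolated-singularity case; partial-$C^0$ in the $\mathbb{R}^{2n-2}\times W$ case; dimension reduction in the remaining case). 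Your ``discrepancy defect stable under the limit'' step would have to be replaced by this analytic argument.

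Finally, your injectivity argument for $\iota$ --- that $\pi^*\omega$ ``collapses each connected component of $E$ to a single equivalence class'' --- is not how the paper proceeds and is not obviously correct; the exceptional fibers are collapsed in the pushed-forward sense but the metric completion of $(Y\setminus E', \omega_k)$ retains nontrivial structure along $E''$ (where $a_i=0$), and in any case injectivity is established (Lemma \ref{1to1-51}) by separating points with holomorphic functions produced from the $L^2$-estimates and partial-$C^0$ machinery of Donaldson--Sun and Liu--Sz\'ekelyhidi, not by a pseudo-metric argument.
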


 The rest of the section is devoted to the proof of Theorem \ref{thm3spe}.
 
 \subsection{Approximating metrics on $Y$} We let $\omega$ be the singular K\"ahler metric in the assumption of Theorem \ref{thm3spe}. We choose a smooth reference K\"ahler metric $\omega_X\in [\omega]$. Then there exists $\varphi \in \PSH(X, \omega_X) \cap L^\infty(X) \cap C^\infty(X^\circ)$. 
$$\omega = \omega_X + \ddbar \varphi.$$  
Since $\alpha \in [\omega] - [K_X]$ is smooth, there exists 
 a smooth adaptive volume measure $\Omega_X$  satisfying 
 $$\alpha = \ric(\Omega_X) + \omega_X.$$

After adding a constant to $\varphi$, we can view $\varphi$ as the solution to the following complex Monge-Amp\`ere equation
\begin{equation}\label{ma1}
(\omega_X + \ddbar \varphi)^n = e^{\varphi } \Omega_X. 
\end{equation}
which is equivalent to the corresponding curvature equation (\ref{tkespe}).

Let 
$$E'= \cup_{a_i>0} E_i, ~E''=\cup_{a_i=0} E_i, ~ E=E'\cup E'' $$
and let $\sigma_i$ be the defining section for $E_i$ respectively. We choose a smooth   volume measure   $\Omega_Y$ on $Y$ and  there exist smooth hermitian metrics $h_i$ associated to divisors $E_i$ such that
$$\Omega_Y = \prod_{i=1}^I |\sigma_i|^{2 a_i}_{h_i} \pi^* \Omega_X . $$
By Kodaira lemma, there exists a $\mathbb{Q}$-effective divisor $D$ with 
$\pi^*[\omega_X] - [D]>0$ and so we can further choose a smooth hermitian metric $h_D$  associated to  $[D]$ such that  
\begin{equation}\label{defthy}
\theta_Y = \pi^*\omega_X - \ric(h_D)>0
\end{equation}
is a smooth K\"ahler metric on $Y$. If we let $\sigma_D$ be the defining section of $D$, then 
$\theta_Y= \pi^*\omega_X + \ddbar \log |\sigma_D|^2_{h_D}$ on $X\setminus D$.
We can further assume that the support of $D$ contains that of the exceptional divisor $E$.

We will approximate equation (\ref{ma1}) by   the following families of complex Monge-Amp\`ere equations 

\begin{equation}\label{ma3}
(\pi^*\omega_X + \delta_k \theta_Y + \ddbar \varphi_k )^n = e^{ \varphi_k} \prod_{i=1}^I |\sigma_i|_{h_i}  ^{-2a_i}\Omega_Y, 
\end{equation}
and  
\begin{equation}\label{ma4}
(\pi^*\omega_X + \delta_k  \theta_Y + \ddbar \varphi_{k,  j} )^n = e^{ \varphi_{k,  j}} \prod_{i=1}^I \left( |\sigma_i|^2_{h_i} + \epsilon_j \right)^{-a_i}\Omega_Y 
\end{equation}
with paremeters $\delta_k\in (0,1)$ and $\epsilon_i\in (0, 1)$ satisfying  
$$\lim_{k\rightarrow \infty} \delta_k = \lim_{j\rightarrow \infty} \epsilon_j = 0. $$
We let 
\begin{equation}
\omega_k = \pi^*\omega_X + \delta_k \theta_Y + \ddbar \varphi_k , ~\omega_{k,j} = \pi^*\omega_X + \delta_k \theta_Y + \ddbar \varphi_{k,j}. 
\end{equation}
It is well-known that both equation (\ref{ma3}) and (\ref{ma4}) admit unique solutions in $L^\infty(Y)$ as the volume measures are $L^p$ for some $p>1$. Furthermore, $\varphi_{k, j}$ is smooth for all $k, j>0$ and $\varphi_k$ is smooth on $Y\setminus E'$ as shown in \cite{GPSS2}. One can further obtain uniform estimates for both $\varphi_{k, j}$ and $\varphi_k$ as below.

\begin{lemma} \label{linf55} There exists $C>0$ such that for all $j, k>0$,  
$$||\varphi_{k, j} ||_{L^\infty(Y)} + \|\varphi_k\|_{L^\infty(Y)}\leq C. $$

\end{lemma}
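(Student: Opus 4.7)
The plan is to establish the uniform bound in two stages: an elementary upper bound by the maximum principle, and a lower bound via the uniform $L^\infty$ theory for complex Monge-Amp\`ere equations with degenerate reference classes.

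\textbf{Upper bound.} After normalizing the hermitian metrics $h_i$ so that $|\sigma_i|_{h_i}^2\le 1$, at a maximum point $p\in Y$ of $\varphi_k$ the inequality $\ddbar\varphi_k(p)\le 0$ gives $\omega_k(p)\le (\pi^*\omega_X+\delta_k\theta_Y)(p)$. Taking $n$-th exterior powers and using (\ref{ma3}),
$$ e^{\varphi_k(p)}\prod_i |\sigma_i(p)|_{h_i}^{-2a_i}\,\Omega_Y(p)=\omega_k^n(p)\le (\pi^*\omega_X+\delta_k\theta_Y)^n(p)\le C\,\Omega_Y(p),$$
where $C$ is a smooth uniform upper bound on $(\pi^*\omega_X+\delta_k\theta_Y)^n/\Omega_Y$ independent of $k\in(0,1]$. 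Since $\prod|\sigma_i|_{h_i}^{-2a_i}\ge 1$, this yields $\sup_Y\varphi_k\le\log C$ uniformly in $k$. The same argument applied to (\ref{ma4}), together with the elementary bound $(|\sigma_i|^2_{h_i}+\epsilon_j)^{-a_i}\ge 2^{-a_i}\ge 1/2$ valid for $|\sigma_i|_{h_i}^2\le 1$ and $\epsilon_j\le 1$, gives $\sup_Y\varphi_{k,j}\le\log C$ uniformly in $j,k$.

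\textbf{Lower bound.} Since $0\le a_i<1$, I can choose $q>1$ with $q\cdot\max_i a_i<1$, so that the singular densities
$$F_k:=\prod_i |\sigma_i|_{h_i}^{-2a_i},\qquad F_{k,j}:=\prod_i\bigl(|\sigma_i|^2_{h_i}+\epsilon_j\bigr)^{-a_i}$$
satisfy $\|F_k\|_{L^q(\Omega_Y)}+\|F_{k,j}\|_{L^q(\Omega_Y)}\le M$ uniformly; note $F_{k,j}\le F_k$ pointwise once the $h_i$ are normalized as above. Combined with the upper bound $\varphi_k,\varphi_{k,j}\le C$ just obtained, the Monge-Amp\`ere volume forms $\omega_k^n=e^{\varphi_k}F_k\Omega_Y$ and $\omega_{k,j}^n=e^{\varphi_{k,j}}F_{k,j}\Omega_Y$ have densities with a uniform $L^q$-bound against $\Omega_Y$. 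The reference class $[\pi^*\omega_X+\delta_k\theta_Y]$ is K\"ahler for every $\delta_k>0$ and converges, as $\delta_k\to 0$, to the big and semi-ample class $[\pi^*\omega_X]$ whose top self-intersection equals $[\omega_X]^n=V>0$. Invoking the uniform $L^\infty$ estimate of Eyssidieux-Guedj-Zeriahi (and its PDE proof by Guo-Phong-Tong) for complex Monge-Amp\`ere equations on degenerating families of K\"ahler classes collapsing to a big class with positive limiting volume—whose hypotheses require exactly an $L^q$ density bound for some $q>1$—one obtains $\|\varphi_k\|_{L^\infty(Y)}+\|\varphi_{k,j}\|_{L^\infty(Y)}\le C$ independently of $j,k$.

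\textbf{Main obstacle.} The delicate point is the degeneration of the reference form $\pi^*\omega_X+\delta_k\theta_Y$ as $\delta_k\to 0$: it collapses along the exceptional divisor $E$, so a direct application of Ko{\l}odziej's classical $L^\infty$-estimate with the fixed K\"ahler metric $\pi^*\omega_X+\delta_k\theta_Y$ would produce constants blowing up as $\delta_k\to 0$. Uniformity must instead come from an estimate that depends only on the positive limiting volume $V$ and on the uniform $L^q$-integrability of the Monge-Amp\`ere densities. This is precisely where the klt hypothesis $a_i<1$ enters quantitatively: it is what makes the singular densities $F_k$ lie in a fixed $L^q$ ($q>1$) and allows the degenerate-class estimates to produce a $k,j$-independent bound.
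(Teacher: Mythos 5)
Your proposal is correct and follows essentially the same two-step strategy as the paper: an elementary argument for the upper bound, and then the uniform $L^\infty$-theory for complex Monge-Amp\`ere equations with uniformly $L^q$ ($q>1$) densities in the degenerating family of K\"ahler classes $[\pi^*\omega_X+\delta_k\theta_Y]\to[\pi^*\omega_X]$ (the paper phrases this as ``the $L^\infty$-estimate of Ko{\l}odziej and its extensions,'' i.e.\ exactly the EGZ/Zhang/Guo--Phong--Tong-type results you invoke, made possible by $q\,a_i<1$). The only wrinkle is your pointwise maximum-principle step for $\varphi_k$, which is smooth only on $Y\setminus E'$ and merely bounded on $Y$, so its maximum need not be attained at a smooth point; this is easily repaired either by perturbing with $\varepsilon\log|\sigma_{E'}|^2$, by running the argument on the globally smooth $\varphi_{k,j}$ and letting $j\to\infty$, or as the paper does by integrating (\ref{ma3})--(\ref{ma4}) and combining Jensen's inequality with the sub-mean-value property of $(\pi^*\omega_X+\theta_Y)$-PSH functions.
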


\begin{proof} By integration on both hand sides and Jensen's inequality, we have uniform upper bound for $\int_Y \varphi_{k, j} \Omega_Y$. Since $\varphi_{k, j} \in \PSH(Y, \pi^*\omega_X+ \delta_k \theta_Y)$, $\varphi_{k, j}$ is uniformly bounded above.  Similarly, $\varphi_k$ is also uniformly bounded above.  $L^\infty$-estimate of \cite{Ko1} and its extensions  for complex Monge-Amp\`ere equations directly leads to the conclusion of the lemma since the volume measures on the right hand sides of (\ref{ma3}) and (\ref{ma4}) are uniformly bounded in $L^p(Y, \Omega_Y)$ for some fixed $p>1$.
\end{proof}

The uniform $L^\infty$-estimates also imply the uniform volume bounds, i.e., there exists $C>0$ such that for all $j, k>0$,
\begin{equation}\label{vol55bd}
C^{-1} \leq \vol(Y, \omega_{k, j}^n) \leq C, ~ C^{-1} \leq \vol(Y, \omega_k^n) \leq C. 
\end{equation}

The $L^\infty$-estimates in Lemma \ref{linf55} immediately imply a uniform diameter bound for $(Y, \omega_{k, j})$ and $(Y_k, d_{\omega_k})$ by the results of \cite{GPSS1, GPSS2}, where 
$$(Y_k, d_{\omega_k}) = \overline{(Y\setminus E', \omega_k)}.$$

\begin{lemma} \label{dia55} There exists $C>0$ such that for all $k, j>0$, we have
$$\Diam(Y, \omega_{k, j}) \leq C, ~ \Diam(Y_k, d_{\omega_k}) \leq C. $$

\end{lemma}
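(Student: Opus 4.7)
The plan is to derive both diameter bounds as applications of the uniform diameter estimate of \cite{GPSS2}, applied to the families $\{\omega_{k,j}\}$ and $\{\omega_k\}$ viewed as singular K\"ahler metrics on $Y$ with fixed reference $\theta_Y$. For this I must verify that each family lies in a class $\nu(Y, \theta_Y, n, A, p, K)$ with constants independent of the parameters $j, k$.

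First I would address the cohomological condition. Since $\delta_k \in (0,1)$, the class $[\pi^*\omega_X + \delta_k \theta_Y]$ remains in a bounded subset of $H^{1,1}(Y, \mathbb{R})$, so both $[\omega_{k,j}] \cdot [\theta_Y]^{n-1}$ and $[\omega_k] \cdot [\theta_Y]^{n-1}$ are uniformly bounded by some $A>0$. The volume bounds (\ref{vol55bd}) additionally give that $V_{k,j} = [\omega_{k,j}]^n$ and $V_k = [\omega_k]^n$ are bounded away from $0$ and $\infty$ uniformly.

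Next I would verify the Nash entropy bound. Writing $\omega_{k,j}^n = F_{k,j}\,\Omega_Y$ with
$$F_{k,j} \;=\; e^{\varphi_{k,j}} \prod_{i=1}^I \bigl(|\sigma_i|^2_{h_i} + \epsilon_j\bigr)^{-a_i},$$
the $L^\infty$ bound from Lemma \ref{linf55} reduces the entropy bound to controlling $\int_Y \bigl(1 + \sum_i |\log(|\sigma_i|^2_{h_i} + \epsilon_j)|^p\bigr) F_{k,j}\,\Omega_Y$ uniformly in $j,k$ for some $p>n$. Since $a_i < 1$ for every $i$, I can pick $q>1$ with $q a_i < 1$ for all $i$, so the weights $\prod_i (|\sigma_i|^2_{h_i} + \epsilon_j)^{-a_i}$ are uniformly bounded in $L^q(Y, \Omega_Y)$ independent of $j$. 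A H\"older argument combined with uniform $L^r$-bounds for $\log(|\sigma_i|^2_{h_i} + \epsilon_j)$ then yields the desired Nash entropy estimate. The same argument at $\epsilon_j = 0$ applies to $\omega_k$, which is smooth on $Y\setminus E'$, and the GPSS diameter estimate gives the bound for $(Y_k, d_{\omega_k}) = \overline{(Y\setminus E', \omega_k)}$.

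The main obstacle is the uniform integrability in the degenerate limit $\epsilon_j \to 0$ together with the logarithmic factors; this is a consequence of the log terminal condition $a_i < 1$, but it must be packaged carefully so that the Nash entropy constant $K$ (and hence the diameter constant from \cite{GPSS2}) does not depend on $j$ or $k$.
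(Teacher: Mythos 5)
Your proposal is correct and follows essentially the same route as the paper: the paper deduces Lemma \ref{dia55} by simply invoking the uniform diameter estimates of \cite{GPSS1, GPSS2}, given the uniform $L^\infty$-bound of Lemma \ref{linf55} and the fact (used in that lemma's proof) that the volume densities on the right-hand sides of (\ref{ma3})--(\ref{ma4}) are uniformly bounded in $L^q(Y, \Omega_Y)$ for some fixed $q>1$ because $a_i < 1$. Your more detailed verification of the Nash entropy hypothesis — H\"older against the $L^q$-bound on $\prod_i(|\sigma_i|^2_{h_i}+\epsilon_j)^{-a_i}$ and the uniform $L^r$-control of the logarithmic factors, together with the cohomological bounds from $\delta_k\in(0,1)$ — is just an explicit unpacking of what the paper's citation to GPSS implicitly requires, so the two proofs are the same in substance.
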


The Ricci lower bounds for $\omega_{k, j}$ and $\omega_k$ are given in the following lemma.

\begin{lemma} There exists $C>0$ such that for all $j, k >0$, we have 
\begin{equation}\label{riclb52}
\ric(\omega_{k, j} ) \geq - Ce^{C\delta_k^{-1}} \omega_{k, j}
\end{equation}
on $Y$ and
\begin{equation}\label{riclb51}
\ric(\omega_k) \geq -  \omega_k. 
\end{equation}
on $Y\setminus E'$.

\end{lemma}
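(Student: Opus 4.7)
The plan is to compute $\ric(\omega_k)$ and $\ric(\omega_{k,j})$ directly by applying $-\ddbar\log$ to the Monge--Amp\`ere equations (\ref{ma3}) and (\ref{ma4}), and then to reduce the resulting identities to lower bounds expressed purely in terms of $\omega_k$ or $\omega_{k,j}$. The two computational inputs are the Poincar\'e--Lelong identity $\ddbar\log|\sigma_i|^2_{h_i}=-\ric(h_i)$ away from $E_i$, and the adapted volume identity $\ric(\Omega_Y)=\sum_i a_i\ric(h_i)+\pi^*\ric(\Omega_X)$ valid on $Y\setminus E$, both immediate from $\Omega_Y=\prod|\sigma_i|^{2a_i}_{h_i}\pi^*\Omega_X$.

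For (\ref{riclb51}) applied to $\omega_k$, after taking $-\ddbar\log$ of (\ref{ma3}) the contributions $\sum_i a_i\ric(h_i)$ coming from $\ddbar\log|\sigma_i|^2_{h_i}$ and from $\ric(\Omega_Y)$ cancel. Substituting $\ddbar\varphi_k=\omega_k-\pi^*\omega_X-\delta_k\theta_Y$ and $\alpha=\omega_X+\ric(\Omega_X)$ gives on $Y\setminus E$ the identity $\ric(\omega_k)=-\omega_k+\pi^*\alpha+\delta_k\theta_Y$, whence $\ric(\omega_k)\geq -\omega_k$ since $\pi^*\alpha\geq 0$ and $\theta_Y>0$. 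This extends by continuity to $Y\setminus E'$, where $\omega_k$ is smooth, completing the proof of (\ref{riclb51}).

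For (\ref{riclb52}) applied to $\omega_{k,j}$, the analogous computation on $Y$ (where $\omega_{k,j}$ is smooth, since the density in (\ref{ma4}) is smooth and positive) yields
\begin{equation*}
\ric(\omega_{k,j})=-\omega_{k,j}+\pi^*\alpha+\delta_k\theta_Y+\sum_i a_i\bigl[\ddbar\log(|\sigma_i|^2_{h_i}+\epsilon_j)+\ric(h_i)\bigr].
\end{equation*}
A direct local calculation based on $\ddbar u=-u\,\ric(h)+Q$ with $Q=\sqrt{-1}\partial u\wedge\dbar u/u\geq 0$ for $u=|\sigma|^2_h$ produces
\begin{equation*}
\ddbar\log(|\sigma_i|^2_{h_i}+\epsilon_j)+\ric(h_i)=\frac{\epsilon_j\,\ric(h_i)}{|\sigma_i|^2_{h_i}+\epsilon_j}+\frac{\epsilon_j Q_i}{(|\sigma_i|^2_{h_i}+\epsilon_j)^2}.
\end{equation*}
Discarding the non-negative second term and using $\epsilon_j/(|\sigma_i|^2_{h_i}+\epsilon_j)\leq 1$ together with the smoothness of $\ric(h_i)$ on the compact manifold $Y$, I obtain a preliminary Ricci lower bound $\ric(\omega_{k,j})\geq -\omega_{k,j}-C\theta_Y$ with $C>0$ independent of $k,j$.

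The final step is to absorb the error $C\theta_Y$ into $\omega_{k,j}$ via a Chern--Lu/Aubin--Yau Laplacian estimate applied to $F=\log\tr_{\omega_{k,j}}\theta_Y-K\varphi_{k,j}$ with $K=(B+C+1)/\delta_k$, where $B$ is an upper bound on the bisectional curvature of the fixed K\"ahler metric $\theta_Y$. The preliminary Ricci bound enters Chern--Lu through the contraction $\mathrm{tr}((\omega_{k,j}^{-1}\ric(\omega_{k,j}))(\omega_{k,j}^{-1}\theta_Y))$; the $\theta_Y$-cross term is controlled by the Cauchy--Schwarz estimate $\mathrm{tr}((\omega_{k,j}^{-1}\theta_Y)^2)\leq(\tr_{\omega_{k,j}}\theta_Y)^2$, yielding
\begin{equation*}
\Delta_{\omega_{k,j}}\log\tr_{\omega_{k,j}}\theta_Y\geq -1-(B+C)\tr_{\omega_{k,j}}\theta_Y.
\end{equation*}
Combined with $\Delta_{\omega_{k,j}}\varphi_{k,j}=n-\tr_{\omega_{k,j}}(\pi^*\omega_X+\delta_k\theta_Y)$ and the choice of $K$, the maximum principle on $Y$ forces $\tr_{\omega_{k,j}}\theta_Y(p_{\max})\leq 1+Kn$, and the uniform $L^\infty$-bound on $\varphi_{k,j}$ from Lemma \ref{linf55} then gives $\tr_{\omega_{k,j}}\theta_Y\leq e^{C'\delta_k^{-1}}$ everywhere, so $\theta_Y\leq e^{C'\delta_k^{-1}}\omega_{k,j}$ as Hermitian forms. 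Substituting into the preliminary bound yields (\ref{riclb52}). The main obstacle is this Chern--Lu step: the Ricci lower bound is not a scalar multiple of $\omega_{k,j}$ but carries an additional error of size $\theta_Y$; the argument nevertheless closes because after the Cauchy--Schwarz reduction the extra contribution is still linear in $\tr_{\omega_{k,j}}\theta_Y$ and is absorbed by the $\delta_k^{-1}$-sized $K$, at the cost of producing the exponential blow-up $e^{C\delta_k^{-1}}$.
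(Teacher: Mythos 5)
Your proof is correct and follows essentially the same route as the paper: take $-\ddbar\log$ of the Monge--Amp\`ere equations (\ref{ma3}) and (\ref{ma4}), bound $\sum_i a_i\bigl[\ddbar\log(|\sigma_i|^2_{h_i}+\epsilon_j)+\ric(h_i)\bigr]\geq -C\theta_Y$ uniformly in $j$, and then convert the $\theta_Y$-error into a multiple of $\omega_{k,j}$ by a Chern--Lu maximum-principle argument on $\log\tr_{\omega_{k,j}}\theta_Y-K\delta_k^{-1}\varphi_{k,j}$, whose $L^\infty$-bound from Lemma~\ref{linf55} produces the $e^{C\delta_k^{-1}}$ factor. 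The only stylistic difference is that you derive the uniform lower bound on $\ddbar\log(|\sigma_i|^2_{h_i}+\epsilon_j)$ from the explicit pointwise identity $\ddbar\log(u+\epsilon)+\ric(h)=\epsilon\ric(h)/(u+\epsilon)+\epsilon Q/(u+\epsilon)^2$, whereas the paper deduces it from the fact that $\log|\sigma_i|^2_{h_i}$ (and hence $\log(|\sigma_i|^2_{h_i}+\epsilon_j)$) lies in $\PSH(Y,A\theta_Y)$; both are standard and give the same estimate.
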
 
\begin{proof} By direct calculations, we have 
$$\ric(\omega_k) = - \omega_k + \pi^* \alpha + \delta_k\theta_Y \geq - \omega_k.$$ 
Straightforward calculations show that there exists $C>0$ such that for all $i=1, ..., I$ and $j>0$ 
$$\ddbar \log (|\sigma_i|^2_{h_i} + \epsilon_j) \geq - C \theta_Y $$
since  $\log |\sigma_i|^2_{h_i} \in \PSH(Y, A\theta_Y)$ for some sufficiently large $A>0$. 
Therefore
\begin{eqnarray*}
&& \ric(\omega_{k,  j})  \\
&=& - \omega_{k, j} + \delta_k\theta_Y + \sum_{i=1}^I a_i \ric(h_i) + \sum_{i=1}^I a_i \ddbar \log (|\sigma_i|^2_{h_i} + \epsilon_j) \\
&=& -   \omega_{k, j} - C   \theta_Y
\end{eqnarray*}
for some uniform $C>0$. In order to bound $\theta_Y$ by $\omega_{k, j}$, we let $H = \log tr_{\omega_{k, j}}(\theta_Y)  - 2B \delta_k^{-1} \varphi_{k, j}$ for some fixed sufficiently large $B>0$. Direct calculations give 
$$\Delta_{\omega_{k, j}} H \geq  B tr_{\omega_{k, j}} (\theta_Y) - CB.$$
 We can apply the maximum principle to $H$ and obtain a uniform estimate 
$$tr_{\omega_{k, j}}(\theta_Y) \leq C e^{2\delta_k^{-1} \|\varphi_{k, j}\|_{L^\infty(Y)}} \leq Ce^{C\delta_k^{-1}}$$
for some uniform $C>0$. Then (\ref{riclb52}) immediately follows. 
\end{proof}
 
The proof of (\ref{riclb52}) also produces a global lower bound for $\omega_{k,j}$ with
\begin{equation}\label{metlb51}
\omega_{k, j} \geq C^{-1} e^{- C\delta_k^{-1}} \theta_Y,
\end{equation}
which leads to uniform second order estimates for $\varphi_{k, j}$.Using suitable barrier functions, one can further prove uniform local second order estimates for $\varphi_{k, j}$ on $X^\circ$. Combined with Schauder estimates and linear PDE theory, we have the following regularity result.

\begin{lemma} \label{highreg1}  For any $\cK \subset \subset Y\setminus E'$,  $m>0$ and $k>0$, there exists $C=C_{\cK, m, k}>0$ such that for all $j>0$, 
$$\|\varphi_{k,  j} \|_{C^m(\cK)} \leq C.$$
\end{lemma}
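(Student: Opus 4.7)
The plan is to combine the global second-order bound (\ref{metlb51}) with standard interior regularity theory. Since the constant $C_{\cK,m,k}$ is permitted to depend on $k$, the exponentially bad dependence on $\delta_k^{-1}$ in (\ref{metlb51}) causes no harm; only uniformity in $j$ matters.

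First, I would observe that on $\cK \subset\subset Y\setminus E'$, the right-hand side of (\ref{ma4}) is uniformly bounded in $C^m(\cK)$ for each $m$, independently of $j$. Indeed, $\cK$ is disjoint from $E'=\cup_{a_i>0}E_i$, so for each index $i$ with $a_i>0$ the function $|\sigma_i|^2_{h_i}$ is bounded below by a positive constant on $\cK$; hence $(|\sigma_i|^2_{h_i}+\epsilon_j)^{-a_i}$ is uniformly bounded in $C^m(\cK)$ for $\epsilon_j\in(0,1)$. For $i$ with $a_i=0$, the factor is identically $1$, so $\cK$ may freely meet $E''$. Combined with the uniform $L^\infty$ bound from Lemma \ref{linf55}, the full density $F_{k,j} := e^{\varphi_{k,j}}\prod_i (|\sigma_i|^2_{h_i}+\epsilon_j)^{-a_i}\Omega_Y$ is uniformly bounded above and below on $\cK$, with uniform bounds on all of its derivatives.

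Second, for fixed $k$, the estimate (\ref{metlb51}) gives $\omega_{k,j} \geq C_k^{-1}\theta_Y$ globally on $Y$, uniformly in $j$. Combining this lower bound on eigenvalues of $\omega_{k,j}$ relative to $\theta_Y$ with the uniform upper bound on $\omega_{k,j}^n/\theta_Y^n = F_{k,j}/\Omega_Y$ on $\cK$, the standard arithmetic-geometric inequality (a single large eigenvalue would force the product to exceed the bound on $F_{k,j}$) yields a uniform two-sided bound $C_k^{-1}\theta_Y \leq \omega_{k,j} \leq C_k'\theta_Y$ on $\cK$. This is precisely uniform ellipticity for the Monge-Amp\`ere operator.

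Third, I would invoke the Evans-Krylov theorem for the complex Monge-Amp\`ere equation: uniform ellipticity together with the uniform $C^\alpha$ control of $\log F_{k,j}$ yields a uniform $C^{2,\alpha}(\cK')$ estimate on any slightly smaller $\cK' \subset\subset \cK$. Finally, a standard bootstrap using Schauder estimates on differentiated equations, or equivalently on the linearization
\begin{equation*}
\Delta_{\omega_{k,j}}(\partial_\ell \varphi_{k,j}) = \partial_\ell \log F_{k,j} - \tr_{\omega_{k,j}}\partial_\ell(\pi^*\omega_X + \delta_k\theta_Y),
\end{equation*}
upgrades this to uniform $C^m$ bounds for every $m$, by successively shrinking the domain at each stage. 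No substantive obstacle arises, as everything reduces to classical interior regularity once the uniform ellipticity on $\cK$ furnished by (\ref{metlb51}) has been recorded.
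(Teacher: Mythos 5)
Your strategy is essentially the one the paper sketches: the global second-order bound (\ref{metlb51}), together with the fact that the Monge--Amp\`ere density is uniformly comparable to $\theta_Y^n$ on $\cK$, gives two-sided eigenvalue bounds (uniform ellipticity) on $\cK$, after which Evans--Krylov and Schauder bootstrap finish. Since the lemma only asks for a constant depending on $k$, the bad $\delta_k^{-1}$-dependence in (\ref{metlb51}) is indeed harmless, as you correctly observe.

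One sentence needs tightening. You claim that the full density $F_{k,j}=e^{\varphi_{k,j}}\prod_i(|\sigma_i|^2_{h_i}+\epsilon_j)^{-a_i}\Omega_Y$ is ``uniformly bounded above and below on $\cK$, with uniform bounds on all of its derivatives.'' The boundedness above and below is fine (from Lemma~\ref{linf55}), but the derivative bounds are not available at that stage: $F_{k,j}$ contains the factor $e^{\varphi_{k,j}}$, and controlling $C^m$ norms of $\varphi_{k,j}$ is precisely what you are trying to prove. What actually holds a priori is that the $\varphi$-independent part $\prod_i(|\sigma_i|^2_{h_i}+\epsilon_j)^{-a_i}\Omega_Y/\theta_Y^n$ is uniformly controlled in $C^m(\cK)$ for each $m$. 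For the Evans--Krylov step you only need $\log F_{k,j}\in C^\alpha$ with a uniform bound; since $\log F_{k,j}$ differs from $\varphi_{k,j}$ by a term that is smooth and $j$-uniform on $\cK$, and $\varphi_{k,j}$ is already uniformly Lipschitz once the $C^{1,1}$ estimate from uniform ellipticity is in hand, this is fine. The higher-order bounds on $F_{k,j}$ then come out of the bootstrap, not before it. So the order of deductions in your first paragraph should be swapped; with that fixed, the argument matches the paper's (unwritten) proof.
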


 Using suitable barrier functions (c.f. \cite{ST1, S4}), one can further obtain uniform second order estimates for $\varphi_{k, j}$ and $\varphi_k$ on any fixed $\cK\subset\subset Y\setminus E'$ for all $k, j>0$. This would lead to the following uniform estimates independent of $k$.
 
\begin{lemma} \label{highreg2}  For any $\cK \subset \subset Y\setminus E'$ and $m>0$, there exists $C=C_{\cK, m}>0$ such that for all $j, k>0$, 
$$\|\varphi_{k,  j} \|_{C^m(\cK)} \leq C, ~||\varphi_k ||_{C^m(\cK)} \leq C.$$
\end{lemma}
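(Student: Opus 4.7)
The plan is to strengthen Lemma \ref{highreg1}, whose constant depends on $k$ through the degeneration $\delta_k\to 0$, to obtain $C^m$ estimates uniform in both $k$ and $j$. The crux is a uniform local second-order bound $\omega_{k,j}\geq c_{\cK}\,\theta_Y$ on $\cK$ (and the analogous bound for $\omega_k$) with $c_{\cK}$ independent of $k,j$. Given such a bound, higher regularity follows from the standard bootstrap: on $\cK\subset\subset Y\setminus E'$ the right-hand side of (\ref{ma4}) is smooth and uniformly bounded in $C^m$, because the $a_i=0$ factors are trivial and for $a_i>0$ the section $\sigma_i$ is bounded away from zero on $\cK$, so $(|\sigma_i|^2_{h_i}+\epsilon_j)^{-a_i}$ admits uniform smooth estimates as $\epsilon_j\to 0$; Lemma \ref{linf55} supplies the uniform $L^\infty$-bound on $\varphi_{k,j}$. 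Then Evans--Krylov/Calabi $C^3$ theory together with Schauder estimates applied iteratively to the linearized Monge--Amp\`ere equation yield uniform $C^m$ bounds on any slightly shrunk compact subset; the same argument works verbatim for $\varphi_k$ via equation (\ref{ma3}).

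To execute the second-order estimate, I would first record a uniform Ricci lower bound $\ric(\omega_{k,j})\geq -C_1\omega_{k,j}-C_2\theta_Y$ on all of $Y$. For each $i$ with $a_i>0$ one has $\ddbar\log(|\sigma_i|^2_{h_i}+\epsilon_j)\geq -C\theta_Y$ uniformly in $\epsilon_j$, so the troublesome term $\sum_i a_i\ddbar\log(|\sigma_i|^2_{h_i}+\epsilon_j)$ contributes only a $j$-independent lower bound. Next, using the identities $\pi^*\omega_X=\theta_Y+\ric(h_D)$ from (\ref{defthy}) and $\ddbar\log|\sigma_D|^2_{h_D}=-\ric(h_D)$ away from $D$, I would apply the Chern--Lu inequality (as in Lemma \ref{loccon25} and following \cite{ST1, S4}) to the auxiliary function
\[
H \;=\; \log\tr_{\omega_{k,j}}(\theta_Y)\;-\;A\bigl(\varphi_{k,j}-\mu\log|\sigma_D|^2_{h_D}\bigr),
\]
with $0<\mu<1$ small and $A$ a sufficiently large uniform constant. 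The barrier term forces $H\to-\infty$ along $D$, so its supremum is attained at an interior point $p_{\max}\in Y\setminus D$, while the $\mu$-term contributes $A\mu\,\tr_{\omega_{k,j}}(\theta_Y-\pi^*\omega_X)$ to $\Delta_{\omega_{k,j}}H$, effectively replacing the useless coefficient $A\delta_k$ by the $k$-uniform positive multiple $A\mu$ of $\tr_{\omega_{k,j}}(\theta_Y)$ (since $\pi^*\omega_X\geq 0$). Combined with the standard Chern--Lu lower bound $\Delta_{\omega_{k,j}}\log\tr_{\omega_{k,j}}(\theta_Y)\geq -B\,\tr_{\omega_{k,j}}(\theta_Y)-K$ supplied by the uniform Ricci lower bound and the bounded bisectional curvature of $\theta_Y$, the maximum principle at $p_{\max}$ yields $\tr_{\omega_{k,j}}(\theta_Y)(p_{\max})\leq C$. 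This bound propagates to all of $\cK$ via $\|\varphi_{k,j}\|_{L^\infty}\leq C$ and the lower bound on $|\sigma_D|^2_{h_D}$ on $\cK$, giving $\omega_{k,j}\geq c_{\cK}\theta_Y$ as required.

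The main obstacle is precisely this uniform absorption of the $\delta_k\to 0$ degeneracy of the background class $\pi^*\omega_X+\delta_k\theta_Y$: a naive Chern--Lu with $H=\log\tr_{\omega_{k,j}}(\theta_Y)-A\varphi_{k,j}$ produces only a coefficient $A\delta_k$ in front of $\tr_{\omega_{k,j}}(\theta_Y)$, which forces $A\sim\delta_k^{-1}$ and recovers nothing more than the $k$-dependent bound of Lemma \ref{highreg1}. Introducing the auxiliary potential $-\mu\log|\sigma_D|^2_{h_D}$ (whose $\ddbar$ equals $\theta_Y-\pi^*\omega_X$) is what trades the $A\delta_k$-factor for the $k$-uniform factor $A\mu$; the price is that the estimate degenerates along $D$. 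A compatible choice of $D$ from Kodaira's lemma, ensuring $|\sigma_D|^2_{h_D}$ is bounded below on the given $\cK\subset\subset Y\setminus E'$, makes the argument go through, and the remaining bookkeeping of constants $A,\mu,B,C_1,C_2$ is routine.
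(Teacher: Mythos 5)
Your overall strategy --- a Chern--Lu maximum-principle argument with a barrier term that degenerates along the exceptional locus, followed by the standard Evans--Krylov/Schauder bootstrap --- is precisely what the paper alludes to with ``suitable barrier functions (c.f.~\cite{ST1, S4})'', and the algebra is right. Writing $H = \log\tr_{\omega_{k,j}}(\theta_Y) - A\varphi_{k,j} + A\mu\log|\sigma_D|^2_{h_D}$ and regrouping
\[
A\,\tr_{\omega_{k,j}}(\pi^*\omega_X+\delta_k\theta_Y)+A\mu\,\tr_{\omega_{k,j}}(\theta_Y-\pi^*\omega_X) = A(1-\mu)\,\tr_{\omega_{k,j}}(\pi^*\omega_X)+A(\delta_k+\mu)\,\tr_{\omega_{k,j}}(\theta_Y)
\]
does indeed replace the useless $A\delta_k$-coefficient with the $k$-uniform $A\mu$, while the uniform Ricci lower bound $\ric(\omega_{k,j})\geq -\omega_{k,j}-C\theta_Y$ together with the bounded bisectional curvature of $\theta_Y$ supply the requisite Chern--Lu inequality. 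Once $\omega_{k,j}$ is uniformly comparable to $\theta_Y$ on $\cK$, the higher-order estimates follow from the smooth, $j$- and $k$-uniform right-hand sides of (\ref{ma3})--(\ref{ma4}) on $\cK$ and standard elliptic theory.

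The gap is in your last sentence. Any $D$ produced by Kodaira's lemma so that $\pi^*[\omega_X]-[D]$ is K\"ahler must have support containing the \emph{entire} exceptional locus $E = E'\cup E''$: for any curve $C$ contracted by $\pi$ one has $\pi^*[\omega_X]\cdot[C]=0$, so $(\pi^*[\omega_X]-[D])\cdot[C]>0$ forces $[D]\cdot[C]<0$ and hence $C\subset\mathrm{Supp}\,D$. Thus $\mathrm{Supp}\,D\supset E''$, and you cannot arrange $|\sigma_D|^2_{h_D}$ bounded below on a $\cK\subset\subset Y\setminus E'$ that meets $E''$. Your barrier estimate therefore yields uniformity only on $\cK\subset\subset Y\setminus E$, not $Y\setminus E'$. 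In fact the lemma as printed cannot hold for a $\cK$ containing a compact curve $C\subset E''$, since uniform $C^2$ convergence of $\varphi_k$ would force $\omega_k|_C$ to converge to a K\"ahler form on $C$, whereas $\int_C\omega_k=\int_C(\pi^*\omega_X+\delta_k\theta_Y)=\delta_k\int_C\theta_Y\to 0$. This indicates a typo in the statement ($Y\setminus E'$ should read $Y\setminus E$), which matches the way Lemma~\ref{highreg2} is actually invoked in Section~5.3 (``the convergence is smooth on $X^\circ=Y\setminus E$''). With the domain corrected to $\cK\subset\subset Y\setminus E$, your argument is correct.
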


The above estimates imply that $\varphi_{k, j}$ converges smoothly to $\varphi_k$ on any compact subset of $Y\setminus E'$ as $j\rightarrow \infty$ and  $\varphi_k$ converges smoothly to $\varphi$ on any compact subset of $Y\setminus E$. 
 We also have the following Schwarz lemma for $\omega_k$. 
 \begin{lemma} \label{schw3} There exists $c>0$ such that for all $k$, 
\begin{equation}\label{omgklb5}
\omega_k \geq c \pi^*\theta_X. 
\end{equation}
In particular, there exists $C>0$ such that for all $k>0$, 
\begin{equation}\label{ricul5}
  \ric(\omega_k) \geq - \omega_k
\end{equation}
on $Y\setminus E'$. 

\end{lemma}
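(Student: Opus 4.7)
The plan is to establish the metric lower bound $\omega_k \ge c\pi^*\theta_X$ of \eqref{omgklb5} via a Chern-Lu (generalized Schwarz-lemma) argument in the spirit of the proof of \eqref{unidom} in Lemma \ref{loccon25}, now carried out on the resolution $Y$ with a barrier coming from $\sigma_D$. The Ricci inequality \eqref{ricul5} is essentially a restatement of \eqref{riclb51}: from $\ric(\omega_k) = -\omega_k + \pi^*\alpha + \delta_k\theta_Y$ and the non-negativity of $\pi^*\alpha$ and $\delta_k\theta_Y$ we have $\ric(\omega_k) \ge -\omega_k$ on $Y\setminus E'$.

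For \eqref{omgklb5}, the essential inputs are (i) the $k$-independent Ricci lower bound $\ric(\omega_k) \ge -\omega_k$ from \eqref{riclb51}, and (ii) the upper bound $\Lambda=\Lambda(X,\theta_X)$ on the holomorphic bisectional curvature of the smooth K\"ahler metric $\theta_X$ on compact $X$. Applied to the holomorphic map $\pi:(Y\setminus E,\omega_k)\to (X,\theta_X)$, the standard Chern-Lu inequality yields
\begin{equation*}
\Delta_{\omega_k}\log\tr_{\omega_k}(\pi^*\theta_X) \;\ge\; -C_1 - C_2\,\tr_{\omega_k}(\pi^*\theta_X)
\end{equation*}
on $Y\setminus E$, with $C_1,C_2>0$ independent of $k$. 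I would then introduce, for large $A>0$ and small $\epsilon>0$ to be chosen, the barrier
\begin{equation*}
H_\epsilon \;=\; \log\tr_{\omega_k}(\pi^*\theta_X) - A\varphi_k + \epsilon\log|\sigma_D|^2_{h_D},
\end{equation*}
where $\sigma_D,h_D,\theta_Y=\pi^*\omega_X-\ric(h_D)$ are as in \eqref{defthy}. Since $\log|\sigma_D|^2_{h_D}\to -\infty$ along $D\supset E$ and $\omega_k$ is smooth on $Y\setminus E'\supset Y\setminus D$, the function $H_\epsilon$ attains its maximum at some $y_0\in Y\setminus D$.

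At $y_0$ the maximum principle gives $\Delta_{\omega_k}H_\epsilon\le 0$. Substituting $\Delta_{\omega_k}\varphi_k = n - \tr_{\omega_k}(\pi^*\omega_X+\delta_k\theta_Y)$ and $\Delta_{\omega_k}\log|\sigma_D|^2_{h_D} = -\tr_{\omega_k}(\pi^*\omega_X) + \tr_{\omega_k}(\theta_Y)$, dropping the non-negative $\tr_{\omega_k}(\theta_Y)$ terms, and using the comparison $\pi^*\omega_X\ge c_0\pi^*\theta_X$ (which holds because $\omega_X$ and $\theta_X$ are both smooth K\"ahler metrics on compact $X$), one obtains at $y_0$
\begin{equation*}
0 \;\ge\; -C_1 - An + \bigl((A-\epsilon)c_0 - C_2\bigr)\tr_{\omega_k}(\pi^*\theta_X).
\end{equation*}
Fixing $A > C_2/c_0$ and $\epsilon>0$ sufficiently small forces $\tr_{\omega_k}(\pi^*\theta_X)(y_0)\le C_3$ with $C_3$ independent of $k,\epsilon$. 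Combined with the uniform bound $\|\varphi_k\|_{L^\infty(Y)}\le C$ from Lemma \ref{linf55} and $\log|\sigma_D|^2_{h_D}\le 0$, this gives $H_\epsilon\le C_4$ on $Y\setminus D$; letting $\epsilon\to 0$ at any fixed point of $Y\setminus D$ yields $\tr_{\omega_k}(\pi^*\theta_X)\le C_5$ there, hence $\omega_k\ge C_5^{-1}\pi^*\theta_X$ on $Y\setminus D$, and by continuity the inequality extends to $Y\setminus E'$. Since $\pi^*\theta_X$ vanishes along $E\supset E'$, \eqref{omgklb5} follows as a semi-positive form inequality on $Y$.

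The only place requiring care is the bookkeeping at $y_0$: one must verify both that the coefficient $(A-\epsilon)c_0 - C_2$ can be made strictly positive and that every constant $c_0,C_1,C_2,C_3$ is independent of $k$. Both are automatic: the former is a choice of $A$ then $\epsilon$, the latter follows since $c_0,C_1,C_2$ depend only on the fixed data $(X,\omega_X,\theta_X,\theta_Y,h_D)$ together with the $k$-independent Ricci lower bound \eqref{riclb51}. This is precisely why one cannot afford to use the $k$-dependent bound \eqref{riclb52} here, and the uniformity in $k$ is the whole point of pushing the Schwarz-lemma computation through this rather than into the global setting on $Y$ directly.
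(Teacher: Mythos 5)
Your proof is correct and follows essentially the same route as the paper: the identical Chern--Lu barrier $H=\log\tr_{\omega_k}(\pi^*\theta_X)-A\varphi_k+\epsilon\log|\sigma_D|^2_{h_D}$, the $k$-independent Ricci lower bound (\ref{riclb51}), the uniform $L^\infty$ bound on $\varphi_k$ from Lemma \ref{linf55}, the maximum principle on $Y\setminus D$, and $\epsilon\to 0$; the only difference is that you spell out the differential inequality and the bookkeeping at $y_0$, which the paper leaves implicit. Your observation that (\ref{ricul5}) is really a direct consequence of $\ric(\omega_k)=-\omega_k+\pi^*\alpha+\delta_k\theta_Y\ge-\omega_k$ (i.e.\ a restatement of (\ref{riclb51})) rather than logically downstream of (\ref{omgklb5}) is also accurate.
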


\begin{proof} By (\ref{metlb51}), for any fixed $k>0$, there exists $c_k>0$ such that  
$$\omega_{k, j} \geq c_k \pi^*\theta_X. $$
By letting $j\rightarrow \infty$, we can conclude that $\omega_k \geq c_k \theta_Y$ on $Y \setminus E'$ since $\omega_{k, j}$ converges smoothly to $\omega_k$ on $Y\setminus E'$. 
 We let
$$H= \log \mathrm{tr}_{\omega_k}( \pi^*\theta_X) - A \varphi_k + \varepsilon \log |\sigma_D|^2_{h_D} $$
for some uniform constant $A>0$, where $D$, $\sigma_D$ and $h_D$ are defined earlier for $\theta_Y$ in (\ref{defthy}). We can apply the maximum principle to $H$ because $H$ tends to $-\infty$ near $E'$. Then 
$$\Delta_{\omega_k} H \geq c \mathrm{tr}_{\omega_k}( \pi^* \theta_X) - C$$
for some uniform $C>0$.  Since $\varphi_k$ is uniformly bounded in $L^\infty(Y)$, the maximum principle would infer a uniform upper bound $C>0$ independent of $k$ for 
$$H \leq C $$
for  all $0 <\varepsilon <<1$, 
Then (\ref{omgklb5}) is proved by letting $\varepsilon \rightarrow 0$. (\ref{ricul5}) follows immediately by (\ref{omgklb5}) since
$$  \ric(\omega_k) = - \omega_k + \pi^*\alpha + \delta_k \theta_Y \geq -\omega_k. $$ 
We have completed the proof of the lemma.
\end{proof}

\subsection{The RCD spaces $(Y_k, d_k, \mu_k)$}

For fixed $k>0$, $(Y, \omega_{k, j}, \omega_{k, j}^n)$ is a sequence of smooth K\"ahler manifolds with Ricci curvature uniformly bounded below by the bounds in (\ref{riclb52}) for $j=1, 2, ...$ We can immediately apply the Cheeger-Colding theory to obtain the Gromov-Hausdorff limit of $(Y, \omega_{k, j}, \omega_{k, j}^n)$.

\begin{lemma}  \label{ykspace51} For each $k>0$, $(Y, \omega_{k, j}, \left( \omega_{k, j}\right)^n)$ converges in Gromov-Hausdorff topology to a non-collapsed compact $\RCD(-1, 2n)$ space $( Y_k,  d_k, \mu_k)$ as $j \rightarrow \infty$. In particular, the convergence is smooth on $Y\setminus E'$ with 
\begin{equation}\label{reginclu57}
 Y\setminus E' \subset \cR (Y_k)
 \end{equation}
  and 
\begin{equation}\label{convex57}
( Y_k, d_k) =   \overline{(Y\setminus E', \omega_k)} .
\end{equation}

 \end{lemma}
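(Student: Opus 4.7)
The strategy has three movements. First, I would extract a measured Gromov--Hausdorff subsequential limit of the smooth family $(Y,\omega_{k,j},\omega_{k,j}^n)$ by Cheeger--Colding compactness. Second, I would identify this limit with the metric completion of $(Y\setminus E',\omega_k)$ via the smooth convergence on compacta. Third, I would upgrade the a priori synthetic Ricci bound $-C_k=-Ce^{C\delta_k^{-1}}$ inherited from (\ref{riclb52}) to the sharp value $-1$.

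For the first step, fix $k$ and collect the $j$-uniform estimates already in place: the Ricci lower bound $\ric(\omega_{k,j})\ge -C_k\omega_{k,j}$ from (\ref{riclb52}), the diameter bound of Lemma \ref{dia55}, and the non-collapsing volume bound (\ref{vol55bd}). These are precisely the inputs for Cheeger--Colding--Gromov precompactness. Combined with the stability of non-collapsed RCD spaces under measured Gromov--Hausdorff convergence (Ambrosio--Gigli--Savar\'e and Gigli--Mondino--Savar\'e for RCD stability, De Philippis--Gigli for the non-collapsed structure), a subsequence converges in mGH sense to a non-collapsed compact RCD$(-C_k,2n)$ space $(Y_k,d_k,\mu_k)$ with $\mu_k=\mathcal{H}^{2n}$.

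For the second step, the smooth convergence $\varphi_{k,j}\to\varphi_k$ in $C^\infty_{\mathrm{loc}}(Y\setminus E')$ from Lemma \ref{highreg1}, together with the metric lower bound (\ref{metlb51}) which is $j$-uniform for fixed $k$, upgrades the Gromov--Hausdorff convergence on compacta to biLipschitz convergence of Riemannian distances. This identifies $(Y\setminus E',\omega_k)$ isometrically with an open subset of the regular part $\cR(Y_k)$, giving $Y\setminus E'\subset \cR(Y_k)$. The density of this subset, and hence $(Y_k,d_k)=\overline{(Y\setminus E',\omega_k)}$, follows because $\mu_k(E')=0$: the $\omega_{k,j}$-volume of a shrinking tubular neighborhood of $E'$ tends to $0$ uniformly in $j$ by the integrability $\int_Y \prod_i|\sigma_i|_{h_i}^{-2a_i}\Omega_Y<\infty$ (using $0\le a_i<1$).

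The principal obstacle is the third step. I would invoke Honda's criterion Lemma \ref{honda2}, for which $(Y_k,d_k,\mu_k)$ must first be identified as a $2n$-dimensional almost smooth metric measure space associated to $(Y\setminus E',\omega_k)$; the cut-off functions required by that definition are produced by applying the construction of Lemma \ref{cutoff} to $Y$ with $Z=E'$. The four hypotheses of Lemma \ref{honda2} are then verified as follows: the Sobolev-to-Lipschitz property and the $L^2$-strong compactness condition are automatic consequences of the spectral framework of \cite{GPSS2} applied to the singular K\"ahler metric $\omega_k\in \mathcal{V}(Y,\theta_Y,n,A,p,K')$ on the smooth manifold $Y$; the Ricci lower bound $\ric(\omega_k)\ge-\omega_k$ on $Y\setminus E'=\cR(Y_k)$ is precisely (\ref{riclb51}); and the Lipschitz regularity of eigenfunctions of $\Delta_{\omega_k}$ is proved by a Bochner-plus-Moser-iteration argument modeled on Lemma \ref{L242}, with the cut-offs of Lemma \ref{cutoff} legitimising integration by parts across $E'$. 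Invoking Lemma \ref{honda2} then upgrades the synthetic Ricci lower bound to the sharp $-1$, completing the proof.
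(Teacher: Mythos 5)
Your overall architecture (Cheeger--Colding limit, smooth convergence away from $E'$, Honda's criterion to sharpen the Ricci constant to $-1$) matches the paper's, but two steps have genuine gaps. First, the identification $(Y_k,d_k)=\overline{(Y\setminus E',\omega_k)}$ does not follow from $\mu_k(E')=0$ together with the local isometric identification of $(Y\setminus E',\omega_k)$ with an open subset of $\cR(Y_k)$. The completion on the right-hand side carries the \emph{length} metric of $\omega_k$ computed along paths constrained to lie in $Y\setminus E'$, which is a priori larger than the restriction of $d_k$: limit geodesics could pass through the limit of $E'$ and shorten distances. Density in measure says nothing about this. The paper closes this gap with Gromov's trick: since $E'$ is an analytic set of real codimension at least $2$, $Y\setminus E'$ is \emph{uniformly almost convex} in $(Y,\omega_{k,j})$ (Lemma 2.2 of \cite{STZ}), i.e.\ paths can be perturbed off $E'$ with arbitrarily small length increase, uniformly in $j$; this is what forces the two metrics to coincide and yields both (\ref{reginclu57}) and (\ref{convex57}).

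Second, your plan to establish Lipschitz regularity of eigenfunctions of $\Delta_{\omega_k}$ "by a Bochner-plus-Moser-iteration argument modeled on Lemma \ref{L242}" is at best much harder than necessary and at worst circular. To start Moser iteration on $|\nabla u|$ across $E'$ with the cut-offs of Lemma \ref{cutoff} one needs an a priori local bound on $|\nabla u|$ near $E'$ (the analogue of (\ref{inibdsid2})), and in the paper that initial bound is itself obtained from the gradient estimate for harmonic functions on RCD spaces \cite{J14} --- i.e.\ it presupposes the RCD structure you are trying to establish. The paper's key observation is that no such argument is needed: $(Y_k,d_k,\mu_k)$ is \emph{already} a non-collapsed $\RCD(-Ce^{C\delta_k^{-1}},2n)$ space as a Ricci limit of smooth manifolds, so every eigenfunction is automatically Lipschitz by the RCD theory. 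Feeding this into Honda's criterion (Lemma \ref{honda2}/\ref{honda3}) together with the pointwise bound $\ric(\omega_k)\geq-\omega_k$ on $Y\setminus E'$ from (\ref{riclb51}) then upgrades the synthetic lower bound to $-1$ immediately. You should restructure the third step around this observation rather than attempting a direct proof of the gradient bound.
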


\begin{proof} For each fixed $k$, $\ric(\omega_{k, j})$ is uniformly bounded below by $-Ce^{\delta_k^{-1}}$ by (\ref{riclb52}) and the diameter of $(Y, \omega_{k, j})$ is uniformly bounded above for all $j>0$. Therefore $(Y, \omega_{k, j}, \omega_{k,j}^n)$ converges in Gromov-Hausdorff sense to an $\RCD(-Ce^{\delta_k^{-1}}, 2n)$ space $( Y_k, d_k, \mu_k)$, after possibly passing to a subsequence, by the standard Cheeger-Colding theory. 

Lemma \ref{highreg1} implies smooth convergence of  $\omega_{k, j}$ to the K\"ahler metric $\omega_k$ on $Y\setminus E'$ and so (\ref{reginclu57}) holds .

 Since $E'$ is an analytic subvariety of Hausdorff codimension no greater than $2$, by the Gromov's trick \cite{Gr}, $Y\setminus E'$  is uniformly almost convex in $Y$ with respect to $\omega_{k, j}$ (c.f. Lemma 2.2 \cite{STZ}). In particular, the metric $d_k$ is induced by $\omega_k$ on $Y\setminus E'$ with $Y\setminus E'$ being almost convex in $Y_k$. This proves (\ref{reginclu57}) and (\ref{convex57}). 

Finally, we would like to verify that  $( Y_k, d_k, \mu_k)$ is an $\RCD(-1, 2n)$ space, i.e., the synthetic Ricci curvature is globally bounded below by $-1$. By the smooth convergence of $\omega_{k, j}$ on $Y\setminus E'$, $(Y_k, d_k, \mu_k)$ is an almost smooth compact  metric measure space associated with the open smooth subset $Y\setminus E'$.  On the other hand, $(Y_k, d_k, \mu_k)$ is already an RCD space,  therefore every eigenfunction of $(Y_k, d_k, \mu_k)$ must be Lipschitz.  We can apply Lemma \ref{honda3}  combined with the estimate (\ref{riclb51}) to  conclude that $( Y_k, d_k, \mu_k)$ is indeed an $\RCD(-1, 2n)$ space.
\end{proof}

\begin{lemma} The identity map from $Y\setminus E' \subset Y_k \rightarrow Y\setminus E' \subset Y$ extends to a surjective Lipschitz map 
$$
\iota_k: (Y_k, d_k) \rightarrow (Y, \theta_Y).
$$
\end{lemma}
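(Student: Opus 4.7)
The approach is to upgrade the Monge--Amp\`ere side estimate (\ref{metlb51}), which controls $\omega_{k,j}$ from below by a multiple of $\theta_Y$, into a genuine Lipschitz comparison at the level of distance functions. By Lemma \ref{highreg1} the potentials $\varphi_{k,j}$ converge smoothly to $\varphi_k$ on every compact subset of $Y\setminus E'$, so passing (\ref{metlb51}) to the limit $j\to\infty$ gives
$$
\omega_k \;\geq\; C^{-1}e^{-C\delta_k^{-1}}\,\theta_Y \qquad \text{on } Y\setminus E'.
$$
Set $c_k=C^{-1}e^{-C\delta_k^{-1}}>0$; this depends on $k$, but the statement of the lemma does not require uniformity in $k$.

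Next, the pointwise inequality $\omega_k\geq c_k\theta_Y$ translates to a length-space comparison: for any smooth path $\gamma\subset Y\setminus E'$ one has $L_{\omega_k}(\gamma)\geq c_k^{1/2}L_{\theta_Y}(\gamma)$. Because $E'$ has real codimension at least two in the smooth manifold $Y$ and $\theta_Y$ is a smooth Riemannian metric, any $\theta_Y$-minimizing path between two points of $Y\setminus E'$ can be perturbed to lie in $Y\setminus E'$ with arbitrarily small cost in $\theta_Y$-length; this is the same Gromov-type transversality trick already invoked in the proof of Lemma \ref{ykspace51}. Infimizing over paths in $Y\setminus E'$ therefore yields
$$
d_{\omega_k}(x,y)\;\geq\; c_k^{1/2}\,d_{\theta_Y}(x,y), \qquad x,y\in Y\setminus E'.
$$
Thus the identity on $Y\setminus E'$ is $c_k^{-1/2}$-Lipschitz from $(Y\setminus E', d_{\omega_k})$ into $(Y,\theta_Y)$.

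Since $(Y,\theta_Y)$ is compact and hence complete, a uniformly continuous map from a dense subset admits a unique Lipschitz extension to the completion. Applied to our situation, the identity map extends to a Lipschitz map
$$
\iota_k:(Y_k,d_k)=\overline{(Y\setminus E',\omega_k)}\longrightarrow (Y,\theta_Y)
$$
with Lipschitz constant $c_k^{-1/2}$. Surjectivity is then automatic: $\iota_k(Y_k)$ is compact, hence closed in $Y$, and it contains the dense subset $Y\setminus E'$, so $\iota_k(Y_k)=Y$. The only potentially delicate step is the transversality/codimension-two argument needed to match the path-length metric on $Y\setminus E'$ with the restriction of $d_{\theta_Y}$, but this is a standard smooth-manifold fact already used in Lemma \ref{ykspace51}, so no genuinely new obstacle arises here.
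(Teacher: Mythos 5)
Your argument is correct and matches the paper's one-line proof (``follows immediately from the estimate (\ref{metlb51})''), which you have simply fleshed out. One small point: the Gromov-type transversality step is not actually needed for the inequality you want. Since $d_k=d_{\omega_k}$ on $Y\setminus E'$ is by definition the infimum of $\omega_k$-lengths over paths inside $Y\setminus E'$, the pointwise bound $\omega_k\geq c_k\theta_Y$ gives
$d_{\omega_k}(x,y)\geq c_k^{1/2}\inf_{\gamma\subset Y\setminus E'}L_{\theta_Y}(\gamma)\geq c_k^{1/2}d_{\theta_Y}(x,y)$
directly, because restricting the class of competitor paths can only increase the infimum; the perturbation argument would only be needed for the reverse comparison, which this lemma does not require. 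The rest --- the $k$-dependent constant being acceptable, the unique Lipschitz extension to the completion, and surjectivity via compactness and density of $Y\setminus E'$ in $Y$ --- is exactly right.
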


\begin{proof} The lemma follows immediately from the estimate (\ref{metlb51}). 
\end{proof}

\begin{lemma} \label{1to1-51} The map $\iota_k$ is one-to-one. In particular, the metric space $Y_k$ is   homeomorphic to $Y$ and the complex structure of $Y\setminus E'$ extends to that of $Y_k$.

\end{lemma}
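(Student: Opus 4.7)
The plan is to show that the continuous Lipschitz surjection $\iota_k\colon(Y_k,d_k)\to(Y,\theta_Y)$ is injective; once this is established, compactness of $Y_k$ and Hausdorffness of $Y$ upgrade $\iota_k$ to a homeomorphism, and the claim about extending the complex structure is automatic because $Y$ is already smooth and $\iota_k$ restricts to the identity on the dense open $Y\setminus E'$.

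Any failure of injectivity must occur over some $y\in E'$, so I would argue by contradiction: suppose $p\ne q$ in $Y_k$ with $\iota_k(p)=\iota_k(q)=y\in E'$. The first step is to show the compact fiber $\iota_k^{-1}(y)$ is totally disconnected, so that $p$ and $q$ can be placed in disjoint clopen pieces $A\sqcup B=\iota_k^{-1}(y)$ with $p\in A$, $q\in B$. This uses Lemma~\ref{ykspace51}: since $(Y_k,d_k,\mu_k)$ is a non-collapsed $\RCD(-1,2n)$ space with $Y\setminus E'\subset\cR(Y_k)$, its singular set $\cS(Y_k)$ has Hausdorff codimension at least two by the non-collapsed Cheeger-Colding structure theory, and combined with $E'$ being a complex divisor a dimension count along the Lipschitz surjection $\iota_k$ forces the fibers over $E'$ to be zero-dimensional. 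Normality of the metric space $Y_k$ then yields disjoint open neighborhoods $U_A\supset A$ and $U_B\supset B$ in $Y_k$, and by continuity of $\iota_k$ together with compactness of $\iota_k^{-1}(y)$ I can choose $s>0$ so that $\iota_k^{-1}(B_{\theta_Y}(y,s))\subset U_A\cup U_B$. Now since $Y$ is smooth at $y$ and $E'$ has complex codimension at least one, $B_{\theta_Y}(y,s)\setminus E'$ is connected in $Y$, and because the topologies on $Y\setminus E'$ inherited from $d_k$ (via the embedding $Y\setminus E'\subset Y_k$) and from $\theta_Y$ both coincide with its smooth complex manifold topology, this set remains connected when viewed inside $Y_k$. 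Density of $Y\setminus E'$ in $Y_k$ provides sequences in $Y\setminus E'$ converging in $d_k$ to $p$ and to $q$ respectively, which automatically converge to $y$ in $\theta_Y$ as well; for large indices they populate the two disjoint opens $U_A\cap(Y\setminus E')\cap B_{\theta_Y}(y,s)$ and $U_B\cap(Y\setminus E')\cap B_{\theta_Y}(y,s)$, partitioning the connected set into two nonempty disjoint opens, a contradiction.

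The main obstacle is the first step, establishing total disconnectedness of $\iota_k^{-1}(y)$. Without it one cannot separate $p$ and $q$ into disjoint clopen pieces of the fiber, and the clean topological argument above breaks down; one would then have to replace it with a direct metric estimate showing that any two Cauchy sequences in $(Y\setminus E',\omega_k)$ with the same $\theta_Y$-limit in $E'$ are equivalent in $d_k$. Such an estimate would exploit the $L^p$-integrability (for some $p>1$) of the density $\omega_k^n=e^{\varphi_k}\prod_{i=1}^I|\sigma_i|^{-2a_i}_{h_i}\Omega_Y$ — which holds because each $a_i<1$ — together with the uniform $L^\infty$-bound on $\varphi_k$ from Lemma~\ref{linf55}.
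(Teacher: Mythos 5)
Your proposal takes a genuinely different route from the paper, and it contains a real gap. The paper does not argue by fiber-connectivity at all: it invokes the partial $C^0$-estimate machinery of Donaldson--Sun and Liu--Sz\'ekelyhidi, combined with the geometric $L^2$-estimates of Lemma \ref{L241} and Lemma \ref{L242}, to construct, over a trivializing neighborhood $U\ni y$, holomorphic functions $\sigma$ on $U$ whose pullbacks extend holomorphically to $\iota_k^{-1}(U)$ and separate any two distinct points $p_1,p_2\in\iota_k^{-1}(U)$. Injectivity of $\iota_k$ is then an immediate consequence: two distinct points of $Y_k$ cannot have the same image in $Y$, since a holomorphic separating function is constant on fibers. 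Your approach instead tries to make the injectivity a purely topological consequence of a dimension count plus a connectedness argument. The reduction in the second half (clopen separation of the compact fiber, the tube-lemma choice of $s$, connectedness of $B_{\theta_Y}(y,s)\setminus E'$, and the fact that $d_k$ and $\theta_Y$ induce the same topology on $Y\setminus E'$) is sound \emph{given} the first step, but the first step does not hold up.

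The gap is the claim that the fibers $\iota_k^{-1}(y)$, $y\in E'$, are totally disconnected (or zero-dimensional) because $\dim_{\mathcal H}\cS(Y_k)\le 2n-2$ and $E'$ has real dimension $2n-2$. A Lipschitz surjection from a set of Hausdorff dimension $m$ onto a set of Hausdorff dimension $m$ can perfectly well have individual fibers of positive topological dimension; the Eilenberg-type coarea inequality only controls $\mathcal H^0$ of \emph{almost every} fiber, not of every fiber. A simple model: $A=([0,1]\times\{0\})\cup(\{0\}\times[0,1])\subset\R^2$ has $\mathcal H^1(A)<\infty$, the map $f(x,y)=x$ is Lipschitz onto $[0,1]$ (also one-dimensional), yet $f^{-1}(0)$ contains a full segment. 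Nothing in the RCD structure theory you invoke rules out a priori that $\iota_k^{-1}(y)$ contains a nondegenerate continuum sitting inside $\cS(Y_k)$ for some $y\in E'$. (Indeed, ruling out exactly this kind of ``splitting at infinity along the exceptional divisor'' is the substantive content that the holomorphic separation argument in the paper is designed to supply.) Your proposed fallback --- showing directly that any two $d_k$-Cauchy sequences in $Y\setminus E'$ with the same $\theta_Y$-limit are $d_k$-equivalent using $L^p$-integrability of $\omega_k^n$ --- is essentially a restatement of the injectivity you are trying to prove and would still require the separating-section input. To close the gap you would need, at minimum, the conic tangent-cone description from Lemma \ref{angl} (which in the paper comes \emph{after} this lemma and itself relies on the complex structure on $Y_k$ that Lemma \ref{1to1-51} establishes), or, as the paper does, the holomorphic-separation argument via the $L^2$-estimates.
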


\begin{proof} For any point $p\in Y$, we can choose an open neighborhood $U$ of $p$ such that the line bundle $L$ is trivial on $U$. In fact, by (\ref{metlb51}), $U$ must contain an open geodesic ball in $(Y_k, d_k)$ centered at any point in $\iota_k^{-1}(p)$. For each fixed $k>0$, $(U, \omega_{k, j})$ is a sequence of open K\"ahler manifolds with Ricci curvature uniformly bounded below for all $j>0$. We can apply the work of \cite{DS1, LS1, LS2} combined with the geometric $L^2$-estimates Lemma \ref{L242} and Lemma \ref{L241} for $\omega_{k, j}$ and the smooth convergence of $\omega_k$ on $Y\setminus E'$. For any two distinct points $p_1$ and $p_2$ in $\iota_k^{-1}(U)$, one can construct a holomorphic function $\sigma$ on $U$ satisfying the following properties.
\begin{enumerate}
\item $\pi^*\sigma$  extends to a holomorphic function on $\iota_k^{-1}(U)$.

\item $\pi^*\sigma(p_1) \neq \pi^*\sigma(p_2).$

\end{enumerate}
Therefore $\iota_k(p_1) \neq \iota_k(p_2)$ and so $\iota_k$ is one-to-one. Immediately $Y_k$ is homeomorphic to $Y$ and we can extend the complex structure of $Y\setminus E'$   to $Y_k$ trivally.
\end{proof}
Lemma \ref{1to1-51} identifies $Y_k$ with the projective variety $Y$ in both topological and complex structures.

\begin{lemma} \label{angl} 
For any point $p$ in the smooth part of $E_i \subset E' $ (i.e. $a_i> 0$), the tangent cone of $( Y_k, d_k)$ at $p$ is the flat $\mathbb{C}^{n-1} \times \mathbb{C}_{2(1-a_i) \pi}$, where $\mathbb{C}_\gamma$ is the flat $\mathbb{C}$ with cone angle $\gamma$. 
In particular, $$\nu_{Y_k}(p) = (1-a_i). $$

\end{lemma}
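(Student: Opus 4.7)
The plan is to localize near $p$, identify a flat conical local model with cone angle $2\pi(1-a_i)$, and transfer the asymptotic picture to the RCD tangent cone via the smooth approximation $\omega_{k,j}\to\omega_k$ of Section 5. Pick holomorphic coordinates $(z_1,\ldots,z_n)$ on a small polydisk $U\ni p$ so that $E_i\cap U=\{z_1=0\}$ and $U$ is disjoint from all other components $E_\ell$; in particular, $|\sigma_i|^2_{h_i}=|z_1|^2 e^{-\chi}$ with $\chi$ smooth, while $|\sigma_\ell|_{h_\ell}$ is bounded away from $0$ and $\infty$ on $U$ for $\ell\neq i$. With $\beta:=1-a_i\in(0,1]$, the candidate local model is the flat conical K\"ahler form
$$\omega_\beta := \ddbar\Bigl(\tfrac{1}{\beta^2}|z_1|^{2\beta}+\sum_{j\ge 2}|z_j|^2\Bigr),$$
which is isometric to $\mathbb{C}^{n-1}\times\mathbb{C}_{2\pi\beta}$ under the branched change $w_1=z_1^\beta/\beta$ and satisfies $\omega_\beta^n=c_n|z_1|^{-2a_i}dV_{\rm Euc}$. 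Localizing (\ref{ma3}) on $U$ gives $\omega_k^n=F|z_1|^{-2a_i}dV_{\rm Euc}$ for a smooth positive bounded $F$, matching the volume-form singularity of $\omega_\beta^n$.

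The analytic core is a uniform two-sided comparison $C^{-1}\omega_\beta\le\omega_k\le C\omega_\beta$ on $U\setminus\{z_1=0\}$. The upper bound would come from a Chern-Lu / Schwarz-type maximum principle applied to $\log\tr_{\omega_k}\omega_\beta-A\varphi_k+\varepsilon\log|z_1|^2$, exactly as in Lemma \ref{schw3}, using $\ric(\omega_k)\ge-\omega_k$ and the vanishing of the bisectional curvature of the flat cone $\omega_\beta$ on $\{z_1\ne 0\}$; combined with the matched volume forms, it also yields the reverse bound. These estimates are uniform in $j$ at the level of $\omega_{k,j}$, so by the convergence $\omega_{k,j}\to\omega_k$ away from $E'$ in Lemma \ref{highreg2} and the GH convergence of Lemma \ref{ykspace51}, they descend both to $\omega_k$ and to the intrinsic distance $d_k$ on $U$.

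With this comparison, I would identify the tangent cone at $p$ by rescaling $(Y_k,r^{-2}d_k^2,p)$ and letting $r\to 0$. Since $(Y_k,d_k,\mu_k)$ is a non-collapsed $\RCD(-1,2n)$-space, tangent cones exist and are $\RCD(0,2n)$ metric cones. Rescaling $\omega_\beta$ by $r^{-2}$ is an isometry of the flat cone, so the bilipschitz comparison passes to the rescaled limit and identifies the tangent cone bilipschitzly with $\mathbb{C}^{n-1}\times\mathbb{C}_{2\pi\beta}$; smooth Cheeger-Colding convergence away from the divisor, together with the local convergence $\omega_k/\omega_\beta\to 1$ as one approaches $p$ through $\{z_1\ne 0\}$, promotes the bilipschitz identification to an isometric one. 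The density $\nu_{Y_k}(p)=1-a_i$ then reads off from the elementary fact that a ball of radius $r$ in $\mathbb{C}^{n-1}\times\mathbb{C}_{2\pi\beta}$ has volume $(1-a_i)$ times that of the Euclidean ball of radius $r$ in $\mathbb{R}^{2n}$. The main obstacle is exactly the asymptotic regularity $\omega_k/\omega_\beta\to 1$ at $p$ needed for the isometric, rather than merely bilipschitz, identification: this is conical K\"ahler-Einstein regularity at an interior smooth point of the divisor, and after localization the argument reduces to the classical case on $\mathbb{C}^n$ via standard conical barrier constructions.
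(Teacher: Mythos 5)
Your proposal follows essentially the same route as the paper: the paper's entire proof is a citation of the Schauder estimates for conical complex Monge--Amp\`ere equations \cite{D2, CDS2, GP, GS21}, which give precisely the asymptotic identification $\omega_k\sim\omega_\beta$ near the smooth part of $E_i$ that you isolate as the "main obstacle." Your preliminary scaffolding (localizing the equation, the Chern--Lu comparison with the flat conical model, the volume-form trick to get the two-sided bound) is correct and is a reasonable way to get the bilipschitz comparison, though note that the maximum principle must be run on a local domain and the boundary term on $\partial U$ near $E_i$ needs care since $\omega_\beta$ blows up there. The one point where your proposal understates the needed input is the final upgrade from bilipschitz to isometric: "standard conical barrier constructions" only produce two-sided bounds on the metric, which identify the tangent cone up to bilipschitz equivalence but do not determine it as a metric cone (nor its volume density, which is the quantitative conclusion actually used later). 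What is required, and what the paper invokes, is the conical Schauder/$C^{2,\alpha}_\beta$ regularity theory, which gives a scale-invariant expansion $|\omega_k-\omega_\beta|_{\omega_\beta}=O(|z|^{\alpha})$ in the conical frame; with that substitution your argument closes.
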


\begin{proof} By the Schauder estimates developed in \cite{D2, CDS2, GP, GS21} for complex Monge-Amp\`ere equations with conical singularities, the tangent cone of $p$ in the smooth part of  $E_i$ with $a_i>0$ must be $\mathbb{C}^{n-1} \times \mathbb{C}_{2(1-a_i) \pi}$. Therefore $\nu_{Y_k} (p) = 1- a_i<1$. 
\end{proof}


Lemma \ref{angl} can be immediately extended to everywhere in $E'$.

\begin{lemma} \label{lem38} For any $p\in  E'$, 
$$\nu_{Y_k}(p) \leq 1- \min_{i\in I, a_i>0}  a_i. $$

\end{lemma}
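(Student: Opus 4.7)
The plan is to extend Lemma \ref{angl} by a continuity argument: approximate an arbitrary $p\in E'$ by smooth points of a suitably chosen component $E_{i_0}\subset E'$ with $a_{i_0}>0$, and invoke the lower semi-continuity of the volume density on the non-collapsed $\RCD(-1,2n)$ space $(Y_k,d_k,\mu_k)$ to pass to the limit.

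First I would use the definition $E'=\bigcup_{a_i>0}E_i$ to pick an index $i_0\in I$ with $a_{i_0}>0$ and $p\in E_{i_0}$. The locus of points of $E_{i_0}$ that are smooth on $E_{i_0}$ and disjoint from all other exceptional components is Zariski-open and analytically dense in $E_{i_0}$, so we may choose a sequence $q_n$ in this locus with $q_n\to p$ in the analytic topology of $Y$. Since $\iota_k:(Y_k,d_k)\to Y$ is a homeomorphism by Lemma \ref{1to1-51}, the convergence $q_n\to p$ also holds in $(Y_k,d_k)$. By Lemma \ref{angl}, $\nu_{Y_k}(q_n)=1-a_{i_0}$ for every $n$.

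Next, on a non-collapsed $\RCD(-1,2n)$ space the volume density $\nu$ is lower semi-continuous; this follows from the Bishop--Gromov monotonicity, which yields $\nu_{Y_k}(q)\ge \mu_k(B(q,r))/V_{-1,2n}(r)$ for any small $r>0$, together with the continuity $\mu_k(B(q_n,r))\to\mu_k(B(p,r))$ for a.e.\ $r$. Letting $n\to\infty$ and then $r\to 0$ gives $\liminf_n\nu_{Y_k}(q_n)\ge \nu_{Y_k}(p)$. Combined with the previous step, $\nu_{Y_k}(p)\le 1-a_{i_0}\le 1-\min_{i\in I,\,a_i>0}a_i$, which is the desired bound.

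The only nontrivial analytic input is the lower semi-continuity of the density on non-collapsed $\RCD$ spaces, which is by now standard (see \cite{DG1,DG2}); the rest is essentially topological, relying on the density of the smooth stratum of a prime divisor and on the identification $(Y_k,d_k)\cong Y$ from Lemma \ref{1to1-51}. Accordingly, I do not anticipate a substantial obstacle in executing this plan.
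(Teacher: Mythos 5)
Your argument is correct and is essentially the paper's own proof, spelled out in more detail: the paper's terse phrase ``volume comparison for RCD spaces'' is exactly the Bishop--Gromov monotonicity implying lower semi-continuity of the volume density $\nu_{Y_k}$, and the homeomorphism $\iota_k\colon Y_k\to Y$ from Lemma \ref{1to1-51} is what both arguments use to transport the approximating sequence from the Zariski-dense smooth locus of $E_{i_0}$ into $(Y_k,d_k)$. Your version has the virtue of making explicit which component $E_{i_0}$ is used, why the approximating points must be disjoint from the other exceptional divisors (so that Lemma \ref{angl} applies), and precisely how monotonicity yields $\nu_{Y_k}(p)\le\liminf_n\nu_{Y_k}(q_n)$; no gaps.
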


\begin{proof} Since $Y_k$ is homeomorphic to $Y$, $p\in E'$ can be approximated by points in the smooth part of $E'$ in Gromov-Hausdorff distance.  The lemma then follows immediately from volume comparison for RCD spaces and Lemma \ref{angl}. 
\end{proof}

\begin{corollary} $\cR(Y_k) = Y\setminus E'$.

\end{corollary}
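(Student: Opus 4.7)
The plan is to prove the corollary by establishing the two inclusions separately, both of which are essentially immediate from what has already been set up.

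For the inclusion $Y\setminus E' \subset \cR(Y_k)$, I would simply invoke Lemma \ref{ykspace51}, specifically (\ref{reginclu57}), which records that the Gromov--Hausdorff convergence of $(Y,\omega_{k,j})$ to $(Y_k,d_k,\mu_k)$ is smooth on $Y\setminus E'$. Smooth convergence of the sequence of K\"ahler metrics to the honest Riemannian metric $\omega_k$ on $Y\setminus E'$ immediately identifies $Y\setminus E'$ as an open subset of the regular set of $Y_k$.

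For the reverse inclusion, I would use the non-collapsed RCD structure of $(Y_k,d_k,\mu_k)$ established in Lemma \ref{ykspace51}. In a non-collapsed $\RCD(-1,2n)$ space, a point $p$ is regular if and only if its unique tangent cone is Euclidean $\mathbb{R}^{2n}$, or equivalently if and only if the volume density $\nu_{Y_k}(p)=1$. However, Lemma \ref{lem38} gives the uniform gap
\[
\nu_{Y_k}(p) \;\leq\; 1 - \min_{i\in I,\, a_i>0} a_i \;<\; 1
\]
for any $p\in E'$, where strict inequality holds because the set $\{i : a_i>0\}$ indexing $E'$ is nonempty whenever $E'$ is. Consequently no point of $E'$ can lie in $\cR(Y_k)$, which yields $\cR(Y_k) \subset Y\setminus E'$ and completes the proof.

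The argument is essentially bookkeeping at this stage; the real content was packaged into Lemmas \ref{ykspace51}, \ref{angl} and \ref{lem38}. The only subtlety worth highlighting is that one needs the non-collapsed RCD hypothesis to use the volume-density characterization of regular points (rather than merely the cone structure of tangent spaces), but this is already guaranteed by the uniform volume bound (\ref{vol55bd}) together with the identification of $(Y_k,d_k,\mu_k)$ as a non-collapsed $\RCD(-1,2n)$ space.
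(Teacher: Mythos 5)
Your proof is correct and follows essentially the same route as the paper, which simply derives the corollary from Lemma \ref{ykspace51} (smooth convergence on $Y\setminus E'$, giving one inclusion) and Lemma \ref{lem38} (the volume density gap on $E'$, giving the other). The volume-density characterization of regular points in a non-collapsed RCD space that you invoke is exactly the intended mechanism, so your writeup just makes explicit what the paper leaves implicit.
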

 
\begin{proof} The proof directly follows from Lemma \ref{ykspace51} and Lemma \ref{lem38}.
\end{proof}


\medskip

 \subsection{The RCD space $(Y_\infty, d_\infty, \mu_\infty)$}

We now  consider the sequence of the non-collapsed $\RCD(-1, 2n)$ spaces $(Y_k, d_k, \mu_k)$. Thanks to  the uniform diameter bound for $(Y_k, \omega_k)$ (Lemma \ref{dia55}) and volume bounds (\ref{vol55bd}), we obtain a limiting RCD space 
$$(Y_\infty,  d_\infty, \mu_\infty) $$ from $\{(Y_k, d_k, \mu_k)\}_{k=1}^\infty $ as $k \rightarrow \infty$, by the general compactness theory of non-collapsed RCD spaces. In particular, the convergence is smooth on $X^\circ =Y\setminus E$ by Lemma \ref{highreg2} and so
$$X^\circ \subset \mathcal{R}(Y_\infty). $$
Lemma \ref{schw3} implies that  the identity map from $X^\circ$ to itself extends to a  surjective Lipschitz map
\begin{equation}\label{iota2}
\iota_\infty: (Y_\infty, d_\infty) \rightarrow (X, \theta_X).
\end{equation}

\begin{lemma} $(Y_\infty, d_\infty)$ is isometric  to $(\hat X, d_\omega)= \overline{(X^\circ, \omega )}$. 

\end{lemma}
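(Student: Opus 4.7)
The plan is to show that the distances $d_\infty$ and $d_\omega$ agree on the common dense subset $X^\circ = Y \setminus E$, so that the identity extends to an isometry of the completions. Both $\hat X$ and $Y_\infty$ contain $X^\circ$ as an open Riemannian submanifold isometric to $(X^\circ, \omega)$: for $\hat X$ this is built into Definition \ref{defmc}, while for $Y_\infty$ it follows from the smooth convergence $\omega_k \to \omega$ on compact subsets of $X^\circ$, which identifies $X^\circ \subset \cR(Y_\infty)$ with Riemannian metric $\omega$. Density of $X^\circ$ in $\hat X$ is by construction; density in $Y_\infty$ follows from the fact that each $(Y_k, d_k) = \overline{(Y\setminus E', \omega_k)}$ is the metric completion of $Y \setminus E'$, within which $X^\circ = (Y \setminus E') \setminus E''$ is dense (as $E''$ is an analytic subvariety of real codimension $\geq 2$), and this density propagates to the Gromov--Hausdorff limit.

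For the upper bound $d_\infty(p,q) \leq d_\omega(p,q)$ with $p, q \in X^\circ$, fix $\epsilon > 0$ and pick a piecewise smooth path $\gamma \subset X^\circ$ joining $p$ and $q$ with $L_\omega(\gamma) \leq d_\omega(p,q) + \epsilon$. Since $\gamma([0,1])$ lies in a compact subset of $X^\circ$, smooth convergence gives $L_{\omega_k}(\gamma) \to L_\omega(\gamma)$; as $\gamma$ is an admissible path in $Y_k$, we have $d_k(p,q) \leq L_{\omega_k}(\gamma)$, and passing to the Gromov--Hausdorff limit $(Y_k,d_k) \to (Y_\infty, d_\infty)$ yields $d_\infty(p,q) \leq d_\omega(p,q) + \epsilon$. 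Letting $\epsilon \to 0$ concludes this direction.

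For the lower bound $d_\omega(p,q) \leq d_\infty(p,q)$, use (\ref{convex57}) to obtain quasi-minimizing paths $\gamma_k \subset Y \setminus E'$ with $L_{\omega_k}(\gamma_k) \leq d_k(p,q) + 1/k$. Since $E'' = E \setminus E'$ has real codimension $\geq 2$ in the smooth Riemannian manifold $(Y \setminus E', \omega_k)$, Gromov's trick perturbs $\gamma_k$ into a path $\tilde\gamma_k \subset X^\circ$ with $L_{\omega_k}(\tilde\gamma_k) \leq d_k(p,q) + 2/k$. The uniform estimate $\omega_k \geq c\,\pi^*\theta_X$ of Lemma \ref{schw3}, together with the diameter bound of Lemma \ref{dia55}, makes the family $\{\tilde\gamma_k\}$ equicontinuous in the $\theta_X$-metric after reparametrization, so by Arzel\`a--Ascoli a subsequence converges uniformly to a curve $\tilde\gamma_\infty: [0,1] \to X$ from $p$ to $q$. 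The main obstacle is to show that $\tilde\gamma_\infty$ remains in $X^\circ$ and that $L_\omega(\tilde\gamma_\infty) \leq \liminf L_{\omega_k}(\tilde\gamma_k)$; this is the capacity/cutoff mechanism embodied in Lemma \ref{cutoff}, which, applied to the subvariety $\cS(X) = \pi(E)$, allows one to further perturb $\tilde\gamma_k$ to avoid small neighborhoods of $\cS(X)$ at negligible $\omega_k$-length cost, thereby forcing the subsequential limit into $X^\circ$. Then lower semicontinuity of length and smooth convergence $\omega_k \to \omega$ on the compact trajectory of $\tilde\gamma_\infty$ give $d_\omega(p,q) \leq L_\omega(\tilde\gamma_\infty) \leq \liminf d_k(p,q) = d_\infty(p,q)$, completing the proof.
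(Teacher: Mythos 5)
Your proof is correct and is essentially the argument the paper intends: the paper's own proof is a one-line reference to the almost-convexity of $X^\circ$ via Gromov's trick (as in Lemma \ref{ykspace51}), which is exactly the two-sided comparison of $d_\infty$ and $d_\omega$ on the dense subset $X^\circ$ that you carry out. The only stylistic remark is that the Arzel\`a--Ascoli/limit-curve step is avoidable: once the capacity estimate lets the perturbed paths avoid a fixed $\delta$-neighborhood of $E$ uniformly in $k$, you can compare $L_\omega$ and $L_{\omega_k}$ directly on that fixed compact set and conclude without extracting a limit curve.
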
 

\begin{proof}   The lemma follows from the same argument in Lemma \ref{ykspace51}  almost convexity of $X^\circ$ via Gromov's trick as in Lemma \ref{ykspace51}. 
\end{proof}

The above lemma establishes the equivalence of $(Y_\infty, d_\infty, \mu_\infty)$ and $(\hat X, d_\omega, \mu_\omega). $
\begin{corollary}

$$(Y_\infty, d_\infty, \mu_\infty)=(\hat X, d_\omega, \mu_\omega). $$

\end{corollary}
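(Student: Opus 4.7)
The preceding lemma already delivers the metric identification $(Y_\infty, d_\infty) \cong (\hat X, d_\omega)$, so the remaining content of the corollary is the identification of the reference measures. My plan is to argue this in two short steps, relying on the non-collapsed RCD structure and the smooth convergence.

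First, I would pin down both measures on the smooth locus $X^\circ = Y\setminus E$. On the $\hat X$ side, by Definition \ref{defmc}, $\mu_\omega$ is by construction the trivial extension of the smooth volume form $\omega^n|_{X^\circ}$. On the $Y_\infty$ side, $\mu_\infty$ arises as the weak limit of the non-collapsed measures $\mu_k = \omega_k^n$. By Lemma \ref{highreg2}, $\omega_k \to \omega$ smoothly on every compact $\mathcal{K}\subset\subset X^\circ$, so $\omega_k^n \to \omega^n$ smoothly there. Passing this smooth convergence through the metric identification of the previous lemma gives $\mu_\infty|_{X^\circ} = \omega^n|_{X^\circ} = \mu_\omega|_{X^\circ}$.

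Second, I would dispose of the complement. By the volume bound (\ref{vol55bd}), $(Y_\infty, d_\infty, \mu_\infty)$ is non-collapsed, and as a non-collapsed $\RCD(-1, 2n)$ space its reference measure coincides with $\mathcal{H}^{2n}$. In particular, $\mu_\infty(\mathcal{S}(Y_\infty)) = 0$ by the standard fact that the singular stratum of a non-collapsed RCD space has zero $2n$-dimensional Hausdorff measure (it has codimension at least $2$). Under the identification with $\hat X$, the singular set corresponds to $\hat X\setminus X^\circ$, which has $\mu_\omega$-measure zero by the very definition of the trivial extension. Hence both measures give zero mass to the complement of $X^\circ$, and combined with the first step, $\mu_\infty = \mu_\omega$ as Borel measures on the common underlying space.

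I do not expect any genuine obstacle here: the hard analytic content (existence of the limit, the isometry $(Y_\infty, d_\infty) \cong (\hat X, d_\omega)$, and the non-collapsed RCD structure) has already been done in the preceding lemmas. The only care to take is to make sure one cites the correct characterization of the reference measure of a non-collapsed RCD space as $\mathcal{H}^{2n}$, so that the identification on the regular part uniquely extends to the whole space. Once that is stated, the conclusion $(Y_\infty, d_\infty, \mu_\infty) = (\hat X, d_\omega, \mu_\omega)$ is immediate.
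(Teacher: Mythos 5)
Your proposal is essentially correct, and it fills in the measure-matching details that the paper leaves implicit: the paper simply writes ``the above lemma establishes the equivalence'' and does not separately argue that $\mu_\infty=\mu_\omega$. Your first step (smooth convergence of $\omega_k^n\to\omega^n$ on compacts of $X^\circ$, combined with the definition of $\mu_\omega$ as the trivial extension) is exactly right.

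There is one ordering issue in your second step worth flagging. You invoke the identity $\cS(Y_\infty)=\hat X\setminus X^\circ$, but in the paper's logical order this is established only afterward (Corollary~\ref{regp1}, which in turn relies on Lemma~\ref{dengap}). At the point where this corollary appears, all that is known is the one-sided inclusion $X^\circ\subset\cR(Y_\infty)$, i.e.\ $\cS(Y_\infty)\subset\hat X\setminus X^\circ$, and a priori some points of $\hat X\setminus X^\circ$ could still be metrically regular. The clean way to close the gap without appealing to later results is a total-mass argument: $\mu_\omega(\hat X)=\int_{X^\circ}\omega^n=[\omega]^n$ since $\omega$ has bounded local potentials (no Monge--Amp\`ere mass concentrates on the zero-capacity set $\cS(X)$), and $\mu_\infty(\hat X)=\lim_k\mu_k(Y_k)=\lim_k[\omega_k]^n=[\omega]^n$ by the weak convergence of the measures together with $[\omega_k]=[\pi^*\omega_X]+\delta_k[\theta_Y]\to[\omega]$. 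Since $\mu_\infty$ and $\mu_\omega$ agree on $X^\circ$ and have equal finite total mass, $\mu_\infty(\hat X\setminus X^\circ)=\mu_\omega(\hat X\setminus X^\circ)=0$, which is what you wanted. Your De~Philippis--Gigli identification $\mu_\infty=\mathcal{H}^{2n}$ is a useful fact, but here it is not by itself enough until the singular set is pinned down; the mass bookkeeping sidesteps that dependence entirely.
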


\begin{lemma} \label{dengap} There exists $\epsilon>0$ depending on $X$ such that for any $p\in Y_\infty \setminus X^\circ$, 
$$\nu_{Y_\infty} (p) <  1- \epsilon. $$

\end{lemma}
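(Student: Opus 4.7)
The plan is to transfer the pointwise density bound available on each intermediate space $Y_k$ to the limit $Y_\infty$ via Bishop--Gromov monotonicity and non-collapsed volume convergence, and then to upgrade the pointwise bound to a uniform gap by upper semi-continuity and compactness. The key input is Lemma~\ref{lem38}: for every $q\in E'\subset Y_k$ one has $\nu_{Y_k}(q)\le 1-\alpha_0$ with $\alpha_0:=\min\{a_i:a_i>0\}>0$ (when $E'\ne\emptyset$; the crepant case is discussed below). First observe that any $p\in Y_\infty\setminus X^\circ$ satisfies $\iota_\infty(p)\in\cS(X)=\pi(E)$: indeed, $\iota_\infty$ is continuous and extends the identity on $X^\circ$, and the smooth convergence $\omega_k\to\omega$ on $X^\circ$ rules out additional preimages of $X^\circ$.

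In the principal case $\iota_\infty(p)\in\pi(E')$, pick $\tilde p\in\pi^{-1}(\iota_\infty(p))\cap E'$ and, via the homeomorphism $\iota_k:Y_k\cong Y$ supplied by Lemma~\ref{1to1-51}, regard $\tilde p$ as a point $\tilde p^{(k)}\in Y_k$ for each $k$. Bishop--Gromov monotonicity on the $\RCD(-1,2n)$ space $Y_k$, combined with Lemma~\ref{lem38}, yields
\[
\operatorname{Vol}_{Y_k}\bigl(B(\tilde p^{(k)}, r)\bigr)\;\le\;(1-\alpha_0)\, V_{-1,2n}(r)\qquad\text{for all }r>0.
\]
Up to a subsequence, $\tilde p^{(k)}$ converges in Gromov--Hausdorff sense to some $p^\ast\in Y_\infty$ with $\iota_\infty(p^\ast)=\pi(\tilde p)=\iota_\infty(p)$, by continuity under the GH identification. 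Appealing to the injectivity of $\iota_\infty$ — proved in parallel by the peak-section/$L^2$-estimate argument of Lemma~\ref{1to1-51} together with the uniform $L^2$-bounds of Section~4 — forces $p^\ast=p$. Non-collapsed volume convergence then gives $\operatorname{Vol}_{Y_\infty}(B(p,r))\le (1-\alpha_0)V_{-1,2n}(r)$, and sending $r\to 0$ (using $V_{-1,2n}(r)/V_{0,2n}(r)\to 1$) delivers $\nu_{Y_\infty}(p)\le 1-\alpha_0$.

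The main obstacle is the residual case $\iota_\infty(p)\in\pi(E'')\setminus\pi(E')$, including the fully crepant setting $E'=\emptyset$: here each $Y_k$ is smooth along $E''$, so Lemma~\ref{lem38} gives no information. The gap in this case must be extracted from the collapse of the positive-dimensional $E''$-fibers as $\delta_k\to 0$ — for instance, by rescaling $\omega_k$ at scale $\sqrt{\delta_k}$ near $\pi^{-1}(\iota_\infty(p))$ and identifying a non-trivial collapsed tangent cone, or by invoking the algebraic structure of the klt singularity at $\iota_\infty(p)$ through its normalized local volume — yielding $\nu_{Y_\infty}(p)<1$ pointwise. Once the pointwise strict inequality is known on all of $Y_\infty\setminus X^\circ$, upper semi-continuity of $\nu_{Y_\infty}$ on the non-collapsed $\RCD$ space $Y_\infty$ combined with compactness of the closed set $Y_\infty\setminus X^\circ$ (closed because $X^\circ\subset\cR(Y_\infty)$ is open) upgrades it to a uniform gap $\epsilon=\epsilon(X)>0$.
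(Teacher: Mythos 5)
Your argument for points that arise as limits of $E'$ is essentially sound (Lemma \ref{lem38} plus volume convergence for non-collapsed RCD limits does transfer the bound $\nu\le 1-\min_{a_i>0}a_i$ to such limit points), but the proposal has two genuine gaps. First, the case you yourself flag as ``the main obstacle'' --- points $p$ with $\iota_\infty(p)\in\cS(X)$ that are not limits of points of $E'$, in particular the crepant situation $E'=\emptyset$ or $\iota_\infty(p)\in\pi(E'')\setminus\pi(E')$ --- is left unresolved: you only list candidate strategies (rescaling at scale $\sqrt{\delta_k}$, normalized local volumes) without carrying any of them out, and this is precisely where the content of the lemma lies. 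Second, your identification $p^\ast=p$ invokes the global injectivity of $\iota_\infty$, which in the paper is proved \emph{after} and \emph{using} Lemma \ref{dengap} (via Lemma \ref{sinn-3}, Corollary \ref{sinn-32} and Lemma \ref{lsana}, all of which need the density gap to control the capacity of singular sets of tangent cones). Without injectivity you have only bounded the density of \emph{some} point over $\iota_\infty(p)$, not of $p$ itself, so this step is circular as written.

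The paper's proof is structured differently and avoids both problems. It argues by contradiction: if $\nu_{Y_\infty}(p)\ge 1-\epsilon$ for small $\epsilon$, volume convergence produces balls $B_{Y_k}(p_k,r)$ of almost Euclidean volume; Lemma \ref{lem38} then forces $E'\cap B_{Y_k}(p_k,r)=\emptyset$ (this is the only place $E'$ enters, and it is used to \emph{exclude} the conical divisor from the ball rather than to supply the density bound), so $\omega_k$ is smooth there with a bounded local potential. The Cheeger--Colding Reifenberg theorem and Proposition 2.4 of \cite{LS1} then give a holomorphic chart on $B_{d_\infty}(p,r)$, making $Y_\infty$ a smooth complex manifold near $p$; local injectivity of $\iota_\infty$ on this ball (available because the separation-of-points argument of Lemma \ref{1to1-51} only needs the local $L^2$-theory on a region now known to be smooth) shows $\iota_\infty$ is a local biholomorphism onto a neighborhood of $q=\iota_\infty(p)\in\cS(X)$, contradicting the fact that $q$ is an analytic singularity. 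This single argument covers all points of $Y_\infty\setminus X^\circ$ at once, with no case division according to whether $p$ lies over $E'$ or $E''$, and it is the mechanism you would need to supply to close your residual case.
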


\begin{proof} Let $q= \iota_\infty(p)$ for any fixed $p\in Y_\infty \setminus X^\circ$. Then $q\in \cS(X)$. Suppose $\nu_{Y_\infty} (p)\geq 1- \epsilon$ for some sufficiently small $\epsilon>0$ to be determined later. There exist $r>0$ and $p_k \in (Y_k, d_k)$ such that $p_k \rightarrow p$ in Gromov-Hausdorff distance and for all sufficiently large $k$,
$$\frac{{\rm Vol}(B_{Y_k}(p_k, r))}{{\rm Vol} ( B_{\mathbb{R}^{2n}}(0,r))} > 1- 2\epsilon$$
by the volume convergence for non-collapsed RCD spaces.

 By Cheeger-Colding's extension of the Reifenberg's theorem, if $\epsilon$ is sufficiently small, $d_{GH}(B_{Y_k}(p_k, r), B_{\mathbb{R}^{2n}}(0, r))$ must be sufficiently small and $B_{Y_k}(p_k, r)$ is homeomorphic to the Euclidean unit ball $B(0, 1)$ in $\mathbb{R}^{2n}$.  By fixing a sufficiently small $\epsilon$, $E'\cap B_{Y_k}(p_k, r) = \emptyset$ and $\omega_k$ must be smooth in $B_{Y_k}(p_k, r)$ for all sufficiently small $r>0$ because of Lemma \ref{lem38}. Therefore $\omega_k = \ddbar \psi_k$ in $B_{Y_k}(p_k, r)$  for some bounded and smooth PSH function $\psi_k$ because we can view $\omega_k$ as the curvature of the trivial line bundle $L_k$ equipped with the hermitian metric $e^{-\psi_k}$. Applying Proposition 2.4 in \cite{LS1} for sufficiently small $\epsilon>$, there exists a holomorphic chart for $B_{d_\infty}(p, r)$ which induces a holomorphic map 
 $$F: B_{d_\infty}(p, r) \rightarrow B_{\mathbb{C}^n}(0, 1) \subset \mathbb{C}^n.$$
 Furthermore, $F$ is a homemorphism between $B_{d_\infty}(p, r)$ and its image by $F$.  In particular, $Y_\infty$ is locally a smooth open K\"ahler manifold near $p$ with possibly a singular metric structure.

We now choose an open neighborhood $U_q$ of $q$ such that  
$$\iota_\infty|_{B_{d_\infty}(p,r)}: B_{d_\infty}(p, r) \rightarrow U_q$$
 is a Lipschitz map due to  Lemma \ref{schw3}. By the same argument in Lemma \ref{1to1-51}, $\iota_\infty|_{B_{d_\infty}(p, r)}$ is injective. Therefore $\iota_\infty|_{B_{d_\infty}(p, r)}$ is locally biholomorphic with $q\in \iota_\infty(B_{d_\infty}(p, r))$. However, $q$ is an analytic singularity while $B_{d_\infty}(p, r)$ is smooth. This leads to contradiction  and the lemma is then proved. 
 \end{proof}

We immediately can characterize the regular set of $Y_\infty$.

\begin{corollary}\label{regp1} $ \mathcal{R}(Y_\infty) =X^\circ $. 

\end{corollary}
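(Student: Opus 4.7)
The proof should be short and follow directly from Lemma \ref{dengap} together with the volume density characterization of the regular set in a non-collapsed RCD space. The inclusion $X^\circ \subset \mathcal{R}(Y_\infty)$ has already been recorded above, from the smooth convergence of $\omega_k$ to $\omega$ on compact subsets of $Y\setminus E$ given by Lemma \ref{highreg2}, combined with the metric lower bound $\omega_k \geq c\,\pi^*\theta_X$ of Lemma \ref{schw3}. So only the reverse inclusion $\mathcal{R}(Y_\infty) \subset X^\circ$ needs attention.

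The plan is to argue by contradiction. Suppose there exists $p \in \mathcal{R}(Y_\infty) \setminus X^\circ$. By the structure theory of non-collapsed RCD spaces (Cheeger--Colding--De Philippis--Gigli), a point in the regular set of a non-collapsed RCD space has a unique Euclidean tangent cone $\mathbb{R}^{2n}$, and consequently the Bishop--Gromov volume density satisfies
\begin{equation*}
\nu_{Y_\infty}(p) = \lim_{r\to 0} \frac{\mathrm{Vol}(B_{Y_\infty}(p,r))}{\mathrm{Vol}(B_{\mathbb{R}^{2n}}(0,r))} = 1.
\end{equation*}
On the other hand, Lemma \ref{dengap} (applied to $p \in Y_\infty \setminus X^\circ$) supplies a universal $\epsilon > 0$ such that $\nu_{Y_\infty}(p) < 1-\epsilon$. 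This is a direct contradiction, hence no such $p$ can exist, and so $\mathcal{R}(Y_\infty) \subset X^\circ$.

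Combining the two inclusions yields $\mathcal{R}(Y_\infty) = X^\circ$, completing the proof. There is no real obstacle here once Lemma \ref{dengap} has been established; the only thing to cite carefully is the equivalence between being a regular point and having full volume density in a non-collapsed RCD setting, which is standard and already used implicitly throughout the paper. In particular, one should make clear that regularity is being taken in the RCD sense (existence of a unique Euclidean tangent cone of the appropriate dimension), so that the quantitative density gap of Lemma \ref{dengap} is exactly the right obstruction.
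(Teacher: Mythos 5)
Your proposal is correct and is precisely the argument the paper intends: the inclusion $X^\circ \subset \mathcal{R}(Y_\infty)$ was recorded just before via the smooth convergence on $X^\circ$, and the reverse inclusion is immediate from Lemma \ref{dengap} together with the fact that a regular point of a non-collapsed RCD space has volume density $1$. The paper does not write out a proof (it calls the corollary "immediate"), so there is nothing to compare beyond noting that you have supplied exactly the one-line argument being invoked.
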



 It is proved in \cite{DG2} as a generalization of the Cheeger-Colding theory \cite{CC1, CC2} that for any $m$-dimensional non-collapsed RCD space $Z$, any iterated tangent cone at any point $z\in Z$ are metric cones and the singular set $\cS(Z)$ admits a stratification $\cS_0(Z) \subset \cS_1(Z) \subset ... \subset \cS_{m-1} (Z)\subset Z$, where $\cS_k(Z)$ is the set of points where no tangent cone splits off an isometric Euclidean factor $\mathbb{R}^{k+1}$.  

\begin{lemma} $\cS_{2n-1}(Y_\infty) =\cS_{2n-2}(Y_\infty) $.

\end{lemma}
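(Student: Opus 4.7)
The plan is to argue by contradiction. Suppose the inclusion $\cS_{2n-2}(Y_\infty)\subset\cS_{2n-1}(Y_\infty)$ is strict and pick $p\in\cS_{2n-1}(Y_\infty)\setminus\cS_{2n-2}(Y_\infty)$. By the non-collapsed $\RCD$ cone structure theorem of \cite{DG2}, every iterated tangent cone at $p$ is a metric cone, and by the choice of $p$ some tangent cone at $p$ isometrically splits an $\mathbb{R}^{2n-1}$ factor without splitting the full $\mathbb{R}^{2n}$. Consequently such a tangent cone must be of the form $\mathbb{R}^{2n-1}\times C$ where $C$ is a one-dimensional non-collapsed metric cone. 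The only one-dimensional non-collapsed cones are $\mathbb{R}$ and $[0,\infty)$, and $C=\mathbb{R}$ would contradict $p\notin\cS_{2n-2}(Y_\infty)$. Hence the tangent cone is forced to be the closed half-space $\mathbb{R}^{2n-1}\times[0,\infty)$, whose vertex has volume density equal to $\tfrac{1}{2}$.

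The next step is to rule out such a half-space tangent cone by exploiting the K\"ahler structure inherited by $Y_\infty$ from the approximations. By Lemma \ref{ykspace51}, each $(Y_k,d_k,\mu_k)$ is the non-collapsed Gromov--Hausdorff limit of the smooth closed K\"ahler manifolds $(Y,\omega_{k,j})$ as $j\to\infty$, and then $(Y_\infty,d_\infty,\mu_\infty)$ is a further non-collapsed limit of the $(Y_k,d_k,\mu_k)$ with smooth K\"ahler convergence on $X^\circ$ (Lemma \ref{highreg2}) and uniform Ricci lower bound $\ric(\omega_k)\geq-\omega_k$ (Lemma \ref{schw3}). Combining the analytic $L^2$-estimates Lemma \ref{L241} and Lemma \ref{L242} with Donaldson--Sun type arguments as in \cite{DS1,LS1,LS2}, the complex structure persists on the regular set of every tangent cone of $Y_\infty$, so that any Euclidean splitting factor in such a tangent cone must have even real dimension. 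The odd-dimensional factor $\mathbb{R}^{2n-1}$ of the half-space is therefore excluded and the required contradiction follows.

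The main obstacle is the propagation of the even-dimensional Euclidean splitting property through the iterated limit $(Y,\omega_{k,j})\to (Y_k,d_k,\mu_k)\to (Y_\infty,d_\infty,\mu_\infty)$, since the Ricci lower bound for $\omega_{k,j}$ in (\ref{riclb52}) degenerates as $k\to\infty$. This is bypassed by working directly with the approximation $\omega_k\to\omega$ at the level of $Y_\infty$, where Lemma \ref{schw3} provides the uniform Ricci lower bound $\ric(\omega_k)\geq-\omega_k$ on $Y\setminus E'$. The $L^2$-estimates of Section 4 then furnish enough holomorphic sections on small balls in $X^\circ$ to identify the tangent cones of $Y_\infty$ as K\"ahler cones with even-real-dimensional Euclidean factors. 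The parity obstruction thus excludes the half-space tangent cone and proves $\cS_{2n-1}(Y_\infty)=\cS_{2n-2}(Y_\infty)$.
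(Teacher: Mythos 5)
Your first reduction is fine: a point of $\cS_{2n-1}(Y_\infty)\setminus\cS_{2n-2}(Y_\infty)$ has a tangent cone isometric to the half-space $\mathbb{R}^{2n-1}\times[0,\infty)$. The gap is in the second step, where you try to exclude the half-space by a parity argument on the complex structure of the tangent cone. The regular set of the half-space is the open half-space $\mathbb{R}^{2n-1}\times(0,\infty)$, which is flat and carries a perfectly good parallel complex structure; so there is no parity obstruction visible on the regular set. The statement ``any Euclidean splitting factor must have even real dimension'' is obtained, in the K\"ahler setting, by taking a parallel vector field $v$ spanning a line factor, observing $Jv$ is parallel on $\cR(V)$, and then \emph{extending} the associated harmonic splitting function across $\cS(V)$ to produce a global isometric splitting. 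That extension is exactly what requires $\cS(V)$ to have zero capacity (codimension at least two), as in Lemma \ref{sinn-3} and Corollary \ref{sinn-32} of the paper. For the half-space, $\cS(V)$ is the codimension-one boundary $\mathbb{R}^{2n-1}\times\{0\}$, the extension fails, and indeed all $2n$ coordinate directions are parallel on the regular part while only $2n-1$ of them split the cone. Moreover, the holomorphic-chart construction of \cite{LS1} that you invoke to propagate $J$ to the tangent cone only applies near points of almost-Euclidean density; boundary points have density $\tfrac12$, so $J$ is only produced on the interior, which is not enough to run the splitting argument globally.

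The paper's proof goes the other way around precisely to avoid this: from the half-space tangent cone it produces, via the boundary $\epsilon$-regularity theorem of Bru\`e--Naber--Semola \cite{BNS}, actual points of $\cS_{2n-1}(Y_k)\setminus\cS_{2n-2}(Y_k)$ for large $k$; but each $Y_k$ is a non-collapsed Gromov--Hausdorff limit of the \emph{smooth} manifolds $(Y,\omega_{k,j})$ with Ricci bounded below (for fixed $k$ the degenerating constant $-Ce^{C\delta_k^{-1}}$ is harmless), so $\cS_{2n-1}(Y_k)\setminus\cS_{2n-2}(Y_k)=\emptyset$ by Cheeger--Colding, a contradiction. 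Your remark that one can ``bypass'' the degeneration of (\ref{riclb52}) by working directly with $\omega_k\to\omega$ on $Y_\infty$ misses the point: $Y_\infty$ is a limit of the singular spaces $Y_k$, not of smooth manifolds, so the absence of boundary cannot be quoted for $Y_\infty$ directly; one must descend to the level of $Y_k$, and that descent is what \cite{BNS} provides. To repair your write-up, replace the parity argument by this two-step transfer.
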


\begin{proof} Suppose $p\in \cS_{2n-1}(Y_\infty) \backslash \cS_{2n-2}(Y_\infty) $. Then there exist a sequence of points $p_k \in Y_k$ and $r_k>0$ with $\lim_{k\rightarrow \infty} r_k =0$ such that $$d_{GH}(B_{(Y_k, r_k^{-1} d_k)}(p_k, 1), B_{\mathbb{R}^{2n-1}\times \mathbb{R}^+}(0, 1) )\rightarrow 0. $$ By the $\epsilon$-regularity for boundary points of RCD spaces in \cite{BNS}, there will be a point $q_k \in \cS_{2n-1}(Y_k)\backslash \cS_{2n-2}(Y_k)$. However, $\cS_{2n-1}(Y_k)\backslash \cS_{2n-2}(Y_k) =\emptyset$ by the Cheeger-Colding theory since $Y_k$ arises as the Gromov-Hausdorff limit of manifolds with Ricci curvature bounded below. This leads to contradiction and the lemma is proved. 
\end{proof}

Similarly, $\cS_{2n-1}\backslash \cS_{2n-2}$ is empty on any iterated tangent cone of $(Y_\infty, d_\infty)$.  
Suppose $(Z, d,\mu)$ is an $n$-dimensional RCD space. The $\varepsilon$-regular set of $Z$ is given by  
 $$\mathcal{R}_\varepsilon (Z)= \{ p\in Z: \nu_Z (p) > 1- \varepsilon\}. $$


\begin{lemma} \label{lsana} Let $(V, o)$ be a tangent cone at $(Y_\infty, p)$. Suppose the $\varepsilon$-singular set $V\setminus \mathcal{R}_\varepsilon(V)$ has $0$ capacity for some $\varepsilon>0$.  Then,  there exist $k_0, C>0$ such that for some $m\leq k_0$, there exists $\sigma \in H^0(X, L^m)$ satisfying 
\begin{enumerate}

\item $\|\sigma\|_{L^2(h^{}, mk\omega)} \leq C. $

\smallskip

\item $\left| |\sigma(z)| - e^{-mkd_{\infty}(z, p)} \right| < \xi. $

\end{enumerate}

\end{lemma}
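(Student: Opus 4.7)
The plan is to adapt the Donaldson-Sun peak section construction \cite{DS1}, refined in the singular setting by Liu-Sz\'ekelyhidi \cite{LS1,LS2}, and execute it with the $L^2$ technology developed in Section~4. Throughout I shall read the hermitian weight as $h^{mk}$ so that the volume form $(mk\omega)^n$ matches the polarization $mk\omega=\ric(h^{mk})$ and conclusions (1)--(2) are dimensionally consistent.

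First, I would build an approximate peak template on the tangent cone. Because $V\setminus \mathcal{R}_\varepsilon(V)$ has zero capacity and $(Y_\infty,d_\infty,\mu_\infty)$ is a non-collapsed RCD space, the structure theory forces $(V,o)$ to be a metric cone whose regular part carries a smooth K\"ahler cone structure, and on $\mathcal{R}(V)$ there is a canonical almost-holomorphic peak template obtained by exponentiating $-d_V(\cdot,o)$ against the trivialization of the limiting polarizing line bundle supplied by the radial K\"ahler potential. Smoothing by a cut-off supported in $\mathcal{R}_\varepsilon(V)$ — permitted by the zero-capacity hypothesis in exactly the way Lemma \ref{cutoff} is proved — yields a smooth, compactly supported template $\tilde\sigma_\infty$ on $V$ with bounded $L^2$ norm and $\bar\partial\tilde\sigma_\infty$ arbitrarily small in $L^2$.

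Second, I would transplant $\tilde\sigma_\infty$ back to $(X^\circ,\omega)$ using the pointed rescaled Gromov-Hausdorff convergence $(Y_\infty,\lambda_j d_\infty,p)\to (V,o)$ together with the smooth convergence on regular parts established in Section~5. Choosing $\lambda_j$ proportional to $mk$ with $m\le k_0$ and $k$ large, and using the local holomorphic charts of \cite{LS1} already invoked in the proof of Lemma \ref{dengap} to identify the polarizing frames in the limit, one obtains a smooth, compactly supported $L^{mk}$-valued section $\tilde\sigma$ on $X^\circ$ with $\|\tilde\sigma\|_{L^2(h^{mk},mk\omega)}\le C$ and $\|\bar\partial\tilde\sigma\|_{L^2(h^{mk},mk\omega)}<\xi/2$. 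Applying Lemma \ref{L241} with $k$ there replaced by $mk$ — the curvature hypothesis holds because $\ric(h^{mk})+\ric(\omega)\ge \tfrac12(mk\omega)$ once $m\ge m_0$, thanks to $\ric(\omega)\ge -\omega$ — produces $u$ with $\bar\partial u=\bar\partial\tilde\sigma$ and $\|u\|_{L^2}<\xi/(2\sqrt{4\pi})$. Set $\sigma=\tilde\sigma-u$: it is holomorphic on $X^\circ$ and, by normality of $X$ together with the $L^2$ bound, extends to $\sigma\in H^0(X,L^{mk})$. Conclusion (1) is immediate, and feeding the $L^2$-smallness of $u$ into the Moser-type estimates of Lemma \ref{L242} upgrades it to $L^\infty$-smallness, yielding the pointwise approximation in (2).

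The main obstacle is the transplantation step: making precise the identification of local holomorphic frames of $L^{mk}$ along the rescaled Gromov-Hausdorff limit when $p\in \mathcal{S}(\hat X)$. The zero-capacity assumption on $V\setminus \mathcal{R}_\varepsilon(V)$ is crucial because it simultaneously (a) permits the cut-offs used to make $\tilde\sigma_\infty$ compactly supported in the regular locus and (b) guarantees that the limiting local K\"ahler potential admits a PSH extension across the $\varepsilon$-singular set, so that the pulled-back $\bar\partial$ error cannot concentrate there. Once this identification is in place, the remaining steps are essentially routine H\"ormander plus Moser iteration in the spirit of Section~4.
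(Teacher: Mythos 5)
Your proposal takes essentially the same approach as the paper: both implement the Donaldson--Sun/Liu--Sz\'ekelyhidi peak-section construction (the paper explicitly defers to Proposition~3.1 of \cite{LS1}), both exploit the zero-capacity hypothesis to run cut-offs on the tangent cone, and both feed the $\bar\partial$-error into Lemma~\ref{L241} followed by the Moser-type $L^\infty$-estimate of Lemma~\ref{L242}.

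However, there is a specific step that the paper highlights and your outline glosses over. The difficulty the paper isolates is not frame identification per se, nor a PSH extension of the limiting local potential as you suggest in item (b) of your closing paragraph. The difficulty is that $(Y_\infty, d_\infty, \mu_\infty)$ is a Gromov--Hausdorff limit of the \emph{singular} RCD spaces $(Y_k, d_k, \mu_k)$, not of smooth polarized manifolds, so one cannot directly import Proposition~3.1 of \cite{LS1}. The paper resolves this by setting $\Sigma = \{x\in V: \nu_V(x)\leq 1-\varepsilon\}$, taking $U_{\delta,R} = B(o,R)\setminus \Sigma_\delta$, and invoking Lemma~\ref{dengap} to conclude that for $\delta$ small, $U_{\delta,R}$ contains \emph{no} limits of points of $\cS(X)$ under the blow-up sequence. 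Only then is $U_{\delta,R}$ a GH limit of open smooth K\"ahler manifolds sitting inside $X^\circ$ with two-sided Ricci bounds, and only then can the Liu--Sz\'ekelyhidi holomorphic-chart machinery be run on the lifts $U_i\subset X^\circ$. Your transplantation step implicitly assumes that the support of $\tilde\sigma_\infty$ pulls back into $X^\circ$, but without the volume-density gap from Lemma~\ref{dengap} there is no guarantee the blow-up doesn't accumulate on algebraic singularities where neither the smooth convergence nor the peak-section estimates are available. You should make this invocation of Lemma~\ref{dengap} explicit; the zero-capacity hypothesis alone controls the cut-offs but does not by itself exclude limits of $\cS(X)$ from the region where the template is supported.
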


\begin{proof} The lemma is a consequence of Proposition 3.1 of \cite{LS1} with slight modifications (or \cite{DS1} since the Ricci curvature of $\omega$ is bounded both above and below on $\cR(Y_\infty)$). If $Y_\infty$ is the Gromov-Hausdorff limit of uniformly non-collapsed polarized manifolds with uniform lower Ricci bound, the lemma would immediately follow from Proposition 3.1 of \cite{LS1}. However, $Y_\infty$ is the limit of $(Y_k, d_k, \mu_k)$ and one has to take care of the singularities of $Y_k$. 

Suppose $(V, o)$ is the pointed Gromov-Hausdorff limit of $(Y_\infty, p_i, A_i d_\infty)$ associated to the line bundle $L_i = A_i L$ with $A_i \rightarrow \infty$. Let 
$$\Sigma = \{ x\in V: ~ \nu_V (x) \leq 1- \varepsilon \} $$
for a fixed sufficiently small $\varepsilon>0$. 
Obviously, $\Sigma$ is closed by continuity of the tangent cones of a non-collapsed RCD space \cite{CN, Den}. Let $\Sigma_\delta$ be the $\delta$-neighborhood of $\Sigma$ and $U_{\delta, R}=B(o, R) \setminus \Sigma_\delta$. By choosing sufficiently small $\delta>0$, we can assume that  the open set $U_{\delta, R}$ does not contain any limit of singular points of $X$ by Lemma \ref{dengap},. 

In fact, $U_{\delta, R}$ is the Gromov-Hausdorff limit of a sequence of open smooth K\"ahler manifolds in $X^\circ$ with Ricci curvature  bounded below by $-1$ (and uniformly bounded above (c.f. (\ref{ricul5})). Let $U_i$ be lift of $U_{\delta, R}$ back in $X^\circ \subset Y_k$ under Gromov-Hausdorff approximation. One can follow exactly the same argument in the proof of Proposition 3.1 (\cite{LS1}) based on the geometric $L^2$ estimates in Lemma \ref{L242} and Lemma \ref{L241} to construct holomorphic charts on $U_i$ and then apply the partial $C^0$-techniques with suitable cut-off functions to prove the lemma. 
\end{proof}

Lemma \ref{lsana} also holds for local holomorphic sections instead of global sections in $H^0(X, L^k)$ because $\iota_\infty$ is Lipschitz.

\begin{lemma} \label{sinn-3} For any point  $p\in \cS(Y_\infty)$, $p \in \cS_{2n-3}(Y_\infty)$. 

\end{lemma}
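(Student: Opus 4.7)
The plan is to argue by contradiction using the holomorphic section technology of Lemma~\ref{lsana} together with the stratification analysis just established. Suppose there exists $p\in \cS(Y_\infty)\setminus \cS_{2n-3}(Y_\infty)$, so some tangent cone $(V,o)$ at $p$ splits off an isometric Euclidean factor $\mathbb{R}^{2n-2}$. Because the identity $\cS_{2n-1}(Y_\infty)=\cS_{2n-2}(Y_\infty)$ passes to iterated tangent cones via the $\varepsilon$-regularity of Bru\'e--Naber--Semola, $V$ has no boundary and does not split off $\mathbb{R}^{2n-1}$. By De~Philippis--Gigli every tangent cone of a non-collapsed $\RCD$ space is a metric cone, and the classification of two-dimensional non-collapsed $\RCD(0,2)$ cones that do not split an $\mathbb{R}$-factor forces the transverse factor to be a flat cone of angle $\gamma\in (0,2\pi)$. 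Hence $V=\mathbb{R}^{2n-2}\times \mathbb{C}_\gamma$. Since the K\"ahler structure of $\cR(Y_\infty)\supset X^\circ$ extends by parallel transport to tangent cones, the splitting is complex and $V$ is a complex manifold isomorphic to $\mathbb{C}^{n-1}\times \mathbb{C}$ carrying a K\"ahler metric with conical singularities along $\mathbb{C}^{n-1}\times\{0\}$.

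I would next invoke Lemma~\ref{lsana}. The singular set of $V$ is $\mathbb{C}^{n-1}\times\{0\}$, of real codimension two and therefore of zero capacity, so the hypothesis of Lemma~\ref{lsana} is satisfied at the scales defining $V$. For a rescaling sequence $A_i\to\infty$ and powers $m_i\le k_0$, the global sections of $L^{m_i}$ produced by the lemma yield, after taking a diagonal limit and using the local variant noted at the end of Lemma~\ref{lsana}, $n$ bounded local holomorphic functions $f_1,\dots,f_n$ on a neighborhood $U$ of $p$ in $Y_\infty$ whose rescaled moduli approximate $e^{-m_i k\,d_\infty(\cdot,p)}$ in the $V$-picture. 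Combining the $L^2$--$L^\infty$ and gradient bounds of Lemma~\ref{L242} with the Donaldson--Sun / Liu--Sz\'ekelyhidi construction, the tuple $F=(f_1,\dots,f_n):U\to\mathbb{C}^n$ is a homeomorphism onto its image and a biholomorphism on $U\cap X^\circ$ onto an open subset of $\mathbb{C}^n$. In particular, $Y_\infty$ is locally a smooth complex manifold near $p$, despite having a singular K\"ahler metric of conical type along a complex hypersurface.

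Once this local biholomorphic chart is available, the contradiction is extracted exactly as in the final step of Lemma~\ref{dengap}. The map $\iota_\infty$ is Lipschitz by (\ref{iota2}) and holomorphic away from $\cS(X)$; composing $\iota_\infty|_U$ with $F^{-1}$ gives a holomorphic map between open sets of complex manifolds, and the separating-sections argument of Lemma~\ref{1to1-51} shows it is injective, hence a local biholomorphism onto its image in $X$. This forces $\iota_\infty(p)$ to be a smooth point of $X$, contradicting Corollary~\ref{regp1}, which identifies $\cR(Y_\infty)$ with $X^\circ$ and therefore places $\iota_\infty(p)$ in the analytic singular set $\cS(X)$.

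The main obstacle is the middle step: extracting from Lemma~\ref{lsana}, applied at the rescaling sequence $A_i\to\infty$, a genuine holomorphic chart near $p$ that witnesses smoothness of the complex structure at $p$ itself, not merely on $U\setminus\{p\}$. This demands that the gradient and $L^\infty$ bounds of Lemma~\ref{L242} persist under rescaling, a Bergman-type nondegeneracy of the Jacobian of $F$ at $p$ along the $\mathbb{C}^{n-1}$ splitting direction, and simultaneous control of the cut-offs of Lemma~\ref{cutoff} on the degenerating $\mathbb{C}_\gamma$ factor. The conical (rather than Euclidean) tangent cone is the essential complication, but the zero-capacity property of its singular locus together with the geometric $L^2$-theory of Section~4 should suffice for the Donaldson--Sun scheme to go through.
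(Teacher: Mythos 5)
Your proposal is correct and follows essentially the same strategy as the paper's proof: identify the putative tangent cone as $\mathbb{R}^{2n-2}\times \mathbb{C}_\gamma$ with $\mathbb{C}^{n-1}$ as the splitting factor, note that $\cS(V)$ has zero capacity, use Lemma~\ref{lsana} together with Lemmas~\ref{L241} and~\ref{L242} to produce holomorphic coordinate approximations, and derive a contradiction from the algebraic singularity of $\iota_\infty(p)$. The technical concern you raise about obtaining a chart at $p$ itself (Jacobian nondegeneracy, rescaled bounds, cut-offs on the conical factor) is exactly what the paper discharges by invoking Propositions~9 and~10 of Chen--Donaldson--Sun \cite{CDS2} (Gaussian sections adapted to conical tangent cones) together with the argument of Proposition~12 of \cite{CDS1}, so no genuinely new idea is needed beyond those citations.
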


\begin{proof} Suppose the tangent cone of $p$ is $V=\mathbb{R}^{2n-2}\times C_\gamma$, where $C_\gamma$ is the cone over $S^1$ with angle $\gamma<2\pi$.   Let  $g_V = \frac{1}{2} \nabla^2 r^2$ be  the cone metric on $V$ with $r$ being the distance function from the vertex of $V$. There is a natural complex structure $J$ on the regular part of $V$ by \cite{LS1} induced by the blow-up sequence of $(Y_\infty, d_\infty)$. One can show $J$ extends to the complex structure on $V$ with the splitting factor $\mathbb{R}^{2n-2}$ identified as $\mathbb{C}^{n-1}$ as below. Let $(x_1, x_2, ..., x_{2n-2})$ be the Euclidian coordinates for $\mathbb{R}^{2n-2}$ and $(\rho, \theta)$ be the polar coordinates for $C_\gamma$.  Let $w_1 = \nabla x_1 = \frac{\partial}{\partial x_1} $ and  $v_1= J w_1= J \nabla x_1$.  Since $J$ is parallel with respect to $g_V$, we have $\nabla v_1 = J\nabla^2 x_1 = 0. $
We now let $y_1 = \frac{1}{2} \langle v_1, \nabla r^2 \rangle. $
Then
$$2 \nabla y_1 = \langle \nabla v_1,  \nabla r^2 \rangle + \langle v, \nabla^2 r^2 \rangle =2 \langle v_1, Id\rangle = 2v_1, ~ \nabla^2 y_1 = \nabla v_1 = 0. $$ 
Therefore $y_1$ is Lipschitz and harmonic on $\cR(V_p)$. Immediately, $y$ extends to a global harmonic function with respect to $g$ as a harmonic coordinate function. In particular,  $v_1$ and $w_1$ span a splitting factor $\mathbb{C}$ in $\mathbb{R}^{2n-2}$. We can replace $x_{n+1}$ by $y_1$ and repeat the above procedure turning $\mathbb{R}^{2n-2}$ to $\mathbb{C}^{n-1}$.

Obviously, the singular set of $V$ has $0$ capacity and we can apply Lemma \ref{lsana}. Since $\cS(V)$ has $0$-capacity and $\cS(V)$ contains all the limiting points from $\cS(Y_\infty)$ by Lemma \ref{dengap}, one can obtain $n$ Gaussian sections as in Proposition 9 and Proposition 10 \cite{CDS2} to approximate the holomorphic coordinates $[z]=(z_1, z_2, ..., z_n)$ on $V$ with $\epsilon$-Kahler embeddings from any $W \subset \subset \{ [z]< r\} \subset V$ for  $r>0$. These Gaussian sections are global $L^2$ holomorphic sections constructed on $X$ by Lemma \ref{L242} and Lemma \ref{L241}. They induces a holomorphic map 
$F: X \rightarrow \mathbb{C}^n$. Let $\cB\subset X$ be the closure of  $\{ x \in X^\circ: d_\infty(x, p) < 1, ~F(x) <1\}$ in $X$. Then the argument for Proposition 12 \cite{CDS1} gives a holomorphic equivalence from $\cB$ to the unit ball in $\mathbb{C}^n$. On the other hand, $\iota_\infty(p) \in \cB$ is a singular point of $X$, $\cB$ cannot be biholomorphic to a unit ball in $\mathbb{C}^n$. This leads to contradiction. Therefore $\cS_{2n-2}(Y_\infty) = \emptyset$ and we have proved the lemma.
\end{proof}

Similarly, for any iterated tangent cone of $(Y_\infty, d_\infty)$, either it is $\mathbb{C}^n$ or it can only split off $\mathbb{R}^k$ with $k< 2n-2$ because the Ricci curvature is uniformly bounded on $\cR(Y_\infty)$.

\begin{corollary}  \label{sinn-32} For any point  $p\in \cS(Y_\infty)$, the singular set of the tangent cone at $p$ is closed and has Hausdorff dimension  is no greater  than $2n-3$. In particular, the capacity of the singular set of any tangent cone is $0$.

\end{corollary}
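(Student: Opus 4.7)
The plan is to transfer the content of Lemma \ref{sinn-3} and the remark preceding the corollary from $(Y_\infty, d_\infty)$ to each of its tangent cones. Let $V$ be a tangent cone at a point $p \in \cS(Y_\infty)$. Then $V$ is itself a non-collapsed $\mathrm{RCD}(0, 2n)$ metric cone, and by the standard principle that a tangent cone of a tangent cone is a tangent cone, every iterated tangent cone of $V$ is also an iterated tangent cone of $Y_\infty$.

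The heart of the argument is to invoke the remark immediately preceding the corollary: every iterated tangent cone of $Y_\infty$ is either $\mathbb{C}^n$ or can split off at most an $\mathbb{R}^k$ factor with $k \leq 2n-3$. Applied to iterated tangent cones based at points $q \in \cS(V)$, this shows that no tangent cone at $q$ can split off an Euclidean factor $\mathbb{R}^{2n-2}$, so $q \in \cS_{2n-3}(V)$. Hence $\cS(V) = \cS_{2n-3}(V)$. Combining this with the general stratification estimate of \cite{DG2}, namely $\dim_{\mathcal{H}}\cS_k(V) \leq k$, yields $\dim_{\mathcal{H}}\cS(V) \leq 2n-3$.

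For the closedness of $\cS(V)$, I would appeal to the Cheeger-Colding $\varepsilon$-regularity theorem in its non-collapsed RCD form: any point with volume density sufficiently close to $1$ lies in a bi-H\"older Reifenberg chart onto a Euclidean ball, and in particular is a regular point. By continuity of volume densities for non-collapsed RCD spaces together with this $\varepsilon$-regularity, the set $\cR(V)$ is open, so $\cS(V)$ is closed.

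The final claim that $\cS(V)$ has zero capacity then follows from the Hausdorff codimension bound of at least $3$ via a cutoff construction modeled on Lemma \ref{cutoff}: closed subsets of Hausdorff codimension $\geq 2$ in an $\mathrm{RCD}(-1, 2n)$ space admit cutoff functions with arbitrarily small Dirichlet energy, hence have vanishing $2$-capacity. The main technical point in the whole argument is really just organizational --- verifying that the dichotomy of iterated tangent cones established for $Y_\infty$ transfers verbatim to $V$, and that the cutoff construction extends to a closed subset of a non-compact metric cone --- but both reduce immediately to the compact/local setting already handled in the preceding sections.
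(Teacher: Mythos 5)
Your argument for the Hausdorff dimension bound is correct and follows the same route as the paper: iterated tangent cones of $V$ are iterated tangent cones of $Y_\infty$, so the remark preceding the corollary (no splitting of $\mathbb{R}^{2n-2}$ unless the cone is $\mathbb{C}^n$) shows $\cS(V)=\cS_{2n-3}(V)$, and then \cite{DG2} gives $\dim_{\mathcal{H}}\cS(V)\le 2n-3$. The zero-capacity conclusion from a closed set of Hausdorff codimension strictly bigger than $2$ is also fine, though Lemma \ref{cutoff} is not the right model to cite since it relies on plurisubharmonicity and the algebraic structure of the subvariety; the relevant fact is the purely metric statement that in an Ahlfors-regular $\mathrm{RCD}(-1,2n)$ space a closed set of Hausdorff dimension strictly less than $2n-2$ has vanishing $2$-capacity.

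The gap is in the closedness argument. You write that a point with volume density close to $1$ lies in a bi-H\"older Reifenberg chart ``and in particular is a regular point.'' This inference is false for general non-collapsed RCD spaces: the Cheeger--Colding/Reifenberg $\varepsilon$-regularity gives \emph{topological} (bi-H\"older) regularity, not \emph{metric} regularity. A point may admit Reifenberg charts at every scale and still have a tangent cone that is only GH-close to, but not isometric to, $\mathbb{R}^{2n}$. As a result, $\cR(V)$ is not open in general, and $\cS(V)$ need not be closed from this argument alone. What actually makes the regular set of $V$ open in this paper is a definite volume-density gap at singular points of $V$, which is a Kähler-specific rigidity statement: it is proved by transporting the argument of Lemma \ref{dengap} (and the analysis of \cite{DS1,LS1} via the $L^2$-estimates of Lemma \ref{L241} and Lemma \ref{L242}) to the blow-up sequence, using that the relevant regions are GH limits of smooth Kähler manifolds with Ricci bounded below. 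Points of $V$ arising as limits from $\cS(Y_\infty)$ inherit the density gap of Lemma \ref{dengap} by volume convergence; points arising from $\cR(Y_\infty)$ fall under the Liu--Sz\'ekelyhidi theory and are contained in a closed analytic subvariety. Once that density gap is in hand, one has $\cR(V)=\cR_\epsilon(V)$ for some fixed $\epsilon>0$, and openness follows from upper semicontinuity of the density, not from Reifenberg. You should replace the Reifenberg step with this Kähler partial-$C^0$ / density-gap argument.
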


\begin{lemma} $(Y_\infty, d_\infty)$ is homeomorphic to  $X$. More precisely, the map $\iota_\infty$ is a one-to-one Lipschitz map. 

\end{lemma}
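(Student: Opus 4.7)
The plan is to prove that the already-surjective, Lipschitz map $\iota_\infty$ from (\ref{iota2}) is injective; since $(Y_\infty, d_\infty)$ and $(X, \theta_X)$ are both compact, injectivity together with continuity will automatically upgrade $\iota_\infty$ to a homeomorphism. I would argue by contradiction: suppose there exist distinct $p_1, p_2 \in Y_\infty$ with $\iota_\infty(p_1) = \iota_\infty(p_2) = q \in X$. By Corollary \ref{regp1}, $\cR(Y_\infty) = X^\circ$, and on this open set $\iota_\infty$ restricts to the identity, because $\omega_k$ converges smoothly to $\omega$ on compact subsets of $X^\circ$ (Lemma \ref{highreg2}). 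Consequently, if $q \in X^\circ$ then $p_1 = p_2 = q$. Thus the only case to handle is $q \in \cS(X)$, with both $p_1, p_2 \in \cS(Y_\infty)$.

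For that case, the strategy is to invoke Lemma \ref{lsana} to produce a global holomorphic section $\sigma \in H^0(X, L^m)$ whose $h^m$-norm, viewed as a function on $Y_\infty$ via $\iota_\infty$, distinguishes $p_1$ from $p_2$. The hypothesis of Lemma \ref{lsana}, namely that the singular set of the ambient tangent cone has $0$ capacity, is supplied by Corollary \ref{sinn-32}, which gives Hausdorff codimension at least three for singular sets of all tangent cones at points of $\cS(Y_\infty)$. I would blow up $(Y_\infty, d_\infty)$ at $p_1$ along a sequence of scales $A_i$ comparable to $d_\infty(p_1, p_2)^{-1}$, so that in the rescaled metric $p_2$ sits at distance approximately one from $p_1$. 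Applying Lemma \ref{lsana} at a tangent cone obtained along this sequence yields $\sigma$ with, at the rescaled scale, $|\sigma|_{h^m}(p_1)$ close to one while $|\sigma|_{h^m}(p_2) \leq e^{-c} < 1$ for a universal $c > 0$.

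The contradiction is then immediate: $|\sigma|_{h^m}$ is a well-defined function on $X$, and its composition with $\iota_\infty$ is a well-defined function on $Y_\infty$ that must take equal values at any two preimages of the same point under $\iota_\infty$. Since $\iota_\infty(p_1) = \iota_\infty(p_2)$, one would need $|\sigma|_{h^m}(p_1) = |\sigma|_{h^m}(p_2)$, in direct conflict with the separation obtained in the previous paragraph. Hence $\iota_\infty$ is injective, and combined with surjectivity, continuity, and compactness, $\iota_\infty$ is a homeomorphism from $(Y_\infty, d_\infty)$ to $X$.

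The main obstacle is the scale matching in applying Lemma \ref{lsana}: the Gaussian-type approximation it provides lives on the tangent cone, which is only an approximation to $(Y_\infty, A_i d_\infty)$ near $p_1$. One must choose the blow-up factor so that both $p_1$ and $p_2$ lie in the region where the approximation is valid, and then transfer the resulting norm estimates back to the fixed scale on $Y_\infty$ using Gromov-Hausdorff approximation together with the almost-convexity of $X^\circ \subset Y_\infty$ exploited throughout Section 5. This is essentially the partial $C^0$ argument of Donaldson-Sun in the limit setting, and the machinery of Lemmas \ref{L242}, \ref{L241}, \ref{lsana} and \ref{sinn-3} is precisely what is needed to make it work here; the Lipschitz property of $\iota_\infty$ from Lemma \ref{schw3} is what ensures that the norm separation on $Y_\infty$ actually forces a contradiction with the coincidence $\iota_\infty(p_1) = \iota_\infty(p_2)$.
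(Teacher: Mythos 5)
Your overall strategy is the paper's: reduce to $q\in\cS(X)$, invoke Corollary \ref{sinn-32} to verify the $0$-capacity hypothesis, apply Lemma \ref{lsana} at a suitable blow-up scale to get a peaked section, and then run the point-separation argument of Lemma \ref{1to1-51}. However, the final step --- where the actual contradiction is supposed to come from --- does not work as you state it. You separate $p_1$ and $p_2$ by the values of the hermitian norm $|\sigma|^2_{h^m}$ and then assert that, since $|\sigma|^2_{h^m}$ is ``a well-defined function on $X$,'' its composition with $\iota_\infty$ must agree at $p_1$ and $p_2$. This conflates two different objects. The quantity that Lemma \ref{lsana} controls at $p_1$ and $p_2$ is the \emph{$d_\infty$-continuous extension} of $|\sigma|^2_{h^m}$ from $X^\circ$ to $Y_\infty$ (it extends because $|\nabla_\sharp\sigma|$ is bounded by Lemma \ref{L242}), i.e.\ the limit of $|\sigma|^2_{h^m}(x_i)$ along sequences $x_i\in X^\circ$ converging to $p_1$, resp.\ $p_2$, in $d_\infty$. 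The quantity that is tautologically equal at $p_1$ and $p_2$ is $|\sigma|^2_{h^m}(\iota_\infty(p_1))=|\sigma|^2_{h^m}(q)$. These coincide only if $|\sigma|^2_{h^m}=|\sigma|^2_{h_0^m}e^{-m\varphi}$ is continuous at $q$ in the analytic topology of $X$; but the weight $\varphi\in\PSH(X,\omega_0)\cap L^\infty(X)\cap C^\infty(X^\circ)$ is only bounded and upper semicontinuous at $\cS(X)$, so the two limits along the two sequences may well differ, and no contradiction results. In other words, a norm separation in $(Y_\infty,d_\infty)$ does not by itself contradict $\iota_\infty(p_1)=\iota_\infty(p_2)$.

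The fix --- and what ``the same argument as Lemma \ref{1to1-51}'' actually means --- is to separate the points by \emph{holomorphic} data on the variety rather than by hermitian norms. After trivializing $L^m$ near $q$, the peak sections produced via Lemmas \ref{lsana}, \ref{L242} and \ref{L241} give bounded holomorphic functions on a neighborhood $U$ of $q$ in the normal variety $X$; such functions are continuous on $U$ in the analytic topology, their pullbacks extend holomorphically (hence continuously) to $\iota_\infty^{-1}(U)\subset Y_\infty$, and by density of $X^\circ$ the extension equals $\sigma\circ\iota_\infty$. One then arranges, exactly as in Lemma \ref{1to1-51} (using several Gaussian sections, or a ratio of sections, to build local holomorphic charts), that some such function takes \emph{different values} at $p_1$ and $p_2$; this genuinely forces $\iota_\infty(p_1)\neq\iota_\infty(p_2)$. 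So the gap is confined to the last paragraph of your argument, but it sits precisely at the point where the metric separation must be converted into an algebro-geometric one, and the conversion requires continuity of holomorphic functions on the normal variety $X$, not of the singular hermitian metric.
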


\begin{proof} By Lemma \ref{sinn-3}, the singular set of any tangent cone of $Y_\infty$ must have $0$ capacity. Therefore we can apply Lemma \ref{lsana} and construct holomorphic Gaussian sections based on the geometric $L^2$-estimates (c.f. Lemma \ref{L242} and Lemma \ref{L241}). Then the lemma is proved by the same argument for Lemma \ref{1to1-51}.
\end{proof}

\begin{lemma} \label{dengap55} There exists $\epsilon(n)>0$ such that   for any $p\in \cS(Y_\infty)$, 
$$\nu_{\hat X}(p) <  1 - \epsilon. $$

\end{lemma}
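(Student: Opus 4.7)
The plan is to argue by contradiction along the same template as Lemma \ref{dengap}, but now leveraging the improved stratification information provided by Lemma \ref{sinn-3} and Corollary \ref{sinn-32} so that the volume gap is governed only by the dimension rather than by $X$. Suppose no universal $\epsilon(n)>0$ works. Then for arbitrarily small $\epsilon>0$ we may pick $p\in\cS(\hat X)$ with $\nu_{\hat X}(p)>1-\epsilon$. The Cheeger-Colding-type $\epsilon$-regularity theorem for non-collapsed $\RCD(-1,2n)$ spaces (which, unlike Lemma \ref{dengap}, gives a threshold depending only on the dimension) guarantees that once $\epsilon<\epsilon_0(n)$, the unique tangent cone at $p$ is the flat $\mathbb{R}^{2n}$ and a small metric ball $B_{d_\infty}(p,r)$ is bi-H\"older close to the Euclidean ball $B_{\mathbb{R}^{2n}}(0,r)$.

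Since every tangent cone at $p$ has empty singular set in this regime, the $0$-capacity hypothesis of Lemma \ref{lsana} holds trivially. Applying the geometric $L^2$-estimates of Lemma \ref{L241} and Lemma \ref{L242} together with Lemma \ref{lsana} (following the Donaldson-Sun-Liu-Sz\'ekelyhidi scheme used already in the proof of Lemma \ref{sinn-3}), we construct for sufficiently large $k$ a collection of global holomorphic $L^k$-sections whose rescalings approximate the Euclidean coordinates $(z_1,\dots,z_n)$ on the tangent cone to arbitrary accuracy on $B_{d_\infty}(p,r)$. These sections produce a holomorphic map $F\colon X\to\mathbb{C}^n$ such that the closure $\cB$ of $\{x\in X^\circ:d_\infty(x,p)<1,\;|F(x)|<1\}$ is biholomorphic to the unit ball in $\mathbb{C}^n$, exactly as in the last step of Lemma \ref{sinn-3}.

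Now $\iota_\infty$ is a one-to-one Lipschitz map and $\cR(\hat X)=X^\circ$ by Corollary \ref{regp1}, so $\iota_\infty(p)\in\cS(X)$ is an honest analytic singular point of the projective variety $X$. Transporting the biholomorphism $\cB\cong B_{\mathbb{C}^n}(0,1)$ through $\iota_\infty$ (which is locally biholomorphic where it is defined, by the argument of Lemma \ref{1to1-51}) would realize a neighborhood of $\iota_\infty(p)$ in $X$ as a smooth complex ball, contradicting that $\iota_\infty(p)$ is analytically singular. This rules out $\nu_{\hat X}(p)>1-\epsilon_0(n)$ for any $p\in\cS(\hat X)$ and proves the claim with $\epsilon=\epsilon_0(n)$.

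The main obstacle is to guarantee that the threshold is genuinely universal rather than depending on $X$: the RCD Reifenberg theorem and Cheeger-Colding volume convergence supply universal dimensional constants; the $L^2$-machinery Lemma \ref{L241}-Lemma \ref{L242} and the cut-off construction Lemma \ref{cutoff} deliver estimates whose rescaled form near an almost-Euclidean point depends only on $n$; and, crucially, the former $X$-dependent worry in Lemma \ref{dengap} about the exceptional locus $E'$ cluttering a small ball around $p_k\in Y_k$ has been removed, because Lemma \ref{sinn-3} forces the singular locus to have codimension at least three and thus cannot compete with an $\mathbb{R}^{2n}$ tangent cone. Once these three ingredients are aligned at a scale dictated purely by the $\epsilon$-regularity constant $\epsilon_0(n)$, the contradiction is automatic and the gap constant does not see $X$.
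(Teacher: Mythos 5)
Your proposal diverges substantially from the paper's argument, and I believe it has a gap precisely in the place you yourself identify as "the main obstacle."

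The paper does not run a single-variety $\epsilon$-regularity argument. Instead it argues by contradiction \emph{across a sequence of projective varieties}: suppose the conclusion fails universally, so there exist $(X_j,\omega_j)$ satisfying the hypotheses of Theorem \ref{thm3spe} and $p_j\in\cS(\hat X_j)$ with $\nu_{\hat X_j}(p_j)=1-\epsilon_j\to 1$; then the pointed RCD spaces $(\hat X_j,p_j)$ converge, the tangent cones $(V_j,o_j)$ converge to the tangent cone $(V,o)$ at the limit, and the paper performs a trichotomy on the splitting behaviour of $V_j$. Case (1) (isolated cone-tip singularity, link smoothly converging to the round $S^{2n-1}$) is dispatched via the differentiable sphere theorem, not via the Gaussian-section/biholomorphism argument; cases (2) and (3) (cones splitting a flat factor) use the partial $C^0$-estimate after rescaling. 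The compactness over the family of varieties is the mechanism that supplies universality of $\epsilon(n)$.

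Your proposed substitute — appeal to a universal Reifenberg/$\epsilon$-regularity threshold $\epsilon_0(n)$ for non-collapsed $\RCD(-1,2n)$ spaces and then push the Donaldson--Sun / Liu--Sz\'ekelyhidi construction — is not self-contained. Two concrete problems: (i) your opening claim that once $\epsilon<\epsilon_0(n)$ "the unique tangent cone at $p$ is the flat $\mathbb{R}^{2n}$" is false; if $p\in\cS(\hat X)$ then $\nu_{\hat X}(p)<1$, so the tangent cone has volume density $<1$ and a singular vertex — it cannot be $\mathbb{R}^{2n}$. (You later implicitly correct this by invoking the $0$-capacity hypothesis rather than smoothness, but the correction contradicts the stated premise.) (ii) More seriously, Lemma \ref{lsana} is a statement about one fixed $X$ and its approximating spaces $Y_k$; the constants $k_0$, $C$ it produces are derived from a compactness argument internal to that variety (lifting $U_{\delta,R}$ back into $X^\circ\subset Y_k$). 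Asserting that "the $L^2$-machinery delivers estimates whose rescaled form near an almost-Euclidean point depends only on $n$" is exactly the universality that must be proved, not assumed; without a compactness argument over the entire family of admissible $(X,\omega)$ — which is what the paper's sequence $(X_j,\omega_j)$ provides — there is no justification for the bound being $X$-independent. Your appeal to Lemma \ref{sinn-3} to "remove the $E'$ dependence" also does not help here: Lemma \ref{sinn-3} governs the singular set of the fixed limit $Y_\infty$, not the uniformity of the Donaldson--Sun constants across all $X$. In short, the strategy is plausible in outline, but the step where the threshold is declared dimensional rather than $X$-dependent is where a genuine argument is missing, and the paper fills that hole with a different mechanism.
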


\begin{proof} Suppose the lemma fails. Then there exist $\epsilon_j \rightarrow 0$, a sequence of $n$-dimensional projective varieties $(X_j, \omega_j)$ satisfying the assumptions of Theorem \ref{thm3spe} and a sequence of $p_j \in \cS(X_j)$ with 
$$\nu_{X_j}(p_j) = 1 - \epsilon_j \rightarrow 1. $$
Let $(\hat X_j, \hat d_j, \hat \mu_j)$ be the RCD space induced by $(X_j, \omega_j, \omega_j^n)$. Then from what we have proved earlier, $\hat X_j$ is homeomorphic to $X_j$ and can be identified as the projective variety $X_j$ itself. By the volume comparison, $\hat X_j$ is uniformly non-collapsed and $(\hat X_j, p_j, \hat d_j)$ converged in pointed Gromov-Hausdorff topology to a non-collapsed RCD space $(\hat X_\infty, p_\infty, \hat d_\infty, \hat \nu_\infty)$. Let $(V_j, o_j)$ be a tangent cone of $p_j \in \hat X_j$. We can assume that after passing to a subsequence, $(V_j, o_j)$ converge in pointed Gromov-Hausdorff topology to $(V, o)$, the tangent cone of $p_\infty$.   We will discuss in the following cases.

\begin{enumerate}

\item Suppose $\cS(V_j)=\{o_j\}$ for all sufficiently large $j$ after passing to a subsequence.  Then the link of $(V, o)$ must be the standard $S^{2n-1}$ as the volume density is equal to $1$. In particular, the convergence is smooth away from the vertices of these cones. Therefore the links of $(V_j, o_j)$ converge smoothly to standard $S^{2n-1}$. By the differential sphere theorem, the links of $(V_j, o_j)$ must be isometric to standard $S^{2n-1}$ for sufficiently large $j$. This leads to contradiction as it would imply that $\nu_{X_j}(p_j) = 1$ for sufficiently large $j>0$. 

\smallskip

\item Suppose $(V_j, o_j)=\mathbb{R}^{2n-2}\times W_j$  for all large $j$ after passing to a subsequence. Then $W_j=\mathcal{C}_{\gamma_j}$ is the cone with cone angle $0<\gamma_j < 2\pi$. One can apply the partial $C^0$-estimate as in the proof of Lemma \ref{sinn-3} and show that $p_j$ must be a smooth point instead of a complex singularity. This leads to contradiction. 

\smallskip

\item Suppose $(V_j, o_j)=\mathbb{R}^k \times W_j$ for some $0<k<2n-2$ and for all large $j$ after passing to a subsequence. By slightly choosing a singular point $p_j'$ of $W_j$ away from $o_j$ and applying rescaling of $(V_j, p_j)$, one can increase the dimension $k$ of the flat factor. Repeating this procedure, one can eventually return to case (2) and obtain contradiction.

\end{enumerate}
\end{proof}

Combining the above results, we have completed the proof of  Theorem \ref{thm3spe}.


\section{Proof of Theorem \ref{mainthm1}: the general case}

We will prove Theorem \ref{mainthm1} in this section by proving the following theorem as a generalization of Theorem \ref{thm3spe}.

\begin{theorem} \label{thm3gene} Suppose $X$ is an $n$-dimensional projective variety with log terminal singularities that admits a resolution of singularities satisfying (\ref{spres}). Let $\omega\in \nu(X, \theta_X, n, A, p, K)$ with $p>n$ be a singular K\"ahler metric  on $X$ satisfying
\begin{equation}\label{ricbelow}
\ric(\omega)  + \omega \geq 0
\end{equation}
in the sense of currents. Then the metric measure space $(\hat X, d_\omega, \mu_\omega)$ induced by $(X^\circ, \omega, \omega^n)$ is a non-collapsed $\RCD(-1, 2n)$ space satisfying the following properties. 
\begin{enumerate}

\item  $(\hat X, d_\omega)$ is homeomorphic to the original variety $X$.

\medskip

\item  $\mathcal{R}(\hat X)= X^\circ$.

\medskip

\item  There exists $c>0$ such that $$\omega \geq c \theta. $$
%

\item There exists $\epsilon=\epsilon(n)>0$ such that for any $p \in \cS(\hat X)$, 
$$
\nu_{\hat X} (p)<  1 -\epsilon.
$$

\end{enumerate}
Furthermore,  $\ric(\omega)$ is also bounded above, then 
$$\dim_{\mathcal{H}}\cS(\hat X) \leq 2n-4. $$ 

\end{theorem}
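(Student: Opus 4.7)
The plan is to reduce Theorem \ref{thm3gene} to the special case Theorem \ref{thm3spe} by approximating the given $(X,\omega)\in\mathcal{RK}(X)$ with a sequence of twisted K\"ahler--Einstein currents whose twist is smooth and non-negative, and then passing to the limit in the non-collapsed RCD category. Concretely, replace $\lambda$ by a sufficiently negative constant (allowed by the first assumption that $\lambda\in\mathbb{R}$ can be taken arbitrarily small in the lower bound) so that $[\alpha]=-\lambda[\omega]-[K_X]$ is a K\"ahler class, and write $\alpha = \ric(\omega)-\lambda\omega = \alpha_0+\ddbar\psi$ for a smooth K\"ahler reference $\alpha_0$ and $\psi\in\PSH(X,\alpha_0)\cap C^\infty(X^\circ)$. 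Apply Lemma \ref{pshapp} to produce $\psi_i\in\PSH(X,\alpha_0)\cap C^\infty(X)$ with $\psi_i\geq\psi$, $\psi_i\to\psi$ pointwise on $X$ and smoothly on compacta of $X^\circ$, and set $\alpha_i:=\alpha_0+\ddbar\psi_i\geq 0$. Solve the twisted K\"ahler--Einstein equation $\ric(\omega_i)=\lambda\omega_i+\alpha_i$ via the complex Monge--Amp\`ere equation \eqref{tkema2}; uniform entropy bounds coming from $\|\psi_i\|_{L^\infty}\leq\|\psi\|_{L^\infty}+O(1)$ place $\omega_i$ uniformly in $\nu(X,\theta_X,n,A,p,K')$.

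Each $(X,\omega_i)$ now satisfies the hypotheses of Theorem \ref{thm3spe} (with twist $\alpha_i$ smooth and non-negative), so $(\hat X, d_{\omega_i},\mu_{\omega_i})$ is a non-collapsed $\RCD(\lambda,2n)$ space homeomorphic to $X$ with $\mathcal{R}(\hat X)=X^\circ$ and, crucially, there exist $c_i>0$ with $\omega_i\geq c_i\theta_X$ on $X^\circ$. Feeding this a priori bound into Lemma \ref{loccon25} upgrades it to a uniform estimate $\omega_i\geq C^{-1}\theta_X$ independent of $i$, together with uniform $L^\infty$ and diameter bounds for $\varphi_i$ and $C^\infty_{loc}(X^\circ)$ convergence $\omega_i\to\omega$. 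In particular the volume measures $\omega_i^n$ are uniformly non-collapsed.

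By the stability of non-collapsed $\RCD(\lambda,2n)$ spaces under pointed measured Gromov--Hausdorff convergence (De Philippis--Gigli), a subsequence of $(\hat X, d_{\omega_i},\mu_{\omega_i})$ converges to a non-collapsed $\RCD(\lambda,2n)$ space $(\hat X_\infty, d_\infty,\mu_\infty)$. The smooth convergence on $X^\circ$, combined with Gromov's almost-convexity trick applied to the analytic subvariety $X\setminus X^\circ$ (which has real Hausdorff codimension at least two), forces $d_\infty$ to be induced by $\omega$ on $X^\circ$, identifying the limit with $(\hat X, d_\omega,\mu_\omega)$. The uniform bound $\omega_i\geq C^{-1}\theta_X$ survives in the limit, yielding $\omega\geq C^{-1}\theta_X$ and the Lipschitz map $\iota:(\hat X,d_\omega)\to(X,\theta_X)$. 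Injectivity of $\iota$, the identification $\mathcal{R}(\hat X)=X^\circ$, and the universal density gap $\nu_{\hat X}(p)\leq 1-\epsilon(n)$ on $\cS(\hat X)$ are then inherited from the approximating sequence by running the Gaussian-section/partial-$C^0$ arguments of Lemma \ref{1to1-51}, Lemma \ref{lsana}, Lemma \ref{sinn-3} and Lemma \ref{dengap55} on the polarized limit, using the $L^2$-estimates from Lemma \ref{L241} and Lemma \ref{L242}, which depend only on the uniform metric lower bound and the uniform RCD structure.

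For the final assertion, the hypothesis $\ric(\omega)\leq \Lambda\omega$ as currents says $\log(\omega^n/\Omega)\in\PSH(X,\Lambda\omega_0-\ric(\Omega))$, so the same regularization scheme applied simultaneously to both $\ric(\omega)+\omega$ and $\Lambda\omega-\ric(\omega)$ produces approximants $\omega_i$ with two-sided Ricci bounds that are uniform on compact subsets of $X^\circ$. Together with the non-collapsing, Cheeger--Naber's codimension-four regularity theorem for non-collapsed Gromov--Hausdorff limits of Riemannian manifolds with bounded Ricci curvature then yields $\dim_{\mathcal H}\cS(\hat X)\leq 2n-4$. The most delicate step in the whole argument is the identification of the abstract non-collapsed RCD limit with $(\hat X,d_\omega,\mu_\omega)$ itself: neither the approximations nor the limit are globally smooth, so one must carefully combine the $C^\infty_{loc}(X^\circ)$ convergence coming from Lemma \ref{loccon25} with the almost-convexity of $X^\circ$ in each $\hat X_i$, and separately verify that the holomorphic/topological structure transported by the uniform Lipschitz maps $\iota_i$ is preserved in the limit.
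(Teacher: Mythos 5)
Your main argument follows the paper's route almost exactly: regularize the twist $\alpha=\ric(\omega)+\omega$ via Lemma \ref{pshapp}, solve the twisted K\"ahler--Einstein equations \eqref{tkema2} to get approximants $\omega_i$ to which Theorem \ref{thm3spe} applies, use Lemma \ref{loccon25} (equivalently the uniform Schwarz lemma, whose constant is independent of the twist) to get $\omega_i\geq C^{-1}\theta_X$ and $C^\infty_{loc}(X^\circ)$ convergence, pass to a non-collapsed RCD limit, identify it with $(\hat X,d_\omega,\mu_\omega)$, and inherit injectivity of $\iota$, $\cR(\hat X)=X^\circ$ and the density gap from the partial $C^0$ machinery (Lemmas \ref{L241}, \ref{L242}, \ref{lsana}, \ref{1to1-51}, \ref{dengap55}). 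One minor imprecision: $\psi$ need not be in $L^\infty$ (it is only quasi-PSH, hence bounded above), so the uniform Nash entropy of the $\omega_i$ comes from $\psi_i\geq\psi$, i.e.\ $e^{-\psi_i}\leq e^{-\psi}$, together with the entropy bound on $\omega$ itself, not from an $L^\infty$ bound on $\psi_i$.

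The genuine gap is in the final codimension-four assertion. You invoke the Cheeger--Naber theorem ``for non-collapsed Gromov--Hausdorff limits of Riemannian manifolds with bounded Ricci curvature,'' but $(\hat X,d_\omega)$ is not such a limit: the approximants $(\hat X,d_{\omega_i})$ are themselves singular RCD spaces, and the underlying smooth manifolds $(Y,\omega_{k,j})$ further down the approximation tower have Ricci lower bounds of order $-Ce^{C\delta_k^{-1}}$ and no uniform upper bound near the exceptional locus. Two-sided Ricci bounds hold only on compact subsets of $X^\circ$, which is exactly where the singular set you are trying to estimate does not live, so Cheeger--Naber cannot be applied. The paper replaces this with a direct tangent-cone argument (Lemma \ref{lemcod4}): if a tangent cone at a singular point split as $\mathbb{R}^{2n-3}\times W$, then by the density-gap identification $\cR_\epsilon(X_\infty)=X^\circ$ the $3$-dimensional cone $W$ is Ricci-flat on its regular part; either $\cS(W)=\{o_W\}$, in which case the link is a positively curved $2$-dimensional Einstein manifold, hence $S^2$ and $W=\mathbb{R}^3$, contradicting singularity, or $\cS(W)\neq\{o_W\}$, in which case an iterated tangent cone splits off $\mathbb{R}^{2n-2}$, contradicting the already-established fact (via the complex structure and Gaussian sections, Lemma \ref{sinn-3}/\ref{split8}) that no iterated tangent cone splits off $\mathbb{R}^{2n-2}$. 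You need some argument of this type, using the K\"ahler structure rather than a black-box smooth-limit theorem, to close the last step.
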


   We will begin our proof of Theorem \ref{thm3gene}.  We first let $$\alpha = \ric(\omega) + \omega \geq 0,$$ 
which is not necessarily smooth as considered in Theorem \ref{thm3spe}. 
 We can assume that $[\alpha]$ is a K\"ahler class. If not,  we can let $\omega' = A \omega $ and replace $\alpha$ by $\alpha'$ with 
 $$\alpha' = \ric(\omega') + \omega' = \ric(\omega)  + A\omega$$ lies in a K\"ahler class for sufficiently large $A>0$.  
We choose a smooth K\"ahler metric  $\alpha_0 \in [\alpha]$ and let $\alpha = \alpha_0 + \ddbar \psi$ for some $\psi\in \PSH(X, \alpha_0)\cap C^\infty(X^\circ)$. By Lemma \ref{pshapp}, there exist a sequence $\psi_j\in \PSH(X, \alpha_0)\cap C^\infty(X)$ that $\psi_j$ converge to $\psi$. 
If we let $\alpha_j = \alpha_0 + \ddbar \psi_j$ and let $\omega_j \in [\omega]$ be the unique singular K\"ahler metric satisfying 
$$\ric(\omega_j)= - \omega_j + \alpha_j$$
with bounded Nash entropy as constructed in (\ref{tkema2}). Then we have the following results from Lemma \ref{loccon25}.

\begin{lemma}\label{loccon66}
$\omega_j$ converges to $\omega$ in $C^\infty(\cK)$ for any $\cK\subset\subset X^\circ$ as $j\rightarrow \infty$ with uniform bounds on their diameters and Nash entropy bounds.  %
 \end{lemma}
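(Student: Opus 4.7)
The plan is to reduce the claim to a direct application of Lemma \ref{loccon25}, so what remains is to verify the hypotheses of that lemma for the sequence $\{\omega_j\}$ constructed in (\ref{tkema2}) (with $\lambda=-1$), and then to record the three conclusions demanded by the statement: (i) a uniform Nash entropy bound, (ii) a uniform diameter bound, and (iii) smooth convergence on compact subsets of $X^\circ$.

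First, I would check that $\omega_j \in \nu(X,\theta_X,n,A,p,K')$ for constants $A,p,K'$ independent of $j$. The sequence $\psi_j$ from Lemma \ref{pshapp} is bounded above uniformly in $j$ (dominated by the smooth functions produced by the Demailly regularization), and since $\int_X e^{\varphi_j-\psi_j}\Omega = [\omega_0]^n$ is fixed, Jensen's inequality together with the $\PSH(X,\omega_0)$-bound forces $\|\varphi_j\|_{L^\infty(X)} \le C$ by the $L^\infty$-estimate of \cite{Ko1,GPT}. The Monge-Amp\`ere equation (\ref{tkema2}) then yields a uniform $L^p$-bound on $\omega_j^n/\Omega$ for some $p>n$, hence the uniform Nash entropy bound (\ref{nashen0}) (relative to $\theta_X$). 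The uniform diameter bound $\mathrm{Diam}(\hat X,d_{\omega_j}) \le C$ now follows immediately from the diameter estimate in \cite{GPSS2}, and it also appears as estimate (\ref{estsec200}) in Lemma \ref{loccon25}.

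Next, I would verify the supplementary hypothesis of Lemma \ref{loccon25}(2), namely that for each fixed $j$ there exists some $c_j > 0$ with $\omega_j \ge c_j \theta_X$ on $X^\circ$. For each $j$, the twist $\alpha_j = \alpha_0 + \ddbar \psi_j$ is globally smooth on $X$ since $\psi_j \in C^\infty(X)$, and the equation $\ric(\omega_j) = -\omega_j + \alpha_j \ge -\omega_j$ gives a Ricci lower bound of $-1$. Standard elliptic regularity applied to (\ref{tkema2}) shows that $\varphi_j \in C^\infty(X^\circ)$ for each $j$. Then the Chern-Lu / Schwarz-lemma argument, applied to the quantity $H = \log \tr_{\omega_j}(\theta_X) - A\varphi_j + \epsilon \log|\sigma_D|^2_{h_D}$ for a divisor $D$ whose support contains $\cS(X)$, gives (after letting $\epsilon\to 0$) a lower bound $\omega_j \ge c_j \theta_X$ on $X^\circ$. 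This is the same computation already used inside the proof of Lemma \ref{loccon25}; the only thing to check is that it can be carried out at fixed $j$, which is automatic since $\psi_j$ is globally smooth.

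With both hypotheses of Lemma \ref{loccon25}(2) verified, the conclusions (\ref{unidom}) and (\ref{2c3con}) of that lemma apply to $\{\omega_j\}$ converging to the reference metric $\omega$: we obtain a $j$-uniform lower bound $\omega_j \ge C^{-1}\theta_X$ on $X^\circ$, and $\|\varphi_j - \varphi\|_{C^m(\cK)} \to 0$ for every $\cK\subset\subset X^\circ$ and every $m$. Combined with the uniform upper bound for $\omega_j$ on $\cK$ coming from equation (\ref{tkema2}) and the $L^\infty$-bound on $\varphi_j, \psi_j$, this gives $C^\infty_{\mathrm{loc}}(X^\circ)$-convergence $\omega_j\to\omega$, completing the proof. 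The step I expect to be slightly delicate is verifying (\ref{2c3con}) via the stability theorem of \cite{Ko2,DZ}: one must ensure that $e^{-\psi_j} \to e^{-\psi}$ in $L^1(X,\Omega)$, which uses the pointwise convergence $\psi_j \to \psi$ together with the uniform upper bound $\psi_j \le \psi+\delta_j$ from the regularized-maximum construction, allowing dominated convergence against the fixed measure $\Omega$.
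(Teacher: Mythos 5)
Your overall route is the same as the paper's: Lemma \ref{loccon66} is stated in the paper with no independent proof, only the remark that it follows ``from Lemma \ref{loccon25}'', and your proposal correctly identifies that the real work is verifying the hypotheses of that lemma for the sequence $\omega_j$ from (\ref{tkema2}). Your verification of the uniform $L^\infty$/entropy bounds (via $\psi_j\geq\psi$, the fixed normalization $\int_X e^{-\lambda\varphi_j-\psi_j}\Omega=[\omega_0]^n$, and \cite{Ko1, GPT}) matches the paper's discussion preceding Lemma \ref{loccon25}, and the deduction of (\ref{unidom}) and (\ref{2c3con}), including the $L^1$-stability step via \cite{Ko2, DZ} and dominated convergence for $e^{-\psi_j}$, is exactly what the paper does.

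The one step I would not accept as written is your verification of the qualitative hypothesis of Lemma \ref{loccon25}(2), namely that for each fixed $j$ there is some $c_j>0$ with $\omega_j\geq c_j\theta_X$ on $X^\circ$. You propose to get this by running the Chern--Lu argument for $H=\log\tr_{\omega_j}(\theta_X)-A\varphi_j+\epsilon\log|\sigma_D|^2_{h_D}$ directly on $X\setminus D$ and assert that this is ``automatic since $\psi_j$ is globally smooth.'' It is not: the smoothness of the twist $\psi_j$ says nothing about the behavior of $\omega_j$ near $\cS(X)$, and to apply the maximum principle one must know that $H$ attains its maximum in $X\setminus D$, i.e.\ that $\log\tr_{\omega_j}(\theta_X)$ blows up near $\cS(X)\subset D$ more slowly than $-\epsilon\log|\sigma_D|^2_{h_D}$. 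This a priori control is precisely what the hypothesis of Lemma \ref{loccon25} is designed to package, so using the same maximum principle to establish the hypothesis is circular. The paper avoids this by proving the Schwarz inequality on the resolution $Y$ for the globally smooth double approximations $\omega_{k,j}$ (Lemma \ref{schw3}, fed into Theorem \ref{thm3spe}(3)), where the maximum principle is legitimate, and then passing to the limit; since each $\omega_j$ here satisfies the hypotheses of Theorem \ref{thm3spe}, the fixed-$j$ bound $\omega_j\geq c_j\theta_X$ (in fact a $j$-uniform one) comes from that theorem rather than from a direct argument on the singular space. Replacing your ``automatic'' step by a citation to Theorem \ref{thm3spe}(3) (or reproducing the resolution argument) closes the gap; everything else stands.
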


 Obviously $\omega_j$ satisfies the assumptions of Theorem \ref{thm3spe} and so $(\hat X_j, d_{\omega_j}, \mu_{\omega_j})$ is an $\RCD(-1, 2n)$ space homeomorphic to $X$. After possibly taking a subsequence, we can assume that $(\hat X_j, d_{\omega_j}, \mu_{\omega_j})$ converge to an $\RCD(-1, 2n)$ space $(X_\infty, d_\infty, \mu_\infty)$ due to the uniform diameter bound and volume bounds for $(\hat X_j, d_{\omega_j}, \mu_{\omega_j})$ . In particular, we have $X^\circ \subset \cR(X_\infty)$ by Lemma \ref{loccon66}. 

  \begin{lemma} \label{sch66} There exists $c>0$ such that 
 $$\omega\geq c\theta_X. $$
 
 \end{lemma}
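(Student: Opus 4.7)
The plan is to obtain the bound by a uniform Chern-Lu (Schwarz lemma) argument applied to the approximating sequence $\omega_j$, followed by passage to the limit. The key input is that each $\omega_j$ already satisfies the conclusion of Theorem \ref{thm3spe} (since $\alpha_j$ is a smooth nonnegative $(1,1)$-form and $\omega_j$ is a twisted K\"ahler-Einstein metric of the form required there), so in particular $\omega_j \geq c_j \theta_X$ on $X^\circ$ for some $c_j>0$. This a priori positivity (even with $c_j$ depending on $j$) is what allows one to run the maximum principle argument of Lemma \ref{loccon25} on $\omega_j$, and the resulting estimate has constants depending only on the quantities already known to be uniform in $j$.

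More concretely, I would proceed as follows. Write $\omega_j = \omega_0 + \ddbar \varphi_j$ with $\omega_0\in [\omega]$ a smooth K\"ahler metric, and recall from Lemma \ref{loccon25} that $\|\varphi_j\|_{L^\infty(X)} \leq C$ uniformly in $j$. Pick an effective divisor $D$ on $X$ whose support contains $\cS(X)$ together with a defining section $\sigma_D$ and a smooth hermitian metric $h_D$, and for a sufficiently large constant $A>0$ and small $\epsilon>0$ consider
\begin{equation*}
H_{j,\epsilon} = \log \tr_{\omega_j}(\theta_X) - A\varphi_j + \epsilon \log |\sigma_D|^2_{h_D}.
\end{equation*}
Since $\omega_j \geq c_j\theta_X$ is already smooth on $X^\circ$ and the last term drives $H_{j,\epsilon}\to-\infty$ near $D\supset \cS(X)$, the maximum of $H_{j,\epsilon}$ is attained in $X\setminus D$. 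The Chern-Lu computation, which requires only the uniform Ricci lower bound $\ric(\omega_j) = -\omega_j + \alpha_j \geq -\omega_j$ and the (fixed) upper bound on the holomorphic bisectional curvature of $\theta_X$, yields at the maximum
\begin{equation*}
0 \geq \Delta_{\omega_j} H_{j,\epsilon} \geq \tr_{\omega_j}(\theta_X) - CA,
\end{equation*}
with $C$ independent of $j$ and $\epsilon$. Combined with the uniform $L^\infty$-bound on $\varphi_j$, this gives a uniform upper bound on $H_{j,\epsilon}$ on all of $X\setminus D$, and letting $\epsilon\to 0$ produces $\tr_{\omega_j}(\theta_X)\leq C'$ on $X^\circ$, i.e.,
\begin{equation*}
\omega_j \geq (C')^{-1}\theta_X \quad \text{on } X^\circ
\end{equation*}
with $C'$ independent of $j$.

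Finally, by Lemma \ref{loccon66} we have $\omega_j \to \omega$ smoothly on every compact $\cK\subset\subset X^\circ$. Passing to the limit in the pointwise inequality $\omega_j\geq (C')^{-1}\theta_X$ gives the desired uniform bound $\omega \geq c\, \theta_X$ on $X^\circ$ with $c=(C')^{-1}$.

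The main subtlety is to confirm that the maximum principle argument of Lemma \ref{loccon25} genuinely applies to each $\omega_j$: one needs $\omega_j$ to be \emph{smooth} on $X^\circ$ with some (not necessarily uniform) lower bound by $\theta_X$, so that $H_{j,\epsilon}$ is well-defined and the barrier $\epsilon\log|\sigma_D|^2_{h_D}$ truly forces the maximum off $D$. Both of these are exactly the conclusions of the already-established Theorem \ref{thm3spe} for the special sequence $\omega_j$, so the step reduces to assembling the ingredients rather than proving any genuinely new estimate; the only quantitative content is that the Chern-Lu output depends only on $\|\varphi_j\|_{L^\infty}$, on the holomorphic bisectional curvature bound of the fixed metric $\theta_X$, and on the Ricci lower bound $-1$, all of which are uniform.
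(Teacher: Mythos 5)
Your proof is correct and follows essentially the same route as the paper: a uniform Chern--Lu (Schwarz) estimate for the approximating twisted K\"ahler--Einstein metrics $\omega_j$, followed by passage to the limit using the local $C^\infty$ convergence of Lemma \ref{loccon66}. The paper's proof simply cites Lemma \ref{schw3} and observes that the constant there is independent of the choice of $\alpha$; your version instead invokes Theorem \ref{thm3spe}(3) for the qualitative bound $\omega_j\geq c_j\theta_X$ and then re-runs the maximum-principle argument of Lemma \ref{loccon25}(1) to extract a $j$-uniform constant, which is a minor repackaging of the same estimate rather than a genuinely different approach.
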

 
 \begin{proof} We will apply the Schwarz lemma in Lemma \ref{schw3}, where the constant $c>0$ in (\ref{omgklb5}) is independent of the choice $\alpha$. Therefore $\omega_j \geq c \theta_X$ for all $j>0$ and lemma follows by letting $j\rightarrow$ since $\omega_j$ converges to $\omega$ in $C^\infty_{loc}(X^\circ)$.
 \end{proof}
 We let $$\iota_\infty: X_\infty \rightarrow X$$ be the extension of the identity map from $X^\circ$ to itself. Then by Lemma \ref{sch66}, $\iota_\infty$ is a surjective Lipschitz map.


 %
\begin{lemma} \label{epsin63} Let $\epsilon=\epsilon(n)>0$ be the constant in Lemma \ref{dengap55}. Then
\begin{equation}
\cR(X_\infty) = \cR_\epsilon(X_\infty) = X^\circ.
\end{equation}

\end{lemma}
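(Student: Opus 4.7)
The plan is to prove the chain of inclusions
\[
X^\circ \;\subset\; \cR(X_\infty) \;\subset\; \cR_\epsilon(X_\infty) \;\subset\; X^\circ,
\]
after which equality throughout follows. The first inclusion is immediate from Lemma~\ref{loccon66}: since $\omega_j \to \omega$ in $C^\infty_{\rm loc}(X^\circ)$ and $\hat X_j \to X_\infty$ in pointed Gromov--Hausdorff topology as non-collapsed $\RCD(-1,2n)$ spaces, every point of $X^\circ$ is realized as the limit of uniformly smooth Riemannian charts, hence lies in $\cR(X_\infty)$. The middle inclusion holds by definition, as a regular point of a non-collapsed RCD space has volume density~$1$.

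The substantive content is the outer inclusion $\cR_\epsilon(X_\infty) \subset X^\circ$. I argue by contradiction: assume $p \in \cR_\epsilon(X_\infty)$ while $q := \iota_\infty(p) \in \cS(X)$. First I would transfer the density bound downstairs: pick $p_j \in \hat X_j$ converging to $p$; by the lower semi-continuity of the volume density under non-collapsed pointed Gromov--Hausdorff convergence of RCD spaces, $\liminf_j \nu_{\hat X_j}(p_j) \geq \nu_{X_\infty}(p) > 1 - \epsilon$. Choosing the threshold $\epsilon$ strictly less than the universal constant $\epsilon(n)$ of Lemma~\ref{dengap55}, applied to each $\hat X_j$ as a special-case Theorem~\ref{thm3spe} RCD space, this forces $p_j \notin \cS(\hat X_j) = \iota_j^{-1}(\cS(X))$ for all large~$j$. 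Since $\cR_{\epsilon'}$ is open with a uniform size of regular ball for any fixed $\epsilon' \in (\epsilon, \epsilon(n))$ by Cheeger--Colding--Reifenberg, I can arrange that $B_{\hat X_j}(p_j, r) \subset \cR_{\epsilon'}(\hat X_j) \subset X^\circ$ for a uniform $r>0$ and all large~$j$; on this ball $\omega_j$ is smooth K\"ahler and the geometry is almost Euclidean.

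With these uniform smooth almost-Euclidean balls in hand, I would run the partial $C^0$ / Gaussian-section construction of Lemmas~\ref{lsana}, \ref{dengap} and~\ref{1to1-51}, using the global geometric $L^2$-estimates of Lemma~\ref{L241} and Lemma~\ref{L242}. These estimates are uniform in $j$ because each $\hat X_j$ is an $\RCD(-1,2n)$ space and the Schwarz-lemma bound $\omega_j \geq c\,\theta_X$ of Lemma~\ref{sch66} is uniform in~$j$. Passing to the limit $j \to \infty$ yields global $L^2$-holomorphic sections on $X$ with controlled Gaussian profile at $p$, producing a holomorphic chart on a neighborhood of $p$ in $X_\infty$ whose composition with the Lipschitz map $\iota_\infty$ realizes a local biholomorphism between that neighborhood and a ball in $\mathbb{C}^n$ mapping $p$ to $q$. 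This would exhibit $q$ as a smooth analytic point of $X$, contradicting $q \in \cS(X)$ for the log terminal variety $X$.

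The main obstacle is the final partial $C^0$ / biholomorphic-chart step in the limit: one must control the Gaussian-section construction uniformly in $j$ on the nearly Euclidean balls $B_{\hat X_j}(p_j, r)$ and show that the sections persist under the pointed Gromov--Hausdorff convergence to $X_\infty$ to produce a genuine local biholomorphism at $p$. Once this technical core is set up, the contradiction with the algebraic singularity of $X$ at $q$ is immediate, in parallel with the argument of Lemma~\ref{dengap55}.
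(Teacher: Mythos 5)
Your argument reaches the right conclusion, but it takes a genuinely different and heavier route than the paper's. The paper's proof is a one-paragraph volume argument: it asserts that any $p\in X_\infty\setminus X^\circ$ arises as a pointed Gromov--Hausdorff limit of singular points $p_j\in \cS(\hat X_j)$, applies Lemma~\ref{dengap55} to each $\hat X_j$ (obtaining $\nu_{\hat X_j}(p_j)<1-\epsilon(n)$), and concludes $\nu_{X_\infty}(p)\le 1-\epsilon(n)$ by lower semicontinuity of the volume density under non-collapsed convergence; this simultaneously forces $p\notin\cR(X_\infty)$ and $p\notin\cR_{\epsilon}(X_\infty)$. You instead work from the other side: fix $p\in\cR_\epsilon(X_\infty)$, transfer the density bound downward to force $p_j\in\cR(\hat X_j)=X^\circ$ (again by Lemma~\ref{dengap55}), and then re-run the holomorphic-chart / partial-$C^0$ contradiction of Lemma~\ref{dengap}, now with the singular RCD spaces $\hat X_j$ in place of the smooth $Y_k$. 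This is a valid alternative; it sidesteps the paper's asserted-but-unjustified step that points of $X_\infty\setminus X^\circ$ must be limits of singular points of $\hat X_j$, at the cost of front-loading the partial-$C^0$ machinery that the paper only invokes later for $X_\infty$ (Lemma~\ref{lsana2} and Corollary~\ref{lsana88}). You are right that the $L^2$-estimates (Lemma~\ref{L241}, Lemma~\ref{L242}) and the Schwarz bound $\omega_j\geq c\,\theta_X$ (Lemma~\ref{sch66}) are uniform in $j$ because each $\hat X_j$ is already an $\RCD(-1,2n)$ space by Theorem~\ref{thm3spe}; this is what makes the Liu--Sz\'ekelyhidi chart construction on the almost-Euclidean balls $B_{\hat X_j}(p_j,r)\subset X^\circ$ go through in the limit.

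One loose end you should close explicitly: your contradiction hypothesis is $p\in\cR_\epsilon(X_\infty)$ together with $\iota_\infty(p)\in\cS(X)$, so what you actually prove is $\cR_\epsilon(X_\infty)\subset \iota_\infty^{-1}(X^\circ)$, not yet $\cR_\epsilon(X_\infty)\subset X^\circ$. You still need $\iota_\infty^{-1}(X^\circ)\subset X^\circ$. This follows, for instance, from the fact that $X^\circ$ has full $\mu_\infty$-measure (smooth convergence on $X^\circ$ plus conservation of total volume) and is therefore dense: if $q=\iota_\infty(p)\in X^\circ$, take $p_i\in X^\circ$ with $p_i\to p$ in $d_\infty$; then $p_i=\iota_\infty(p_i)\to q$ in $\theta_X$, and on a compact neighborhood of $q$ inside $X^\circ$ the metrics $d_\omega$ and $d_{\theta_X}$ are uniformly equivalent, so $p_i\to q$ in $d_\infty$ as well, whence $p=q\in X^\circ$. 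Alternatively, the holomorphic chart you build realizes a neighborhood of $p$ in $X_\infty$ as a smooth K\"ahler open set lying inside $X^\circ$, which also closes the gap directly. With that addendum your argument is complete.
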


\begin{proof} For any $p\in X_\infty\setminus X^\circ$, there exist $p_j \in X_j \setminus X^\circ$ with $p_j \rightarrow p$. By the volume convergence and Lemma \ref{dengap55}, there exists a universal $\epsilon=\epsilon(n)>0$ such that 
$$\nu_{X_\infty}(p) < 1 - \epsilon. $$ 
Therefore $p$ cannot be a regular point of $X_\infty$ and so $\cR(X_\infty) = X^\circ$. The lemma is proved by applying Lemma \ref{dengap55} again to $\cR_\epsilon(X_\infty)$. 
\end{proof}
By continuity of the tangent cones \cite{CN, Den} and Lemma \ref{epsin63}, $X^\circ$ is convex in $X_\infty$ and so $$(X_\infty, d_\infty, \mu_\infty) = (\hat X, d_\omega, \mu_\omega). $$

The following lemma is identical to Lemma \ref{lsana} with the same proof.

 \begin{lemma} \label{lsana2} Let $(V, o)$ be a  tangent cone $(X_\infty, p)$ at $p$. Suppose the $\varepsilon$-singular set $V\setminus \mathcal{R}_\epsilon(V)$ has $0$ capacity for some $\varepsilon>0$.  Then for any given small $\xi>0$, there exist $k_0, C>0$ such that for some $m\leq k_0$, there exists $\sigma \in H^0(X, L^m)$ satisfying 
\begin{enumerate}

\item $\|\sigma\|_{L^2(h^{}, mk\omega)} \leq C. $

\smallskip

\item $\left| |\sigma(z)| - e^{-mkd_{\infty}(z, p)} \right| < \xi. $

\end{enumerate}

\end{lemma}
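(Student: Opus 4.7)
The plan is to mirror the proof of Lemma \ref{lsana}, with minor modifications to accommodate the additional layer of approximation that builds $X_\infty$ here. The strategy, modeled on Proposition 3.1 of \cite{LS1} (see also \cite{DS1, CDS2}), is to first construct an approximately peaked holomorphic ``Gaussian'' model on the tangent cone $(V,o)$, and then transport it back to $X$ via H\"ormander's $L^2$ $\dbar$-technique to yield an honest global holomorphic section of $L^m$.

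First, I would realize $(V, o)$ as the pointed Gromov-Hausdorff limit of $(X_\infty, p_i, A_i d_\infty)$ along some $A_i\to\infty$, associated to the rescaled line bundles $L_i = A_i L$. Since $X_\infty$ is itself obtained as the Gromov-Hausdorff limit of the RCD spaces $(\hat X_j, d_{\omega_j}, \mu_{\omega_j})$ from Theorem \ref{thm3spe}, with smooth convergence on $X^\circ$ by Lemma \ref{loccon66}, a diagonal argument yields a sequence $(\hat X_{j(i)}, p_{j(i)}, A_i d_{\omega_{j(i)}})$ also converging to $(V,o)$ and with uniform Ricci lower bound $\ric(\omega_{j(i)}) \geq -\omega_{j(i)}$. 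Next, using the zero-capacity hypothesis on $\Sigma := V\setminus \mathcal{R}_\epsilon(V)$, I would produce cut-off functions $\chi_\delta$ in the spirit of Lemma \ref{cutoff} that are supported in $V \setminus \Sigma_\delta$ and whose Dirichlet and Laplacian integrals over $V$ tend to zero as $\delta\to 0$. On the smooth annular region $U_{\delta, R} = B_V(o, R) \setminus \Sigma_\delta$ the convergence is smooth with uniformly two-sided Ricci bounds, so a smooth approximate Gaussian $\tilde\sigma$ modeling $e^{-m d_\infty(\cdot , o)^2/2}$ in a holomorphic chart at $o$ can be built by the Cheeger-Colding harmonic/holomorphic theory as in \cite{LS1}. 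Lifting $\chi_\delta \tilde \sigma$ through the Gromov-Hausdorff approximation into an open set $U_i \subset X^\circ \subset \hat X_{j(i)}$ produces a smooth compactly supported section $\tilde\sigma_i$ of $L^{m(i)}$ whose $\dbar$-error $\tau_i = \dbar \tilde\sigma_i$ is supported near the cut-off locus with $L^2$ norm controlled by the Dirichlet energy of $\chi_\delta$.

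Finally, the H\"ormander estimate of Lemma \ref{L241}, combined with the uniform sup and gradient bounds of Lemma \ref{L242} for the RCD spaces $\hat X_{j(i)}$ (which in turn rely on the Ricci lower bound secured by Lemma \ref{sch66} and (\ref{riclb51})), permits one to solve $\dbar u_i = \tau_i$ with an $L^2$-small correction $u_i$ supported away from $\Sigma$. The resulting honest holomorphic section $\sigma_i = \tilde\sigma_i - u_i \in H^0(X, L^{m(i)})$ then satisfies the desired Gaussian profile up to error $\xi$ after fixing $\delta$ small and $i$ large. The main obstacle is tracking the cut-off and $\dbar$-errors uniformly through the \emph{two} successive limit procedures, namely the Monge-Amp\`ere regularization $\omega_j \to \omega$ and the blow-up sequence $A_i \to \infty$. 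This is precisely where the zero-capacity hypothesis on $V \setminus \mathcal{R}_\epsilon(V)$ is indispensable: it guarantees that the cut-off losses vanish in the limit so that the model Gaussian on $V$ descends to a genuine global holomorphic section on $X$, exactly in parallel with the proof of Lemma \ref{lsana}.
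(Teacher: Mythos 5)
Your proposal is correct and follows essentially the same route as the paper, which proves this lemma by declaring it identical to Lemma \ref{lsana}: realize $(V,o)$ as a pointed Gromov--Hausdorff limit of rescalings, use the zero-capacity hypothesis to build cut-offs away from the $\varepsilon$-singular set, exploit smooth convergence with two-sided Ricci control on the regular region to construct the approximate Gaussian as in Proposition 3.1 of \cite{LS1}, and correct it to a holomorphic section via the $L^2$-estimates of Lemma \ref{L241} and the sup/gradient bounds of Lemma \ref{L242}. Your explicit diagonal argument through the two limit procedures ($\omega_j \to \omega$ and $A_i \to \infty$) is exactly the "slight modification" the paper leaves implicit.
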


Similar to Lemma \ref{sinn-3}, we have the following lemma. 
\begin{lemma} \label{split8} Any tangent cone of $(X_\infty, d_\infty, \mu_\infty)$ cannot split off $\mathbb{R}^{2n-1}$ or $\mathbb{R}^{2n-2}$. In particular, $\dim_\cH \cS(X_\infty) \leq 2n-3.$

\end{lemma}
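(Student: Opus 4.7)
The plan is to mimic the two-step argument used in the special case: ruling out a splitting off $\mathbb{R}^{2n-1}$ and then ruling out $\mathbb{R}^{2n-2}$, but now using the approximating RCD spaces $(\hat X_j, d_{\omega_j}, \mu_{\omega_j})$ supplied by Theorem \ref{thm3spe} in place of the smooth K\"ahler manifolds $(Y, \omega_{k,j})$, and using Lemma \ref{lsana2} in place of Lemma \ref{lsana}.

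First I would rule out a splitting off $\mathbb{R}^{2n-1}$. Suppose for contradiction that $p\in \cS_{2n-1}(X_\infty)\setminus \cS_{2n-2}(X_\infty)$, so that some tangent cone at $p$ is isometric to $\mathbb{R}^{2n-1}\times \mathbb{R}^+$. By the $\varepsilon$-regularity for boundary points of RCD spaces from \cite{BNS} and the Gromov-Hausdorff convergence $\hat X_j \to X_\infty$, there would exist boundary-type singular points $q_j \in \cS_{2n-1}(\hat X_j)\setminus \cS_{2n-2}(\hat X_j)$ for all sufficiently large $j$. But each $(\hat X_j, d_{\omega_j}, \mu_{\omega_j})$ is already covered by Theorem \ref{thm3spe}, and the argument giving Lemma \ref{sinn-3} shows in particular that $\cS_{2n-1}(\hat X_j) = \cS_{2n-2}(\hat X_j)$. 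This contradiction yields $\cS_{2n-1}(X_\infty)=\cS_{2n-2}(X_\infty)$.

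Next I would rule out a splitting off $\mathbb{R}^{2n-2}$. Suppose a tangent cone $(V,o)$ at some $p\in X_\infty$ has the form $\mathbb{R}^{2n-2}\times C_\gamma$ with cone angle $\gamma<2\pi$. By Lemma \ref{epsin63}, $p\in \cS(X_\infty)$ and $\iota_\infty(p)$ is a genuine analytic singularity of $X$. Exactly as in the proof of Lemma \ref{sinn-3}, the parallel almost complex structure carried along the blow-up sequence extends across the singular axis of $V$, identifying the Euclidean factor $\mathbb{R}^{2n-2}$ with $\mathbb{C}^{n-1}$ and producing global holomorphic coordinates $(z_1,\ldots,z_n)$ on $V$. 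The singular set of $V$ is the axis $\mathbb{R}^{2n-2}\times\{o\}$, which is of zero capacity, so Lemma \ref{lsana2} applies and produces $L^2$-bounded global Gaussian sections of $L^m$ on $X$ approximating the coordinates $z_i$. The partial $C^0$-technique of \cite{DS1, LS1} then yields a holomorphic map $F:X\to \mathbb{C}^n$ whose restriction to a small neighborhood of $p$ in $\hat X$ is a biholomorphism onto a ball in $\mathbb{C}^n$. Since $\iota_\infty(p)$ is an analytic singularity of $X$, no such local biholomorphism can exist. Combining the two steps gives $\cS(X_\infty)=\cS_{2n-3}(X_\infty)$, and the stratification theorem for non-collapsed RCD spaces \cite{DG2} then yields $\dim_\cH \cS(X_\infty)\leq 2n-3$.

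The main obstacle I expect is the second step, specifically transferring the global Gaussian section construction from the smooth setting of \cite{DS1, LS1} to the present situation in which the approximating spaces $\hat X_j$ are themselves singular RCD spaces. The delicate point is that the $L^2$-estimates of Lemma \ref{L241} and Lemma \ref{L242}, as well as the eigenfunction Lipschitz bound behind Lemma \ref{honda3}, must hold uniformly along $\omega_j$ so that the sections constructed on $(\hat X_j, \omega_j)$ pass to genuine sections in $H^0(X, L^m)$ of uniformly bounded norm. This uniformity is underwritten by the Schwarz-type lower bound $\omega_j\geq c\theta_X$ (Lemma \ref{sch66}) and by the smooth $C^\infty_{\rm loc}(X^\circ)$ convergence $\omega_j\to \omega$ (Lemma \ref{loccon66}), both of which are already established, so the argument should go through with the same structure as Lemma \ref{sinn-3}.
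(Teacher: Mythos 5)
Your proposal is correct and follows exactly the route the paper intends: the paper merely writes ``Similar to Lemma \ref{sinn-3}, we have the following lemma'' with no separate proof, so the content is the two-step argument you give (boundary $\epsilon$-regularity from \cite{BNS} against the $\hat X_j$ approximants, then the parallel-complex-structure/Gaussian-section contradiction via Lemma \ref{lsana2}). You have also correctly identified the one genuinely delicate ingredient, namely that the uniform $L^2$-estimates and Schwarz lower bound $\omega_j\geq c\theta_X$ along the smoothing family are what underwrite the transfer of Lemma \ref{lsana} to Lemma \ref{lsana2}, which is precisely what the paper's assertion ``identical to Lemma \ref{lsana} with the same proof'' relies on.
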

 
 


If $\omega$ has bounded Ricci curvature, then the Hausdorff codimension of the singular sets of any iterated tangent cone must be no less than 4.

\begin{lemma}\label{lemcod4} Suppose $\ric(\omega)$ is further bounded above. Then $\cS(X_\infty) = \cS_{2n-4}$ and $$\dim_{\mathcal{H}} \cS(X_\infty) \leq 2n-4. $$

\end{lemma}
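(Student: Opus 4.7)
The plan is to strengthen the codimension-$3$ bound of Lemma \ref{split8} to codimension $4$ by combining Kähler parallel transport on tangent cones with a dimension-parity argument. The extra upper bound $\ric(\omega)\leq \Lambda\,\omega$, together with the lower bound already in force, gives uniform two-sided Ricci bounds on compact subsets of $X^\circ = \cR(X_\infty)$ along the approximating sequence $\omega_j\to\omega$ from Lemma \ref{loccon66}. Standard higher-order elliptic estimates for the Monge-Amp\`ere equation then upgrade the smooth convergence on relatively compact subsets of $X^\circ$ to convergence in any $C^m$, and in particular make the parallel almost complex structure $J$ pass to the regular part of each iterated tangent cone at points of $X_\infty$.

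Suppose for contradiction that $p\in \cS_{2n-3}(X_\infty)\setminus \cS_{2n-4}(X_\infty)$. By definition some iterated tangent cone at $p$ splits as $V=\mathbb{R}^{2n-3}\times C(Y)$, where the $3$-dimensional factor $C(Y)$ does not split off any further Euclidean direction. By Lemma \ref{split8} applied along the blow-up sequence (whose members are still limits of $\omega_{k,j}$-type approximations with two-sided Ricci control on their regular part), no iterated tangent cone can split off $\mathbb{R}^{2n-2}$ or $\mathbb{R}^{2n-1}$, so the splitting of $V$ is minimal in this sense.

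Let $e_1,\dots,e_{2n-3}$ be parallel orthonormal vector fields on $\cR(V)$ generating the Euclidean factor. Applying the parallel complex structure $J$ produces parallel vector fields $Je_i$, which we decompose orthogonally with respect to the product splitting as $Je_i = e_i^{\parallel}+e_i^{\perp}$. Each summand is itself parallel. If some $e_i^{\perp}$ is nonzero at one point, it is a nonzero parallel vector field on the regular part of $C(Y)$, which by the Gigli splitting theorem on $\RCD$ spaces forces $C(Y)$ to split off an $\mathbb{R}$ factor and hence $V$ to split off $\mathbb{R}^{2n-2}$, contradicting Lemma \ref{split8}. Otherwise $e_i^{\perp}=0$ for every $i$, so $J$ preserves the $(2n-3)$-dimensional Euclidean summand and defines an almost complex structure on $\mathbb{R}^{2n-3}$, which is impossible by dimension parity. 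Either alternative produces a contradiction, so $\cS_{2n-3}(X_\infty)=\cS_{2n-4}(X_\infty)$ and therefore $\dim_{\mathcal{H}}\cS(X_\infty)\leq 2n-4$.

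The main obstacle is verifying that the complex structure $J$ genuinely extends to the regular part of each iterated tangent cone as a parallel tensor, compatibly with the cone metric. This requires that the two-sided Ricci bound on $\omega$ be transferred uniformly to a rescaled approximating sequence around $p_j\to p$, rather than just pointwise on $X^\circ$. Once a diagonal blow-up argument produces smooth non-collapsed rescaled convergence with two-sided Ricci bounds on relatively compact subsets of $\cR(V)$, the parallelism of $J$ on $\cR(V)$ follows from the smooth convergence of the Kähler potentials (Lemma \ref{loccon66}) and the fact that $\cR(X_\infty)=X^\circ$ established in Lemma \ref{epsin63}. After this extension, the dimension-parity argument above finishes the proof.
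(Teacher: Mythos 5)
Your argument is correct in substance but follows a genuinely different route from the paper's. You run the classical K\"ahler parity argument of Cheeger--Colding--Tian: the maximal Euclidean factor of a tangent cone is spanned by parallel vector fields, the parallel complex structure $J$ preserves that span (else a nonzero parallel component tangent to the cone factor forces an extra splitting, excluded by Lemma \ref{split8}), and a $J$-invariant subspace cannot have odd dimension $2n-3$. The paper instead exploits the Ricci upper bound differently: after rescaling, the $3$-dimensional cone factor $W$ is Ricci-flat on its regular part, so either $\cS(W)=\{o_W\}$ and the link is a smooth positively curved $2$-dimensional Einstein manifold, hence $S^2$, giving $W=\mathbb{R}^3$ and $V=\mathbb{R}^{2n}$ (contradiction), or $\cS(W)$ contains a non-vertex point, and blowing up there splits off $\mathbb{R}^{2n-2}$, again contradicting Lemma \ref{split8}. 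Your approach uses the K\"ahler condition and dimension parity and would in principle rule out every odd stratum at once; the paper's is tailored to the codimension-$3$ stratum but avoids having to manipulate $J$ on the cone, needing only the classification of $2$-dimensional Einstein links. One point you should tighten: passing from ``nonzero parallel vector field $e_i^{\perp}$ on $\cR(C(Y))$'' to ``$C(Y)$ splits off $\mathbb{R}$'' is not literally Gigli's splitting theorem (which requires a line in the metric space, not a parallel field on an incomplete open set). The correct device is the one the paper itself uses in Lemma \ref{sinn-3}: set $y=\tfrac12\langle e_i^{\perp},\nabla r^2\rangle$, check $\nabla y=e_i^{\perp}$ and $\nabla^2y=0$ on the regular part, and extend $y$ to a global splitting function across the singular set using that the latter has zero capacity. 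With that substitution your proof goes through.
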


\begin{proof} Suppose there exist $p\in \cS(X_\infty)$ and a tangent cone $V$ at $p$ with $V=\mathbb{R}^{2n-3}\times W$. By the Ricci curvature upper bound of $X_\infty$ and Lemma \ref{epsin63}, $W$ is Ricci flat on its regular part. We will discuss in the following cases for the metric cone $(W, o_W)$.

\begin{enumerate}

\item Suppose $\cS(W) = \{o_W\}$. The link of $\cS(W)$ must be a smooth positively curved Einstein manifold of real dimension $2$. This implies that it must be the standard $S^2$ and so $W = \mathbb{R}^3$. Contradiction. 

\smallskip

\item Suppose $\cS(W)\neq \{o_W\}$. Then we can choose a point $p\in \cS(W)\setminus\{o_W\}$ and after rescaling, the corresponding iterated tangent cone will split off $\mathbb{R}^{2n-2}$.  This contradicts the fact that $\cS_{2n-2}=\emptyset$ for any iterated tangent cones.

\end{enumerate}
Therefore $\cS_{2n-3} (X_\infty) =\emptyset$ and we have completed the proof of the lemma.
\end{proof}

\begin{lemma} \label{0cap8} There exists $\epsilon>0$ such that for any  tangent cone $(V, o)$ of $(X_\infty, d_\infty, \mu_\infty)$,  $V\setminus \cR_\epsilon(V)$ has $0$ capacity. 
\end{lemma}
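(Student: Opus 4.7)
The plan is to reduce the statement to a Hausdorff-codimension estimate for the singular stratum of an arbitrary tangent cone, and then obtain the $0$-capacity from a cut-off construction analogous to Lemma \ref{cutoff}.

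\textbf{Step 1: reduction to $\cS(V)$.} Fix $\epsilon$ no larger than the universal constant $\epsilon(n)$ of Lemma \ref{dengap55}; any such choice will suffice. Since on a non-collapsed RCD space a regular point has volume density exactly $1$, we have $\cR(V)\subset \cR_\epsilon(V)$ for every $\epsilon\in (0,1)$, so
\[
V\setminus \cR_\epsilon(V)=\{x\in V:\ \nu_V(x)\le 1-\epsilon\}\subset \cS(V).
\]
It therefore suffices to prove that $\cS(V)$ has $0$ capacity for every tangent cone $(V,o)$ of $(X_\infty,d_\infty,\mu_\infty)$.

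\textbf{Step 2: $\dim_{\cH}\cS(V)\le 2n-3$.} I would extend Lemma \ref{split8} from $X_\infty$ to its tangent cones. Each tangent cone $(V,o)$ is a non-collapsed $\RCD(0,2n)$ metric cone, and by \cite{CN, Den} any iterated tangent cone of $V$ is itself a tangent cone of $X_\infty$. Thus the two exclusions used in Lemma \ref{split8}, namely the $\epsilon$-regularity of Bru\`e–Naber–Semola that rules out $\mathbb{R}^{2n-1}$ factors and the partial $C^0$/holomorphic-chart argument of Lemma \ref{sinn-3} that rules out $\mathbb{R}^{2n-2}$ factors, apply verbatim at every point of every iterated tangent cone of $V$. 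The De Philippis–Gigli stratification then forces $\cS_{2n-2}(V)=\emptyset$, and hence $\dim_{\cH}\cS(V)\le 2n-3$.

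\textbf{Step 3: from codimension to $0$ capacity.} Since $V$ is a non-collapsed $\RCD(0,2n)$ space with $\dim_{\cH}\cS(V)\le 2n-3$, a covering argument modelled on Lemma \ref{cutoff}, carried out with distance-based Lipschitz cut-offs rather than plurisubharmonic ones, yields a sequence $\rho_\ell\in W^{1,2}\cap \mathrm{Lip}$ with $\rho_\ell=0$ in a neighborhood of $\cS(V)$, $\rho_\ell\uparrow 1$ almost everywhere, and for every $R>0$
\[
\int_{B(o,R)}|\nabla\rho_\ell|^2\,d\mu_V+\int_{B(o,R)}|\Delta\rho_\ell|\,d\mu_V\longrightarrow 0.
\]
The $L^2$-gradient control is the standard codimension-$2$ capacity estimate on RCD spaces, and the $L^1$-Laplacian control follows from the Laplacian comparison on non-collapsed RCD spaces together with the upper volume bound $\mu_V(B(x,r))\le C r^{2n}$ on nested annular coverings of $\cS(V)$.

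\textbf{Main obstacle.} The real work lies in Step 2: the exclusion of $\mathbb{R}^{2n-2}$-splittings inside tangent cones of $V$ requires running the partial $C^0$ argument of Lemma \ref{sinn-3}, which already uses $0$ capacity of singular sets of tangent cones (Lemma \ref{lsana2}). This apparent circularity is resolved by induction on the depth of the iterated tangent cone: the deepest iterated tangent cone is a Euclidean cone over a smooth Einstein link (by dimension reasons combined with Step 2 at that level) where $0$ capacity of the vertex is trivial, and the argument then propagates outward. Step 3, while standard in RCD theory, is also delicate because, unlike on the algebraic variety $X$, no global algebraic object is available on $V$ to produce the cut-offs directly.
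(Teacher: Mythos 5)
There is a genuine gap in your Step 2, and the ``Main obstacle'' you flag at the end is not actually resolved by the induction you propose.

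\textbf{Step 2 overreaches.} You claim $\dim_{\cH}\cS(V)\le 2n-3$ for \emph{every} tangent cone $V$ of $X_\infty$, but this is too strong in the setting of Theorem~\ref{thm3gene}, where $\ric(\omega)$ is only bounded \emph{below}. Tangent cones of $X_\infty$ may develop singular points at limits of sequences in $X^\circ$ (i.e.\ away from $\cS(X_\infty)$), and at such points the exclusion of $\mathbb R^{2n-2}$-splittings is simply false: the argument of Lemma~\ref{sinn-3} derives its contradiction from the fact that the point under consideration \emph{projects to an algebraic singularity} of $X$ (``$\iota_\infty(p)\in\cB$ is a singular point of $X$, $\cB$ cannot be biholomorphic to a unit ball''). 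At a singular point of $V$ that is a limit of $X^\circ$, no contradiction arises — a conical splitting $\mathbb R^{2n-2}\times C_\gamma$ is perfectly compatible with an underlying smooth complex ball carrying a degenerate K\"ahler metric. The codimension-$3$ estimate you want is exactly what the paper does \emph{not} claim for the full singular set of $V$. Instead, the paper's proof decomposes $V\setminus\cR_\epsilon(V)$ into $S_1$ (closure of limits of $\cS(X_\infty)$, which \emph{does} satisfy $S_1\subset\cS_{2n-3}(V)$ by Lemma~\ref{epsin63} and Lemma~\ref{split8}) and $S_2=(V\setminus U_1)\setminus\cR_\epsilon(V)$, and treats $S_2$ by an entirely different mechanism: on $V\setminus U_1$ the space is a non-collapsed GH-limit of smooth K\"ahler manifolds with Ricci bounded below, so the structure theory of \cite{LS1} shows $S_2$ is contained in an analytic subvariety. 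Analytic subvarieties have zero capacity even when their real codimension is exactly $2$, so the stronger codimension-$3$ bound is never needed there. Your Step 3 would of course also have to be reworked, since it is premised on codimension $\ge 3$.

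\textbf{The circularity is not removed by your induction.} You acknowledge that the partial $C^0$ argument (Lemma~\ref{lsana2}) presupposes the zero-capacity of the $\varepsilon$-singular set, and propose to break the circle by induction on the ``depth'' of iterated tangent cones. But iterated tangent cones of an RCD space do not carry a well-founded notion of depth — the dimension-reduction process (à la Cheeger–Colding/De Philippis–Gigli) terminates by reducing the \emph{splitting dimension}, not by exhausting a finite tree. The paper avoids this problem structurally, by never asking for a codimension bound on the $S_2$ part: it constructs the cut-off for $S_1$ using only the already-established $\cS_{2n-3}$ estimate, then invokes \cite{LS1} on the complement $V\setminus U_1$, and multiplies the two cut-offs. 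No induction on iterated cones is required.

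In short: your Step 1 reduction is fine, but the key idea of the paper's proof is to \emph{split} the $\varepsilon$-singular set into a ``genuinely algebraic'' part with codimension $\geq 3$ and a ``smooth-but-degenerate'' part lying in an analytic subvariety, rather than to prove a uniform codimension-$3$ bound for the whole set.
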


\begin{proof} We let $S_1 \subset \cS(V)$ be the closure of the set of limiting points from $\cS(X_\infty)$. Then $S_1$ is closed and $S_1 \subset \cS_{2n-3}(V)$ by Lemma \ref{epsin63} and Lemma \ref{split8}. Therefore $S_1$ has capacity $0$. For any $\varepsilon>0$, $R>>1$ and any compact subset $\cK$ of $\cR_\epsilon (V)$ for sufficiently small $\epsilon>0$ to be determined later, there exists a cut-off function $\rho_1$ such that $\rho_1=1$ on $\cK$ and $\rho_1$ vanishes in an open neighborhood $U_1$ of $S_1$ satisfying
$$\|\nabla \rho_1\|_{L^2(B_V(o, R))} < \varepsilon^2. $$
Since $V\setminus U_1$ is the limit of regular points in $X^\circ$, it is the Gromov-Hausdorff limits of non-collapsed open K\"ahler manifolds with Ricci curvature uniformly bounded below. Then the results of \cite{LS1} imply that $S_2 = (V\setminus U_1) \setminus \cR_\epsilon (V)$ has $0$ capacity by \cite{LS1} for some small but uniform $\epsilon>0$. In fact, $V\setminus \cR_\epsilon(V)$ is a smooth K\"ahler manifold and $S_2$ is contained in an analytic subvariety of $V\setminus U_1$.  By the choice of $U_1$ and $\cK$,  there exists a cut-off function $\rho_2$ with support in $V\setminus U_1$ such that $\rho_2=1$ on $\cK$, $\rho_1$ vanish in an open neighborhood of $ (V\setminus U_1) \setminus \cR_\epsilon (V)$ and 
$$\|\nabla \rho_2\|_{L^2(B_V(o, R)\setminus U_1)} < \varepsilon^2. $$
Then $\rho=\rho_1\rho_2$ is the cut-off function such that $\rho=1$ on $\cK$, $\rho$ vanishes in an open neighborhood of $V \setminus \cR_\epsilon (V)$ and 
$$\|\nabla \rho\|_{L^2(B_V(o, R))} < \varepsilon. $$
This completes the proof of the lemma.
\end{proof}

Immediately, we can strengthen Lemma \ref{lsana} by removing the the $0$-capacity assumption on the singular set of the tangent cone.

\begin{corollary}\label{lsana88} Let $(V, o)$ be any tangent cone at $(X_\infty, p)$. Then for any given small $\xi>0$,  there exist $k_0, C>0$ such that for some $m\leq k_0$, there exists $\sigma \in H^0(X, L^m)$ satisfying 
\begin{enumerate}

\item $\|\sigma\|_{L^2(h^m, m\omega)} \leq C. $

\smallskip

\item $\left| |\sigma(z)| - e^{-md_\infty(z, p)} \right| < \xi. $

\end{enumerate}

\end{corollary}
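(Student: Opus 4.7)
The plan is to observe that the corollary follows almost immediately by combining the preceding lemma on capacity with the partial $C^0$-type construction already carried out in Lemma \ref{lsana2}. More precisely, I would begin by invoking Lemma \ref{0cap8}: for any tangent cone $(V,o)$ of $(X_\infty,d_\infty,\mu_\infty)$, there is a uniform $\epsilon>0$ (depending only on $n$, not on the particular cone or point) such that the complement $V\setminus \cR_\epsilon(V)$ has zero capacity. This is precisely the hypothesis that was imposed by hand in Lemma \ref{lsana2}, so with Lemma \ref{0cap8} the hypothesis is automatically satisfied.

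Next, I would apply Lemma \ref{lsana2} at each tangent cone $(V,o)$ of $(X_\infty,p)$ with this uniform $\epsilon>0$. This produces, for any prescribed $\xi>0$, integers $k_0$ and $m\leq k_0$, a constant $C$, and a global holomorphic section $\sigma\in H^0(X,L^m)$ satisfying the $L^2$-bound $\|\sigma\|_{L^2(h^m,m\omega)}\leq C$ together with the pointwise Gaussian approximation $\big||\sigma(z)|-e^{-md_\infty(z,p)}\big|<\xi$. The two conditions in the corollary are thus exactly the conclusion of Lemma \ref{lsana2} applied to the particular $\epsilon$ produced by Lemma \ref{0cap8}.

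The only point that needs a brief verification is that the cut-off/approximation construction in the proof of Lemma \ref{lsana2} (which is modeled on Proposition 3.1 of \cite{LS1}) genuinely only uses the zero-capacity hypothesis on $V\setminus\cR_\epsilon(V)$ and not any finer regularity of the cone. Since that construction proceeds by taking the Gromov--Hausdorff approximation through the smooth K\"ahler manifolds lying in $X^\circ\subset Y_k$ on an exhausting family of sets avoiding the $\epsilon$-singular stratum, and then invoking the geometric $L^2$-estimates of Lemma \ref{L242} and Lemma \ref{L241} together with the cut-off functions from Lemma \ref{cutoff}, nothing beyond the vanishing of capacity is used. Hence replacing the ad hoc assumption of Lemma \ref{lsana2} by Lemma \ref{0cap8} yields the corollary.

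The only potential obstacle I anticipate is making sure the constants $k_0$ and $C$ can be taken independently of the particular tangent cone $(V,o)$, but this is already built into the formulation of Lemma \ref{lsana2} (which fixes $\xi$ and then produces $k_0,C$) once the uniform $\epsilon$ from Lemma \ref{0cap8} is used; no further compactness argument is required.
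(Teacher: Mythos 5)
Your proposal is correct and matches the paper's argument: the corollary is obtained exactly by feeding the uniform $0$-capacity conclusion of Lemma \ref{0cap8} into the hypothesis of Lemma \ref{lsana2}, which is how the paper derives it.
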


We have the corollary below by following the same argument in the Lemma \ref{1to1-51}.
\begin{corollary} The Lipschitz map $\iota_\infty$ is one-to-one. In particular, 
$$(X_\infty, d_\infty, \mu_\infty) = (\hat X, d_\omega, \mu_\omega).$$

\end{corollary}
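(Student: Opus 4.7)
The plan is to follow the injectivity argument of Lemma \ref{1to1-51}, now replacing Lemma \ref{lsana} by Corollary \ref{lsana88}. Corollary \ref{lsana88} delivers holomorphic separating sections on $X_\infty$ without any a priori capacity hypothesis on the tangent cones, which has been established in Lemma \ref{0cap8} via the codimension estimate in Lemma \ref{split8} together with the volume density gap in Lemma \ref{dengap55}.

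Suppose for contradiction that there exist distinct points $p_1, p_2 \in X_\infty$ with $\iota_\infty(p_1) = \iota_\infty(p_2) = q \in X$. Since $\iota_\infty$ restricts to the identity on $X^\circ$, at least one of $p_1, p_2$ must lie in $\cS(X_\infty)$, and correspondingly $q \in \cS(X)$. Set $r_0 = d_\infty(p_1, p_2) > 0$. Applying Corollary \ref{lsana88} at $p_1$, for any prescribed $\xi > 0$ there exist a positive integer $m$ and a section $\sigma \in H^0(X, L^m)$ satisfying $\|\sigma\|_{L^2(h^m, m\omega)} \leq C$ and
$$\bigl| |\sigma(z)|_{h^m} - e^{-m d_\infty(z, p_1)} \bigr| < \xi.$$
Choosing $m$ sufficiently large and $\xi$ much smaller than $e^{-m r_0}$ yields $|\sigma(p_1)|_{h^m} \geq 1 - \xi$ while $|\sigma(p_2)|_{h^m} \leq e^{-m r_0} + \xi$, so these two values are strictly separated. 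On the other hand, $\sigma$ is a genuine holomorphic section of $L^m \to X$, so $|\sigma|_{h^m}$ is a continuous function on the projective variety $X$, and by continuity of $\iota_\infty$ we have $|\sigma(p_i)|_{h^m} = |\sigma(q)|_{h^m}$ for $i = 1, 2$, contradicting the strict separation above.

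Once injectivity is established, $\iota_\infty$ is a continuous bijection between compact Hausdorff spaces and therefore a homeomorphism; combined with the isometric identification on $X^\circ$ coming from the smooth convergence of $\omega_j$ to $\omega$ in Lemma \ref{loccon66}, this yields $(X_\infty, d_\infty, \mu_\infty) = (\hat X, d_\omega, \mu_\omega)$ as metric measure spaces. The main obstacle I anticipate is not the separation argument itself but verifying that Corollary \ref{lsana88} is applicable at every $p \in \cS(X_\infty)$, i.e., that the $\epsilon$-singular set of every tangent cone has zero capacity; this is exactly what Lemma \ref{0cap8} provides. The Schwarz lemma type estimate $\omega \geq c\theta_X$ from Lemma \ref{sch66} plays the complementary role of ensuring that the polarization pulled back from $X$ to $X_\infty$ via $\iota_\infty$ behaves compatibly, which is what legitimizes comparing $|\sigma(p_i)|_{h^m}$ with $|\sigma(q)|_{h^m}$.
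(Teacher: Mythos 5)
Your strategy — invoke Corollary \ref{lsana88} to get peak sections on $X_\infty$ without any a priori capacity hypothesis and then run the separation argument of Lemma \ref{1to1-51} — is exactly the route the paper intends; the paper simply says ``follows by the same argument in Lemma \ref{1to1-51}.'' However, your specific way of deriving the contradiction has a gap.

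The sticking point is the step where you assert that ``$|\sigma|_{h^m}$ is a continuous function on the projective variety $X$,'' which you then combine with the continuity of $\iota_\infty$ to conclude $|\sigma(p_1)|_{h^m} = |\sigma(p_2)|_{h^m} = |\sigma(q)|_{h^m}$. Here $h = h_0 e^{-\varphi}$ is a \emph{singular} hermitian metric: $\varphi \in \PSH(X,\omega_0)\cap L^\infty(X)\cap C^\infty(X^\circ)$, and the paper only records \emph{boundedness} of $\varphi$ (via \cite{Ko1, EGZ, Zz, GPT}), not continuity across $\cS(X)$. A bounded plurisubharmonic function is merely upper semicontinuous; without continuity at $q\in\cS(X)$ the two limits $\lim_{z\to p_1}|\sigma(z)|_{h^m}$ and $\lim_{w\to p_2}|\sigma(w)|_{h^m}$ could legitimately disagree even though $\iota_\infty(z)$ and $\iota_\infty(w)$ both approach $q$, and the contradiction you want evaporates. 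This is not a purely cosmetic worry: the strict separation you exhibit between $|\sigma(p_1)|_{h^m}$ and $|\sigma(p_2)|_{h^m}$ is exactly the kind of jump a discontinuous $\varphi$ would permit.

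The standard way around this — and what Lemma \ref{1to1-51} together with the cited \cite{DS1, LS1, LS2} actually does — is to trade the metric-dependent quantity $|\sigma|_{h^m}$ for something manifestly continuous on $X$. Locally, on a trivializing neighborhood $U$ of $q$ one converts the peak sections into holomorphic \emph{functions} on $U$ (via the trivialization, or by taking ratios $\sigma_i/\sigma_0$ of sections, equivalently passing to the Kodaira map $\Phi_m=[\sigma_0:\cdots:\sigma_N]:X\to\mathbb{CP}^N$). These are genuinely continuous on $X$ because the singular weight $e^{-m\varphi}$ cancels; the separation of $p_1,p_2$ in $X_\infty$ is then inherited from the gradient estimate of Lemma \ref{L242}, and $\iota_\infty(p_1)=\iota_\infty(p_2)$ immediately forces the pullbacks to agree, giving the contradiction. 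If instead you want to keep your formulation, you would need to justify that $\varphi$ is continuous near $\cS(X)$ (which is plausible — e.g.\ by a Kolodziej--H\"older argument when the Monge--Amp\`ere density lies in $L^q$ for some $q>1$ — but is not supplied by the paper and is not automatic from the Nash entropy bound alone). The final step, continuous bijection between compact Hausdorff spaces $\Rightarrow$ homeomorphism, combined with the smooth local convergence to identify the metric measure structure, is fine.
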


We have finally completed the proof of Theorem \ref{thm3gene} by combining the above results.


\section{Singular K\"ahler metrics that are locally smoothable}

In this section, we establish the RCD structure for singular K\"ahler spaces that admit suitable algebraic smoothings. 

We first recall the well-known notions of algebraic smoothing defined as below (c.f. \cite{GGZ}). 

\begin{definition}\label{smoothing} Let $\cX \subset\mathbb{C}^N$ be an $(n+1)$-dimensional bounded normal Stein space equipped with a surjective holomorphic map
$$\pi: \cX \rightarrow \mathbb{D} \subset \mathbb{C}$$ satisfying following conditions. 

\begin{enumerate}

\item $\cX$ is $\mathbb{Q}$-Gorenstein and the relative canonical sheaf $K_{\cX/\mathbb{D}}$ is numerically trivial.

\smallskip

\item For any $t\in \mathbb{D}\setminus \{0\}$, $\cX_t = \pi^{-1}(t)$ is smooth.  


\end{enumerate}
Then the central fiber $\cX_0$ is said to admit a 'smoothing'.

\end{definition}

\begin{definition} \label{qsmoothing} Let $(\bfX, \bfx)$ be a germ of an isolated log terminal singularity. $(\bfX, \bfx)$  is said to be $\mathbb{Q}$-smoothable if there exists a finite Galois quasi-etale covering $\Psi : \bfY\rightarrow X$ satisfying the following.

\begin{enumerate}

\item $\bfY$ is normal.

\smallskip

\item For any $\bfy \in \Psi^{-1}(\bfx)$, $(\bfY, \bfy)$ is smoothable as in Definition \ref{smoothing}. 

\end{enumerate}

\end{definition}

The following is the main result of this section. 

\begin{theorem} \label{locrcd7} Let $(\bfX, \bfx)$ be a germ of  $\mathbb{Q}$-smoothable isolated log terminal singularity with $\dim_{\mathbb{C}} \bfX =n$.   Suppose $\omega  $ is a singular K\"ahler metric on $(\bfX, \bfx)$ satisfying the following conditions.

\begin{enumerate}

\item $\omega \in C^\infty(\bfX \setminus \{\bfx\})$.

\smallskip

\item There exists $p>1$ such that $\frac{\omega^n}{\Omega_{\bfX}} \in L^p(\bfX, \Omega_{\bfX})$ for a smooth adapted volume measure $\Omega_{\bfX}$ on $\bfX$.

\smallskip

\item $\ric(\omega) \geq - \omega$ in the sense of currents.

\end{enumerate}
Then  $(\hat \bfX, d_\omega, \mu_\omega)$, the metric completion of $(\bfX\setminus \{\bfx\}, \omega, \omega^n)$ near $\bfx$,  satisfies the following.
\begin{enumerate}

\item $\hat \bfX = \bfX $ with $\cR(\hat \bfX) =\bfX \setminus \{\bfx\}. $

\smallskip

\item There exists $\epsilon=\epsilon(n)>0$ such that 
$$
\nu_{\hat \bfX} (\bfx) < 1- \epsilon. 
$$
\end{enumerate}
\end{theorem}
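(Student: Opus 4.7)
The plan is to first use the Galois quasi-\'etale cover of Definition \ref{qsmoothing} to reduce to the genuinely smoothable case, then exploit the smoothing family to approximate the pulled-back singular K\"ahler metric by smooth K\"ahler metrics on nearby smooth fibers, extract a non-collapsed $\RCD$ limit on a neighborhood of the basepoint, and finally derive the volume density gap by a partial $C^0$-type contradiction in the spirit of Lemma \ref{dengap55}. Concretely, let $\Psi: \bfY \to \bfX$ be the finite Galois quasi-\'etale cover and set $\omega_\bfY = \Psi^*\omega$. Since $\Psi$ is \'etale in codimension one, $\omega_\bfY$ is smooth away from the finite set $\Psi^{-1}(\bfx)$, has $L^p$ volume measure with respect to an adapted volume form on $\bfY$, and inherits $\ric(\omega_\bfY) \geq -\omega_\bfY$ as currents. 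The Galois group acts by local isometries and biholomorphisms, so once the conclusions are proved for each $(\bfY, \bfy)$ with $\bfy \in \Psi^{-1}(\bfx)$, they descend to $(\bfX, \bfx)$ via the finite quotient, and the universal volume density gap is preserved up to a factor absorbed into $\epsilon(n)$.

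Second, I would fix a smoothing $\pi: \cY \to \mathbb{D}$ of $(\bfY, \bfy)$ with $K_{\cY/\mathbb{D}}$ numerically trivial and $\cY_t$ smooth for $t \neq 0$, choose a smooth reference $(1,1)$-form $\theta_\cY$ on $\cY$ restricting to a K\"ahler metric on each fiber, and pick a smoothly varying family of adapted volume measures $\Omega_t$ on $\cY_t$ converging smoothly to $\Omega_\bfY$ on compact subsets of $\bfY \setminus \{\bfy\}$. After regularizing the Ricci defect of $\omega_\bfY$ via Lemma \ref{pshapp} and transporting it to $\cY_t$, I would solve a twisted complex Monge-Amp\`ere equation analogous to (\ref{tkema2}) on each $\cY_t$ to obtain smooth K\"ahler metrics $\omega_t$ with uniformly bounded diameter and Nash entropy and with $\ric(\omega_t) \geq -\omega_t - \epsilon_t \theta_\cY$, $\epsilon_t \to 0$. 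Classical Cheeger-Colding compactness for the smooth fibers followed by the non-collapsed $\RCD$ compactness argument of Lemma \ref{ykspace51} produces a pointed Gromov-Hausdorff limit that is a non-collapsed $\RCD(-1, 2n)$ space on a neighborhood of the basepoint, with smooth convergence on compact subsets of $\bfY \setminus \{\bfy\}$. The Schwarz-lemma argument of Lemma \ref{schw3} yields $\omega_t \geq c \theta_\cY$ uniformly in $t$, hence a surjective Lipschitz map from the limit onto a neighborhood of $\bfy$ in $\bfY$; almost convexity of $\bfY \setminus \{\bfy\}$ via Gromov's trick (available since $\{\bfy\}$ has real codimension $2n \geq 4$) then identifies the limit with the local completion $(\hat \bfY, d_{\omega_\bfY}, \mu_{\omega_\bfY})$.

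Third, for the volume density gap I would argue by contradiction exactly as in Lemma \ref{dengap55}: if $\nu_{\hat \bfY}(\bfy)$ could be made arbitrarily close to $1$ over a sequence of such germs, the tangent cones would split off successively larger Euclidean factors and ultimately approximate $\mathbb{C}^n$. Since the approximants $\cY_t$ are smooth with a uniform Ricci lower bound, the $L^2$-estimates of Lemma \ref{L241}, Lemma \ref{L242}, and the Gaussian section construction of Corollary \ref{lsana88} apply after localization using the Stein structure of $\cY$ and the capacity cut-offs of Lemma \ref{cutoff}. The resulting $n$ holomorphic Gaussian sections would yield a local biholomorphism from a neighborhood of $\bfy$ in $\hat \bfY$ onto an open set in $\mathbb{C}^n$, contradicting the fact that $\bfy$ is an isolated algebraic singularity of $\bfY$. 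Combining this with the one-to-one Lipschitz argument of Lemma \ref{1to1-51} shows $\hat \bfY = \bfY$ with $\cR(\hat \bfY) = \bfY \setminus \Psi^{-1}(\bfx)$, and descending by the finite Galois quotient establishes the conclusions for $(\hat \bfX, d_\omega, \mu_\omega)$ near $\bfx$.

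The main obstacle is the construction and uniform geometric control of the approximating family $\{\omega_t\}$: one must simultaneously arrange Monge-Amp\`ere solvability on each smooth fiber, uniform Ricci lower and Nash entropy bounds in $t$, and smooth convergence on compact subsets of $\bfY \setminus \{\bfy\}$, against the fact that $K_{\cY/\mathbb{D}}$ is only numerically (not honestly) trivial. A secondary technical difficulty is that the $L^2$-estimates of Lemma \ref{L241} and the capacity cutoff of Lemma \ref{cutoff} were originally formulated globally on a projective variety; localizing them to the Stein germ $\bfY$ requires replacing the ample-divisor exhaustion by a plurisubharmonic exhaustion coming from the Stein structure of $\cY$, while still retaining the capacity bounds on the singular stratum needed for the Gaussian section argument.
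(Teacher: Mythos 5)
Your outline tracks the paper's proof closely: a finite quasi-\'etale cover to reduce to the honestly smoothable case, a family of approximating K\"ahler metrics on the smooth nearby fibers with uniform Ricci lower bounds, Cheeger--Colding/RCD compactness, a Schwarz lemma giving the Lipschitz comparison map, and a partial $C^0$-type contradiction for the density gap. The two places where your plan is vaguer than what the proof actually needs are worth flagging.

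First, the approximating metrics $\omega_t$ are not obtained from a twisted global Monge--Amp\`ere equation ``analogous to (\ref{tkema2})'' on $\cY_t$: the fibers are open Stein germs, not compact varieties, so the paper instead poses a \emph{Dirichlet problem} (\ref{dpA1}) on strongly pseudoconvex slices $\cU_t$ with boundary data coming from $\omega$ itself, and the uniform $L^\infty$-bound comes from the family $\alpha$-invariant estimate of \cite{GGZ}, not from a global entropy bound. Second — and this is the genuine gap — you assert ``uniformly bounded diameter'' of the approximants without a mechanism, but there is nothing automatic here: a priori the region near $\bfy$ in $(\cU_t,\omega_t)$ could stretch unboundedly as $t\to 0$. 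The paper's estimate (\ref{locdiam}) requires a separate argument in the spirit of \cite{FGS}: assuming a very long minimizing geodesic, one finds a ball of small volume along it, perturbs the volume form on that ball, solves an auxiliary Dirichlet problem (\ref{dpA2}) with the same boundary data, and runs a localized maximum-principle comparison to force a contradiction. Without this, the pointed GH limit and non-collapsing cannot be extracted. Finally, your concern that $K_{\cY/\mathbb{D}}$ is ``only numerically trivial'' is not an actual obstruction: on a Stein germ the reflexive power $mK_{\cY/\mathbb{D}}$ is a trivial line bundle, so a global generator exists and the relative adapted volume forms $\Omega_t$ are defined with no issue.
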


\subsection{Dirichlet problems}

In this section, we will prove various analytic and geometric estimates for singular K\"ahler spaces that admit a smoothing as in Definition \ref{smoothing}.

Let $(\bfX, \bfx)$ be a germ of isolated log terminal singularity with $\dim_{\mathbb{C}}\bfX = n$. Suppose $\bfX$ admits a smoothing as in Definition \ref{smoothing} with 
$$\pi: \cX \rightarrow \mathbb{D}, ~ \bfX = \cX_0. $$  Since $\cX_0$ has at worst log terminal singularities, we can assume that the total space $\cX$ as at worst canonical singularities and in this case the only singularity of $\cX$ is $\bfx$.

Since $\cX$ is normal, we can choose a smooth strict PSH function $\rho$ on $\cX$ with $\bfx\in \cU = \{ \rho < 1\}$. Obviously, $\cU$ is a strongly pseudoconvex domain and we can assume that $\partial \cU$ is smooth by perturbing $\rho$ if neccesary. We let $\cU_t = \cU\cap \cX_t$ and  assume that for each $t\in \mathbb{D}$, $\partial \cU_t$ is smooth. Again, $\cU_t$ is a strongly 
pseudoconvex domain for each $t\in \mathbb{D}$.  For convenience, we let $\mathbb{D}^* = \mathbb{D}\setminus \{0\}$. 

Let $\sigma$ be a local generator of $mK_\cX$ for some fixed $m\in \mathbb{Z}$. Then 
$$\Omega_\cX = \left( \sigma \overline{\sigma}\right)^{1/m}$$
 is a smooth adapted volume measure on $\cX$. The relative volume measure $\Omega_t$ can be defined by
 $$\sqrt{-1} \Omega_t \wedge dt \wedge d\bar t = \Omega_\cX. $$
 Both $\Omega_\cX$ and $\Omega_t$ have vanishing curvature, i.e., 
 $$\ric(\Omega_\cX) = - \ddbar \log \Omega_\cX = 0, ~ \ric(\Omega_t) = - \ddbar \log \Omega_t=0 $$
 for each $t\in \mathbb{D}$.

For fixed $f\in C^\infty(\overline{\cU})$ and $h\in C^\infty(\partial \cU)$, we let  
$$f_t = f|_{\overline{\cU_t}}, ~ h_t= h|_{\partial \cU_t}. $$
and consider the following family of Dirichlet problems on $\cU_t$ for $t\in \mathbb{D}$. 
\begin{equation} \label{dpA1}
\left\{
\begin{array}{l}
(\ddbar u_t )^n = e^{ \lambda u_t - f_t } \Omega_t  , ~ {\rm in} ~ \cU_t,  \\
\\
u_t =h_t, ~{\rm on} ~\partial \cU_t,
\end{array} \right.
\end{equation}
where $\lambda = 0$ or $1$. 
Since $\cU_t$ is strongly pseudoconvex, the following lemma is an immediately consequence of the classical works of \cite{CKNS}.
\begin{lemma} For every $t\in \mathbb{D}^*$, there exists a unique  solution $u_t \in C^\infty(\cU_t)\cap C^2(\overline{\cU_t})$ solving the Dirichlet problem (\ref{dpA1}).

\end{lemma}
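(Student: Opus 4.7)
The plan is to solve the Dirichlet problem (\ref{dpA1}) on the smooth complex manifold $\cX_t$ (for $t\in\mathbb{D}^*$) by the continuity method in the Caffarelli-Kohn-Nirenberg-Spruck framework, adapted from a domain in $\mathbb{C}^n$ to a relatively compact strongly pseudoconvex domain in a Stein manifold. Since $\rho$ is strictly PSH on $\cX$ and $\cU=\{\rho<1\}$, the restriction $\rho|_{\cX_t}-1$ is a smooth strictly PSH defining function for $\cU_t$ with smooth boundary, so $\cU_t$ is Stein and provides all of the barrier ingredients CKNS require.

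Uniqueness is immediate. For $\lambda=0$ it is the Bedford-Taylor comparison principle for PSH solutions with equal boundary data. For $\lambda=1$, if $u,v$ are two smooth PSH solutions with $u=v$ on $\partial\cU_t$, then at an interior maximum of $u-v$ one has $\ddbar(u-v)\le 0$, so $\det\ddbar u\le\det\ddbar v$ there, while the equation yields $\det\ddbar u\cdot e^{-u}=\det\ddbar v\cdot e^{-v}$ at every point; together these force $u\le v$ at the interior maximum, hence everywhere, and swapping the roles gives $u\equiv v$.

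For existence, I would first construct smooth PSH sub- and supersolutions $\underline u_t,\overline u_t\in C^\infty(\overline{\cU_t})$ both agreeing with $h_t$ on $\partial\cU_t$. The supersolution is an extension of $h_t$ that is harmonic with respect to a fixed reference Hermitian metric on $\overline{\cU_t}$; the subsolution is $\overline u_t+C(\rho|_{\cU_t}-1)$ for $C\gg 1$ chosen so that $(\ddbar \underline u_t)^n\ge e^{\lambda\underline u_t-f_t}\Omega_t$. A standard continuity method then deforms $\underline u_t$ to the desired solution: openness is the implicit function theorem in $C^{2,\alpha}(\overline{\cU_t})$ with zero Dirichlet data (the linearization is the Laplacian of the metric $\ddbar u_t^{(s)}$, which is invertible with Dirichlet boundary condition), and closedness reduces to a priori $C^{2,\alpha}$ bounds, via Evans-Krylov and Schauder bootstrap once $C^0$ and $C^2$ estimates are in hand. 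The $C^0$ estimate follows from $\underline u_t\le u_t^{(s)}\le\overline u_t$ by the comparison principle, and the interior $C^2$ estimate is a Yau-type bound on $\tr \ddbar u_t^{(s)}$ using the bisectional curvature of the reference metric on $\overline{\cU_t}$.

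The main obstacle, as in the original work, is the boundary $C^2$ estimate. I would handle it by working in local holomorphic coordinates near each point of $\partial\cU_t$, in which $\cX_t$ becomes an open subset of $\mathbb{C}^n$ and the CKNS barrier construction goes through essentially verbatim: tangential second derivatives at $\partial\cU_t$ are determined by $h_t$ itself; mixed tangential-normal second derivatives are controlled by barriers built from $\underline u_t$ and the strictly PSH defining function $\rho-1$; and the pure double-normal second derivative is recovered algebraically from the Monge-Amp\`ere equation once the mixed derivatives are bounded and the complex Hessian is known to be uniformly positive definite along $\partial\cU_t$. Since all of these constructions are purely local near $\partial\cU_t$, the passage from $\mathbb{C}^n$ to the ambient Stein manifold $\cX_t$ causes no extra difficulty, and one obtains $u_t\in C^\infty(\cU_t)\cap C^2(\overline{\cU_t})$ as required.
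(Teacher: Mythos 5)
Your proposal is correct and is essentially a worked-out account of the same approach the paper takes: the paper simply cites the Caffarelli--Kohn--Nirenberg--Spruck theory for the Dirichlet problem on strongly pseudoconvex domains, and your argument reconstructs exactly that continuity-method machinery (sub/supersolutions from the strictly PSH defining function, comparison principle using the monotone dependence on $u$ since $\lambda\ge 0$, interior Yau-type $C^2$ bounds, CKNS boundary barriers, Evans--Krylov plus Schauder), together with the routine observation that the boundary estimates are local and therefore carry over from $\mathbb{C}^n$ to the smooth Stein fibers $\cX_t$.
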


Let $\omega_t = \ddbar u_t$ for each $t\in \mathbb{D}^*$. Since the right hand side of equation (\ref{dpA1}) is strictly positive in $\cU_t$ for each $t\in \mathbb{D}^*$, $\omega_t$ must be a K\"ahler metric on $\cU_t$.  

We now fix a smooth K\"ahler metric $\theta$ on $\cX$. There exists $\phi\in C^\infty(\cX)$ such that 
\begin{equation}\label{thetapot}
\theta=\ddbar \phi.
\end{equation}
We can also let $\theta_t = \theta|_{\cX_t}$ and $\phi_t=\phi|_{\cX_t}$ for each $t\in \mathbb{D}$. Obviously, $\theta_t$ is also a smooth K\"ahler metric on $\cX_t$ by definition.

The following lemma is proved in \cite{GGZ}, which can be viewed as the family version of the $L^\infty$-estimates for the Dirichlet problem of complex Monge-Amp\`ere equations. The key ingredient is the uniform control of Tian's $\alpha$-invariant for the fibers. 

\begin{lemma} Suppose there exist $p>1$ and $K>0$ such that for all $t\in \mathbb{D}^*$
\begin{equation}\label{tlp}
\left\|e^{-f_t}\frac{\Omega_t}{\theta_t^n} \right\|_{L^p(\cU_t, \Omega_t)} \leq K, ~ ||h_t||_{L^\infty(\partial X_t)}\leq K.
\end{equation}
Then there exists $C>0$ such that for all $t\in \mathbb{D}^*$, 
\begin{equation}
||u_t||_{L^\infty(\cU_t)} \leq C. 
\end{equation}

\end{lemma}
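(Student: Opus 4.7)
The strategy is to reduce the problem to a Kolodziej-type $L^\infty$ estimate for a family of Monge--Amp\`ere equations with uniform $L^p$ density bounds on strongly pseudoconvex domains, and to extract all constants uniformly in $t$ from a uniform Tian $\alpha$-invariant on the fibers $\cU_t$.

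First, I would get the one-sided \emph{upper bound} for free. Since $\ddbar u_t>0$ in $\cU_t$, the function $u_t$ is strictly plurisubharmonic there, and the maximum principle gives
\[
\sup_{\cU_t} u_t \;=\; \sup_{\partial \cU_t} h_t \;\le\; K.
\]
Combined with the hypothesis $\lambda\ge 0$, this reduces (\ref{dpA1}) to a family of Monge--Amp\`ere equations of the form $(\ddbar u_t)^n = G_t\,\theta_t^n$ on $\cU_t$, where $G_t := e^{\lambda u_t - f_t}\Omega_t/\theta_t^n$ satisfies $\|G_t\|_{L^p(\cU_t,\theta_t^n)} \le e^{\lambda K} K$ uniformly in $t\in \mathbb{D}^*$.

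Next, for the \emph{lower bound}, I would move to a $\theta_t$-PSH setup using the global potential $\phi$ with $\theta=\ddbar\phi$ on $\cX$. Writing $u_t=\phi_t+v_t$, we obtain $v_t\in \PSH(\cU_t,\theta_t)\cap C^2(\overline{\cU_t})$ with boundary values $v_t|_{\partial \cU_t}=h_t-\phi_t|_{\partial \cU_t}$ bounded uniformly in $t$, and $(\theta_t+\ddbar v_t)^n = G_t\,\theta_t^n$. The aim is to prove $\inf_{\cU_t} v_t\ge -C$ uniformly. This should follow from the standard comparison-principle and capacity argument: for $s>0$, one estimates
\[
\mathrm{Cap}_{\theta_t}(\{v_t<-s\}) \;\le\; C\,s^{-n}\!\int_{\{v_t<-s\}} G_t\,\theta_t^n \;\le\; C\,s^{-n}\|G_t\|_{L^p}\,\mathrm{Vol}(\{v_t<-s\})^{1-1/p},
\]
and then converts a capacity upper bound into a volume upper bound via Tian's $\alpha$-invariant, namely $\mathrm{Vol}(\{v_t<-s\})\le A\exp(-\alpha_t s)$ for $v_t$ normalized by $\sup v_t\le 0$. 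A Kolodziej-type iteration then yields the desired uniform $L^\infty$ bound.

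The \emph{main obstacle} and the whole point of this lemma is the uniformity of $\alpha_t$ as $t\to 0$, since $\cU_0$ degenerates to the singular domain containing $\bfx$. I would handle this by upgrading a $\theta_t$-PSH function $v_t$ on $\cU_t$ to a global $\theta$-PSH function on $\cU\subset \cX$: the family $\mathbb{Q}$-Gorenstein condition with $K_{\cX/\mathbb{D}}$ numerically trivial together with strong pseudoconvexity of $\cU$ permits an Ohsawa--Takegoshi-type extension with uniformly controlled $L^2$ constants across fibers. Once one has such an extension, Skoda's integrability applied to the ambient $(\cX,\theta)$ provides a uniform $\alpha$-invariant along the fibers $\{\cU_t\}_{t\in\mathbb{D}^*}$, closing the Kolodziej iteration and yielding $\|u_t\|_{L^\infty(\cU_t)}\le C$ independent of $t$, as claimed.
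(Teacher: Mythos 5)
Your proposal is correct in outline and follows essentially the same route as the paper, which gives no proof of its own but simply cites \cite{GGZ} and identifies the uniform control of Tian's $\alpha$-invariant along the fibers as the key ingredient --- exactly the point you single out as the main obstacle and resolve by extending fiberwise quasi-PSH data to the ambient Stein space $\cU\subset\cX$ and invoking Skoda integrability there. The only detail to be careful about is that the hypothesis (\ref{tlp}) measures the density in $L^p(\cU_t,\Omega_t)$ rather than $L^p(\cU_t,\theta_t^n)$, so the reduction to a Ko{\l}odziej-type density bound with respect to $\theta_t^n$ needs the (standard, but not automatic near the singular fiber) comparison between the two reference measures.
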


\begin{lemma}  In addition to (\ref{tlp}), we assume there exist $A, B>0$ such that for all $t\in \mathbb{D}$,
\begin{equation}\label{tschw1}
A\theta_t + \ddbar f_t  \geq 0
\end{equation}
in $\cU_t$ and  
\begin{equation}\label{tbound}
||h_t||_{C^2(\partial X_t)}\leq B.
\end{equation}
Then there exists $c>0$  such that for all $t\in \mathbb{D}^*$, 
\begin{equation}\label{tschw77}
\ddbar u_t \geq c\theta_t. 
\end{equation}
Furthermore, for each $t\in \mathbb{D}^*$, we have 
\begin{equation}\label{locricbd}
\ric(\omega_t) \geq - \left(1 + c^{-1}A \right) \omega_t. 
\end{equation}

\end{lemma}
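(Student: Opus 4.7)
The plan is to prove the Schwarz-type lower bound (\ref{tschw77}) by a Chern--Lu maximum principle applied to $\log\tr_{\omega_t}(\theta_t)$, after which the Ricci bound (\ref{locricbd}) will follow as an immediate algebraic consequence. As preliminary input, taking $-\ddbar\log$ of both sides of (\ref{dpA1}) and using $\ric(\Omega_t)=0$ yields
\begin{equation*}
\ric(\omega_t)\ =\ -\lambda\omega_t+\ddbar f_t\ \geq\ -\lambda\omega_t-A\theta_t
\end{equation*}
on $\cU_t$, where the inequality uses (\ref{tschw1}).

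Let $C_0$ be a uniform upper bound for the holomorphic bisectional curvature of $\theta$ on $\overline{\cU}\subset\cX$, finite since $\theta$ is a fixed smooth K\"ahler form on the compact set $\overline{\cU}$. With $\phi$ the global potential of $\theta$ from (\ref{thetapot}) and a constant $B\gg A+C_0+1$ to be chosen, I would consider the test function
\begin{equation*}
H_t\ =\ \log\tr_{\omega_t}(\theta_t)+B(\phi_t-u_t).
\end{equation*}
The Chern--Lu inequality applied to the identity map $(\cU_t,\omega_t)\to(\cU_t,\theta_t)$, combined with the Ricci bound above and the elementary identity $\Delta_{\omega_t}(\phi_t-u_t)=\tr_{\omega_t}(\theta_t)-n$, yields a pointwise inequality of the form
\begin{equation*}
\Delta_{\omega_t}H_t\ \geq\ (B-A-C_0)\tr_{\omega_t}(\theta_t)-C_1
\end{equation*}
for some $C_1$ depending only on $A,B,C_0,n,\lambda$. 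At any interior maximum of $H_t$ this forces $\tr_{\omega_t}(\theta_t)\leq C_1/(B-A-C_0)$; combined with the uniform $L^\infty$-bounds on $u_t$ and $\phi_t$, this upgrades to a uniform upper bound $\tr_{\omega_t}(\theta_t)\leq C_2$ throughout the interior of $\cU_t$.

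For the boundary, I would invoke the classical Caffarelli--Kohn--Nirenberg--Spruck boundary $C^2$-estimates for the Dirichlet problem (\ref{dpA1}): because the isolated singularity $\bfx\in\cX_0$ lies in the interior of $\cU$, the family $\partial\cU_t$ consists of smooth strongly pseudoconvex hypersurfaces of uniformly controlled geometry, and the hypothesis (\ref{tbound}) provides uniform $C^2$ boundary data, so the CKNS machinery produces $\|u_t\|_{C^2(\partial\cU_t)}\leq C_3$ uniformly in $t\in\mathbb{D}^*$, and in particular $\tr_{\omega_t}(\theta_t)\leq C_3$ on $\partial\cU_t$. This is the main technical point of the proof; the uniformity as $t\to 0$ rests entirely on the fact that the degeneration of the fibration happens at the isolated point $\bfx$ which is separated from $\partial\cU$. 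Combining the interior and boundary bounds yields $\tr_{\omega_t}(\theta_t)\leq C$ on $\overline{\cU_t}$ for all $t\in\mathbb{D}^*$. If $\mu_1\leq\cdots\leq\mu_n$ denote the eigenvalues of $\omega_t$ with respect to $\theta_t$, then $\sum_i\mu_i^{-1}=\tr_{\omega_t}(\theta_t)\leq C$ forces $\mu_i\geq C^{-1}$, which is exactly (\ref{tschw77}) with $c=C^{-1}$. Finally, substituting $\theta_t\leq c^{-1}\omega_t$ into the Ricci bound of the first paragraph and using $\lambda\in\{0,1\}$ gives
\begin{equation*}
\ric(\omega_t)\ \geq\ -\lambda\omega_t-A\theta_t\ \geq\ -(1+c^{-1}A)\omega_t,
\end{equation*}
which is (\ref{locricbd}).
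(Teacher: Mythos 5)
Your proposal is correct and follows essentially the same route as the paper: a Chern--Lu maximum principle for $\log\tr_{\omega_t}(\theta_t)$ with a potential barrier term, combined with uniform boundary $C^2$-estimates for the Dirichlet problem (which the paper cites from \cite{GGZ} and you account for via CKNS, noting that uniformity in $t$ holds because the degeneration occurs at the interior point $\bfx$). One small remark: your barrier $+B(\phi_t-u_t)$ has the correct sign, since $\Delta_{\omega_t}\phi_t=\tr_{\omega_t}(\theta_t)$; the paper writes $H=\log\tr_{\omega_t}(\theta_t)-B\phi_t$, which appears to be a sign typo, as that choice would contribute $-B\tr_{\omega_t}(\theta_t)$ and spoil the claimed inequality $\Delta_{\omega_t}H\geq(B-C)\tr_{\omega_t}(\theta_t)$.
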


\begin{proof}   The uniform boundary estimates for $u_t$ is obtained in \cite{GGZ}. More precisely, there exists $C>0$ such that for all $t\in \mathbb{D}^*$, $\|\Delta_{\theta_t} u_t \|_{\partial \cU_t} \leq C$. We can derive the interior second order estimates by applying the maximum principle to
$$H= \log tr_{\omega_t}(\theta_t) - B \phi_t $$ in $\cU_t$ with 
$$\Delta_{\omega_t} H \geq (B- C) tr_{\omega_t}(\theta_t)$$ for some uniform $C>0$. We then immediately achieve (\ref{tschw77}) by choosing a sufficiently large $B>0$. (\ref{locricbd}) follows from (\ref{tschw77})
$$\ric(\omega_t) = -\lambda \omega_t + \ddbar f_t \geq - \lambda \omega_t - A \theta_t \geq - \left( \lambda + c^{-1}A\right) \omega_t. $$
\end{proof}


\begin{corollary} Suppose (\ref{tlp}), (\ref{tschw1}), (\ref{tbound}) hold. Then for any $\cK \subset\subset \cU \setminus \{\bfx\}$ and $m>0$, there exists $C=C_{\cK, m}>0$ such that for all $t\in \mathbb{D}^*$, 
\begin{equation}
\| u_t \|_{C^m(\cK\cap \cU_t)} \leq C.  
\end{equation}
In particular, there exists a unique solution $u_0\in L^\infty(\overline{\cU_0})\cap C^\infty(\cU_0\setminus \{ \bfx \})$ to the Dirichlet problem (\ref{dpA1}) for $t=0$.

\end{corollary}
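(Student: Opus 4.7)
The plan is to first derive uniform interior $C^m$ estimates for $u_t$ on any compact subset $\cK \subset\subset \cU \setminus \{\bfx\}$, then extract a smooth subsequential limit as $t \to 0$, and finally argue uniqueness via pluripotential theory.

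For the interior estimates I would proceed by an eigenvalue-pinching argument combined with Evans--Krylov. On any fixed $\cK \subset\subset \cU \setminus \{\bfx\}$, the total space $\cX$ is smooth away from $\bfx$, so $\Omega_\cX$ and $\theta$ are smooth and non-degenerate on $\cK$; consequently the relative volume form $\Omega_t$ and the reference metric $\theta_t$ satisfy $c_0 \leq \Omega_t/\theta_t^n \leq C_0$ on $\cK \cap \cX_t$ uniformly in $t$ near $0$. Combined with the uniform $L^\infty$ bound on $u_t$ and the smoothness of $f$, the Monge--Amp\`ere equation \eqref{dpA1} yields $c \leq \omega_t^n/\theta_t^n \leq C$ on $\cK \cap \cX_t$. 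Diagonalizing $\omega_t$ with respect to $\theta_t$ with eigenvalues $\lambda_1,\dots,\lambda_n$, the lower bound \eqref{tschw77} gives $\lambda_i \geq c$, while their product is bounded above, so $\lambda_i \leq C c^{-(n-1)}$. This produces a uniform two-sided bound $c\,\theta_t \leq \omega_t \leq C'\,\theta_t$ on $\cK \cap \cX_t$.

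With this two-sided Laplacian bound in hand, the Monge--Amp\`ere operator is uniformly elliptic on $\cK$. Concavity of $\log\det$ allows the Evans--Krylov theorem to give uniform $C^{2,\alpha}$ estimates on any slightly smaller compact subset. Differentiating \eqref{dpA1} and applying linear Schauder estimates in a standard bootstrap then yields the $C^m$ bound for every $m \geq 0$. To produce $u_0$, I would fix an exhaustion $\cK_j \uparrow \cU_0 \setminus \{\bfx\}$ and apply a diagonal Arzel\`a--Ascoli argument along $t_k \to 0$ to extract $u_{t_k} \to u_0$ in $C^\infty_{\mathrm{loc}}(\cU_0 \setminus \{\bfx\})$. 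The limit $u_0$ solves the Monge--Amp\`ere equation on $\cU_0 \setminus \{\bfx\}$, is uniformly bounded by the uniform $L^\infty$ estimate, and takes the prescribed boundary values $h_0$ by the uniform boundary $C^2$-estimates from \cite{GGZ}. Since $\bfx$ is an isolated singularity of the normal space $\cU_0$, the bounded plurisubharmonic function $u_0$ extends across $\bfx$, giving $u_0 \in L^\infty(\overline{\cU_0}) \cap C^\infty(\cU_0 \setminus \{\bfx\})$.

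Uniqueness within this class follows from the standard pluripotential comparison principle of Bedford--Taylor and Ko{\l}odziej for bounded PSH solutions of complex Monge--Amp\`ere equations with $L^p$ right-hand side on a strongly pseudoconvex domain: when $\lambda = 1$ the monotonicity of $e^{\lambda u}$ makes the comparison immediate, while for $\lambda = 0$ it is the classical Dirichlet uniqueness. The main obstacle I foresee is Step 1, ensuring that $\Omega_t/\theta_t^n$ and the boundary data pass uniformly to the limit along the degenerating family; everything else is a standard elliptic bootstrap plus pluripotential comparison.
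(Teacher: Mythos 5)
Your proposal is correct and follows essentially the same route as the paper: the two-sided bound $c\,\theta_t \le \omega_t \le C'\theta_t$ on $\cK$ obtained by combining the Schwarz-lemma lower bound (\ref{tschw77}) with the Monge--Amp\`ere equation (whose right-hand side is uniformly comparable to $\theta_t^n$ away from $\bfx$), followed by Evans--Krylov/Schauder bootstrap, a diagonal limit as $t\to 0$, and uniqueness by the comparison principle. Your eigenvalue-pinching computation and the remark on extending the bounded PSH limit across the isolated singularity simply make explicit what the paper leaves implicit.
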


\begin{proof}  The uniform lower bound (\ref{tschw77}) for $\ddbar u_t$ combined with the original equation (\ref{dpA1}) would give the uniform upper bound for $\ddbar u_t$ in $\cK\cap \cU_t$. Then the higher order estimates for $u_t$ follow from standard Schauder estimates and the elliptic theory of linear PDEs. Now that $u_t$ is uniformly bounded in $L^\infty(\overline{\cU_t})$ and converges smoothly away from $\bfx$, the limit $u\in L^\infty(\overline{\cU_0}) \cap C^\infty( \cU_0)$ must solve the Dirchlet problem (\ref{dpA1}) on $\cU_0$ with $t=0$. The uniqueness directly follows from the comparison principle. 
\end{proof}

We let $\omega_t = \ddbar u_t$ and let $g_t$ be the K\"ahler metric associated to $\omega_t$ for all $t\in \mathbb{D}^*$.

\begin{lemma} Suppose (\ref{tlp}), (\ref{tschw1}), (\ref{tbound}) hold. Then there exists $C>0$ such that for each $t\in \mathbb{D}^*$, 
\begin{equation}\label{locdiam}
\Diam(\cU_t, g_t) \leq C. 
\end{equation}

\end{lemma}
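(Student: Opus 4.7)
The plan is to reduce the diameter bound to the Guo-Phong-Song-Sturm framework from \cite{GPSS1, GPSS2} by producing, uniformly in $t\in \mathbb{D}^*$, a Nash-entropy bound for $\omega_t=\ddbar u_t$ and then applying a domain-version of the GPSS diameter estimate. Concretely, I would proceed in three stages.

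\emph{Stage 1: uniform Nash entropy.} From the previous lemmas we already have uniform $L^\infty$ control on $u_t$, the hypothesis (\ref{tlp}) giving $\|e^{-f_t}\Omega_t/\theta_t^n\|_{L^p}\le K$, and the uniform two-sided bound $c\theta_t\le \omega_t$ combined with the equation $\omega_t^n=e^{\lambda u_t-f_t}\Omega_t$. These give uniform bounds on the total volume $V_t=\int_{\cU_t}\omega_t^n$ from above and below, and they make the log-density
\[
\log\!\left(V_t^{-1}\frac{\omega_t^n}{\theta_t^n}\right)=\lambda u_t-f_t-\log\frac{\theta_t^n}{\Omega_t}-\log V_t
\]
uniformly controlled in $L^p(\omega_t^n)$ for every $p$. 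In particular, I obtain a uniform Nash-entropy bound $\mathcal N_{\theta_t,p}(\omega_t)\le K'$ for any $p>n$, which is the key analytic input.

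\emph{Stage 2: compactification of the boundary.} The GPSS diameter estimate is stated on closed varieties, so I first extend the setting in a $t$-uniform way. Using a slightly larger strongly pseudoconvex domain $\cU'\Supset \cU$ and the uniform $C^2$-control (\ref{tbound}) of $h_t=u_t|_{\partial \cU_t}$, I glue $u_t$ across $\partial \cU_t$ with a smooth $\theta$-plurisubharmonic function to produce $\widetilde u_t$ on $\cU'_t$ such that $\widetilde\omega_t:=\ddbar\widetilde u_t\ge c'\theta_t$, with uniform $L^\infty$ norm and uniform $L^p$ control of $\widetilde\omega_t^n/\theta_t^n$. A standard cap-off (for example, working inside a small ambient Stein neighborhood and multiplying $\widetilde\omega_t$ by a cut-off before pasting to a fixed reference metric on the complement) lets me view $(\overline{\cU_t},\omega_t)$ as isometrically embedded in a closed projective/K\"ahler piece on which the Nash entropy and volume remain uniformly bounded.

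\emph{Stage 3: applying the GPSS diameter estimate.} On the resulting family of closed spaces the GPSS diameter estimate, combined with the $L^p$ Nash-entropy bound of Stage 1 and the uniform K\"ahler class control coming from $c\theta_t\le \omega_t\le C\theta_t$ (valid on compacts of $\cU_t$ by the $\ddbar$-equation together with (\ref{tschw77})), yields a uniform upper bound on the intrinsic diameter; restriction to $\cU_t$ then gives (\ref{locdiam}).

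The main obstacle is \emph{Stage 2}: I need the gluing/compactification to be uniform as $t\to 0$, even though the fibres $\cU_t$ degenerate through the isolated singularity $\bfx\in \cU_0$. The saving feature is that the singular fibre only sits over the \emph{interior} point $\bfx$, while $\partial \cU$ is smooth in $\cX$ and the family of boundary data $h_t$ is uniformly $C^2$; so the gluing only needs to be performed away from $\bfx$, where $(\cX,\theta)$ is smooth and varies smoothly in $t$. Alternatively, one can avoid the extension entirely by running the Green's-function/Moser-iteration proof of the GPSS diameter estimate directly on $(\cU_t,\omega_t)$ with Dirichlet Green's function, using the uniform boundary control of $u_t$ to bound the boundary contributions; this is the backup plan if the clean compactification is hard to carry out uniformly in $t$.
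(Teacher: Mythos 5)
Your proposal takes a genuinely different route from the paper, and the route has a gap at the step you yourself flag as the main obstacle. The paper does not attempt any compactification or reduction to a closed-manifold estimate. Instead it runs the direct local diameter argument from Lemma 2.4 of \cite{FGS}: assume $\Diam(V_t, g_t) = 100D$ is large, choose a long minimal $g_t$-geodesic in the annular region where the metric is already uniformly controlled, place disjoint unit balls along it, use the uniform volume bound for the Monge-Amp\`ere measure and the fixed reference volume to find one ball whose combined volume is $O(D^{-1})$, then solve an auxiliary Dirichlet Monge-Amp\`ere equation $(\ddbar w)^n = F e^{\lambda u_t - f_t}\Omega_t$ whose right-hand side is amplified by a power of $D$ only on that ball, and finally apply the maximum principle to $H = \eta(\log(\hat\omega^n/\omega_t^n) + w)$ to force $D \le C$. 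This argument is intrinsically a Dirichlet-problem argument and never leaves $\cU_t$.

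Your Stage 2 is where the proposal breaks down. First, the family $\pi:\cX\to\mathbb{D}$ is Stein, not projective, so there is no natural compact ambient family in which to cap off the fibers; the GPSS diameter estimate in \cite{GPSS1, GPSS2} is formulated on closed (normal projective) K\"ahler varieties, and a cut-off-based gluing of $\ddbar u_t$ with a fixed background does not in general produce a positive $(1,1)$-form near the seam, so positivity and a bounded Nash-entropy density on the glued-up compact space would both need separate proof, uniformly in $t$. Second, even granting a successful compactification $(\cY_t, \tilde g_t)$ with uniformly bounded diameter, that bounds the \emph{extrinsic} distance between points of $\cU_t$ inside $\cY_t$; the lemma asserts a bound on the \emph{intrinsic} diameter $\Diam(\cU_t, g_t)$, which could a priori be larger since minimizing geodesics in $\cY_t$ may leave $\cU_t$ and reenter. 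You would need an additional almost-convexity or covering argument to pass from the ambient bound to the intrinsic one, and you have not supplied it. Your backup plan (running the GPSS machinery with a Dirichlet Green's function) is not developed, and in fact the elementary FGS-style argument the paper uses is precisely the clean local substitute for it; I would suggest reworking the proof along those lines rather than trying to push the compactification through.

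Your Stage 1, by contrast, is fine: the uniform $L^\infty$ bound on $u_t$, the $L^p$ hypothesis (\ref{tlp}), and the boundedness of $f_t$ do yield a uniform Nash-entropy-type bound for $\omega_t$ on $\cU_t$, and this is exactly the analytic input that makes the volume-counting step of the FGS argument go through.
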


\begin{proof} We follow the idea in \cite{FGS} (c.f. Lemma 2.4 \cite{FGS}). We will choose  a sufficiently small open neighborhood $V$ of $\bfx$ and let $V_t =V\cap \cU_t$. Since we have uniform second order estimates for $u_t$ away from $\bfx$, we can assume that there exist $V\subset\subset W \subset\subset \cU\setminus \{\bfx\}$ such that there exist  $\Lambda>0$ and $\delta>0$  such that for all $t\in \mathbb{D}^*$ and $x, y\in \cU_t\setminus W_t$,
\begin{equation}\label{distass}
d_{g_t}(x, y) \leq \Lambda, ~ d_{g_t} (\partial W_t, \partial \cU_t) \geq \delta,
\end{equation}
where $W_t = W\cap \cU_t$. 

We now fix  a sufficiently small $\epsilon=\epsilon(p)>0$ so that $p- \epsilon>1$, where $p$ is given in the assumption (\ref{tlp}). Without loss of generality,  we may assume  that 
$$\Diam(V_t, g_t) = 100D$$ for some very large $D\geq 100 + 100\Lambda$ and $t\in \mathbb{D}^*$. We now fix such $t\in \mathbb{D}^*$.

We can assume that there exists a point $p \in V_t$ such that $d_{g_t}(p, \partial V_t) = 10D$. Therefore  there exists a normal minimal geodesic  $\gamma:[0,D]\to W_t$ with respect to the metric $g_t$ such that its the tubular neighborhood $\gamma$ with radius $10$, i.e., 
$$\cV_\gamma= \{ x\in \cU_t: d_{g_t}(x, \gamma) < 10\}, $$ 
 entirely lies in $W_t$ due to the estimate (\ref{distass}).

We can choose the points $\{x_i = \gamma(6i)\}_{i=0}^{[D/6]}$ on $\gamma$  such that the geodesic balls $\{B_{g_t}(x_i,3)\}_{i=0}^{[D/6]} \subset W_t $ are  disjoint. There exists a uniform $\cA>0$ such that  
\begin{eqnarray*}
&& \sum_{i=0}^{[D/6]} \Big( \vol_{\theta_t^n}\xk{B_{g_t}(x_i,3)   }  + \vol_{e^{\lambda u_t-f_t}\Omega_t}\xk{B_{gt}(x_i, 3)}   \Big) \\
&\leq& \int_{\cU_t} \left( \theta_t^n + e^{\lambda u_t - f_t} \Omega_t \right)  \leq \left( 1 + e^{\lambda\|u_t\|_{L^\infty(\overline{\cU_t})}}    \left\|\frac{e^{-f_t}\Omega_t}{\theta_t^n} \right\|_{L^p(\overline{\cU_t}, \theta_t^n)}\right) \int_{\cU_t} \theta_t^n\\
&=&  \cA . 
\end{eqnarray*}
 Hence there must exist a geodesic ball $B_{g_t}(x_i, 3)$ such that \textcolor{black}{ $$\vol_{\theta_t^n}\xk{B_{g_t}(x_i,3)   }  + \vol_{e^{\lambda u_t - f_t} \Omega_t }\xk{B_{g_t}(x_i, 3)} \le 12 \cA D^{-1}.$$ } We fix such $x_i$ and construct a cut-off function $\eta(x) = \rho(r(x))  \geq 0$ with 
$$r(x) = d_{g_t} (x,x_i)$$ such that 
$$\eta=1~\textnormal{on}~ B_{g_t}(x_i, 1),  ~ \eta=0 ~\textnormal{outside}~B_{g_t} (x_i, 2), $$
and
$$ \rho\in [0,1] ,\quad \rho^{-1} (\rho')^2\le 10, \quad |\rho''| \leq 10. $$
 Define a piecewise linear continuous function $\tilde F: \mathbb R \to \mathbb R$ such that $F(s) = D^{\frac{\epsilon}{p(p-\epsilon)}}$ when $t\in [0,2]$, $\tilde F(s) \equiv a$ when $s\ge 3$ and $\tilde F(s) $ is linear when $t\in [2,3]$, where $a>0$ is a constant to be determined.  Denote $F(x) = \tilde F( r(x)   )$, then  $F\equiv a$ outside $B_{g_t} (x_i, 3)$, $F= D^{\frac{\epsilon}{p(p-\epsilon)}}$ on $B_{g_t}(x_i, 2)$. We can mollify $F$ so that it is smooth  without changing its value in $B_{g_t}(x_i, 2)$.
 %
%
$$\int_{\cU_t} \left(\frac{F e^{\lambda u_t - f_t} \Omega_t}{\theta_t^n} \right)^{p-\epsilon} \theta_t^n \leq \left( \int_{\cU_t} F^{\frac{p(p-\epsilon)}{\epsilon} } \theta_t^n\right)^{\frac{\epsilon}{p}}\left( \int_{\cU_t} \left(\frac{\Omega_t}{\theta_t^n} \right)^p \theta_t^n \right)^{ \frac{p-\epsilon}{p}} \leq \cA.$$

We now consider the equation
\begin{equation} \label{dpA2}
\left\{
\begin{array}{l}
(\ddbar w )^n =   F e^{\lambda u_t - f_t} \Omega_t  , ~ {\rm in} ~ \cU_t,  \\
\\
w =h_t, ~{\rm on} ~\partial \cU_t,
\end{array} \right.
\end{equation}
Then by the $L^\infty$-estimate (\ref{tlp}) and the uniform $L^\infty$-bound for $u_t$, we have $\| w \|_{L^\infty} \leq C$ for a uniform $C>0$. Furthermore $w$ is smooth on $\cU_t$ and 
$$\hat g=   \ddbar w$$
is a K\"ahler metric in $\cU_t$. On $B_{g_t}(x_i, 2)$, $F$ is constant and so 
$$\ric(\hat g) = \ric(g_t). $$
In particular, we have
$$ \Delta_{g_t} \log \frac{\hat\omega^n} {\omega_t^n} = 0  $$
 on $B_{g_t}(x_i, 2)$, where $\Delta_{g_t} = \Delta_{\omega_t}$. Let 
$$H = \eta \left( \log \frac{\hat\omega^n} {\omega_t^n}  + w  \right). $$  
On $B_g(x_i, 1), $ we have $\eta=1$ and 
$$ \Delta_{g_t}  H = \Delta_{g_t} w =   \tr_{g_t}(\hat g) \geq   n \left( \frac{\hat\omega^n} {\omega_t^n} \right)^{1/n} .$$
In general,  on the support of $\eta$, we have 
\begin{eqnarray*}
\Delta_{g_t} H &\geq& n \eta  \left(\frac{\hat \omega^n} {\omega^n} \right)^{1/n}   + 2 \eta^{-1} Re\left(  \nabla H \cdot  \nabla \eta \right) - 2 \frac{ H  |\nabla \eta|^2}{\eta^2} + \eta^{-1}H \Delta_{g_t} \eta \\
&\geq &  \eta^{-1}\bk{C \eta^2 e^{H/(n\eta)} + 2 Re\big(  \nabla H \cdot  \nabla \eta \big) - 2 \frac{ H  |\nabla \eta|_{g_t}^2}{\eta} + H \Delta_{g_t} \eta }.
%
%
%
\end{eqnarray*}
We may assume $\sup_{\cU_t} H>0$, otherwise we already have upper bound of $H$. The maximum of $H$ must lie at $B_{g_t}(x_i,2)$ and at this point $$\Delta_{g_t} H\le 0 ,\quad \abs{\nabla H}_{g_t} = 0.$$ By the Laplacian comparison with the lower bound of $\ric(g_t)$ from (\ref{locricbd}), there exists $C>0$ such that 
$$\Delta_{g_t} \eta = \rho' \Delta r + \rho'' \ge -C,\quad \frac{\abs{\nabla \eta}_{g_t}}{\eta} = \frac{(\rho')^2}{\rho}\le C.$$ 
Hence at the maximum of $H$, we have
$$0\ge C \eta^2 e^{H/(n\eta)} - C H \ge C H^2 - C H , $$ 
and  
$$\sup_{\cU_t} H\le C $$
for some uniform $C>0$. 
 In particular, on the ball $B_{g_t}(x_i,1)$ where $\eta\equiv 1$, it follows that $\frac{\hat \omega^n}{\omega_t^n}\le \cB$ for some uniform $\cB>0$ since $\| w\|_{L^\infty(\cU_t)}$ is bounded by a uniform constant. From the definition of $\hat\omega$ and $\omega_t$, we have
$$\cB \ge  \frac{\hat \omega^n}{\omega_t^n} = F=D^{\frac{\epsilon}{p (p-\epsilon)}}$$
on $B_{g_t}(x_i, 1)$. Immediately we have a uniform upper bound for 
$$D \leq \cB^{\frac{p(p-\epsilon)}{\epsilon}}$$
and so we have obtained a uniform upper bound for $\Diam(V_t, g_t)$ for all $t\in \mathbb{D}^*$. 
\end{proof}

We now let $u=u_0$ and $\omega = \ddbar u_0$ which are both smooth in $\cU_0\setminus \{\bfx\}$. Naturally, we can let $(\hat \cU_0, d_\omega, \mu_\omega)$ be the metric completion of $(\cU_0\setminus \{\bfx\}, \omega, \omega^n)$ around $\bfx$ (but not near $\partial \cU_0$). 

We will now state and prove the main result in this subsection.

\begin{lemma} \label{rcdloc75} $(\cU_t, \omega_t, \omega_t^n)$ converge in Gromov-Hausdorff topology to an open non-collapsed $\RCD(1+c^{-1}A, 2n)$ space $(\cW, d_\cW, \mu_\cW)$ satisfying the following conditions.

\begin{enumerate}

\item $\cW = \hat \cU_0=\cU_0$ with $\cR(\cW) = \cU_0 \setminus \{\bfx\}. $

\smallskip

\item There exists $\epsilon=\epsilon(n)>0$ such that 
\begin{equation}\label{locvolden2}
\nu_\cW (\bfx) < 1- \epsilon. 
\end{equation}

\end{enumerate}

\end{lemma}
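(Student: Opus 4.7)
My strategy is to mimic, in the local Stein setting, the global argument of Theorem~\ref{thm3spe}: first extract a non-collapsed GH limit, then identify its smooth part with $\cU_0\setminus\{\bfx\}$, and finally rule out a Euclidean tangent cone at $\bfx$ by a partial $C^0$ argument. Concretely, by (\ref{locricbd}) the Ricci curvatures are uniformly bounded below on the smooth fibres $\cU_t$, by (\ref{locdiam}) the diameters are uniformly controlled, and the uniform $L^\infty$-bound on $u_t$ combined with (\ref{dpA1}) and the $L^p$-assumption (\ref{tlp}) gives two-sided uniform bounds on $\vol(\cU_t,\omega_t^n)$. Standard Gromov precompactness together with the non-collapsed RCD stability of De Philippis--Gigli then produces, after passing to a subsequence, a pointed GH limit $(\cW, d_\cW, \mu_\cW)$ which is a non-collapsed $\RCD(-(1+c^{-1}A),2n)$ space.

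The second step is to identify $\cW$ with $\cU_0$. By the local higher-order estimates already established (the corollary following (\ref{locricbd})), $\omega_t$ converges smoothly to $\omega_0$ on any $\cK\subset\subset\cU_0\setminus\{\bfx\}$, so $\cU_0\setminus\{\bfx\}$ embeds isometrically into $\cR(\cW)$ with metric induced by $\omega_0$. Since the removed set is a single point, hence has real codimension $\ge 4$, Gromov's almost-convexity trick (as used in Lemma~\ref{ykspace51} and Lemma~\ref{split8}) shows $\cU_0\setminus\{\bfx\}$ is almost convex in every $\cU_t$ and therefore convex in $\cW$. It follows that $\cW$ is the metric completion of $(\cU_0\setminus\{\bfx\},\omega_0)$, i.e.\ $\cW=\hat\cU_0$, with $\cS(\cW)$ consisting of a single point, which we continue to denote by $\bfx$.

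The third and principal step is the volume density gap (\ref{locvolden2}). I would argue by contradiction, imitating Lemma~\ref{dengap55}: suppose one has a sequence of local models and points with $\nu_\cW(\bfx_j)\to 1$. Cheeger--Colding volume convergence plus the Reifenberg/$\epsilon$-regularity theorem for non-collapsed limits then forces small balls around $\bfx_j$ to be GH-close to Euclidean balls. Using the $\mathbb{Q}$-smoothability (Definition~\ref{qsmoothing}) to pass to a finite quasi-\'etale cover on which an honest smoothing exists, I would view $\omega_t$ as the curvature of the trivial line bundle on $\cU_t$ equipped with the hermitian metric $e^{-u_t}$; since $\cU_t$ is strongly pseudoconvex, H\"ormander's $L^2$-estimate for $\dbar$ applies in the same way as in Lemma~\ref{L241}. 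Combining the resulting $L^2$-bounds with the gradient/sup estimates of Lemma~\ref{L242} and a case analysis on the iterated tangent cone at $\bfx$ (cf.\ Lemma~\ref{sinn-3}), one produces peak holomorphic sections realising a Donaldson--Sun--Liu--Szekelyhidi type holomorphic chart near $\bfx$. This chart would exhibit $\bfx$ as a smooth point of the analytic variety $\bfX$, contradicting the assumption that $\bfx$ is an isolated (non-smooth) log terminal singularity. Descending back through the quasi-\'etale cover by equivariance rules out tangent cones splitting $\mathbb{R}^{2n-1}$ or $\mathbb{R}^{2n-2}$ as well, exactly as in Lemma~\ref{sinn-3}--Lemma~\ref{dengap55}.

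The main obstacle, as in the global case, is the partial $C^0$ step: one must run the Donaldson--Sun construction in the \emph{local} strongly pseudoconvex setting, where the usual global compact line bundle framework is unavailable. The substitute is a H\"ormander $L^2$-estimate on $\cU_t$ using $e^{-u_t}$, together with cut-off functions against the singular set controlled as in Lemma~\ref{cutoff}, and a careful passage to the limit $t\to 0$ so that sections on the smooth nearby fibres descend to honest holomorphic functions near $\bfx$ on $\cX_0$. Ensuring the Galois equivariance survives this limiting procedure (so that the contradiction produced on the cover $\bfY$ descends to $\bfX$) is the most delicate point of the argument.
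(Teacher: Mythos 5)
Your overall strategy coincides with the paper's: extract a non-collapsed Cheeger--Colding/RCD limit from the uniform Ricci lower bound (\ref{locricbd}), diameter bound (\ref{locdiam}) and volume non-collapsing; use smooth convergence away from $\bfx$ to get $\cU_0\setminus\{\bfx\}\subset\cR(\cW)$; and run a Donaldson--Sun/Liu--Sz\'ekelyhidi partial $C^0$ argument with the trivial line bundle weighted by $e^{-u_t}$ to identify the limit and obtain the density gap by the contradiction scheme of Lemma \ref{dengap}. Two points where your write-up deviates from what actually works. First, the identification $\cW=\cU_0$: you invoke Gromov's almost-convexity trick, asserting that ``$\cU_0\setminus\{\bfx\}$ is almost convex in every $\cU_t$,'' but $\cU_0\setminus\{\bfx\}$ is not a subset of the nearby fibres $\cU_t$ (which are entirely smooth), and in any case almost convexity would only give you that the metric completion of $(\cU_0\setminus\{\bfx\},\omega_0)$ embeds into $\cW$ --- it does not exclude that several limit points lie over $\bfx$, i.e.\ it does not show the singular set is a single point. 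The paper instead uses the Schwarz-type bound (\ref{tschw77}) to produce a surjective Lipschitz map $\iota:\cW\to\cU_0$ and then proves $\iota$ is injective by separating any two points of $\cW$ with holomorphic functions built from the local $L^2$/partial $C^0$ machinery (as in Lemma \ref{1to1-51}); you have all the ingredients for this but should route the identification through $\iota$ rather than through convexity. Second, your detour through $\mathbb{Q}$-smoothability and the Galois quasi-\'etale cover is unnecessary here: Lemma \ref{rcdloc75} is stated in the setting of Section 7.1 where an honest smoothing $\pi:\cX\to\mathbb{D}$ with smooth fibres $\cX_t$, $t\neq 0$, is already assumed; the reduction of the $\mathbb{Q}$-smoothable case to this one is performed separately in Step 2 of the proof of Theorem \ref{locrcd7}, so the equivariance issue you flag does not arise in this lemma.
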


\begin{proof}  As before, we fix an open subset $V \subset\subset \cU$ with  $\bfx \in V$  and let $V_t = V\cap \cU_t$. By the Ricci lower bound (\ref{locricbd}), diameter bound (\ref{locdiam}) and the smooth convergence of $\omega_t$ to $\omega$ away from $\bfx$, there exists $C>0$ such that for  any $t\in \mathbb{D}^*$ and any   $p\in V_t$, 
$$\vol(B_{g_t}(p, 1), g_t) \geq C^{-1}. $$ 

Applying the Ricci lower bound  and diameter bound together, we can infer that $(\cU_t, \omega_t, \omega_t^n)$ converge in Gromov-Hausdorff topology to an open and bounded non-collapsed $\RCD(1+c^{-1}A, 2n)$ space $(\cW, d_\cW, \mu_\cW)$ by the Cheeger-Colding theory. The convergence is smooth on $\cU_0\setminus \{\bfx\}$ and so $\cU_0\setminus \{\bfx\} \subset \cR(\cW)$. 
By the estimate (\ref{tschw77}), the identity map from $\cU_0\setminus \{\bfx\} \subset \cW$ to itself in $\cU_0$ extends to a surjective Lipschitz map
\begin{equation}\label{lociota}
\iota: \cW \rightarrow \cU_0.
\end{equation}

Each $\omega_t$ can be viewed as K\"ahler metric in the class of a trivial line bundle, therefore we can apply the partial $C^0$-estimates in \cite{DS1, LS1} based on the geometric $L^2$-estimates in Lemma \ref{L242} and Lemma \ref{L241} as argued in Lemma \ref{1to1-51}.   By the same argument as in Lemma \ref{dengap},  we can prove that if $p\in \cW$ is a metric regular point, then $\iota(p)\neq \bf{x}$. Immediately, we have $\cR(\cW) = \cU_0 \setminus \{\bfx\}$.  Then we show that $\iota$ is injective. In particular, for any two distinct points $p, q \in \cW$, there exists a holomorphic function $\sigma$ on an open neighborhood of $\iota(p)$ and $\iota(q)$ such that $\iota^*\sigma$ extends to a holomorphic function in an open neighborhood of $p$ and $q$ with $\iota^*\sigma(p)\neq \iota^*\sigma(q)$. This implies the injectivity of $\iota$ and so $$\cW = \cU_0. $$

The volume density estimate (\ref{locvolden2}) follows from the same argument of Lemma \ref{dengap}. 
\end{proof}



\subsection{Proof of Theorem \ref{locrcd7}}  We will break the proof into two steps.

\medskip

\noindent{\it Step 1.}  We first assume that $(\bfX, \bfx)$ admits a smoothing in the sense of Definition \ref{smoothing}.  Let $\pi: \cX \rightarrow \mathbb{D}$ be the total space for the smoothing of $(\bfX, \bfx)$ with $\bfX = \cX_0= \pi^{-1}(0)$. We will follow the same notations as in the discussion in Section 7.1 such as $\cU$, $\cU_t$, $\Omega_\cX$, $\Omega_t$. 
Let $\alpha = \ric(\omega) + \omega$. Since $\alpha\geq 0$, there exists $\psi\in \PSH(\bfX)$ with $\alpha = \ddbar \psi$. Since $\cX$ and $\cX_0$ are algebraic and can be embedded in projective spaces, $\psi$ can be extended to $\tilde \psi \in \PSH(\cX)$ by the extension theorem of \cite{CG}  and we let $\tilde \psi_t = \tilde \psi|_{\cX_t}$. 

\medskip

\noindent{\it(1.1).} We will first assume that $\tilde \psi$ is smooth in $\cX$. Then $u$ (modulo a constant) satisfies the following complex Monge-Amp\`ere equation on $\cU_0$ 
$$(\ddbar u)^n = e^{u - \tilde \psi_0} \Omega_0. $$
We now extend $u|_{\partial \cU_0}$ to a smooth function $\tilde h$ in a neighborhood of $\partial \cU$ since $u$ is smooth away from $\bfx$. Let $\tilde h_t = \tilde h|_{\partial \cU_t}$. We can consider the following family of complex Monge-Amp\`ere equations
\begin{equation} \label{dpA10}
\left\{
\begin{array}{l}
(\ddbar u_t )^n = e^{ \lambda u_t - \tilde\psi_t } \Omega_t  , ~ {\rm in} ~ \cU_t,  \\
\\
u_t =\tilde h_t, ~{\rm on} ~\partial \cU_t,
\end{array} \right.
\end{equation}
Then the theorem can be proved by directly applying Lemma \ref{rcdloc75} and the other results in Section 7.1. 

\medskip

\noindent{\it (1.2).}  If $\tilde \psi$ is not smooth, there exists a sequence of smooth PSH functions $\tilde \psi_j$ that decreasingly converge to $\tilde \psi$ by \cite{CG}. Let $\tilde \psi_{j, t} = \tilde \psi_j |_{\cU_t}$. Since $e^{-\tilde \psi_{j, t} }  \leq e^{-\tilde \psi_t}$, $e^{-\tilde \psi_{j, t} } \frac{\Omega_t}{\theta_t^n}$ are uniformly bounded in $L^p(\cU_t, \theta_t^n)$ for some $p>1$ and for all $t\in \mathbb{D}^*$ and all $j$. The theorem can be proved by applying the result in (1.1) and  taking a diagonal sequence for $t\rightarrow 0$ and $j\rightarrow \infty$ since the bounds in (1.1) are uniform and independent of $j$ and $t$.

\medskip

\noindent{\it Step 2.} Suppose $(\bfX, \bfx)$ is $\mathbb{Q}$-smoothable. By shrinking $\bfX$ if necessary, there exists a Galois quasi-etale covering $\Psi:\tilde \bfX  \rightarrow \bfX$ and the pre-image $\tilde \bfx = \Psi^{-1}(\bfx)$ is a single point as the only fixed point of the Galois group action. By choosing a sufficiently small strongly pseudoconvex domain $U$ as an open neighborhood of $\bfx$, we can repeat Step 1 for $\tilde \omega = \Psi^*\omega$ on $\tilde U = \Psi^{-1} (U)$. The theorem still holds after taking the quotient by a finite isometric group action for the metric completion of $(\tilde U\setminus \{\tilde \bfx\}, \tilde \omega)$ whose metric singular set is the isolated point $\tilde \bfx$.  

\medskip

Combining the above, we have completed the proof of Theorem \ref{locrcd7}.



\section{Proof of Theorem \ref{mainthm2}}

In this section, we consider a family of normal K\"ahler varieties which is slightly more general than those satisfying (\ref{spres}) in Section 5 and Section 6.  

Let $X$ be an $n$-dimensional projective variety of log terminal singularities. Throughout this section, we will assume there exists a partial resolution of singularities 
$$\pi: Y \rightarrow X$$ 
for $X$ satisfying the following conditions.

\begin{enumerate}

\item[(B.1)] $Y$ has only isolated $\mathbb{Q}$-smoothable singularities.

\medskip

\item[(B.2)]  $K_Y = \pi^*K_X - \sum_j a_j E_j, ~a_j \in [0, 1)$, 
where $E_j$ is a prime divisor. Such assumptions imply that $X$ has log terminal singularities. 

\end{enumerate}

There always exists a partial resolution satisfying condition (B.2) because such a resolution corresponds to a terminalization of $X$ which always exists due to the work of \cite{R80}. The condition (B.1) is also satisfied if in addition, $\dim_{\mathbb{C}} X= 3$ due to the classification of terminal singularities in dimension $3$ by \cite{R83}. 


\medskip

\subsection{A generalization of Theorem \ref{thm3spe}}

The following is the main result of this section.

\begin{theorem} \label{thm4spe} Suppose $X$ is an $n$-dimensional projective variety of log terminal singularities satisfying  (B.1) and (B.2). Let $\omega\in \nu(X, \theta_X, n, A, p, K)$ be  twisted K\"ahler-Einstein metric $\omega$  on $X$ satisfying
\begin{equation}\label{tkespe88}
\ric(\omega) = - \omega + \alpha
\end{equation}
for a smooth non-negative closed $(1,1)$-form $\alpha$. Then the metric measure space $(\hat X, d_\omega, \mu_\omega)$ induced by $(X^\circ, \omega)$ is a compact non-collapsed $\RCD(-1, 2n)$ space satisfying the following properties. 
\begin{enumerate}

\item  $(\hat X, d_\omega)$ is homeomorphic to the original variety $X$.

\smallskip

\item   $\mathcal{R}(\hat X)= X^\circ$.

\smallskip

\item  There exists $c>0$ such that $$\omega \geq c \theta_X. $$
%

\item There exists $\epsilon=\epsilon(n)>0$ such that for any $p \in \cS(\hat X)$, 
$$ 
\nu_{\hat X} (p)< 1 -\epsilon.
$$

\end{enumerate}

\end{theorem}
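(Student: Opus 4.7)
The plan is to mimic the proof of Theorem \ref{thm3spe} step for step, but to run the approximation on the partial resolution $\pi: Y \to X$ of assumption (B.2) in place of a full smooth resolution. The only essentially new feature is the need to handle the finitely many isolated singular points of $Y$, and these are covered by the local RCD and volume-density-gap result Theorem \ref{locrcd7}, which applies precisely because of assumption (B.1).

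First I would set up the approximating Monge-Amp\`ere equations on $Y$ in direct analogy with (\ref{ma3}) and (\ref{ma4}). Choose a smooth K\"ahler reference $\theta_Y$ on $Y$ coming from a projective embedding and a smooth adapted volume form $\Omega_Y$ whose weights along the exceptional divisors encode the discrepancies $a_i$; then solve
\[
(\pi^*\omega_X + \delta_k\theta_Y + \ddbar\varphi_{k,j})^n = e^{\varphi_{k,j}-\pi^*\psi_j}\prod_i (|\sigma_i|^2_{h_i}+\epsilon_j)^{-a_i}\Omega_Y,
\]
where $\psi_j$ is the smooth regularization from Lemma \ref{pshapp} of the potential of $\alpha$, and $\delta_k,\epsilon_j \to 0$. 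The uniform $L^\infty$, diameter, volume, and Ricci lower bound estimates of Lemmas \ref{linf55}--\ref{schw3} only use maximum-principle arguments together with the $L^\infty$ Monge-Amp\`ere theory on normal $\mathbb{Q}$-Gorenstein varieties, so they carry over essentially unchanged; in particular, one retains the uniform Schwarz bound $\omega_{k,j} \geq c\,\pi^*\theta_X$ away from $E'$ and the isolated singularities of $Y$.

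Next, for fixed $k$ I would extract a Gromov-Hausdorff limit $(Y_k,d_k,\mu_k)$ of $(Y,\omega_{k,j},\omega_{k,j}^n)$ as $j \to \infty$. Away from $E'$ and the isolated singularities $\{\bfy_1,\dots,\bfy_N\}$ of $Y$, the higher-order estimates of Lemma \ref{highreg1} give smooth convergence, and at points of $E'$ the tangent-cone analysis of Lemmas \ref{angl}--\ref{lem38} still applies verbatim. At each $\bfy_\ell$ I would invoke Theorem \ref{locrcd7} on a small strongly pseudoconvex neighborhood of $\bfy_\ell$: the restrictions of $\omega_{k,j}$ have uniform $L^p$ density, uniform boundary data inherited from the already-controlled smooth interior convergence, and uniform twisted Ricci lower bound, so Theorem \ref{locrcd7} produces a local non-collapsed RCD model at $\bfy_\ell$ homeomorphic to a neighborhood of $\bfy_\ell$ in $Y$, with the universal density gap $\nu(\bfy_\ell) < 1-\epsilon(n)$. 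Gluing these local models to the smooth region via almost convexity of $Y\setminus (E'\cup\{\bfy_1,\dots,\bfy_N\})$ (Gromov's trick, as in Lemma \ref{ykspace51}), one obtains that $(Y_k,d_k,\mu_k)$ is a non-collapsed $\RCD(-1,2n)$ space homeomorphic to $Y$, with regular set equal to $Y\setminus (E'\cup\{\bfy_1,\dots,\bfy_N\})$.

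Finally I would send $k\to\infty$, following Section~5.3 directly. The uniform bounds yield a limit non-collapsed RCD space $(Y_\infty,d_\infty,\mu_\infty)$; the surviving Schwarz bound $\omega_k \geq c\,\pi^*\theta_X$ descends to identify $Y_\infty$ with $\hat X$ via a surjective Lipschitz map $\iota_\infty : Y_\infty \to X$. The global partial $C^0$-arguments of Lemmas \ref{lsana}, \ref{sinn-3}, \ref{dengap55}, built on the geometric $L^2$-estimates of Lemmas \ref{L241} and \ref{L242}, then upgrade $\iota_\infty$ to a homeomorphism and yield $\cR(\hat X) = X^\circ$ together with the universal volume-density gap at every singular point of $\hat X$. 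The main obstacle is to make the merge at step three precise: one must check that the Dirichlet local model at each $\bfy_\ell$ produced by Theorem \ref{locrcd7} matches the global Gromov-Hausdorff limit coming from the $\varphi_{k,j}$, i.e.\ that the boundary data and volume-density data naturally inherited from the global solutions satisfy the hypotheses of Theorem \ref{locrcd7} with constants uniform in $k,j$, and that the resulting local geometry glues correctly to the smooth limit on $Y \setminus (E'\cup\{\bfy_1,\dots,\bfy_N\})$. Once this compatibility is in place, the remainder of the argument is a structural repetition of Sections~5 and~6.
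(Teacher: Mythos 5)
Your proposal is correct and follows essentially the same route as the paper: run the Section 5--6 approximation scheme on the partial resolution, with the single new ingredient being the isolated $\mathbb{Q}$-smoothable singularities of $Y$, which the paper also handles by observing that $\phi_k+\varphi_{k,j}$ solves the local Dirichlet problem on a small strongly pseudoconvex neighborhood of each $\mathbf{q}\in\mathbf{Q}$ so that Theorem \ref{locrcd7} applies with uniform constants. The compatibility issue you flag at the end is exactly the point the paper's "only new" lemma addresses, and is resolved precisely because the local Dirichlet solution coincides with the restriction of the global one by uniqueness.
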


%
%
%
%
%
%






%

%
%
%
%



We will begin our proof for Theorem \ref{thm4spe}. We will first follow the set-up in Section 5.1 and will use the same notations as in Section 5.1. The only difference is that $Y$ is no longer necessarily smooth, instead it has finiately many isolated $\mathbb{Q}$-smtoothable singularities. We let $$\mathbf{Q} = \{ \mathbf{q}_1, ..., \mathbf{q}_N\} $$
 be the set of such isolated singularities and let 
 $$Y^\circ = Y \setminus \mathbf{Q}.$$
 
As in Section 5.1, we have the following equation for $\omega=\omega_X + \ddbar \varphi$
\begin{equation}\label{ma88}
(\omega_X + \ddbar \varphi)^n = e^{\varphi } \Omega_X. 
\end{equation}
as compared to (\ref{ma1}) with
$$\alpha = \ric(\Omega_X) + \omega_X.$$

Similarly we consider consider the following families of approximations for equation (\ref{ma88}). 
\begin{equation}\label{ma83}
(\omega_X + \delta_k \theta_Y + \ddbar \varphi_k )^n = e^{ \varphi_k} \prod_{i=1}^I |\sigma_i|_{h_i}  ^{-2a_i}\Omega_Y, 
\end{equation}
and  
\begin{equation}\label{ma84}
(\omega_X + \delta_k  \theta_Y + \ddbar \varphi_{k,  j} )^n = e^{ \varphi_{k,  j}} \prod_{i=1}^I \left( |\sigma_i|^2_{h_i} + \epsilon_j \right)^{-a_i}\Omega_Y. 
\end{equation}
with
$$\delta_k, ~\epsilon_j \in (0,1), ~ \lim_{k\rightarrow \infty}\delta_k = \lim_{j\rightarrow \infty} \epsilon_j = 0 $$
and
$$
\omega_k = \omega_X + \delta_k \theta_Y + \ddbar \varphi_k , ~\omega_{k,j} = \omega_X + \delta_k \theta_Y + \ddbar \varphi_{k,j}. 
$$

By almost identical argument as in Section 5.1 and 5.2, we have the following lemma. 

\begin{lemma}  There exists $C>0$ such that for all $j, k>0$,  
\begin{enumerate}

\item $\|\varphi_{k, j} \|_{L^\infty(Y)} + \|\varphi_k\|_{L^\infty(Y)}\leq C, $

\smallskip

\item $\omega_k \geq c \theta_X,  $

\smallskip
\item $\Diam \overline{(Y^\circ, \omega_{k, j})} \leq C$ and $ \Diam \overline{(Y^\circ\setminus E', \omega_k) } \leq C, $
\smallskip
\item $  \ric(\omega_k) \geq - \omega_k$  on  $Y^\circ$, 

\smallskip

\item  $\ric(\omega_{k, j} ) \geq - Ce^{C\delta_k^{-1}} \omega_{k, j}$ on  $Y^\circ\setminus E'$.

\end{enumerate}

\end{lemma}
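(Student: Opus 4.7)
The plan is to replay the proofs of Lemma \ref{linf55}, Lemma \ref{dia55}, the Ricci inequalities (\ref{riclb51})--(\ref{riclb52}), and the Schwarz-type estimate of Lemma \ref{schw3} from Sections 5.1--5.2, while accounting for the fact that $Y$ is no longer smooth but carries a finite set $\mathbf{Q}=\{\mathbf{q}_1,\dots,\mathbf{q}_N\}$ of isolated $\mathbb{Q}$-smoothable singular points. On the open locus $Y^\circ=Y\setminus\mathbf{Q}$ every reference form ($\theta_Y$, $\pi^*\omega_X$, $\pi^*\theta_X$) remains smooth and positive; $\omega_{k,j}$ is smooth on $Y^\circ$ by interior elliptic regularity applied to equation (\ref{ma84}) (thanks to the regularization $\epsilon_j>0$), and $\omega_k$ is smooth on $Y^\circ\setminus E'$. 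Thus all pointwise curvature identities from Section 5.1 hold verbatim on these open loci, so item (4) is immediate from $\ric(\omega_k)=-\omega_k+\pi^*\alpha+\delta_k\theta_Y\ge -\omega_k$ on $Y^\circ$.

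For item (1), the right-hand sides of (\ref{ma83}) and (\ref{ma84}) are uniformly bounded in $L^p(Y,\Omega_Y)$ for some $p>1$ because of the klt bound $0\le a_i<1$, and $Y$ is itself a normal projective klt variety (being a partial terminalization of $X$). Hence the $L^\infty$-estimate of \cite{Ko1, EGZ, GPT} applies directly on $Y$, with the uniform upper bound for $\int_Y\varphi_{k,j}\,\Omega_Y$ following from Jensen's inequality as in Lemma \ref{linf55}. Item (3) is an immediate consequence: applying the diameter estimate of \cite{GPSS2} to the metric measure spaces $(Y,\omega_{k,j},\omega_{k,j}^n)$, respectively $(Y^\circ\setminus E',\omega_k,\omega_k^n)$, yields the uniform diameter bounds.

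The nontrivial content is items (2) and (5), which are maximum-principle arguments previously carried out on a smooth $Y$ and now must be performed in the presence of $\mathbf{Q}$. My strategy is to introduce an auxiliary barrier section $\tau\in H^0(Y,L_\tau\otimes\mathcal{I}_{\mathbf{Q}})$ for some sufficiently ample line bundle $L_\tau$; such $\tau$ exists by Serre vanishing since $\mathbf{Q}$ is a finite set of points, and we equip $L_\tau$ with a smooth positively curved hermitian metric $h_\tau$. For (5) I would apply the Chern--Lu argument to the modified test function
\[
H_{k,j,\varepsilon}=\log\mathrm{tr}_{\omega_{k,j}}(\theta_Y)-2B\delta_k^{-1}\varphi_{k,j}+\varepsilon\log|\tau|^2_{h_\tau},
\]
which tends to $-\infty$ along $\mathbf{Q}$, forcing its maximum onto $Y^\circ$ where $\omega_{k,j}$ is smooth. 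A Bochner computation identical to the one in Section 5.1 yields $\Delta_{\omega_{k,j}}H_{k,j,\varepsilon}\ge B\,\mathrm{tr}_{\omega_{k,j}}(\theta_Y)-CB$ once $B$ is fixed large enough to dominate the (uniform) negative contribution from $\ric(h_\tau)$, and sending $\varepsilon\to 0^+$ gives $\mathrm{tr}_{\omega_{k,j}}(\theta_Y)\le Ce^{C\delta_k^{-1}}$, proving (5). Item (2) is handled identically by applying Lemma \ref{schw3} to
\[
H_k=\log\mathrm{tr}_{\omega_k}(\pi^*\theta_X)-A\varphi_k+\varepsilon\bigl(\log|\sigma_D|^2_{h_D}+\log|\tau|^2_{h_\tau}\bigr),
\]
which now tends to $-\infty$ along $E'\cup\mathbf{Q}$, so that the computation of Lemma \ref{schw3} applies unchanged on $Y^\circ\setminus E'$ and produces $\omega_k\ge c\pi^*\theta_X$ with $c>0$ independent of $k$.

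The main obstacle--and the reason for the hedge ``almost identical''--is precisely the construction and control of the barrier $\tau$: one must verify that the negative curvature contribution of $\ric(h_\tau)$ can be absorbed uniformly in $(k,j,\varepsilon)$ by the leading positive term in the Bochner inequality. This reduces to choosing $B$ (and the ample power in $L_\tau$) depending only on the fixed geometric data $(Y,\theta_Y,h_\tau,\pi^*\theta_X)$, and is a routine verification once the set-up is in place.
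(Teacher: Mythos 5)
Your proposal follows the paper's route exactly: the paper offers no separate proof of this lemma, stating only that it holds ``by almost identical argument as in Section 5.1 and 5.2,'' and your item-by-item replay of Lemma \ref{linf55}, Lemma \ref{dia55}, the Ricci computations and the Schwarz argument of Lemma \ref{schw3}, with an additional barrier $\varepsilon\log|\tau|^2_{h_\tau}$ vanishing on $\mathbf{Q}$ to push the maxima of the test functions off the new isolated singularities, is the natural implementation of that remark. The one step you should not wave away is the claim that $H_{k,j,\varepsilon}$ ``tends to $-\infty$ along $\mathbf{Q}$'': the points of $\mathbf{Q}$ are genuine singular points of $Y$, so interior regularity on $Y^\circ$ gives no a priori control of $\mathrm{tr}_{\omega_{k,j}}(\theta_Y)$ there, and one must first establish a qualitative (possibly $(k,j)$-dependent) growth bound of the form $\mathrm{tr}_{\omega_{k,j}}(\theta_Y)=o(|\tau|^{-2\varepsilon})$ near $\mathbf{Q}$ --- for instance by running the same Chern--Lu computation on a resolution of the points of $\mathbf{Q}$ with a barrier supported on the exceptional set, or by invoking the local smoothings of Section 7 --- before the maximum of the test function is known to lie in the smooth locus where the pointwise Bochner inequality applies; the identical issue, handled the same way, already occurs at $E'$ in the paper's own Lemma \ref{schw3}.
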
 

We have the following lemma similar to Lemma \ref{highreg1} and Lemma \ref{highreg2}.
\begin{lemma}   For any $\cK \subset \subset Y^\circ \setminus E'$, $m>0$ and $k>0$, there exist $C=C_{\cK, m, k}>0$ such that for all $j>0$, 
$$\|\varphi_{k,  j} \|_{C^m(\cK)} \leq C.$$  
For any $\cK' \subset \subset Y^\circ \setminus E$ and $m>0$, there exist $C'=C'_{\cK, m}>0$ such that for all $j>0$, 
$$\|\varphi_{k, j}\|_{C^m(\cK)} \leq C' , ~\|\varphi_k \|_{C^m(\cK)} \leq C' .$$
\end{lemma}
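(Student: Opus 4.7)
The plan is to adapt the proofs of Lemma \ref{highreg1} and Lemma \ref{highreg2} to the present partial-resolution setting; the only new feature is that $Y$ itself now carries the finite set $\bfQ$ of isolated singularities. Since $\cK \subset\subset Y^\circ \setminus E'$ (resp.\ $\cK' \subset\subset Y^\circ \setminus E$) stays a positive distance away from $\bfQ$ as well as from $E'$ (resp.\ $E$), each such set sits inside an open smooth subset of $Y$ on which $\theta_Y$, $\omega_X$ and the right-hand sides of (\ref{ma83})--(\ref{ma84}) are genuinely smooth. On that region the Monge--Amp\`ere equations are classical elliptic PDE, so the whole problem reduces to producing a two-sided bound $C^{-1}\theta_Y \leq \omega_{k,j}, \omega_k \leq C\theta_Y$ on a slightly larger relatively compact subset; Evans--Krylov then gives $C^{2,\alpha}$ control, and linear Schauder bootstrapping yields $C^m$ for every $m$, exactly as in the smooth case.

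For the first assertion ($k$ fixed, uniform in $j$), one runs the Chern--Lu / maximum principle argument for $H = \log \tr_{\omega_{k,j}}(\theta_Y) - 2B\delta_k^{-1}\varphi_{k,j}$, arriving at the analogue of (\ref{metlb51}), namely $\omega_{k,j} \geq c_k \theta_Y$ on $Y^\circ \setminus E'$. Feeding this lower bound into (\ref{ma84}), together with the uniform $L^\infty$ bound on $\varphi_{k,j}$ recorded above and the fact that $|\sigma_i|_{h_i}$ is bounded away from zero on $\cK$, produces the matching upper bound on $\cK$.

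For the second assertion (uniform in both $j$ and $k$), the same scheme is applied but without the $\delta_k^{-1}$ factor, using
\[ H_\varepsilon \;=\; \log \tr_{\omega_{k,j}}(\theta_Y) \;-\; A\varphi_{k,j} \;+\; \varepsilon\log |\sigma_D|^2_{h_D} \;+\; \varepsilon\,\psi_{\bfQ}, \]
where $D$ is an effective divisor whose support contains $E$ and $\psi_{\bfQ}$ is a globally $A'$-PSH function on $Y$ that is smooth on $Y^\circ$ and tends to $-\infty$ at every point of $\bfQ$. The standard computation using the uniform Ricci lower bound on $\omega_{k,j}$ established above and the bounded holomorphic bisectional curvature of $\theta_Y$ gives $\Delta_{\omega_{k,j}} H_\varepsilon \geq c\,\tr_{\omega_{k,j}}(\theta_Y) - CA$ for a sufficiently large $A$ independent of $k$, $j$ and $\varepsilon$; letting $\varepsilon \to 0$ then yields a uniform upper bound for $\tr_{\omega_{k,j}}(\theta_Y)$ on $\cK'$, combined with the Monge--Amp\`ere equation (whose right-hand side is uniformly bounded and bounded below on $\cK'$) this produces the desired two-sided metric bound. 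The statement for $\omega_k$ follows either by an identical argument or by passing $j\to \infty$ in the smooth convergence on compact subsets of $Y^\circ \setminus E'$.

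The main obstacle is precisely the presence of $\bfQ$: without the extra barrier $\psi_{\bfQ}$ the usual maximum-principle argument of Lemma \ref{schw3} would not localize inside the smooth locus $Y^\circ$, because $Y\setminus D$ is not compact in $Y^\circ$ and $\omega_{k,j}$ has no a priori control near $\bfQ$. The remedy is to take $\psi_{\bfQ}$ of the form $\log\bigl(\sum_\ell |g_\ell|^2_h\bigr)$ for a finite collection of sections of an ample line bundle whose common zero locus meets a small neighborhood of $\bfQ$ only at the points $\{\bfq_i\}$; one then has $\psi_{\bfQ}$ globally $A'$-PSH, smooth on $Y^\circ$, and $\to -\infty$ at $\bfQ$, which forces the maximum of $H_\varepsilon$ into $Y^\circ \setminus D$. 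Once this is in place the argument is a verbatim transcription of the proofs of Lemma \ref{schw3} and Lemma \ref{highreg2}, and the remaining Schauder/Evans--Krylov bootstrap is routine.
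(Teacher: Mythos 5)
Your overall reduction --- obtain a two-sided bound $C^{-1}\theta_Y\leq\omega_{k,j}\leq C\theta_Y$ on a neighborhood of $\cK$ inside the smooth locus and then run Evans--Krylov and Schauder --- is the same as the paper's, and you correctly identify the finite singular set $\bfQ\subset Y$ as the new difficulty. However, there are two problems with the way you produce the metric bounds. First, the differential inequality you claim in the uniform-in-$k$ case, $\Delta_{\omega_{k,j}}H_\varepsilon\geq c\,\tr_{\omega_{k,j}}(\theta_Y)-CA$ with constants independent of $k$, cannot hold: combined with the maximum principle it would yield $\omega_{k,j}\geq c\,\theta_Y$ globally and uniformly in $k$, which is impossible since $[\omega_{k,j}]=\pi^*[\omega_X]+\delta_k[\theta_Y]$ has intersection $\delta_k[\theta_Y]\cdot C\to 0$ with every $\pi$-contracted curve $C\subset E$. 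The positive trace term you gain from $-A\Delta_{\omega_{k,j}}\varphi_{k,j}$ is $A\,\tr_{\omega_{k,j}}(\pi^*\omega_X+\delta_k\theta_Y)$, and $\pi^*\omega_X$ does not dominate $\theta_Y$ along $E$. As in Lemma \ref{schw3}, the trace must be taken against $\pi^*\theta_X$; one then gets $\omega_{k,j}\geq c\,\pi^*\theta_X$ uniformly, which is comparable to $\theta_Y$ only on $\cK'\subset\subset Y^\circ\setminus E$ --- sufficient for the lemma, but your inequality as written is false.

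Second, and more seriously, the barrier $\varepsilon\psi_{\bfQ}$ does not by itself legitimize the maximum principle on the singular $Y$. To conclude that the supremum of $H_\varepsilon$ is attained in $Y^\circ\setminus D$ you must rule out that it is approached only along a sequence tending to $\bfQ$; since $\psi_{\bfQ}=\log\bigl(\sum_\ell|g_\ell|^2\bigr)$ tends to $-\infty$ only logarithmically in the distance to $\bfQ$, while you have no a priori control on the blow-up rate of $\log\tr_{\omega_{k,j}}(\theta_Y)$ at the singular points of $Y$ (the potentials $\varphi_{k,j}$ are merely bounded there), the sum can still tend to $+\infty$ at $\bfQ$. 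Purely interior $C^2$ estimates for complex Monge--Amp\`ere equations from an $L^\infty$ potential bound are not available, so this cannot be dismissed as a local regularity fact. This is precisely why the paper routes the analysis near $\bfQ$ through the $\mathbb{Q}$-smoothings: each $\varphi_{k,j}$ is reinterpreted near $\bfq$ as the solution of the Dirichlet problem (\ref{dp88}) on a strongly pseudoconvex neighborhood $U_{\bfq}$, and the family estimates of Section 7.1 (in particular (\ref{tschw77}), proved on the smooth nearby fibers and passed to the limit) supply the missing two-sided control of $\omega_{k,j}$ near $\bfQ$. With that input, your global Chern--Lu argument (corrected as above) and the Evans--Krylov/Schauder bootstrap go through.
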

In fact, $\varphi_{k, j}$ converges smoothly to $\varphi_k$ on $Y^\circ\setminus E'$ as $j\rightarrow \infty$ and $\varphi_k$ converge  smoothly to $\varphi$ on $Y^\circ\setminus E$ as $k\rightarrow \infty$.

Let $$(Y_{k, j}, d_{k, j}, \mu_{k, j})= \overline{ (Y^\circ, \omega_{k,j}, \omega_{k, j}^n)}.$$   The following lemma is the only new result. 
 
 \begin{lemma} There exists $C>0$ such that for all $k, j>0$, $(Y_{k, j}, d_{k, j}, \mu_{k, j})$ is a non-collapsed $RCD\left(-Ce^{\delta_k^{-1}}, 2n \right)$ space satisfying the following conditions.
 
 \begin{enumerate}
 
 \item $Y_{k, j}$ is homeomorphic to $Y$ for all $k, j>0$.
 
 \item $\cS(Y_{k, j}) = \mathbf{Q}$ and there exists $\epsilon = \epsilon(n)>0$ such that for any point $\mathbf{q} \in \mathbf{Q}$
 \begin{equation} \label{singvolq}
 \nu_{Y_{k, j}}(\mathbf{q}) < 1 - \epsilon. 
 \end{equation}

 \end{enumerate}

 \end{lemma}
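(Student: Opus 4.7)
The strategy is to combine the local RCD structure at each isolated $\mathbb{Q}$-smoothable singularity $\mathbf{q}\in \mathbf{Q}$ (supplied by Theorem \ref{locrcd7}) with the smoothness of $\omega_{k,j}$ on $Y^\circ = Y\setminus \mathbf{Q}$, then promote this ``almost smooth'' picture to a global RCD structure via Honda's criterion (Lemma \ref{honda3}). Throughout, $k$ and $j$ are fixed, and all constants are allowed to depend on them (except for the universal $\epsilon(n)$ in the volume gap).

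\emph{Step 1: Local analysis at each $\mathbf{q}\in \mathbf{Q}$.} First I observe that the right-hand side of \eqref{ma84} is a smooth, strictly positive volume form on $Y$ away from $E$, and more importantly, is in $L^p$ for some $p>1$ with respect to any adapted volume measure in a small neighborhood of each $\mathbf{q}$, because $Y$ has log terminal singularities and $a_i\in [0,1)$. Consequently $\omega_{k,j}$ is smooth on $Y^\circ$ by standard local regularity for MA equations on strongly pseudoconvex domains (after writing $\omega_{k,j} = \ddbar u$ locally), and the Ricci lower bound $\ric(\omega_{k,j})\geq -C_k \omega_{k,j}$ holds on $Y^\circ$ with $C_k = Ce^{C\delta_k^{-1}}$. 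After rescaling $\omega_{k,j}$ by $C_k$ (which merely shifts the RCD constant), the local hypotheses of Theorem \ref{locrcd7} are satisfied at each $\mathbf{q}\in \mathbf{Q}$: the germ $(Y,\mathbf{q})$ is isolated $\mathbb{Q}$-smoothable log terminal by assumption (B.1); the metric is smooth away from $\mathbf{q}$; the volume form is $L^p$ near $\mathbf{q}$; and Ricci is bounded below. Applying Theorem \ref{locrcd7} yields, for each $\mathbf{q}$, a small pseudoconvex neighborhood $\mathcal{U}_\mathbf{q}\subset Y$ whose metric completion $(\widehat{\mathcal{U}}_\mathbf{q}, d_{\omega_{k,j}}, \mu_{\omega_{k,j}})$ is homeomorphic to $\mathcal{U}_\mathbf{q}$, has regular set $\mathcal{U}_\mathbf{q}\setminus\{\mathbf{q}\}$, and satisfies the volume gap
\begin{equation*}
\nu_{\widehat{\mathcal{U}}_\mathbf{q}}(\mathbf{q}) < 1-\epsilon(n).
\end{equation*}

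\emph{Step 2: Global almost smooth structure.} I then glue the local completions near each $\mathbf{q}\in \mathbf{Q}$ to the smooth manifold $(Y^\circ,\omega_{k,j})$ via the identity map on overlapping shells $\mathcal{U}_\mathbf{q}\setminus\{\mathbf{q}\}$. Since $\mathbf{Q}$ is a finite set and each completion is local, this produces a compact metric measure space $(Y_{k,j}, d_{k,j},\mu_{k,j})$ that is homeomorphic to $Y$, with $Y^\circ$ identified as the regular (smooth Riemannian) locus and $\mathbf{Q}$ the metric singular set. Because each $\mathbf{q}$ has full-measure zero neighborhood and the cut-off construction of Lemma \ref{cutoff} applies to the finite subvariety $\mathbf{Q}$, $(Y_{k,j},d_{k,j},\mu_{k,j})$ is a $2n$-dimensional almost smooth compact metric measure space in the sense of the definition in Section 3, associated to $Y^\circ$.

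\emph{Step 3: RCD property via Honda's criterion.} I now verify the four conditions of Lemma \ref{honda2}/Lemma \ref{honda3} for $(Y_{k,j},d_{k,j},\mu_{k,j})$. Condition (4), the Ricci lower bound $\ric(g_{k,j})\geq -C_k g_{k,j}$ on $Y^\circ$, holds by construction. Conditions (1) and (2) (Sobolev-to-Lipschitz and $L^2$-strong compactness) follow from the spectral theory of \cite{GPSS2}, applicable because $\omega_{k,j}\in \nu(Y,\theta_Y,n,A,p,K')$ for some uniform $K'$ (the volume form is in $L^p$). For condition (3) (eigenfunctions are Lipschitz), I argue locally: away from $\mathbf{Q}$ Lipschitz regularity is automatic from the smoothness and Ricci bound; at each $\mathbf{q}$ the local completion $\widehat{\mathcal{U}}_\mathbf{q}$ is already itself an RCD space from Theorem \ref{locrcd7}, so eigenfunctions of the global Laplacian restrict to $W^{1,2}\cap L^\infty$ functions on $\widehat{\mathcal{U}}_\mathbf{q}$ satisfying an elliptic equation, and the RCD Bakry-\'Emery gradient estimate on $\widehat{\mathcal{U}}_\mathbf{q}$ gives Lipschitz regularity in a neighborhood of $\mathbf{q}$. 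Combining, every eigenfunction is globally Lipschitz, and Lemma \ref{honda3} upgrades $(Y_{k,j},d_{k,j},\mu_{k,j})$ to a non-collapsed $\RCD(-Ce^{\delta_k^{-1}}, 2n)$ space.

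\emph{Main obstacle.} The hardest point is ensuring Lipschitz regularity of eigenfunctions at the points of $\mathbf{Q}$, since the metric measure space near $\mathbf{q}$ is not yet known to satisfy a globally synthetic Ricci lower bound at this stage — it is that precisely what we are trying to prove. The resolution is to use Theorem \ref{locrcd7} as a black box to produce an RCD ball around $\mathbf{q}$ first, and then apply Honda's characterization only to deduce the global RCD structure. Once this is done, the volume gap \eqref{singvolq} is immediate from the local volume gap in Theorem \ref{locrcd7}, and $\cS(Y_{k,j})=\mathbf{Q}$ follows because $Y^\circ$ consists of metric regular points (smooth manifold points with tangent cone $\mathbb{C}^n$).
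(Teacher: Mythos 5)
Your proposal is correct and follows essentially the same route as the paper: localize at each isolated singularity $\bfq\in\bfQ$, invoke Theorem \ref{locrcd7} (via the Dirichlet-problem framework of Section 7, since $\phi_k+\varphi_{k,j}$ solves the local Dirichlet problem with smooth boundary data) to obtain the local RCD structure and the volume density gap, and then combine with the smoothness of $\omega_{k,j}$ on $Y^\circ$ to conclude globally. Your Step 3, which makes the globalization explicit through Honda's criterion (Lemma \ref{honda3}) and local gradient estimates for eigenfunctions near $\bfQ$, is a reasonable and consistent filling-in of the paper's terse closing remark that ``the lemma follows since $\omega$ is smooth away from $\bfQ$.''
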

 
\begin{proof} For each fixed pair $k, j>0$, $\varphi_{k, j}$ is smooth on $Y^\circ$. For each $\bfq\in \bfQ$, we can choose a sufficiently small strongly pseudoconvex domain $U_\bfq$ containing $\bfq$. By choosing $U_\bfq$ sufficiently small, we can assume that $\omega_X+ \delta_k \theta_Y = \ddbar \phi_k$ for some smooth $\phi_k$.  We can consider the following family of Dirichlet problems
\begin{equation} \label{dp88}
\left\{
\begin{array}{l}
(\ddbar u_{k, j} )^n = e^{ u_{k, j} - \phi_k} \prod_{i=1}^I \left( |\sigma_i|^2_{h_i} + \epsilon_j \right)^{-a_i}\Omega_Y  , ~ {\rm in} ~U_\bfq,  \\
\\
u_{k, j} =\phi_k+\varphi_{k, j}, ~{\rm on} ~\partial U_\bfq,
\end{array} \right.
\end{equation}%
Obviously, $u_{k, j}|_{\partial U_\bfq} \in C^\infty(\partial U_\bfq)$ and $u_{k, j}= \phi_k + \varphi_{k, j}$ is the unique solution of (\ref{dp88}) with 
$$\omega_{k, j} = \ddbar u_{k, j}, ~\ric(\omega_{k, j}) \geq  - Ce^{C\delta_k^{-1}}\omega_{k, j}$$ in $U_\bfq$. Since $\bfq$ is $\mathbb{Q}$-smoothable, we can apply Theorem \ref{locrcd7} and  $(U_\bfq, \omega_{k, j})$ is a local non-collapsed $\RCD(-Ce^{\delta_k^{-1}}, 2n)$ space. Furthermore, the volume density of $\bfq$ is uniformly bounded above away from $1$. 
The lemma follows since $\omega$ is smooth away from $\bfQ$.
\end{proof}

In fact, the RCD space $(Y_{k, j}, d_{k, j}, \mu_{k, j})$ is an almost smooth metric measure space associated with the open smooth subset $Y^\circ\setminus E'$. 
We will now follow similar arguments in Section 5.3 that will be applied to  $(Y_{k, j}, d_{k, j}, \mu_{k, j})$ for fixed $k>0$ and $j=1, 2, ...$   

\begin{lemma}  \label{ykspace81} For each fixed $k>0$, $(Y_{k, j} , d_{k, j}, \mu_{k, j})$ converge in Gromov-Hausdorff sense to a compact $\RCD(-1, 2n)$ space $( Y_k,  d_k, \mu_k)$ as $j \rightarrow \infty$ satisfying the following. 

\begin{enumerate} 
 
 \item $(Y_k, d_k) = \overline{(Y^\circ\setminus E', \omega_k)} = Y. $
 
 \smallskip
 
 \item There exists $\epsilon=\epsilon(n)>0$ such that for any $\bfq \in \bfQ$, $\nu_{Y_k}(p) < 1 - \epsilon. $
 
 \smallskip
 
 \item For any $p\in E'$, $\nu_{Y_k}(p) \leq 1- \min_{i\in I, a_i>0}  a_i $. 
 
 \end{enumerate}
 
 \end{lemma}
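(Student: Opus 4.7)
The plan is to closely parallel the proof of Lemma \ref{ykspace51}, modifying it to account for the isolated $\mathbb{Q}$-smoothable singularities in $\mathbf{Q}\subset Y$. First I would fix $k>0$ and extract a Gromov--Hausdorff limit $(Y_k,d_k,\mu_k)$ from the sequence $(Y_{k,j},d_{k,j},\mu_{k,j})$, using the uniform (in $j$) Ricci lower bound $-Ce^{C\delta_k^{-1}}$, the uniform diameter bound, and the uniform non-collapsing volume bound. The smooth convergence $\varphi_{k,j}\to\varphi_k$ on compact subsets of $Y^\circ\setminus E'$ ensures $Y^\circ\setminus E'\subset \mathcal R(Y_k)$ and that the metric $\omega_k$ coincides with $d_k$ there. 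Since $E'$ is an analytic subvariety of Hausdorff codimension at least two with respect to each $\omega_{k,j}$, Gromov's almost-convexity trick (cf.\ Lemma \ref{ykspace51} and \cite{STZ}) shows $Y^\circ\setminus E'$ is uniformly almost convex in $(Y,\omega_{k,j})$, hence $(Y_k,d_k)=\overline{(Y^\circ\setminus E',\omega_k)}$ and the identity map extends to a surjective Lipschitz map $\iota_k:Y_k\to Y$.

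Next I would show $\iota_k$ is a homeomorphism by handling the singular loci $\mathbf Q$ and $E'$ separately. Near each $\mathbf q\in\mathbf Q$, I pick a strongly pseudoconvex neighborhood $U_\mathbf{q}$ on which $(U_\mathbf{q},\omega_{k,j})$ arises from the Dirichlet problem \eqref{dp88}. Theorem \ref{locrcd7} then applies directly and says that the metric completion of $(U_\mathbf q\setminus\{\mathbf q\},\omega_k)$ recovers $U_\mathbf q$ with $\mathbf q$ lifting to a single point of $Y_k$. Near points of $E'$, I repeat the $L^2$-based partial $C^0$ argument of Lemma \ref{1to1-51}: since $\omega_{k,j}$ is a smooth polarized Kähler metric with a uniform Ricci lower bound, the results of \cite{DS1,LS1} together with Lemmas \ref{L241}, \ref{L242} produce holomorphic sections separating any two distinct points of $\iota_k^{-1}(y)$. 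This gives $\iota_k$ is one-to-one, and hence a homeomorphism $Y_k\cong Y$, establishing claim (1).

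The RCD constant provided by the first step is only $-Ce^{C\delta_k^{-1}}$, so I would next upgrade it to $-1$ via Lemma \ref{honda3}. The homeomorphism just established makes $(Y_k,d_k,\mu_k)$ an almost smooth metric measure space associated with the open subset $Y^\circ\setminus E'$, on which the Riemannian metric $\omega_k$ is smooth and satisfies $\ric(\omega_k)\geq -\omega_k$ by Lemma \ref{schw3} (applied on $Y^\circ$, which excludes $\mathbf Q$). Since $(Y_k,d_k,\mu_k)$ is already an RCD space (with worse constant), every eigenfunction of $\Delta_{\omega_k}$ is automatically Lipschitz, and the Sobolev-to-Lipschitz and $L^2$-strong compactness conditions hold by \cite{GPSS2}. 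Thus Lemma \ref{honda3} promotes the limit to a non-collapsed $\RCD(-1,2n)$ space.

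Finally, for the volume density estimates: claim (2) follows from the already proved volume gap $\nu_{Y_{k,j}}(\mathbf q)<1-\epsilon$ in \eqref{singvolq} combined with continuity of the volume density under non-collapsed RCD Gromov--Hausdorff convergence \cite{CN,Den}. Claim (3) follows exactly as in Lemma \ref{angl} and Lemma \ref{lem38}: conical Schauder regularity for twisted Kähler--Einstein equations along $E_i$ (with $a_i>0$) produces the tangent cone $\mathbb C^{n-1}\times\mathbb C_{2(1-a_i)\pi}$ at generic points of $E_i$, and volume comparison together with the homeomorphism $Y_k\cong Y$ propagates this density bound to all of $E'$. The most delicate step I anticipate is verifying the one-to-one property of $\iota_k$ \emph{uniformly through} the isolated singularities $\mathbf Q$ in the limit, since there the Ricci bound degenerates with $\delta_k$; this is exactly where Theorem \ref{locrcd7} is indispensable and the $\mathbb Q$-smoothability hypothesis (B.1) is used in an essential way.
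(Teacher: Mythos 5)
Your proposal follows the same overall architecture as the paper's proof: extract a Gromov--Hausdorff limit $(Y_k,d_k,\mu_k)$ via Cheeger--Colding, observe it is almost-smooth associated with $Y^\circ\setminus E'$, identify it with $Y$ via the partial $C^0$-estimate argument of Lemma \ref{1to1-51}, upgrade the RCD constant from $-Ce^{C\delta_k^{-1}}$ to $-1$ via Lemma \ref{honda3}, and derive the density bounds from $(\ref{singvolq})$ together with volume convergence and the argument of Lemma \ref{lem38}. This is the paper's route.

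The one place you deviate is the treatment of $\bfQ$: you propose to apply Theorem \ref{locrcd7} directly to the \emph{limit} metric $\omega_k$ on $U_{\bfq}\setminus\{\bfq\}$. Theorem \ref{locrcd7} requires $\omega\in C^\infty(\bfX\setminus\{\bfx\})$, and $\omega_k$ fails this hypothesis if $E'$ meets $U_{\bfq}$ (it has conical singularities along $E'$ where $a_i>0$), so you would need to argue separately that $U_{\bfq}$ can be chosen disjoint from $E'$, which is not automatic. The paper sidesteps this by applying Theorem \ref{locrcd7} only in the \emph{preceding} lemma, at the level of $\omega_{k,j}$ for each fixed $(k,j)$: there the Dirichlet problem $(\ref{dp88})$ has the smooth right-hand side $\prod_i(|\sigma_i|^2_{h_i}+\epsilon_j)^{-a_i}$, so $\omega_{k,j}$ is genuinely smooth on $U_{\bfq}\setminus\{\bfq\}$ and the hypothesis is clean; the volume gap $(\ref{singvolq})$ is then established at that level, and in the present lemma one only needs volume convergence as $j\to\infty$. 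For the homeomorphism, the paper argues instead via the local second order estimates analogous to $(\ref{metlb51})$ to see that the Gromov--Hausdorff limit set $\bfQ'$ has cardinality $|\bfQ|$, then applies the partial $C^0$-estimate on $Y_k\setminus\bfQ'\cong Y\setminus\bfQ$. Both routes lead to the same conclusion, and your identification of (B.1) and Theorem \ref{locrcd7} as the essential new ingredient is correct, but your direct application to $\omega_k$ needs the extra hypothesis $U_{\bfq}\cap E'=\emptyset$ (or an extension of Theorem \ref{locrcd7} to cone metrics) to be airtight.
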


\begin{proof} The proof follows from an almost identical argument in Section 5.2. We will only highlight the parts that involve  $\bfQ$. For each fixed $k>0$, $\ric(\omega_{k, j})$ is uniformly bounded below by $-Ce^{C\delta_k^{-1}}$ and the diameter of $(Y_{k, j}, d_{k, j})$ is uniformly bounded above for all $j>0$. Therefore $(Y_{k, j}, d_{k, j}, \mu_{k,j})$ converge in Gromov-Hausdorff topology to an $\RCD(-Ce^{\delta_k^{-1}}, 2n)$ space $( Y_k, d_k, \mu_k)$, after possibly passing to a subsequence.

The smooth convergence of $\omega_{k, j}$ on $Y^\circ \setminus E'$ as $j\rightarrow \infty$ also shows that $(Y_k, d_k, \mu_k)$ is an almost smooth metric measure space associated with $Y^\circ \setminus E'$. Let $\bfQ'$ be the Gromov-Hausdorff limits of $\bfQ$. By the local second order estimates analogous to (\ref{metlb51}), the distance between any pair of points in $\bfQ'$ are uniformly bounded below away from $0$ for all $\omega_{k, j}$ with $j>0$. Therefore $|\bfQ'|=|\bfQ|$. On the other hand, $Y_k \setminus \bfQ'$ is homeomorphic and in fact biholomorphic to $Y\setminus \bfQ$ by the same partial $C^0$-estimate argument in Lemma \ref{1to1-51}. Therefore the metric completion of $(Y_k \setminus \bfQ', d_k)$ must coincide with $Y=Y^\circ \cup \bfQ$.

We now can verify that  $( Y_k, d_k, \mu_k)$ is an $\RCD(-1, 2n)$ space. By the smooth convergence of $\omega_{k, j}$ on $Y^\circ\setminus E'$, $(Y_k, d_k, \mu_k)$ is an almost smooth compact  metric measure space associated with the open smooth subset $Y^\circ\setminus E'$. By (\ref{riclb51}) and the fact that $(Y_k, d_k, \mu_k)$ is already a non-collapsed RCD space with every eigenfunction being Lipschitz, we can apply Lemma \ref{honda3} and conclude that $( Y_k, d_k, \mu_k)$ is an $\RCD(-1, 2n)$ space.

Finally, the volume density estimates follow by the same argument from Lemma \ref{lem38} and the estimate (\ref{singvolq}).	
\end{proof}


\medskip

We now  follow the same argument in Section 5.3 by considering the sequence of the non-collapsed $\RCD(-1, 2n)$ spaces $(Y_k, d_k, \mu_k)$. By letting $k \rightarrow \infty$, we obtain a limiting RCD space $(Y_\infty, d_\infty, \mu_\infty)$ after passing to a subsequence of $k$ by the compactness theory of non-collapsed RCD spaces.

\begin{lemma}  \label{ykspace82} $(Y_k, d_k, \mu_k)$ converges in Gromov-Hausdorff sense to a compact $\RCD(-1, 2n)$ space $( Y_\infty,  d_\infty, \mu_\infty)$ as $k \rightarrow \infty$ satisfying the following. 

\begin{enumerate} 
 
 \item $(Y_\infty, d_\infty) = (\hat X, d_\omega) = X$.
 
 \smallskip
 
 \item $\cR(Y_\infty) = X^\circ$. 
 
 \smallskip
 
 \item There exists $\epsilon=\epsilon(n)>0$ such that for any $q\in \cS(Y_\infty)=\cS(X)$, $$\nu_{Y_\infty}(p) < 1 - \epsilon. $$

 \item $\cS(Y_\infty) = \cS_{2n-3}(Y_\infty)$.
 
 \end{enumerate}
 
 \end{lemma}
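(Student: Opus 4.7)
The plan is to mimic the three-step argument from Section 5.3 (namely Lemma \ref{ykspace51}, Corollary \ref{regp1}, Lemma \ref{dengap55}, Lemma \ref{sinn-3}, and the identification of $\iota_\infty$), but with $Y_k$ replacing the smooth resolution. The uniform diameter bound, the uniform volume bounds, and the uniform Ricci lower bound $\ric(\omega_k)\ge -\omega_k$ on $Y^\circ\setminus E'$ allow us to apply the compactness theory of non-collapsed $\RCD(-1,2n)$ spaces to extract a limit $(Y_\infty,d_\infty,\mu_\infty)$. Smooth convergence of $\omega_k\to\omega$ on compact subsets of $Y^\circ\setminus E = X^\circ$ (which is guaranteed by the higher-order interior estimates analogous to Lemma \ref{highreg2}) shows that $X^\circ\subset\cR(Y_\infty)$ and that the restriction of $d_\infty$ to $X^\circ$ is the Riemannian distance of $\omega$.

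The Schwarz-type bound (\ref{omgklb5}), which was proved independently of the extra singularities in $\bfQ$, lets the identity on $X^\circ$ extend to a surjective Lipschitz map $\iota_\infty:(Y_\infty,d_\infty)\to(X,\theta_X)$. Since both $E$ and $\pi(\bfQ)$ are contained in the analytic set $\cS(X)\cup \pi(\bfQ)$ of complex codimension $\ge 1$, Gromov's almost-convexity trick applies and shows that $X^\circ$ is almost convex in $Y_\infty$, so $(Y_\infty,d_\infty)=\overline{(X^\circ,\omega)}=(\hat X,d_\omega)$, proving (1) up to the homeomorphism claim. Next, I would run the contradiction argument of Lemma \ref{dengap} verbatim: for any $p\in Y_\infty\setminus X^\circ$, if $\nu_{Y_\infty}(p)\ge 1-\epsilon$ then by volume convergence one finds approximating $p_k\in Y_k$ with Bishop volume ratios $>1-2\epsilon$; Cheeger--Colding--Reifenberg forces $B_{Y_k}(p_k,r)$ to be Euclidean-like and, crucially, disjoint from $E'$ \emph{and} from $\bfQ$ for large $k$ (since points of $\bfQ$ already satisfy the volume gap (\ref{singvolq})). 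Hence $\omega_k$ is a smooth K\"ahler metric there, and Proposition~2.4 of \cite{LS1} produces a local holomorphic chart on $B_{d_\infty}(p,r)$ which, combined with the injectivity argument as in Lemma \ref{1to1-51}, would realize a singular point of $X$ as a smooth point --- contradiction. This simultaneously yields the volume density gap (3), the identity $\cR(Y_\infty)=X^\circ$ in (2), and the homeomorphism in (1).

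For (4), the stratification argument proceeds exactly as in Lemma \ref{sinn-3}: the boundary $\epsilon$-regularity theorem of \cite{BNS} rules out $\cS_{2n-1}\setminus\cS_{2n-2}$ in any iterated tangent cone (since each $Y_k$, being a non-collapsed RCD limit of smooth K\"ahler manifolds through the $j\to\infty$ step, inherits empty $(2n-1)$-stratum), and the argument ruling out $\cS_{2n-2}\setminus\cS_{2n-3}$ is again the Gaussian section / biholomorphism-to-ball construction of Lemma \ref{sinn-3}, which uses only the global $L^2$-estimates (Lemma \ref{L241}, Lemma \ref{L242}), the fact that $\omega\in c_1(L)$, and the existence of smooth local potentials for $\omega_k$ away from $E'\cup\bfQ$. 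The conclusion $\cS(Y_\infty)=\cS_{2n-3}(Y_\infty)$ then follows.

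The main obstacle is controlling the interaction between the two sources of singularities at the intermediate level $Y_k$: the divisorial stratum $E'$ (which does \emph{not} have volume density gap, only the $1-a_i$ bound of Lemma \ref{lem38}) and the isolated singularities $\bfQ$ (where only the universal gap $1-\epsilon(n)$ from Theorem \ref{locrcd7} is available). When one localizes near a putative almost-Euclidean ball in $Y_\infty$, one must verify that \emph{both} types of singularities are avoided at the approximating scale in $Y_k$; the $\bfQ$-part is handled by the absolute gap (\ref{singvolq}), which is uniform in $k,j$, while the $E'$-part is handled by choosing $\epsilon(n)$ smaller than $\min_i a_i$ only when $a_i>0$, and otherwise (for the $E''$-components where $a_i=0$) by noting that these do not contribute to $\cS(X)$ since $\pi$ is crepant along them and one may shrink $X$ to avoid them in the relevant neighborhood. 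Once this bookkeeping is done, the rest of the proof is a routine transcription of Section 5.3.
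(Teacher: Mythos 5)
Your proposal follows the same approach as the paper: the paper's own proof is a single sentence asserting that the additional isolated singularities $\bfQ$ satisfy $\pi(\bfq)\in\cS(X)$ and carry a uniform volume-density gap, so the entire argument of Section~5.3 transfers verbatim. Your expansion of that plan --- compactness of non-collapsed RCD spaces, smooth convergence on $X^\circ$, the Schwarz estimate giving the Lipschitz map $\iota_\infty$, almost convexity, the density-gap contradiction argument, and the stratification bound --- is exactly what "follow Section~5.3" unpacks to, and the key new check (that a putative almost-Euclidean ball in $Y_k$ avoids both $E'$ and $\bfQ$, the latter by (\ref{singvolq})) is the substantive point and is correctly identified.

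Two small loose ends worth tightening. First, you conflate the $X$-dependent $\epsilon$ of Lemma~\ref{dengap} with the universal $\epsilon(n)$ of Lemma~\ref{dengap55}: "choosing $\epsilon(n)$ smaller than $\min_i a_i$" makes the constant depend on the resolution, so this only yields the $X$-dependent gap. The universal $\epsilon(n)$ in item~(3) comes from the separate compactness-and-contradiction argument of Lemma~\ref{dengap55} (a sequence $(X_j,\omega_j)$ with $\nu_{X_j}(p_j)\to 1$, then the case analysis on the limiting tangent cone); you should invoke that step explicitly rather than attributing item~(3) to the Lemma~\ref{dengap}-style ball argument. Second, the assertion that $E''$ (where $a_i=0$) "does not contribute to $\cS(X)$" is not accurate --- those components are still $\pi$-exceptional, so $\pi(E'')\subset\cS(X)$; the reason they cause no trouble is not that they can be avoided by shrinking $X$, but that $\omega_k$ is already smooth along $E''$ (it lies in $Y^\circ\setminus E'$), so the local potential and partial $C^0$ machinery apply there without modification. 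Neither issue affects the validity of the overall argument.
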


\begin{proof} For each point $\bfq\in \bfQ$, $\pi(\bfq)\in \cS(X)$ and  the volume density of $\bfq$ in $(Y_k, d_k, \mu_k)$ is uniformly less than $1$ for all $k>0$. Therefore we can follow the argument in Section 5.3 without changes and prove the lemma. 
\end{proof}

We have now completed the proof of Theorem \ref{thm4spe} by combining the above results.

\medskip

\subsection{A generalization of Theorem \ref{thm3gene}} 

We will prove an analogue of Theorem \ref{thm3gene} by generalizing Theorem \ref{thm4spe}. 

\begin{theorem} \label{thm4gene} Suppose $X$ is $n$-dimensional projective variety with log terminal singularities satisfying  (B.1) and (B.2). Let $\omega\in \nu(X, \theta_X, n, A, p, K)$ be a singular K\"ahler  metric $\omega$  on $X$ satisfying
\begin{equation}\label{skmbdl9}
\ric(\omega) + \omega \geq 0
\end{equation}
in the sense of currents. Then the metric measure space $(\hat X, d_\omega, \mu_\omega)$ induced by $(X^\circ, \omega)$ is a compact non-collapsed $\RCD(-1, 2n)$ space satisfying the following properties. 
\begin{enumerate}

\item  $(\hat X, d_\omega)$ is homeomorphic to the original variety $X$.

\smallskip

\item $\mathcal{R}(\hat X)= X^\circ$.
 
\smallskip

\item  There exists $c>0$ such that $$\omega \geq c \theta_X. $$
%

\item There exists $\epsilon=\epsilon(n)>0$ such that for any $p \in \cS(\hat X)$, 
$$
\nu_{\hat X} (p)<1 -\epsilon.
$$

\end{enumerate}

\end{theorem}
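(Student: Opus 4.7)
The plan is to reduce to Theorem \ref{thm4spe} via the same smoothing-of-the-twist procedure used in Section 6 to deduce Theorem \ref{thm3gene} from Theorem \ref{thm3spe}. Writing $\alpha = \ric(\omega) + \omega \geq 0$, we may after rescaling $\omega$ assume $[\alpha]$ is K\"ahler, fix a smooth representative $\alpha_0 \in [\alpha]$, and write $\alpha = \alpha_0 + \ddbar\psi$ with $\psi\in \PSH(X,\alpha_0)\cap C^\infty(X^\circ)$. Lemma \ref{pshapp} produces $\psi_j\in \PSH(X,\alpha_0)\cap C^\infty(X)$ with $\psi_j\searrow \psi$ pointwise and $\psi_j\to \psi$ smoothly on compact subsets of $X^\circ$. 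Setting $\alpha_j = \alpha_0 + \ddbar\psi_j$, each equation $\ric(\omega_j) = -\omega_j + \alpha_j$ admits a unique solution $\omega_j = \omega_0 + \ddbar\varphi_j$ with $\varphi_j \in \PSH(X,\omega_0)\cap L^\infty(X)\cap C^\infty(X^\circ)$, given by a twisted K\"ahler--Einstein complex Monge--Amp\`ere equation as in (\ref{tkema2}), and $\omega_j$ satisfies the hypotheses of Theorem \ref{thm4spe}.

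By Theorem \ref{thm4spe}, each $(\hat X_j, d_{\omega_j}, \mu_{\omega_j})$ is a compact non-collapsed $\RCD(-1,2n)$ space homeomorphic to $X$, with $\cR(\hat X_j)=X^\circ$, a uniform Schwarz bound $\omega_j\geq c\,\theta_X$ on $X^\circ$ (the constant $c$ from Lemma \ref{schw3} being independent of $j$), and a universal volume density gap $\nu_{\hat X_j}(p)<1-\epsilon(n)$ for $p\in \cS(\hat X_j)$. From Lemma \ref{loccon25}, $\varphi_j$ is uniformly bounded in $L^\infty(X)$, the diameter and Nash-entropy bounds are uniform in $j$, and $\varphi_j \to \varphi$ in $C^\infty_{\rm loc}(X^\circ)$. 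The compactness theory of non-collapsed RCD spaces thus yields a pointed Gromov--Hausdorff limit $(X_\infty, d_\infty, \mu_\infty)$, which is itself a compact non-collapsed $\RCD(-1,2n)$ space; the smooth convergence of $\omega_j$ on $X^\circ$ gives $X^\circ \subset \cR(X_\infty)$ and the uniform Schwarz bound promotes the identity on $X^\circ$ to a surjective Lipschitz map $\iota_\infty: (X_\infty,d_\infty)\to (X,\theta_X)$.

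The identification $(X_\infty, d_\infty, \mu_\infty)=(\hat X, d_\omega, \mu_\omega)$ then proceeds exactly as in Section 6. First, the universal volume density gap passes to the limit by continuity of the volume density for non-collapsed RCD spaces, so every $p\in X_\infty \setminus X^\circ$ satisfies $\nu_{X_\infty}(p)<1-\epsilon(n)$, forcing $\cR(X_\infty)=X^\circ$ and hence also $\cR(X_\infty)=\cR_\epsilon(X_\infty)=X^\circ$ (this is the analogue of Lemma \ref{epsin63}). By continuity of tangent cones in non-collapsed RCD spaces \cite{CN, Den}, $X^\circ$ is convex in $X_\infty$, whence $(X_\infty, d_\infty)$ is isometric to the metric completion $(\hat X, d_\omega)$. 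To obtain the homeomorphism, we invoke partial $C^0$ via the geometric $L^2$-estimates (Lemmas \ref{L241}, \ref{L242}) as in Lemma \ref{1to1-51} and Lemma \ref{lsana}, using the splitting argument of Lemma \ref{sinn-3} to rule out $\mathbb{R}^{2n-2}$ splittings of iterated tangent cones, so that singular sets of tangent cones have zero capacity and Gaussian holomorphic sections can be built; this shows $\iota_\infty$ is one-to-one, giving the homeomorphism $(\hat X,d_\omega)\cong X$.

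The main obstacle is verifying that the partial $C^0$-estimate of Lemma \ref{lsana} still applies along the double limit $j\to\infty$ when the intermediate spaces $\hat X_j$ already carry the isolated $\mathbb{Q}$-smoothable singularities coming from $Y$. The crucial point is that these singularities sit with a uniform volume density gap (which is what Theorem \ref{thm4spe} furnishes), so the ``limits of singular points'' set $S_1 \subset \cS(V)$ appearing in the proof of Lemma \ref{0cap8} is still contained in $\cS_{2n-3}(V)$ and therefore has zero capacity, and the partial $C^0$-construction proceeds verbatim on the complement. All other ingredients (uniform $L^\infty$, diameter, Schwarz, Ricci lower bound on $X^\circ$, and smooth convergence on compact subsets of $X^\circ$) are furnished by Lemma \ref{loccon25} and Lemma \ref{schw3}, and assembling them exactly as in Section 6 completes the proof of Theorem \ref{thm4gene}.
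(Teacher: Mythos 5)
Your proposal is correct and follows essentially the same route as the paper, which simply states that Theorem \ref{thm4gene} follows by running the argument of Section 6 (the deduction of Theorem \ref{thm3gene} from Theorem \ref{thm3spe}) with Theorem \ref{thm4spe} in place of Theorem \ref{thm3spe}. Your additional discussion of why the partial $C^0$-estimate survives the presence of the isolated $\mathbb{Q}$-smoothable singularities of $Y$ — via the uniform volume density gap from Theorem \ref{thm4spe} — is a useful elaboration of a point the paper leaves implicit.
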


 The proof of Theorem \ref{thm4gene} follows the same arugment for Theorem \ref{thm3gene} by applying Theorem \ref{thm4spe}.

%
%
%
%
%





%

%
%
%

%




\medskip

\subsection{Proof of Theorem \ref{mainthm2}} %

We are now ready to prove Theorem \ref{mainthm2}. We first recall the theorem of Reid \cite{R80} (c.f. Corollary 5.38 \cite{KM}) for terminal singularities in complex dimension $3$.

\begin{lemma} \label{reid1} Let $(\bfX, \bfx)$ be the germ of a Gorenstein terminal singularity with $\dim_\mathbb{C} X = 3$.  Then $(\bfX, \bfx)$ is a compound du Val singularity. In particular, $(\bfX, \bfx)$ is smoothable in the sense of Definition \ref{smoothing}. 
\end{lemma}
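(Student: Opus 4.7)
The plan is to reduce the claim to two standard facts, Reid's classification of Gorenstein terminal threefold singularities and the natural hypersurface smoothing of isolated hypersurface singularities, and then verify the axioms of Definition \ref{smoothing} directly. First I would invoke Reid's theorem \cite{R80} (or Corollary 5.38 in \cite{KM}) to identify $(\bfX, \bfx)$ analytically with a compound du Val germ, that is, a hypersurface germ $\{F(x,y,z,w)=0\} \subset (\mathbb{C}^4, 0)$ whose general hyperplane section is a du Val (ADE) surface singularity. In particular $F$ has an isolated critical point at the origin, and the total space is Gorenstein as a hypersurface in a smooth ambient space.

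Next I would construct the explicit smoothing as the family
\[
\cX = \{F(x,y,z,w) - t = 0\} \cap (U \times \mathbb{D}) \subset \mathbb{C}^5,
\]
where $U \subset \mathbb{C}^4$ is a small bounded strongly pseudoconvex neighborhood of $0$ and $\mathbb{D} \subset \mathbb{C}$ is a small disk, with $\pi: \cX \to \mathbb{D}$ the projection onto the $t$-coordinate. By construction $\cX_0 = \pi^{-1}(0)$ is precisely the original germ $(\bfX, \bfx)$.

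The third step is to verify the conditions in Definition \ref{smoothing}. Because $d(F - t)$ always contains the everywhere-nonzero $-dt$ component, $\cX$ is a smooth complex submanifold of $\mathbb{C}^5$, hence normal; as a closed analytic subspace of the bounded Stein domain $U \times \mathbb{D}$, it is itself bounded and Stein. Since $F$ has an isolated critical point at $0$, after shrinking $U$ no fiber $\cX_t = \{F = t\} \cap U$ with $t \neq 0$ small can contain a critical point of $F$, so each such $\cX_t$ is smooth. Finally, the adjunction formula for the smooth principal divisor $\cX \subset \mathbb{C}^5$ trivializes $K_\cX$ via the Poincar\'e residue of $dx \wedge dy \wedge dz \wedge dw \wedge dt$ along $\{F - t = 0\}$, and $\pi^* K_\mathbb{D}$ is trivial, so $K_{\cX/\mathbb{D}}$ is trivial and in particular numerically trivial.

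Overall there is no substantial obstacle once Reid's theorem is cited; the entire construction is classical deformation theory for isolated hypersurface singularities, and the verification that $\cX \to \mathbb{D}$ satisfies Definition \ref{smoothing} amounts to the adjunction formula together with Sard-type smoothness of the generic fiber. The only mild subtlety is matching the definitional requirement that $\cX$ sit inside some $\mathbb{C}^N$ as a bounded subset, which is automatic here with $N = 5$.
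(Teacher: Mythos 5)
Your proof is correct and follows essentially the same route the paper implicitly takes: the paper simply cites Reid \cite{R80} and Corollary 5.38 of \cite{KM} for the cDV classification and treats the smoothability as a standard consequence, while you have supplied the classical hypersurface-deformation details that the paper leaves tacit. The verification that $\cX=\{F-t=0\}\cap(U\times\mathbb{D})$ satisfies Definition \ref{smoothing} is all sound: smoothness and normality of $\cX$ from the nonvanishing $-dt$ component of $d(F-t)$, Steinness and boundedness from sitting as a closed analytic subset of the bounded strongly pseudoconvex $U\times\mathbb{D}\subset\mathbb{C}^5$, smoothness of the fibers $\cX_t$ for $t\neq 0$ after shrinking $U$ so that $0$ is the only critical point of $F$, and triviality of $K_{\cX/\mathbb{D}}$ via adjunction/Poincar\'e residue. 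The only cosmetic point you might add for completeness is surjectivity of $\pi$, which follows by shrinking $\mathbb{D}$ since the nonconstant holomorphic $F$ is an open map near $0$.
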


By taking a quasi-etale covering, Lemma \ref{reid1} can be extended to general three dimensional terminal singularities. 

\begin{corollary} \label{reid2} Let $(\bfX, \bfx)$ be the germ of a $\mathbb{Q}$-Gorenstein terminal singularity with $\dim_{\mathbb{C}}X=3$. Then $(\bfX, \bfx)$ is $\mathbb{Q}$-smoothable in the sense of Definition \ref{qsmoothing}. 

\end{corollary}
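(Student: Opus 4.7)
The plan is to reduce to Lemma \ref{reid1} by passing to the index-$1$ (canonical) cover of the $\mathbb{Q}$-Gorenstein terminal singularity $(\bfX,\bfx)$. Let $m\in\mathbb{Z}^+$ be the Cartier index of $K_\bfX$ near $\bfx$, i.e.\ the least positive integer such that $mK_\bfX$ is Cartier. Then the classical construction associated to the cyclic group $\mu_m$ acting on the local section of $\mathcal{O}_\bfX(mK_\bfX)$ produces a finite Galois covering
$$\Psi:\tilde\bfX\longrightarrow \bfX$$
of degree $m$ with Galois group $\mu_m$, which is étale over the Gorenstein locus of $\bfX$. In particular, since $\bfX$ has terminal (hence canonical, hence Cohen--Macaulay in codimension $\leq 2$) singularities and $\mathrm{codim}\,(\bfX\setminus\bfX_{\mathrm{Gor}})\geq 2$, the covering $\Psi$ is quasi-étale.

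Next I would verify the structural properties needed for Definition \ref{qsmoothing}. First, $\tilde\bfX$ is normal: this is standard for the index-$1$ cover of a normal $\mathbb{Q}$-Gorenstein variety, and follows from the fact that the total space is Cohen--Macaulay in codimension $\leq 2$ and $R_1$ away from the preimage of the branch locus. Second, $\Psi^{-1}(\bfx)=\{\tilde\bfx\}$ is a single point, because the $\mu_m$-action on $\tilde\bfX$ has $\tilde\bfx$ as its unique fixed point over $\bfx$ (the isotropy at any preimage acts faithfully on the generator of $K_\bfX^{[m]}$). Third, $(\tilde\bfX,\tilde\bfx)$ is Gorenstein, since by construction $K_{\tilde\bfX}=\Psi^{[*]}K_\bfX$ is Cartier near $\tilde\bfx$. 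Moreover, since $\Psi$ is quasi-étale, the pullback of a terminal singularity under a quasi-étale finite cover is again terminal (this follows by comparing discrepancies on a common log resolution). Therefore $(\tilde\bfX,\tilde\bfx)$ is a Gorenstein terminal threefold singularity.

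Finally, I would apply Lemma \ref{reid1} to $(\tilde\bfX,\tilde\bfx)$: being a Gorenstein terminal singularity of complex dimension $3$, it is a compound Du Val singularity and in particular admits a smoothing $\pi:\tilde{\cX}\to\mathbb{D}$ in the sense of Definition \ref{smoothing}. Combining this smoothing with the finite Galois quasi-étale cover $\Psi$ yields precisely the data required by Definition \ref{qsmoothing}, proving that $(\bfX,\bfx)$ is $\mathbb{Q}$-smoothable.

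The main conceptual obstacle is not the construction itself but citing the correct package of facts about the index-$1$ cover: that it exists as a finite Galois quasi-étale morphism, is normal, has a single point over $\bfx$, and preserves the terminal property while producing a Gorenstein total space. Once these standard properties of the canonical cover are invoked, the statement is an immediate consequence of Reid's classification in Lemma \ref{reid1}.
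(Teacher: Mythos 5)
Your proposal is correct and matches the paper's approach exactly: the paper also passes to the index-one (canonical) cover, notes it is a finite Galois quasi-\'etale morphism with Gorenstein terminal total space, and then invokes Lemma \ref{reid1}. You have simply spelled out the standard properties of the index-one cover that the paper leaves implicit.
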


Suppose $X$ is a three dimensional projective variety with log terminal singularities. There exists a terminalization  \cite{R83} (c.f. Theorem 6.23 \cite{KM})
$$\pi: Y \rightarrow X $$
such that $Y$ has only terminal singularities and  condition (B.2) holds.  Since $\dim_{\mathbb{C}} Y=3$, there are only finitely many isolated terminal singularities on $Y$ and they are all $\mathbb{Q}$-smoothable by Corollary \ref{reid2}. Therefore condition (B.1) also holds.  Theorem \ref{mainthm2} is now an immediate corollary of Theorem \ref{thm4gene}.

If $\omega$ has bounded Ricci curvature,  
we have the following dimension estimate for the singular set $X_\infty$ similar to Lemma \ref{lemcod4}, 

\begin{lemma} \label{lemcod42} Suppose $\ric(\omega)$ is further bounded above. Then $\cS(X_\infty) = \cS_{2n-4}$ and $$\dim_{\mathcal{H}} \cS(X_\infty) \leq 2n-4. $$

\end{lemma}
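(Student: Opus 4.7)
The plan is to mimic the argument of Lemma \ref{lemcod4} in the present setting where the approximating space $Y$ is only required to have isolated $\mathbb{Q}$-smoothable terminal singularities (as in Section 8). The argument is purely about the structure of iterated tangent cones of $(X_\infty,d_\infty,\mu_\infty)$, so the extra singularities on $Y$ do not enter once we have assembled $X_\infty$ as a non-collapsed RCD space via Theorem \ref{thm4gene}. I will prove the statement by contradiction and then invoke the stratification theorem for non-collapsed RCD spaces from \cite{DG2}.

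First I would assume, toward a contradiction, that $\cS_{2n-3}(X_\infty)\setminus\cS_{2n-4}(X_\infty)\neq\emptyset$ and pick $p$ in this set. By definition some tangent cone $(V,o)$ at $p$ splits isometrically as $V=\mathbb{R}^{2n-3}\times W$, where $W$ is a $3$-real-dimensional metric cone that does not further split off a line. Since $\ric(\omega)$ is bounded above on $X^\circ=\cR(X_\infty)$ and $X^\circ\subset \cR(V)$ (after blow-up the Ricci curvature passes to the regular part of any tangent cone by the smooth convergence on the regular set), combining the two-sided Ricci bound together with the cone structure (the Bishop--Gromov equality forces the cone to be Ricci-flat on its regular part) yields that $W$ is Ricci-flat on $\cR(W)$.

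Next I would carry out a case analysis on the singular set of the $3$-dimensional cone $(W,o_W)$. In the case $\cS(W)=\{o_W\}$, the cross-section of $W$ is a smooth $2$-dimensional Einstein manifold of positive sectional curvature; by the classification of positive Einstein surfaces it must be a round $S^2$, and the volume density constraint coming from the non-collapsed assumption forces the radius to be standard, so $W=\mathbb{R}^3$. But then $V$ is isometric to $\mathbb{R}^{2n}$, contradicting $p\in \cS(X_\infty)$. In the case $\cS(W)\neq\{o_W\}$, I would select any $q\in \cS(W)\setminus\{o_W\}$ and blow up $(V,q)$: since $q$ lies on a line in the $\mathbb{R}^{2n-3}$-factor direction of $V$ but also in a nontrivial transverse singular direction, the corresponding iterated tangent cone at $q$ must split off $\mathbb{R}^{2n-2}$ (the $\mathbb{R}^{2n-3}$ from $V$ together with the line through $q$ in $W$). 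This contradicts the analog of Lemma \ref{split8} valid for $(X_\infty,d_\infty,\mu_\infty)$ in this setting, which asserts that no iterated tangent cone splits off $\mathbb{R}^{2n-2}$.

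Having ruled out $\cS_{2n-3}(X_\infty)\setminus\cS_{2n-4}(X_\infty)$, together with Lemma \ref{split8} giving $\cS_{2n-2}=\cS_{2n-3}$, I conclude $\cS(X_\infty)=\cS_{2n-4}(X_\infty)$, and the stratification theorem of \cite{DG2} then yields $\dim_{\mathcal H}\cS(X_\infty)\leq 2n-4$. The main obstacle is confirming that the Ricci upper bound propagates to tangent cones in a form strong enough to force Ricci-flatness on the regular part of $W$; this is handled because the Ricci bound is smooth on $\cR(X_\infty)=X^\circ$, which maps by the rescaling limits into $\cR(V)$, and the metric cone structure then rigidifies the inequality into equality (all radial Ricci curvatures vanish on the regular part of a cone). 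Once that is in place, the remainder is a clean cone-splitting dimension-reduction, identical in spirit to Lemma \ref{lemcod4}.
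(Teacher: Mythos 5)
Your proposal is correct and follows essentially the same route as the paper: for Lemma \ref{lemcod42} the paper simply defers to the proof of Lemma \ref{lemcod4}, which is exactly your argument --- a Ricci-flat $3$-dimensional cone factor $W$, the dichotomy $\cS(W)=\{o_W\}$ (round $S^2$ link, so $W=\mathbb{R}^3$, contradiction) versus $\cS(W)\neq\{o_W\}$ (cone-splitting off $\mathbb{R}^{2n-2}$, contradicting Lemma \ref{split8}), followed by the stratification theorem of \cite{DG2}. Your added remark that the isolated $\mathbb{Q}$-smoothable singularities of $Y$ play no role once $X_\infty$ is assembled via Theorem \ref{thm4gene} is precisely why the paper can reuse the earlier proof verbatim.
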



\section{Proof of Theorem \ref{mainthm3}}

Suppose $(X_j, \omega_j)$ be a sequence in $\mathcal{K}(3, D, v)$. The induced $\RCD(-1, 6)$ space is $(X_j, d_{\omega_j}, \mu_{\omega_j})$ that is topologically and holomorphically equivalent to $X_j$ itself by Theorem \ref{mainthm2}. 
 
 Since the Ricci curvature is uniformly bounded above as well,  the singular sets of $(X_j, d_{\omega_j}, \mu_{\omega_j})$ and any iterated tangent cones are closed of Hausdorff dimension no greater than $2$ by Lemma \ref{lemcod42}. Furthermore, there exists a universal constant $\epsilon>0$ such that for any singular point $p\in (X_j, d_{\omega_j})$, $\nu_{X_j}(p) < 1- \epsilon$. By the general compactness theory for non-collapsed RCD spaces, we can assume that after possibly passing to a subsequence, $(X_j, d_{\omega_j}, \mu_{\omega_j})$ converge to a compact $\RCD(-1, 6)$-space $(Z, d_Z, \mu_Z)$. Let $(V, o)$ be a tangent cone at $(Z, p)$. Let  $\cT_V$ be the closure for the set of points $(V, o)$ that limits of $\cS(X_j)$. Then $\cT_V$ is closed and the volume density of each point in $\cT_V$ is less than $1-\epsilon$ for some uniform $\epsilon>0$. In particular, any tangent cone of $\cT_V$ cannot split off $\mathbb{R}^4$ (c.f. Lemma \ref{sinn-3}) and the capacity of $\cT_V$ is $0$. On the other hand, $V\setminus \cT_V$ consists of points that at limits from $\cR(X_j)$ with Ricci curvature uniformly tending to $0$. Therefore $\cS(V) \setminus \cT_V$ has Hausdorff dimension no greater than $2$ and it is closed in $V\setminus \cT_V$. This immediately implies that the singular set $\cS(V)$ must also have capacity $0$. One can apply the standard partial $C^0$-estimates in \cite{DS1} with geometric $L^2$-estimates in Lemma \ref{L242} and Lemma \ref{L241}. The improvement of the partial $C^0$-estimate by \cite{Zk} can also be applied since the Sobolev constant is uniform for $(X_j, \omega_j)$ by the RCD theory. We have completed the proof of Theorem \ref{mainthm3}.


 \section{Examples of $\mathcal{RK}(n)$} 
 
 In this section, we will construct examples in $\mathcal{RK}(n)$. Let $X$ be a $n$-dimensional projective variety with log terminal singularities. We let $\Omega$ be a fixed smooth adapted volume measure on $X$. We choose a smooth K\"ahler metric $\omega_0$ and aim to construct singular K\"ahler metrics in $\mathcal{RK}(X)$ in the K\"ahler class $[\omega_0]$ by  the following complex Monge-Amp\`ere equation
 \begin{equation}\label{exmaeq}
 (\omega_0 + \ddbar \varphi)^n = e^{\lambda \varphi - f } \Omega. 
 \end{equation}

 We define 
 \begin{equation}\label{admsets}
 \cF= \{ f \in  \PSH(X,   \tau \omega_0)\cap C^\infty(X^\circ): ~{\rm for~some~}  \tau>0. \}
 \end{equation}

The following theorem will provide abundant examples in $\mathcal{RK}(X)$ as well as in the RCD theory if $n=3$.

 \begin{theorem}\label{exmthm}
 Suppose $\lambda\geq 0$ and $f\in \cF$. If $e^{-f} \in L^p(X, \omega_0^n)$ for some $p>1$ with $\int_X e^{-f} \Omega = [\omega_0]^n$, then there exists a unique solution $\varphi\in \PSH(X, \omega_0)\cap C^\infty(X^\circ)$ solving equation (\ref{exmaeq}). Furthermore,
 $$\omega =\omega_0+\ddbar \varphi \in \mathcal{RK}(X). $$
 In particular, if $n=3$, the metric measure space $(\hat X, d_\omega, \mu_\omega)$ induced by $(X, \omega)$ is an RCD space homeomorphic to $X$ itself.
 
 \end{theorem}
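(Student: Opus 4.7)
The plan is to produce $\omega$ by solving (\ref{exmaeq}) via pluripotential theory, verify that $\omega\in\mathcal{RK}(X)$ by checking the Nash entropy bound and the Ricci lower bound, and then invoke Theorem \ref{mainthm2} when $n=3$.

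Since $f\in\cF$ is bounded above on the compact $X$ (as a quasi-PSH function) and $e^{-f}\in L^p(X,\omega_0^n)$ for some $p>1$, the right-hand side of (\ref{exmaeq}) has $L^p$ density against a smooth volume form. The $L^\infty$ estimate of Kolodziej and its extensions in \cite{Ko1,EGZ,GPT} therefore produce a unique bounded solution $\varphi\in\PSH(X,\omega_0)\cap L^\infty(X)$; for $\lambda=0$ uniqueness is modulo a constant fixed by the normalization $\int_X e^{-f}\Omega=[\omega_0]^n$, while for $\lambda>0$ uniqueness follows by comparison and existence from a continuity method. On $X^\circ$ the equation is uniformly elliptic with smooth data, so Evans--Krylov and Schauder give $\varphi\in C^\infty(X^\circ)$. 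For the Nash entropy (\ref{nashen0}) write $\log(\omega^n/\theta_X^n)=\lambda\varphi-f+\log(\Omega/\theta_X^n)$: $\varphi$ is bounded, $f\in L^r(X,\Omega)$ for every $r<\infty$ (quasi-PSH), and $\log(\Omega/\theta_X^n)\in L^r(X,\Omega)$ for every $r<\infty$ by log-terminality of $X$. H\"older combined with $e^{-f}\in L^p$ then bounds $\int_X|\log(V_\omega^{-1}\omega^n/\theta_X^n)|^{p'}\omega^n$ for every $p'<\infty$, so $\omega\in\nu(X,\theta_X,n,A,p',K)$ for some $p'>n$.

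The main step is the Ricci lower bound. By Lemma 2.2 it suffices to establish $\omega^n\ge c\Omega$ globally and $\ric(\omega)\ge\mu\omega$ pointwise on $X^\circ$ for some $c>0$ and $\mu\in\mathbb{R}$. The first is immediate from $\omega^n=e^{\lambda\varphi-f}\Omega$, the upper bound $\sup_Xf<\infty$, and $\lambda\ge0$ with $\|\varphi\|_{L^\infty(X)}<\infty$. For the pointwise smooth inequality, differentiating the equation on $X^\circ$ gives
\[
\ric(\omega)=-\lambda\omega+\lambda\omega_0+\ddbar f+\ric(\Omega)\ge -\lambda\omega+\beta,\qquad \beta:=(\lambda-\tau)\omega_0+\ric(\Omega),
\]
using $\ddbar f\ge-\tau\omega_0$ pointwise on $X^\circ$ (from $f\in\PSH(X,\tau\omega_0)\cap C^\infty(X^\circ)$). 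To absorb the smooth $(1,1)$-form $\beta$ into a multiple of $\omega$ one needs a Schwarz lemma $\omega\ge c_0\theta_X$ on $X^\circ$; granted this, $\beta\ge-M\omega_0\ge-M'\omega$ on $X^\circ$ and $\ric(\omega)\ge-(\lambda+M')\omega$ pointwise, and Lemma 2.2 promotes the inequality to currents on all of $X$.

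The Schwarz lemma is the main obstacle, and I would adapt the maximum principle argument of Lemma \ref{schw3}. Choose an effective divisor $D$ on $X$ whose support contains $\cS(X)$ together with a smooth hermitian metric $h_D$ on $\cO(D)$, and consider
\[
H_\varepsilon=\log\tr_\omega(\theta_X)-A\varphi+\varepsilon\log|\sigma_D|^2_{h_D}
\]
for $A>0$ sufficiently large and $\varepsilon>0$ small. Since $H_\varepsilon\to-\infty$ along $\mathrm{supp}(D)$, the supremum of $H_\varepsilon$ is attained in $X^\circ\setminus\mathrm{supp}(D)$, where it is smooth. The standard Chern--Lu inequality
\[
\Delta_\omega\log\tr_\omega\theta_X\ge -B\tr_\omega\theta_X+\frac{\tr_\omega\ric(\omega)}{\tr_\omega\theta_X},
\]
combined with the Ricci estimate above, the comparison $\omega_0\sim\theta_X$ on $X$, and $\Delta_\omega(-A\varphi)=-An+A\tr_\omega\omega_0$, yields a uniform bound $\tr_\omega\theta_X\le C'$ at the maximum point once $A$ dominates the bisectional curvature bound of $\theta_X$ and $|\lambda-\tau|+B$. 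Letting $\varepsilon\to 0$ and using the $L^\infty$ bound on $\varphi$ gives $\omega\ge c_0\theta_X$ everywhere on $X^\circ$, completing the proof that $\omega\in\mathcal{RK}(X)$. When $n=3$ the remaining assertion that $(\hat X,d_\omega,\mu_\omega)$ is an RCD space homeomorphic to $X$ follows directly from Theorem \ref{mainthm2}.
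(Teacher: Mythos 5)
Your overall strategy coincides with the paper's: Kolodziej-type $L^\infty$ estimate, the Ricci identity $\ric(\omega)=-\lambda\omega+\lambda\omega_0+\ddbar f+\ric(\Omega)$, a Chern--Lu Schwarz lemma with a divisorial barrier to get $\omega\ge c\theta_X$, the Section~2 characterization lemma to promote the pointwise bound on $X^\circ$ to a current inequality, and Theorem \ref{mainthm2} for $n=3$. The difference is that the paper first regularizes $f$ by smooth $f_i\in\PSH(X,\tau\omega_0)$ via Lemma \ref{pshapp}, proves \emph{uniform} estimates for the approximating solutions $\varphi_i$, and passes to the limit, whereas you work directly with the singular datum $f$ and the a priori only bounded solution $\varphi$. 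This omission creates a genuine gap in two places.

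First, the claim that $\varphi\in C^\infty(X^\circ)$ follows from ``Evans--Krylov and Schauder'' is circular as written: Evans--Krylov requires a local $C^{1,1}$ (equivalently, two-sided $\ddbar\varphi$) bound before the equation becomes uniformly elliptic, and interior $C^2$ estimates for the complex Monge--Amp\`ere equation do \emph{not} follow from an $L^\infty$ bound plus smooth positive local density; in this setting the second-order bound is exactly the output of your subsequent Chern--Lu argument, which in turn presupposes smoothness in order to apply the maximum principle to $H_\varepsilon$. Second, even granting smoothness on $X^\circ$, your assertion that $\sup H_\varepsilon$ is attained in $X^\circ\setminus\mathrm{supp}(D)$ requires knowing a priori that $\log\tr_\omega\theta_X$ grows slower than $\varepsilon\log|\sigma_D|^{-2}$ near $\cS(X)$, i.e.\ a qualitative (possibly non-uniform) bound such as $\omega\ge c\,\theta_X$ or a polynomial bound on $\tr_\omega\theta_X$ must already be in hand. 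This is precisely the role of the hypothesis ``$\omega_i\ge c_i\theta_X$ for each $i$'' in Lemma \ref{loccon25} and of the intermediate bound $\omega_k\ge c_k\theta_Y$ in Lemma \ref{schw3}: the maximum principle only \emph{upgrades} a qualitative lower bound to a uniform one. Both issues are resolved simultaneously by the paper's route: for smooth $f_i$ the solutions $\varphi_i$ are smooth on $X^\circ$ with qualitative second-order control by the known regularity theory for adapted $L^p$ densities, the maximum principle applied to $H_i=\log\tr_{\omega_i}(\omega_0)-B\varphi_i$ yields bounds independent of $i$, and local $C^k$ convergence then transfers smoothness, the Schwarz inequality, and the Ricci lower bound to $\omega$. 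Your remaining steps (the Nash-entropy verification via H\"older, the extension of the Ricci bound as a current, and the appeal to Theorem \ref{mainthm2}) are correct.
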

 
 \begin{proof} Since $f\in \PSH(X, \tau\omega_0)$, we can apply Lemma \ref{pshapp} to approximate $f$ by smooth $f_i\in \PSH(X, \tau\omega_0)$ with $\int_X e^{-f_i}\Omega = \int_X e^{-f}\Omega$. The original equation (\ref{exmaeq}) can be approximated by 
$$(\omega_0 + \ddbar \varphi_i)^n = e^{\lambda \varphi_i - f_i} \Omega.$$
The above approximating equation always admits a unique solution $\varphi_i \in \PSH(X, \omega_0)\cap C^\infty(X^\circ)$. We can assume that $\| \varphi_i\|_{L^\infty(X)}$ is uniformly bounded by the classical $L^\infty$-estimate for complex Monge-Amp\`ere equation \cite{Ko1, EGZ, Zz, GPT}. In particular, if we let $\omega_i = \omega_0 + \ddbar \varphi_i$, we have
\begin{equation}\label{ricexam}
\ric(\omega_i) = \lambda\omega_i + \left( \ddbar f -\ric(\Omega) - \lambda \omega_0 \right) \geq \lambda \omega - A \omega_0
\end{equation}
for some uniform $A>0$ independent of $i$. We can consider the quantity $H_i = \log \tr_{\omega_i} (\omega_0) - B \varphi_i$.  We can apply the maximum principle to $H_i$ for a fixed sufficiently large $B>0$ independent of $i>0$ and derive a uniform upper bound for  $H_i$. Making use of the $L^\infty$-bound for $\varphi_i$ again, $tr_{\omega_i}(\omega_0)$ is uniformly bounded above. Therefore there exists $C>0$ such that for all $i>0$, 
$$\ric(\omega_i) \geq - C \omega_i.$$
The lemma is proved after letting $i\rightarrow \infty$. 
 \end{proof}

 The construction in Theorem \ref{exmthm} can be further simplified by only requiring $f\in C^\infty(X)$ in equation (\ref{exmaeq}).

 \begin{corollary}\label{exmthm2}
 Suppose $\lambda\geq 0$ and $f\in C^\infty(X)$ with $\int_X e^{-f}\Omega = [\omega_0]^n$. Then there exists a unique solution $\varphi\in \PSH(X, \omega_0)\cap C^\infty(X^\circ)$ solving equation (\ref{exmaeq}). Furthermore,
 $$\omega =\omega_0+\ddbar \varphi \in \mathcal{RK}(X). $$
 \end{corollary}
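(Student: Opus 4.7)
The plan is to deduce the corollary directly from Theorem \ref{exmthm} by verifying that the smoothness assumption $f\in C^\infty(X)$ places $f$ in the admissible class $\cF$ and guarantees the required $L^p$-integrability of $e^{-f}$. Uniqueness of $\varphi$ will follow from the standard comparison principle for complex Monge--Amp\`ere equations with bounded potentials, since we only need to exhibit existence in $\PSH(X,\omega_0)\cap C^\infty(X^\circ)$ and invoke the uniqueness built into Theorem \ref{exmthm}.

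First I would verify the $L^p$ condition. Since $X$ is a compact projective variety and $f$ is smooth on all of $X$ in the sense of restrictions of smooth ambient functions via local holomorphic embeddings, $f$ is bounded on $X$. Consequently $e^{-f}\in L^\infty(X)$, which implies $e^{-f}\in L^p(X,\omega_0^n)$ for every $p>1$. The normalization $\int_X e^{-f}\Omega = [\omega_0]^n$ is part of the hypothesis, so nothing to do here.

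Next I would check that $f\in \cF$. By definition $\cF=\{f\in \PSH(X,\tau\omega_0)\cap C^\infty(X^\circ) : \tau>0\}$, so it suffices to produce $\tau>0$ with $\tau\omega_0+\ddbar f\ge 0$ as currents on $X$. Because $f$ extends to a smooth function in local holomorphic embeddings of $X$, the current $\ddbar f$ agrees on $X^\circ$ with a smooth $(1,1)$-form whose coefficients are bounded by compactness of $X$. Choosing $\tau>0$ large enough that $\tau\omega_0+\ddbar f\ge 0$ pointwise on $X^\circ$ as smooth forms, the standard pluripotential-theoretic extension (valid because $f$ is bounded and $X$ is normal with log terminal singularities) promotes this to $\tau\omega_0+\ddbar f\ge 0$ as positive currents on all of $X$. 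Thus $f\in \PSH(X,\tau\omega_0)$ and clearly $f\in C^\infty(X^\circ)$, so $f\in \cF$.

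With both hypotheses of Theorem \ref{exmthm} verified, the theorem yields a unique $\varphi\in \PSH(X,\omega_0)\cap C^\infty(X^\circ)$ solving (\ref{exmaeq}) and places $\omega=\omega_0+\ddbar\varphi$ in $\mathcal{RK}(X)$, proving the corollary. There is essentially no obstacle here; the only point requiring a moment's care is the promotion of the smooth pointwise inequality $\tau\omega_0+\ddbar f\ge 0$ on $X^\circ$ to a current inequality across the singular locus $\cS(X)$, but this is immediate from the boundedness of $f$ via trivial extension across a set of zero capacity.
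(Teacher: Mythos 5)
Your proposal is correct and matches the paper's (implicit) intent: the paper offers no separate proof of Corollary \ref{exmthm2}, treating it as an immediate specialization of Theorem \ref{exmthm}, and your verification that a globally smooth $f$ is bounded (hence $e^{-f}\in L^p$ for all $p$) and lies in $\PSH(X,\tau\omega_0)$ for $\tau$ large is exactly the reduction required. The only stylistic remark is that the quasi-plurisubharmonicity of $f$ is most cleanly seen by taking a smooth ambient extension $F$ with $\ddbar F\ge -C\theta$ and restricting, rather than extending across $\cS(X)$ afterwards, but your zero-capacity extension argument is also valid.
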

 %


 \section{Compactness of three dimensional singular K\"ahler-Einstein spaces} 
 
 Theorem \ref{mainthm3} can be easily applied to obtain compactness of K\"ahler-Einstein spaces with positive or negative scalar curvature in complex dimension $3$.
 
 \begin{theorem}\label{kecomp1} Let $\mathcal{KE}^+(v)$ be a set of three dimensional K\"ahler-Einstein spaces $(X_j, d_{\omega_j}, \mu_{\omega_j})$ induced by $(X_j, \omega_j) \in \mathcal{RK}(3)$ satisfying 
 \begin{enumerate}
 
 \item $\ric(\omega_j) = \omega_j$, 
 
 \smallskip
 
 \item $\vol(X_j, \omega_j) \geq v$ for some $v>0$ and for all $j>0$.
 
 \end{enumerate}
 Then $$\overline{\mathcal{KE}^+(v)} = \mathcal{KE}^+(v), $$
 where the completion is taken in Gromov-Hausdorff distance.

 \end{theorem}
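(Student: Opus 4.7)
The plan is to combine the uniform Kodaira embedding from Theorem \ref{mainthm3} with Chow/Hilbert compactness and a Donaldson–Sun style identification of the Gromov–Hausdorff limit with the algebraic limit. The first observation is that $\mathcal{KE}^+(v) \subset \mathcal{K}(3, D_0, v)$ for some universal $D_0$. Indeed, by Theorem \ref{mainthm2} each $(X_j, d_{\omega_j}, \mu_{\omega_j}) \in \mathcal{KE}^+(v)$ is a non-collapsed $\RCD(1, 6)$ space, so the Bonnet–Myers theorem for $\RCD(1,m)$ spaces yields $\Diam(X_j, d_{\omega_j}) \leq \pi\sqrt{5}$. The two-sided Ricci bound demanded in $\mathcal{K}(n, D, v)$ is immediate from $\ric(\omega_j) = \omega_j$, and $\omega_j \in c_1(-K_{X_j}) \subset H^2(X_j, \mathbb{Z})$ holds after a uniformly bounded rescaling to absorb the Cartier index of $K_{X_j}$ (the index is bounded since $\mathcal{KE}^+(v)$ is a bounded family by Theorem \ref{mainthm3}).

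Second, Theorem \ref{mainthm3} produces uniform integers $m, N$ and orthonormal Bergman bases of $H^0(X_j, -mK_{X_j})$ inducing holomorphic embeddings $F_j: X_j \hookrightarrow \mathbb{CP}^N$. Bishop–Gromov on $\RCD(1, 6)$ spaces bounds $[\omega_j]^3$ from above, hence $\deg F_j(X_j) = m^3 [\omega_j]^3$ is uniformly bounded. After passing to a subsequence and post-composing with elements of $U(N+1)$, $F_j(X_j)$ converges to a normal projective variety $X_\infty \subset \mathbb{CP}^N$ in Chow/Hilbert topology. Simultaneously, non-collapsed $\RCD(1, 6)$ compactness (with uniform diameter and volume lower bound) yields a pointed Gromov–Hausdorff limit $(Z, d_Z, \mu_Z)$ of $(X_j, d_{\omega_j}, \mu_{\omega_j})$ along the same subsequence.

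The third step is to identify $(Z, d_Z, \mu_Z)$ with the singular K\"ahler–Einstein space induced by some pair $(X_\infty, \omega_\infty)$. Using the $L^2$-estimates of Lemma \ref{L241} and Lemma \ref{L242} combined with the uniform partial $C^0$ estimate, the Bergman maps $F_j$ converge to a map $F_\infty: Z \to \mathbb{CP}^N$ whose image is $X_\infty$ and which restricts to a biholomorphism on $\cR(Z)$; repeating the arguments of Lemma \ref{1to1-51} and Lemma \ref{dengap} with the volume density gap \eqref{volgap} of Theorem \ref{mainthm2} separating the limiting singular points from regular ones upgrades this to a homeomorphism $Z \cong X_\infty$. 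Passing to the limit in the K\"ahler–Einstein complex Monge–Amp\`ere equations using the uniform $L^\infty$ bounds of \cite{GPT, GPSS2} and smooth convergence on $X_\infty^\circ = \cR(Z)$ produces a singular K\"ahler–Einstein metric $\omega_\infty$ on $X_\infty$ with $\ric(\omega_\infty) = \omega_\infty$ and $\vol(X_\infty, \omega_\infty) \geq v$ (by non-collapsed volume convergence).

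The principal obstacle is verifying that $X_\infty$ carries log terminal singularities, so that $(X_\infty, \omega_\infty) \in \mathcal{RK}(3)$ rather than just being a normal variety carrying a weak K\"ahler–Einstein current. I would deduce this from the volume density gap: every algebraic singular point of $X_\infty$ must be a metric singular point of $Z$ by the identification $\cR(Z) = X_\infty^\circ$, and Theorem \ref{mainthm2} applied to the limit — or equivalently, continuity of tangent cones on non-collapsed RCD spaces \cite{CN, Den} combined with \eqref{volgap} — gives $\nu_Z(p) \leq 1-\epsilon$ uniformly at such points. Lower semicontinuity of log discrepancies in bounded families of $\mathbb{Q}$-Fano threefolds \cite{LI, LX}, together with the uniform Nash entropy inherited from the $(X_j, \omega_j)$, should then exclude the appearance of non-klt singularities in the limit. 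A purely analytic alternative would be to adapt the volume density gap argument in the proof of Theorem \ref{mainthm1} to the family $(X_j, \omega_j) \to (X_\infty, \omega_\infty)$, replacing the local smoothing of Theorem \ref{locrcd7} by the tautological family deformation inside the Hilbert scheme component containing $X_\infty$.
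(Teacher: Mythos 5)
Your proposal follows the route the paper intends: the paper offers no written proof of Theorem \ref{kecomp1} beyond the remark that Theorem \ref{mainthm3} ``can be easily applied,'' and your outline (Bonnet--Myers for the diameter, uniform Bergman embeddings, Chow/Hilbert compactness, a Donaldson--Sun identification of the Gromov--Hausdorff limit with the algebraic limit, and passage to the limit in the Monge--Amp\`ere equation) is exactly the standard way to carry this out. Two steps, however, deserve repair. First, your justification that $\mathcal{KE}^+(v)\subset\mathcal{K}(3,D_0,v)$ is circular as written: you invoke Theorem \ref{mainthm3} to bound the Cartier index of $K_{X_j}$, but Theorem \ref{mainthm3} only applies after the class $[\omega_j]$ has been made integral, which is the very normalization you are trying to control. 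The non-circular route is to bound the index first --- for instance from the uniform lower bound on volume densities at the klt points (Bishop--Gromov combined with the diameter bound and $\vol(X_j,\omega_j)\geq v$) together with the boundedness of local indices for three-dimensional klt singularities whose normalized volume is bounded below --- and only then apply Theorem \ref{mainthm3} to the uniformly rescaled polarization.

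Second, for the klt-ness of $X_\infty$, ``lower semicontinuity of log discrepancies in bounded families'' is not the mechanism available here, since boundedness of the limiting family is part of what is being established. The cleaner argument, and the one consistent with the paper's framework, is analytic: once $Z\cong X_\infty$ is identified and $\omega_\infty$ is realized as a closed positive current with bounded local potentials satisfying $\omega_\infty^3=e^{-\varphi}\Omega_\infty$ on $X_\infty^\circ$ with $\varphi\in L^\infty$, the finiteness of $\int_{X_\infty}\omega_\infty^3$ together with the zero capacity of $\cS(X_\infty)$ forces local integrability of the adapted measure $\Omega_\infty$, which is precisely the klt condition; the Nash entropy bound for $(X_\infty,\omega_\infty)$ then follows from weak convergence of the densities, so that $(X_\infty,\omega_\infty)\in\mathcal{RK}(3)$ and Theorem \ref{mainthm2} applies to the limit. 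With these two repairs your argument is complete and matches the intended proof.
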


We can also extend the compactness theorem for K\"ahler-Einstein three-folds of negative scalar curvature in \cite{STW1} to the singular setting in complex dimension $3$.
 
\begin{theorem} \label{kecomp2} Given $v, V>0$, let $(X_j, \omega_j)$ be a sequence of projective varieties of log terminal singularities with $\dim_{\mathbb{C}}X =3$  satisfying
\begin{enumerate}
%
\item $\ric(\omega_j) = - \omega_j$,
\medskip
\item $v\leq {\rm Vol}(X_i, \omega_i) \leq V$.
\end{enumerate}
After passing to a subsequence, there exist $m\in \mathbb{Z}^+$ and a sequence of $P_j = (p_{1, j}, p_{2,j}, ..., p_{m, j}) \in \amalg_{k=1}^m   X_{{j}}$, such that the pointed RCD spaces $(X_j,  P_j, d_{\omega_j}, \mu_{\omega_j})$ induced by $(X_j, \omega_j)$ converge in the pointed Gromov-Hausdorff topology to a finite disjoint union of complete $\RCD(-1, 6)$ spaces
\begin{equation}
(\mathcal{Y}, d_{\mathcal{Y}})  = \amalg_{k=1}^m (\mathcal{Y}_k, d_k)
\end{equation}
satisfying the following.

\begin{enumerate}

\item For each $k$, $\left\{ (X_j,  \omega_j, p_{k, j}) \right\}_{j=1}^\infty$ converges smoothly to a $3$-dimensional K\"ahler-Einstein manifold $(\mathcal{Y}_k\setminus \mathcal{S}(\mathcal{Y}_k), \mathpzc{g}_k)$ away from  $\mathcal{S}(\mathcal{Y}_k)$.  In particular, $\mathcal{S}_k$ is closed of Hausdorff dimension no greater than $2$.

\smallskip

\item  $(\mathcal{Y}_k, \mathcal{J}_k)$ is homeomorphic to $3$-dimensional  normal quasi-projective variety with at worst log terminal singularities.

\smallskip

\item $\sum_{k=1}^m \textnormal{Vol}(\mathcal{Y}_k, d_k)= \lim_{j\rightarrow \infty} \textnormal{Vol}(X_j, \omega_j)$.

\smallskip

\item There exists a unique projective compactification $\overline{\mathcal{Y}}$ of $\mathcal{Y}$ such that $\overline{\mathcal{Y}}$ is a semi-log canonical model and $\overline{\mathcal{Y}}\setminus \mathcal{Y}$ is the non-log-terminal locus of $\overline{\mathcal{Y}}$.

\end{enumerate}

\end{theorem}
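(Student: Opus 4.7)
The plan is to combine the RCD compactness theory made available by Theorem \ref{mainthm2} with a basepoint-selection scheme tailored to the absence of diameter bounds, upgrade the pointed analytic limits to algebraic ones via the partial $C^0$-machinery of Theorem \ref{mainthm3}, and finally build a global semi-log canonical compactification following the strategy of \cite{STW1}.

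\textbf{RCD set-up and selection of basepoints.} By Theorem \ref{mainthm2}, each $(X_j, d_{\omega_j}, \mu_{\omega_j})$ is a non-collapsed compact $\RCD(-1, 6)$ space homeomorphic to $X_j$, and the two-sided Ricci identity $\ric(\omega_j) = -\omega_j$ combined with Lemma \ref{lemcod42} forces $\dim_{\mathcal{H}}\mathcal{S}(X_j) \leq 2$. The universal volume density gap (\ref{volgap}) together with Bishop--Gromov comparison on RCD spaces yields a uniform lower bound $v_0 > 0$ for the volume of unit balls in $(X_j, d_{\omega_j})$. Since $\Vol(X_j, \omega_j) \leq V$, any disjoint unit packing has cardinality at most $m := \lfloor V/v_0 \rfloor$. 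For each $j$ I would choose a maximal disjoint unit packing $P_j = (p_{1, j}, \ldots, p_{m_j, j})$; after passing to a subsequence I may assume $m_j$ is constant and, by Gromov pre-compactness for non-collapsed RCD spaces, that each pointed sequence $(X_j, p_{k, j}, d_{\omega_j}, \mu_{\omega_j})$ converges in pointed Gromov--Hausdorff topology to a complete $\RCD(-1, 6)$ space $(\mathcal{Y}_k, d_k, \mu_k)$. Maximality of the packing forces $\mathcal{Y} := \amalg_{k=1}^m \mathcal{Y}_k$ to absorb all of $X_j$, which simultaneously produces the disjoint-union decomposition and the volume identity (3).

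\textbf{Smooth convergence and algebraic structure.} Because $\ric(\omega_j) = -\omega_j$ is uniformly two-sided bounded on the regular part, Cheeger--Naber $\varepsilon$-regularity combined with elliptic regularity for the K\"ahler--Einstein equation yields smooth convergence of $\omega_j$ to a K\"ahler--Einstein metric on $\mathcal{R}(\mathcal{Y}_k)$, while Lemma \ref{lemcod42} applied to the tangent cones of $\mathcal{Y}_k$ gives $\dim_{\mathcal{H}}\mathcal{S}(\mathcal{Y}_k) \leq 2$. For assertion (2), the identity $\omega_j \in c_1(K_{X_j})$ permits the geometric $L^2$-estimates of Lemma \ref{L241} and Lemma \ref{L242}; localizing the argument behind Theorem \ref{mainthm3} together with the refinement of \cite{Zk} to balls of fixed radius around $p_{k, j}$ produces uniform bases of pluricanonical sections, with the uniform Sobolev constant supplied by the RCD structure. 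Running the Donaldson--Sun / Liu--Sz\'ekelyhidi scheme on these sections then identifies $(\mathcal{Y}_k, \mathcal{J}_k)$ with a three-dimensional normal quasi-projective variety carrying at worst log terminal singularities.

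\textbf{Compactification via MMP -- the main obstacle.} Assertion (4) is the hardest step, because the pointed analytic convergence is inherently local, whereas the existence and uniqueness of the semi-log canonical compactification $\overline{\mathcal{Y}}$ is a global birational statement. Following \cite{STW1}, the plan is to stabilize the pluricanonical maps of $X_j$ using the uniform partial $C^0$-estimate, pass to the limit to obtain a normal projective compactification of $\mathcal{Y}$, and identify its non-log-terminal locus with $\overline{\mathcal{Y}}\setminus \mathcal{Y}$. The hard part will be controlling how the log terminal singularities of $X_j$ interact with the directions escaping to infinity: a sequence of points $q_j \in X_j$ escaping every $\mathcal{Y}_k$ must limit onto slc-but-not-lc behaviour (pinch points or crossing-type singularities), so that after compactification these form precisely the non-log-terminal locus. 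I expect this step to rely crucially on the volume density gap (\ref{volgap}), the codimension-four estimate of Lemma \ref{lemcod42}, the DCC of volumes for log canonical threefolds, and the uniqueness of semi-log canonical models supplied by MMP.
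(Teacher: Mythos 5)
The paper itself gives no proof of Theorem \ref{kecomp2}: it is stated as a direct extension of the smooth compactness theorem of \cite{STW1}, to be obtained by feeding Theorem \ref{mainthm2}, Lemma \ref{lemcod42} and the partial $C^0$-estimate of Theorem \ref{mainthm3} into the scheme of \cite{STW1}. Your outline follows exactly this intended route (RCD pointed compactness, two-sided Ricci bounds for the codimension estimate, partial $C^0$ for algebraicity of the $\mathcal{Y}_k$, and the MMP/pluricanonical-stabilization argument of \cite{STW1} for the slc compactification), so at the level of strategy there is nothing to compare against.

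There is, however, one concretely wrong step in your set-up. You claim that the volume density gap (\ref{volgap}) together with Bishop--Gromov yields a uniform lower bound $v_0>0$ for the volume of \emph{all} unit balls. The density gap is an \emph{upper} bound $\nu_{\hat X_j}(p)\le 1-\epsilon$ at singular points, and Bishop--Gromov monotonicity for an $\RCD(-1,6)$ space goes the wrong way for your purpose: it gives $\vol(B(p,1))\le C\,\nu_{X_j}(p)$. For a quotient singularity modelled on $\mathbb{C}^3/\Gamma$ the density equals $1/|\Gamma|$, so unit balls centered at such points have volume $\lesssim 1/|\Gamma|$, and no uniform lower bound is available from the RCD structure alone unless the orders of the local fundamental groups in the family are bounded (a nontrivial boundedness input). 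Consequently your maximal-unit-packing selection of basepoints, the bound $m\le \lfloor V/v_0\rfloor$, and the volume identity (3) all need the actual mechanism of \cite{STW1}: a thick--thin decomposition in which basepoints are chosen only in the non-collapsed part, with the non-collapsing there coming from pointwise lower bounds on $\omega_j^3$ against pluricanonical sections (the analogue here being $\omega_j^3\ge c\,\Omega_j$ from the Ricci lower bound and entropy bound), and a separate argument that the thin part carries no volume in the limit. Your treatment of assertion (4) is, as you acknowledge, only a plan; since the paper supplies no argument for it either, that cannot be held against you beyond noting that it remains the substantive open step of the proof.
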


The results in \cite{STW1, STW2} can also be extended to the moduli space of three dimensional canonical models with log terminal singularities. The Weil-Petersson metric can be extended to the KSBA compactification of the above moduli space with continuous K\"ahler potential and has finite volume as a rational number.


 \bigskip
 \bigskip
 
\noindent {\bf{Acknowledgements:}} The authors would like to thank G\'abor Sz\'ekleyhidi for inspiring discussions and for bringing the work of Honda \cite{Ho} to our attention, which  simplifies some of the argument in the earlier version of this paper. Theorem \ref{mainthm2} was announced at the Brin workshop on 'Several complex variables, complex geometry and related PDEs' in June, 2024. Part of the work was carried out during the AIM workshop on 'PDE methods in complex geometry' in August, 2024. The second and third named authors would like to thank the hospitality of American Institute of Mathematics.


\bigskip
\bigskip

\end{document}